\documentclass[a4paper,oneside,reqno]{amsart}

\usepackage[T1]{fontenc}
\usepackage[utf8]{inputenc}
\usepackage[english]{babel}
\usepackage[osf,sc]{mathpazo}
\usepackage{eucal}
\normalfont

\usepackage{fullpage}
\usepackage[dvipsnames]{xcolor}
\usepackage{hyperref}
\usepackage{bookmark}
\usepackage{enumitem}
\usepackage[normalem]{ulem}

\newcommand\hmmax{0}
\newcommand\bmmax{0}
\hypersetup{
	colorlinks=true,
	linktocpage=true,
	pdfstartpage=1,
	pdfstartview=FitV,
	breaklinks=true,
	pdfpagemode=UseNone,
	pageanchor=true,
	plainpages=false,
	bookmarksnumbered,
	bookmarksopen=true,
	bookmarksopenlevel=1,
	hypertexnames=false,
	pdfhighlight=/O,
	urlcolor=BrickRed,
	linkcolor=RoyalBlue,
	citecolor=ForestGreen
}

\usepackage{graphicx,float}
\usepackage{caption}
\usepackage{pgfplots}
\usepackage{tikz}
\usetikzlibrary{calc,decorations.markings}
\pgfplotsset{compat=1.16}

\usepackage[textsize=footnotesize]{todonotes}
\presetkeys{todonotes}{color=Apricot}{}
\usepackage{amsmath,amssymb,amsthm}
\usepackage{mathtools,thmtools} 
\usepackage{bm,braket,stmaryrd,dsfont,mathrsfs,esint}
\numberwithin{equation}{section}

\usepackage[english]{varioref}
\usepackage[noabbrev,capitalise]{cleveref}

\Crefname{thm}{theorem}{theorems}
\Crefname{thm}{Theorem}{Theorems}

\Crefname{lem}{lemma}{lemmata}
\Crefname{lem}{Lemma}{Lemmata}

\Crefname{prop}{proposition}{propositions}
\Crefname{prop}{Proposition}{Propositions}

\Crefname{defn}{definition}{definitions}
\Crefname{defn}{Definition}{Definitions}

\Crefname{rem}{remark}{remarks}
\Crefname{rem}{Remark}{Remarks}

\newtheorem{theoremA}{Theorem}
\renewcommand{\thetheoremA}{A}

\newtheorem{theoremB}{Theorem}
\renewcommand{\thetheoremB}{B}

\newcommand{\dd}{\mathrm{d}}
\newcommand{\ii}{\mathrm{i}}
\newcommand{\bbraket}[1]{\llbracket #1 \rrbracket}

\DeclarePairedDelimiter{\card}{\lvert}{\rvert}
\DeclareMathOperator*{\Res}{Res}

\newcommand{\MM}{\mathbb{M}}
\newcommand{\Tutte}{\mathsf{Tut}}
\newcommand{\Aut}{\textnormal{Aut}}

\newcommand{\nodisc}{\textup{no }(0,1)}
\newcommand{\std}{\textnormal{std}}
\newcommand{\spl}{\textnormal{spl}}
\newcommand{\ind}{\textnormal{ind}}

\newcommand{\barsuperscript}[2]{%
  \overline{\vphantom{\overline{#1}}\smash{{#1}^{#2}}}%
}
\newcommand{\bbarsuperscript}[2]{%
  \overline{\vphantom{\overline{#1}}\smash{\overline{#1}^{#2}}}%
}

\theoremstyle{plain}
\newtheorem{thm}{Theorem}[section]
\newtheorem{prop}[thm]{Proposition}
\newtheorem{lem}[thm]{Lemma}

\theoremstyle{definition}
\newtheorem{defn}[thm]{Definition}
\newtheorem{rem}[thm]{Remark}

\usepackage[
	style=alphabetic,
	sorting=nyt,
	maxbibnames=99,
	maxcitenames=4,
	maxalphanames=4,
	minalphanames=4,
	giveninits=true,
	backend=biber
]{biblatex}
\renewcommand*{\bibfont}{\small}
\renewbibmacro{in:}{}
\usepackage{csquotes}
\title{A bijective topological recursion for maps}

\author{Ga\"etan Borot}
\address[G. Borot]{Institut f\"ur Mathematik und Institut f\"ur Physik, Humboldt-Universit\"at zu Berlin, 10099 Berlin, Germany}
\email{gaetan.borot@hu-berlin.de}

\author{Alessandro Giacchetto}
\address[A. Giacchetto]{Department Mathematik, ETH Z\"urich, CH-8006 Z\"urich, Switzerland}
\email{alessandro.giacchetto@math.ethz.ch}

\author{Lasse Merkens}
\address[L. Merkens]{Institut f\"ur Physik, Humboldt-Universit\"at zu Berlin, 10099 Berlin, Germany}
\email{lasse.merkens@hu-berlin.de}

\subjclass[2020]{Primary 05A19; Secondary 05A15, 05C30, 81T32, 82B41}

\begin{document}

\begin{abstract}
	We prove bijectively a recursive excision formula for maps of arbitrary topology. This yields the topological recursion formulae governing their enumeration, already at the level of formal generating series and without any analyticity assumption. We extend the result to maps carrying self-avoiding loop models and to stuffed maps, i.e. a variant of maps allowing faces of arbitrary topology. Our results give a precise combinatorial meaning to all terms of the (blobbed) topological recursion known for (stuffed) maps, including the recursion kernel, which had so far been missing. The construction relies on the simple idea of iterating Tutte's algorithm until the topology changes. Equivalently, it can be interpreted as a pair-of-pants decomposition driven by a path issuing from the root of the first boundary, providing an analogue for maps of the Mirzakhani--McShane identity for hyperbolic surfaces.
\end{abstract}

\maketitle

\section{Introduction}

\subsection{Motivation}
A map is an isotopy class of proper embeddings of a finite graph into an oriented compact surface such that the complement of the embedded graph is a union of topological discs. Maps may be viewed as discretised surfaces obtained by gluing polygons, and have been studied combinatorially since Tutte \cite{Tut63}. They also arise as the Feynman diagrams of random Hermitian matrix models \cite{tH74,BIPZ78}. The connection with random matrices was rediscovered in \cite{HZ86} and used to compute the Euler characteristic of moduli spaces of Riemann surfaces. In physics, maps provide a discretisation of two-dimensional quantum gravity, and this connection led Witten to his celebrated conjecture \cite{Wit90} on the enumerative geometry of the moduli space of curves, later proved by Kontsevich \cite{Kon92}.

Building on earlier work \cite{ACM92,ACKM93}, it was shown in \cite{Eyn04,Eyn11,Eyn16} that Hermitian matrix models, and hence generating series of maps of arbitrary topology, satisfy the topological recursion in the sense of Eynard and Orantin \cite{EO07}. This extends to richer classes of maps, such as maps carrying the Ising model \cite{EO05,EO08bndry,EO08mixed}, self-avoiding loop models \cite{BE11,BEO15}, stuffed maps \cite{Bor14}, and constellations with internal faces \cite{BCCG24}. For stuffed maps, which underlie the most general unitarily invariant random Hermitian matrix models, including those with multi-trace interactions, the relevant recursive structure is the blobbed topological recursion \cite{BS17}.

However, the known derivations are not fully \emph{bijective}. They rely on analytic continuation and on a detailed study of the analytic properties of generating series. Although the resulting formulae are reminiscent of pairs-of-pants excision, the individual terms of topological recursion, the recursion kernel and, in the stuffed case, the blob terms, do not a priori come with a direct combinatorial meaning. This naturally leads to the following question:
\begin{center}
\textsf{
	Can one derive topological-recursion-type formulae for maps \\ 
	by a purely combinatorial, bijective procedure?
}
\end{center}
The purpose of this work is to answer this question positively. We give bijective derivations of topological-recursion-type formulae, valid already at the level of formal generating series and without analyticity assumptions. We do so for models of increasing complexity: maps; maps with tubes, in which annular faces are allowed and which are combinatorially equivalent to maps carrying configurations of self-avoiding loops; and stuffed maps, in which the faces can have arbitrary topology.

\subsection{Main idea}
The starting point of our approach is Tutte's decomposition, which consists of erasing the root-edge of the first boundary. Instead of applying it only once, we iterate it as long as the topology is preserved, and stop just before the first topology-changing step. This produces two objects: a \emph{core map}, on which one further application of Tutte's procedure would change topology, and a \emph{skin map}, namely an annular remainder assembling the pieces removed along the way. We call this procedure \emph{skinning}.

A slight variant of this decomposition leads not only to an annular remainder, but to the excision of a pair of pants embedded in the original map. This gives a direct combinatorial origin for the recursive structure that later appears in topological recursion. In the case of maps, it already yields the desired formula. For maps with tubes and stuffed maps, the same mechanism persists, but additional terms appear, reflecting the presence of more complicated internal faces.

\subsection{Main results}
We formulate our main results for the enumeration of stuffed maps, which contains maps and maps with tubes as special cases. We denote by $W_{g,n}^{\ast}(x_1,\ldots,x_n)$ the generating series of stuffed maps of genus $g$ with $n$ labelled rooted boundaries. This series includes formal Boltzmann weights for vertices and for each type of internal face, together with a factor $x_i^{-\ell_i-1}$ when the $i$-th boundary has degree $\ell_i$.

Besides $W_{g,n}^{\ast}$, the formulae involve two auxiliary series. The first is $S^{\ast}(x_1,x)$, which enumerates skin maps, namely the annular remainder produced by iterating Tutte's procedure until topology changes. The second is $V_{g,n}(x_1;x_2,\ldots,x_n)$, which collects the exceptional indecomposable contributions arising in the excision procedure; it does not appear for maps or maps with tubes, but only for stuffed maps. The series $S^{\ast}$ plays the role of the recursion kernel, while $V_{g,n}$ gives rise to the blob term.

Our first main result is an excision formula expressing $W_{g,n}^{\ast}$ recursively in terms of lower-complexity generating series, together with the auxiliary series $S^{\ast}$ and $V_{g,n}$.

\begin{theoremA}[Excision formulae]
\label{intro:excision}
	We have the excision relations
	\begin{multline}
		W_{g,n}^{\ast}(x_1,x_2,\ldots,x_n)
		=
		\Bigg\langle
			S^{\ast}(x_1,x)
			\Bigg(
				\mathcal{M}_{y}W_{g-1,n+1}^{\ast}(x,y,x_2,\ldots,x_n)\big|_{y=x} \\
				\qquad\qquad\qquad
				+
				\sum_{\substack{h' + h'' = g \\ J' \sqcup J'' = \set{2,\ldots,n}}}^{\nodisc}
					W_{h',1+\card{J'}}^{\ast}(x,\bm{x}_{J'})
					\mathcal{M}_{x} W_{h'',1+\card{J''}}^{\ast}(x,\bm{x}_{J''})
			\Bigg)
			+ W_{0,2}^{\ast}(x_1,x)V_{g,n}(x;x_2,\ldots,x_n)
		\Bigg\rangle_x
	\end{multline}
	for $(g,n) \neq (0,1),(0,2),(1,1)$, and the low-topology cases
	\begin{equation}
	\begin{aligned}
		W_{0,2}^{\ast}(x_1,x_2)
		& =
		\left\langle S^{\ast}(x_1,x) \frac{W_{0,1}^{\ast}(x)}{(x_2 - x)^2} \right\rangle_x, \\
		W_{1,1}^{\ast}(x_1)
		& =
		\left\langle
			S^{\ast}(x_1,x) 
				\left.\mathcal{M}_{y}\widetilde{W}^{\ast}_{0,2}(x,y)\right|_{y=x}
			+
			W_{0,2}^{\ast}(x_1,x)V_{1,1}(x)\right\rangle_x.  
	\end{aligned}
	\end{equation}
	Here $\langle F(x) \rangle_x$ denotes the coefficient of $x^{-1}$ in a formal series $F(x)$, $\bm{x}_{I}$ denotes the tuple $(x_i)_{i \in I}$ for any subset $I \subseteq \set{1,\ldots,n}$, and the ``no $(0,1)$'' superscript indicates that we discard $(h',1+\card{J'}) = (0,1)$ and $(h'',1+\card{J''}) = (0,1)$ in the sum. Furthermore, $\mathcal{M}_{x_i}W_{g,n}^{\ast}$ is the sum of $W_{g,n}^{\ast}$ and the generating series of stuffed maps for which the $i$-th boundary is fully glued to a (boundary or internal) face; $\mathcal{M}_{x_i}\widetilde{W}_{0,2}^{\ast}(x_1,x_2)$ is defined similarly, but excluding annuli in which the two boundary faces are fully glued to each other.
\end{theoremA}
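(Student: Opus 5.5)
The plan is to build a weight-preserving bijection and then read the generating-series identity off it. Start from a stuffed map $M$ of type $(g,n)$ whose first boundary is rooted. Apply Tutte's root-edge removal repeatedly to that first boundary for as long as the removal preserves the topology type $(g,n)$ of the component carrying the first boundary. Topology-preserving steps are those that merge the boundary with a disc face, or that split off a planar disc piece, i.e.\ a $(0,1)$ component. Stop at the first step that would change the topology. This \emph{skinning} writes $M$ uniquely as a core map $C$ and a skin map glued along a new boundary. The new boundary, of degree $k$, is the boundary of $C$ created by the skinning.

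The first task is to define skin maps intrinsically as annular objects: a first boundary of degree $\ell_1$, an inner boundary of degree $k$, and only topology-preserving pieces in between, together with the pendant $(0,1)$ pieces that were removed. Their generating series $S^{\ast}(x_1,x)$ carries $x_1^{-\ell_1-1}$ and $x^{k}$. Gluing the skin to a core whose boundary has degree $k$, weighted by $x^{-k-1}$, then contributes exactly the pairing $\langle\cdot\rangle_x$. I would prove that skinning and regluing are inverse operations. This requires checking that skinning commutes with concatenation: first skinning a skin, then continuing, gives a skin again. It also requires checking that the stopping rule is intrinsic to $C$, meaning $C$ is characterised by ``one more Tutte step changes topology.''

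Next, classify core maps by the topology-changing step that the next root-edge removal would perform. There are four cases.

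First, the root edge connects the first boundary to a different boundary $i$. This case is absorbed differently, so I would set it up as the pair-of-pants variant. The root edge may instead lie on the same boundary with removal splitting off a non-disc component, or reducing genus. In those configurations, removing the root edge produces two boundaries, both at position $x$. Together with the edge-gluing conventions, this yields $\mathcal{M}_yW_{g-1,n+1}^\ast(x,y,\dots)|_{y=x}$ in the genus-reducing case. In the separating case it yields $\sum^{\nodisc}W_{h'}\,\mathcal{M}_xW_{h''}$; the $(0,1)$ terms are excluded because disc splittings were already eaten by the skin.

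The $\mathcal{M}$ operator accounts for the degenerate situation where the new boundary is entirely glued to a face (boundary or internal), which in stuffed maps is where Tutte's step enters a non-disc internal face.

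Second, the root edge belongs to a stuffed internal face of nontrivial topology such that the entire core is that face attached to the other boundaries. This is the exceptional indecomposable contribution. I would factor it as an annular $W^\ast_{0,2}(x_1,x)$ piece times $V_{g,n}(x;\ldots)$, which defines $V_{g,n}$. The needed claim is that such configurations are exactly an annulus glued to an elementary stuffed element.

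Third, for $(0,2)$, the only topology change is joining the two boundaries. The core is then a disc with a marked point hit by boundary $2$, which gives the factor $W_{0,1}^\ast(x)/(x_2-x)^2$. Here the double pole encodes the choice of rooting on the degree-$k$ boundary identified with boundary $2$, and I would verify it by expanding $\sum_k k x^{k-1}x_2^{-k-1}$.

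Fourth, for $(1,1)$, the genus-reducing step produces a cylinder. The self-glued annuli must be excluded to avoid double counting with the torus-with-boundary-glued-to-itself. This is the reason for the tilde.

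The main obstacle is making the stopping rule and the $\mathcal{M}$ operator mesh: one must show that each configuration arising after the first topology-changing step is counted exactly once. In particular, cases in which the remaining boundary is fully glued to a face must be neither lost nor double-counted with the skin, and the $V$ term must be exactly what is left for stuffed faces. I would first check the bookkeeping on small cases, $(0,2)$ and $(1,1)$ with few edges. I would then prove the general case by induction on the number of edges, inserting a bijection per case and verifying the weights term by term.
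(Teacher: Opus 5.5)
Your overall architecture (skin until the next Tutte step would change topology, then analyse that single step on the core) is the paper's. However, the case analysis that actually produces the right-hand side is misattributed in one place and missing in another.

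The $\mathcal{R}$-part of $\mathcal{M}=\mathrm{Id}+\mathcal{R}$ does not come from cores in which $r_1$ bounds $\partial_1$ on both sides. Those (D${}^{>}$) cores give exactly the plain terms $W^{\ast}_{g-1,n+1}(x,x,\bm{x}_I)+\sum^{\nodisc}W^{\ast}W^{\ast}$, and no gluing convention adds anything to them. The $\mathcal{R}$-terms come from the other topology-changing outcomes, (R) and (I${}^{>}$), via a construction your proposal lacks.

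Suppose $r_1$ separates $\partial_1$ (degree $\ell$) from a boundary component $\delta_1$, of degree $k_1$, of an internal face $\mathfrak{f}$. You cut the core along $\delta_1$. This yields a thinned map with two new boundaries: $\partial'$ of degree $\ell+k_1-2$, and $\partial''$ of degree $k_1$, adjacent only to $\mathfrak{f}$, i.e.\ special. The core is recovered by gluing in a theta map.

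With the convention $x^{k_1-1}$ for a special boundary, the weight is $x^{-(\ell+k_1-1)}x^{k_1-1}=x^{-\ell}$. This is the same as $x^{-k'-1}x^{-k''-1}$ with $k'+k''=\ell-2$ in case (D${}^{>}$), which is exactly why the two merge into $\mathcal{M}$.

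Nor is (R) ``absorbed differently''. Regarding $\partial_i$ as a degenerate annular face, the same cut leaves a trivial cylinder. So (R) is the $\frac{1}{(x_i-x)^2}$ part of $\mathcal{R}_xW^{\ast}_{0,2}(x,x_i)$ in the term $(h'',J'')=(0,\{i\})$ of the sum. This is already needed for ordinary maps, which have no non-disc faces, so attributing $\mathcal{M}$ to non-disc internal faces cannot be the whole story.

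The blob term is similarly unsupported. The correct criterion is: after cutting along $\delta_1$, the component $\check{\mathfrak{m}}'$ carrying $\partial'$ is a disc, and the component $\check{\mathfrak{m}}''$ carrying $\partial''$ still has topology $(g,n)$. Otherwise the core is in the special case above. Your description forgets $\check{\mathfrak{m}}'$, and you merely assert a factorisation into $W^{\ast}_{0,2}(x_1,x)$ times $V_{g,n}$.

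In your set-up the $x$-pairing is between the skin, counted by $S^{\ast}$, and the core. Getting $W^{\ast}_{0,2}$ instead requires a re-bracketing: glue the skin, the theta map and the disc $\check{\mathfrak{m}}'$ into one annulus $\mathfrak{o}$. The key point is that this is exactly the inverse of skinning an annulus, whose core is always of type (R), i.e.\ a theta map filled by a disc. Hence $\mathfrak{o}$ ranges over \emph{all} stuffed annuli, while $\check{\mathfrak{m}}''$ with $\partial''$ unrooted ranges over $\partial_1$-indecomposable maps.

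Three smaller corrections:
\begin{enumerate}
\item For stuffed maps, (I${}^{=}$) also includes planar faces with $m\geq 2$ boundaries whose removal leaves discs on all but one side. The $(g,n)$ component may then sit on an inner, unrooted boundary, and skinning continues after choosing a root there. These steps must go into the skin, or your $S^{\ast}$ is not the kernel of the statement.
\item The statement's $S^{\ast}$ carries $x^{\ell-1}$, with the core weighted by $x^{-\ell}$, not $x^{\ell}$ and $x^{-\ell-1}$.
\item In the $(1,1)$ case the tilde removes the trivial cylinder because it would encode case (R), which is impossible for $n=1$; its term $\frac{1}{(x-y)^2}$ is meaningless at $y=x$. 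It is not about double counting.
\end{enumerate}
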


A more explicit expression for $\mathcal{M}_{x_i}W_{g,n}^{\ast}$ and $V_{g,n}$ in terms of lower-complexity generating series is given later in \Cref{lem:R1W:stable,lem:V:explicit}. In particular, \Cref{intro:excision} is genuinely recursive in the topological complexity $2g-2+n$.

Our second main result identifies the generating series of skin maps in terms of the disc and annuli data, in a form that makes its role as recursion kernel manifest.

\begin{theoremB}[Skin enumeration]
\label{intro:skin:enum}
	There is an explicit choice of antiderivative for which
	\begin{equation}
		S^{\ast}(x_1,x)
		=
		\frac{
			{\displaystyle\int^{x}}
			\mathcal{D}_{y} W_{0,2}^{\ast}(x_1,y)\dd y
		}{
			-\mathcal{D}_{x}W_{0,1}^{\ast}(x)
		},
	\end{equation}
	where $\mathcal{D}_{x_i}W_{g,n}^{\ast} \coloneqq \frac{1}{2}(W_{g,n}^{\ast} + \mathcal{M}_{x_i}W_{g,n}^{\ast} )$.
\end{theoremB}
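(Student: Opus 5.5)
We will prove \Cref{intro:skin:enum} by decomposing skin maps combinatorially and comparing the result with the annulus case of the excision formula in \Cref{intro:excision}.

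\textbf{Skin maps as chains.} A skin map is the annular remainder produced by iterating Tutte's root-edge removal on the first boundary while the topology stays planar-annular. Each step removes the root edge and does one of three things: it shortens the boundary, it splits off a disc, or it glues to a face that is fully absorbed. So a skin map is a sequence of elementary steps, each of which is either
\begin{itemize}[label=$\cdot$]
	\item a bare edge removal, contributing a factor $x^{-1}$-type shift, or
	\item the attachment of a disc, weighted by $W_{0,1}^{\ast}$ or by its fully glued variant $\mathcal{M}W_{0,1}^{\ast}$.
\end{itemize}
The symmetrisation $\mathcal{D}=\frac12(W+\mathcal{M}W)$ should come from the fact that the root edge may be removed from either side. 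This step counts each configuration twice, with one orientation absorbed into the $\mathcal{M}$-term.

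\textbf{Loop equation for $S^{\ast}$.} Concretely, we will derive a first-order linear relation of the form
\begin{equation}
	-\mathcal{D}_x W_{0,1}^{\ast}(x)\, S^{\ast}(x_1,x) = \Phi(x_1,x),
\end{equation}
where $\Phi$ is the contribution of the last step, the one just before topology changes. To do this, we apply one further Tutte step to a skin map with its outer variable $x$ and read off the resulting identity. The factor $-\mathcal{D}_xW_{0,1}^{\ast}$ plays the role of $2x - 2W_{0,1}$ in Tutte's quadratic disc equation: it is the derivative of that equation with respect to the disc, which is exactly what propagates a marked boundary through the recursion.

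\textbf{Identifying $\Phi$.} We will show that $\frac{\partial}{\partial x}\Phi(x_1,x) = \mathcal{D}_x W_{0,2}^{\ast}(x_1,x)$. The argument compares with \Cref{intro:excision} for $(g,n)=(0,2)$: there, the core of an annulus is a disc glued to the inner boundary, and this produces the kernel $(x_2-x)^{-2}$, which is a derivative. Equivalently, a skin ending at a marked position corresponds to an annulus with a marked corner on its second boundary. Marking a corner acts on generating series as $-\partial_x$, up to the mod-$\mathcal{M}$ symmetrisation. Integrating gives $\Phi=\int^x\mathcal{D}_yW_{0,2}^{\ast}(x_1,y)\,\dd y$, and the integration constant is fixed by the empty-skin term $x_1$-dependence, i.e.\ by the leading coefficient in $1/x$.

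\textbf{Main obstacle.} The hard step is this identification of $\Phi$ with an antiderivative, namely getting the factors of $\frac12$ exactly right. Annuli whose two boundaries are fully glued to each other must be handled with care, since they are exactly what distinguishes $\widetilde{W}_{0,2}$ from $W_{0,2}$. We plan to settle this by a direct bijection between skin maps with a distinguished step and annuli rooted on both boundaries with a marked corner, checking the $\mathcal{M}$ contributions case by case.
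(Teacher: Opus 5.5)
There is a genuine gap, located where you put the main obstacle, and the route you sketch to close it misses an essential input. Decomposing a skin according to its last elementary layer (the identity $\mathcal{S}=\mathrm{Id}+\mathcal{S}\circ\mathcal{E}$, which is what produces a multiplier in $x$) does not give a closed relation $-\mathcal{D}_xW^{\ast}_{0,1}(x)\,S^{\ast}(x_1,x)=\Phi(x_1,x)$. It only determines the coefficients of $x^{\ell}$, $\ell\geq 1$, in $S^{\ast}(x_1,x)D^{\ast}(x)$, where $D^{\ast}=-2\mathcal{D}_xW^{\ast}_{0,1}$; see \eqref{rec:skin:stuffed:gen}. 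Even this positive part is not closed for stuffed maps. It contains the term $\langle S^{\ast}(x_1,\xi)\,\partial_\xi O(x,\xi)\,W^{\ast}_{0,1}(\xi)\rangle_\xi$, which comes from planar faces whose outer component is a disc while the skin continues through an unrooted inner boundary. This term must be converted into $\langle W^{\ast}_{0,2}(x_1,\xi)\,O(\xi,x)\rangle_\xi$ using \Cref{lem:triple:prod} and the annulus skinning relation $W^{\ast}_{0,2}(x_1,\xi)=\langle S^{\ast}(x_1,\eta)\,W^{\ast}_{0,1}(\eta)\,(\xi-\eta)^{-2}\rangle_\eta$. The negative powers of $x$ need a separate input. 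The annulus skinning relation combined with $\bigl\langle F(\xi)\frac{G(\xi)-G(x)}{\xi-x}\bigr\rangle_\xi=-[F(x)G(x)]_{-,x}$ gives $[S^{\ast}D^{\ast}]_{-,x}=2\int_{\infty}^{x}W^{\ast}_{0,2}(x_1,y)\,\dd y$. This is the precise content of your ``annulus with a marked corner'' heuristic (the antiderivative unroots the second boundary), but it only controls negative powers. So $\Phi$ is not the contribution of a single last step, and the one bijection you plan between skins with a distinguished step and doubly rooted annuli is both unconstructed and not the right target.

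The decisive missing ingredient is the coefficient of $x^{0}$ in $S^{\ast}D^{\ast}$, i.e.\ the integration constant. The trivial skin does not fix it, since its contribution $\frac{1}{x_1-x}-\frac{1}{x_1}$ lies entirely in positive powers of $x$. The coefficient of $x^{-1}$ does not fix it either, since that coefficient belongs to the negative part (annuli whose second boundary has degree $1$). Neither the skin recursion nor the annulus relation sees $[S^{\ast}D^{\ast}]_{0,x}$. The paper obtains it from a third, independent bijection: the skinning of \emph{pointed discs}, where the marked vertex plays the role of the boundary at which the process stops. This gives \eqref{skinning:discs:stuf} and hence the constant $-\partial_q W^{\ast}_{0,1}(x_1)$ in the antiderivative. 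Without it you could at best characterise the constant implicitly, by requiring that the quotient by $D^{\ast}$ have no negative powers of $x$, which is not the explicit choice the theorem asserts. Finally, $\mathcal{D}$ does not arise from symmetrising over the two sides of the root edge. Once $S^{\ast}D^{\ast}$ is assembled from its three parts, \Cref{lem:R1W:unstable} shows that $D^{\ast}=-2\mathcal{D}_xW^{\ast}_{0,1}$. It also shows that the $x$-derivative of the assembled numerator is $2\mathcal{D}_xW^{\ast}_{0,2}(x_1,x)$, because the regular parts $(x_1-x)^{-2}$ and $\mathcal{O}_xW^{\ast}_{0,2}(x_1,x)$ are the derivatives of $\frac{1}{x_1-x}$ and $\langle O(x,\xi)W^{\ast}_{0,2}(x_1,\xi)\rangle_\xi$. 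The factor $2$ in front of $W^{\ast}_{0,1}$ just records the two root orientations in case (D${}^{=}$), and the $\frac12$ in $\mathcal{D}$ is an overall normalisation.
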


Simpler bijective functional relations for maps and maps with tubes are proved in \Cref{sec:ord:maps,sec:maps:tubes}, while stuffed maps are treated in \Cref{sec:stuffed:maps}. For instance, in the case of maps, \Cref{intro:skin:enum} takes the concrete form (see~\Cref{skin:enum:maps})
\begin{equation}
	S(x_1,x)
	=
	\frac{
		{\displaystyle\int^{x}_{\infty}}
		\left(
			2W_{0,2}(x_1,y)
			+
			\frac{1}{(x_1-y)^2}
		\right)\dd y
		-
		\partial_q W_{0,1}(x_1)
	}{
		x - \sum_{k \ge 1} t_k x^{k-1} - 2W_{0,1}(x)
	},
\end{equation}
where $q$ is the vertex-weight and $t_k$ is the weight for internal faces of degree $k$.

\subsection{More on the strategy}
Let us explain the strategy in more detail in the case of maps. In his seminal work \cite{Tut63}, Tutte described a bijection giving a recursive construction of maps with disc topology, i.e. of topology $(g,n)=(0,1)$, by erasing the root-edge and examining the resulting map. We apply the same procedure to a map $\mathfrak{m}$ of general topology $(g,n)$, and iterate it until the resulting map no longer contains a component of topology $(g,n)$. This yields a bijection
\begin{equation}
	\mathfrak{m} \longmapsto (\mathfrak{s},\hat{\mathfrak{m}}),
\end{equation}
where the skin $\mathfrak{s}$ is an annular map formed by the pieces removed during the iteration and $\hat{\mathfrak{m}}$ is a core map on which one further application of Tutte's procedure would change topology, see~\Cref{fig:pants:decomp}. This skinning bijection already yields a first recursive formula. Moving a superficial piece of the core map to the skin map then reveals an embedded pair of pants and gives a bijection
\begin{equation}
	\mathfrak{m} \longmapsto (\mathfrak{p},\check{\mathfrak{m}}),
\end{equation}
where $\mathfrak{p}$ is the excised pair of pants and $\check{\mathfrak{m}}$ is the remaining map of simpler topology, see~\Cref{fig:pants:decomp} again. This leads to the excision formula of \Cref{intro:excision}.

\begin{figure}
	\centering

	\caption{Decomposition of a map $\mathfrak{m}$ (top) into its skin $\mathfrak{s}$ and the core map $\hat{\mathfrak{m}}$ (bottom left) and a pair of pants $\mathfrak{p}$ and a map of simpler topology $\check{\mathfrak{m}}$ (bottom right).}
	\label{fig:pants:decomp}
\end{figure}

It remains to enumerate the possible skins, or equivalently the possible excised pairs of pants. For this, one more ingredient is needed. It is provided by pointed discs: although ordinary discs cannot be skinned, pointed discs can be treated by the same procedure. Their enumeration gives the additional relation needed to fix the integration constant in the formula for skin maps in \Cref{intro:skin:enum}.

For maps with tubes and stuffed maps, the guiding ideas are the same, but Tutte's procedure gives rise to additional situations. In particular, there are stuffed maps in which excising a pair of pants starting from the first boundary does not reduce the complexity $2g - 2 + n$, due to internal faces with negative Euler characteristic. These indecomposable maps are exactly those counted by the last term in \Cref{intro:excision}, namely the term involving $V_{g,n}$. The factor $W_{0,2}^{\ast}$ accounts for an annulus propagating the first boundary to the first internal face with negative Euler characteristic that is met while skinning.

\subsection{Connection with topological recursion and Mirzakhani's recursion}
In most applications of Eynard--Orantin theory to enumerative geometry, the local involution and the specific form of the recursion kernel arise from analytic manipulations, and the individual terms of the recursion do not themselves carry a direct enumerative meaning. The structure of our formulae shows that the enumeration of maps, maps with tubes, and stuffed maps belongs instead to the class of ``bijectively meaningful'' topological recursions.

More precisely, our formulae match the universal pattern of the topological recursion \cite{EO07}, as well as its blobbed extension \cite{Bor14,BS17}. Up to simple normalisations, the correspondence is as follows:
\begin{itemize}
	\item \emph{Formal monodromy and formal discontinuity.} The operation $\mathcal{M}_{x}$ plays the role of the local involution, sending a point to its counterpart on the other sheet of the spectral curve. The operation $\mathcal{D}_{x}$ corresponds to the odd part with respect to this involution, that is, the discontinuity across the cut given by the difference between the two values on the two sheets.
	
	\item \emph{Recursion kernel.} The generating series $S^*$ of skin maps plays the role of the recursion kernel, being expressed as the quotient of an integrated odd part of $W_{0,2}^{\ast}$ by the odd part of $W_{0,1}^{\ast}$.
	
	\item \emph{Blob.} In the stuffed case, the term involving $V_{g,n}$ is the analogue of the blob term.
\end{itemize}
The operations $\mathcal{M}_{x}$ and $\mathcal{D}_{x}$ are not operators stricto sensu at the level of generating series, since they combine in a single series the enumeration of two different objects. They become genuine operators after analytic continuation.

When the face and vertex weights are specialised to real values for which the relevant generating series enjoy suitable analytic properties, we show in \Cref{sec:analytic} that our excision formula implies the usual (blobbed) topological recursion on the associated spectral curve, thereby recovering the results of \cite{Eyn04,BE11,Bor14,BEO15,Eyn16}. The combinatorial formulae of Theorems~\labelcref{intro:excision} and~\labelcref{intro:skin:enum} are stronger, in the sense that they already hold formally, without any assumption about analyticity.

We conclude by remarking that there are a handful of cases in which topological recursion is known to admit a direct geometric meaning. A notable example is Mirzakhani's recursion for Weil--Petersson volumes and its non-orientable analogue \cite{Mir07,GGO}, obtained by integrating Mirzakhani--Mc\-Shane identities. Other examples of the same flavour include volumes and lattice-point counts on moduli spaces of metric ribbon graphs and their non-orientable analogues \cite{And+26,CGGO}, as well as recursions governing statistics of hyperbolic or combinatorial lengths of multicurves \cite{ABO,And+23,And+26}. Our situation is analogous, and in fact the parallel is stronger than a mere formal resemblance. In Mirzakhani's work, the recursion is obtained by excising pairs of pants determined by geodesic arcs emanating from the first boundary. In our setting, the re-rooting process underlying skinning determines a path emanating from the first boundary, and the homotopy class of the excised pair of pants is encoded by this path. As we explain in \Cref{sec:pop}, this makes our decomposition a combinatorial analogue of the Mirzakhani--McShane identities.

\subsection{Organisation of the paper}
For pedagogical reasons, we proceed by increasing complexity. We first treat maps in \Cref{sec:ord:maps}, then maps with tubes in \Cref{sec:maps:tubes}, and finally stuffed maps in full generality in \Cref{sec:stuffed:maps}. In each case, we first develop the skinning procedure. This removes the skin adjacent to the first boundary and leaves a core map, but it stops just before the change of topology. In \Cref{sec:pop}, we apply Tutte's procedure once more to the core. This additional step reveals the excised pair of pants and, in the stuffed case, the non-excisable configurations that produce the blob term.

The progressive treatment of maps, maps with tubes, and stuffed maps makes visible the universal structure underlying the excision formula and skin enumeration of Theorems~\labelcref{intro:excision} and~\labelcref{intro:skin:enum}. Finally, in \Cref{sec:analytic}, under suitable analytic assumptions, we compare our formulae with the blobbed topological recursion and recover the usual residue expressions on the associated spectral curve.

\subsection{Conventions for formal series}
\label{ssec:notation}
If $F(x)=\sum_{\ell\in\mathbb{Z}} F_\ell x^\ell$ is a formal bilateral series, we set
\begin{equation}
	[F(x)]_{\pm,x} \coloneqq \sum_{\pm \ell > 0} F_\ell x^\ell,
	\qquad
	[F(x)]_{0,x} \coloneqq F_0,
	\qquad
	\langle F(x) \rangle_x \coloneqq F_{-1}
\end{equation}
for the positive/negative part, the constant term, and the formal residue, respectively. More generally, for a formal series $F(x_1,\ldots,x_n)$ and a decomposition $I \sqcup J = \set{1,\ldots,n}$, we write $\langle F(x_1,\ldots,x_n)\rangle_{\bm{x}_I}$ for the series in the variables $\bm{x}_J$ obtained by extracting the coefficient of $x_i^{-1}$ for each $i \in I$.

Finally, we adopt the formal expansions
\begin{equation}
\label{x1:x2:expns}
	\frac{1}{x_1-x_2}
	\coloneqq
	\sum_{k\geq 0}\frac{x_2^k}{x_1^{k+1}},
	\qquad\qquad
	\frac{1}{(x_1-x_2)^2}
	\coloneqq
	\sum_{k\geq 1} k \frac{x_2^{k-1}}{x_1^{k+1}}.
\end{equation}
The order of the variables matters: by convention, the leftmost variable in the denominator is expanded first near $\infty$.

\subsection*{Acknowledgments}
This work was initiated when G.B. and A.G. were affiliated with the Max-Planck-Institut für Mathematik, Bonn, and benefited at this time from the support of the Max-Planck-Ge\-sell\-schaft. A.G. was supported by an ETH Fellowship (22-2 FEL-003) and a Hermann-Weyl-Instructorship from the Forschungsinstitut für Mathematik at ETH Zürich. L.M. was supported by the Deutsche Forschungsgemeinschaft (DFG, German Research Foundation) -- Projektnummer 417533893/GRK2575 ``Rethinking Quantum Field Theory''.

\section{Bijective functional relations for maps}
\label{sec:ord:maps}

This section is devoted to the case of maps, which already contains the main combinatorial ideas in their simplest form. We revisit Tutte's decomposition, define the skinning procedure, and derive the resulting bijective functional relations. These will serve as the model for the more general constructions developed in the next sections.

\subsection{Definition and generating series}
We begin by recalling the definition of maps and fixing the notation used throughout this section. A \emph{map} is an isotopy class of proper embeddings of a non-empty finite graph into a compact connected oriented surface such that the complement of the embedded graph is a disjoint union of topological discs. We consider maps with $n \geq 1$ labelled distinguished faces, called \emph{boundaries} or \emph{external faces}, denoted $\partial_1,\ldots,\partial_n$; the remaining faces are called \emph{internal faces}. Each \emph{half-edge} belongs to a unique face, and we orient it so that this face lies to its left with respect to the orientation of the surface. If $\epsilon$ is a half-edge, we denote by $\overline{\epsilon}$ the half-edge with the opposite orientation; an \emph{edge} is an unordered pair $e=\set{\epsilon,\overline{\epsilon}}$. We denote by $\epsilon^+$ and $\epsilon^-$ the half-edges immediately after and before $\epsilon$, respectively, along the orientation of the face to which it belongs. Unless specified otherwise, we assume that the $i$-th boundary carries a distinguished half-edge $\rho_i$, called the \emph{$i$-th root}. The edge supporting $\rho_i$ is denoted by $r_i$.

The topology of a map $\mathfrak{m}$ is encoded by the pair $(g,n)$, where $g$ is the genus and $n$ is the number of boundaries. Its \emph{Euler characteristic} is
\begin{equation}
\label{Euler}
	\chi(\mathfrak{m})
	=
	\card{\set{ \text{vertices} }} - \card{\set{ \text{edges} }} + \card{\set{ \text{internal faces}}}
	=
	2 - 2g - n.
\end{equation}
In particular, discs, annuli, and pairs of pants have respective topologies $(0,1)$, $(0,2)$, and $(0,3)$.
 
The degree of a face is the number of half-edges it contains. For fixed topology $(g,n)$ and boundary degrees $\ell_1,\ldots,\ell_n$, let $\MM_{g;\ell_1,\ldots,\ell_n}$ denote the set of maps whose boundaries have respective degrees $\ell_1,\ldots,\ell_n$. All boundary faces have positive degree, except for the map consisting of a single vertex on a sphere, which we regard as having one boundary of degree $0$ and no internal face; we call it the \emph{trivial map}. Thus $\MM_{0;0}$ is a singleton. In all other cases the sets $\MM_{g;\ell_1,\ldots,\ell_n}$ are declared to be empty. For a map $\mathfrak{m}$ we set
\begin{equation}
	v(\mathfrak{m}) \coloneqq \card{\set{ \text{vertices} }},
	\qquad
	N_k(\mathfrak{m}) \coloneqq \card{\set{ \text{internal faces of degree } k }}.
\end{equation}
The generating series of maps of topology $(g,n)$ with fixed boundary degrees is then defined by
\begin{equation}
	W_{g;\ell_1,\ldots,\ell_n}
	\coloneqq
	\sum_{\mathfrak{m} \in \MM_{g;\ell_1,\ldots,\ell_n}}
	q^{v(\mathfrak{m})}
	\prod_{k \geq 1} t_k^{N_k(\mathfrak{m})},
\end{equation}
where $q$ and $t_1,t_2,\ldots$ play the role of Boltzmann weights. The corresponding generating series with free boundary degrees is defined by
\begin{equation}
\label{Wgn:ord:maps}
	W_{g,n}(x_1,\ldots,x_n)
	\coloneqq
	\delta_{g,0}\delta_{n,1} \frac{q}{x_1}
	+
	\sum_{\ell_1,\ldots,\ell_n \geq 1}
	\frac{W_{g;\ell_1,\ldots,\ell_n}}{x_1^{\ell_1 + 1} \cdots x_n^{\ell_n + 1}}.
\end{equation}
The latter is a formal power series in $q,t_1,t_2$ whose coefficients are polynomials in $t_3,t_4,\ldots$ (omitted from the notation) and $x_1^{-1},\ldots,x_n^{-1}$:
\begin{equation}
	W_{g,n} \in \mathbb{Z}\bigl[x_1^{-1},\ldots,x_n^{-1},t_3,t_4,\ldots\bigr]\bbraket{q,t_1,t_2}.
\end{equation}
This polynomiality follows from the fact that, for fixed topology, boundary degrees, number of vertices, and numbers of $1$- and $2$-gons, there are only finitely many maps; this follows from a simple edge-counting argument and the Euler relation \eqref{Euler}. We also stress that our generating series are not weighted by automorphism factors: the presence of labelled rooted boundaries removes all automorphisms. With the notation of \Cref{ssec:notation}, we have
\begin{equation}
	W_{g;\ell_1,\ldots,\ell_n}
	=
	\left\langle W_{g,n}(x_1,\ldots,x_n) \prod_{i=1}^n x_i^{\ell_i} \right\rangle_{x_1,\ldots,x_n}.
\end{equation}
We will also need the \emph{disc potential}, which collects the weights $(t_k)_{k \ge 1}$ of the internal faces:
\begin{equation}
	T(x) \coloneqq \sum_{k \geq 1} \frac{t_k}{k} x^k.
\end{equation}

\subsection{Tutte's procedure}
\label{ssec:Tutte}
In \cite{Tut63}, Tutte described a bijection yielding a recursive construction of maps with disc topology. Its extension to arbitrary topology appears in \cite{WL72} and \cite{Eyn11,Eyn16}, where the corresponding relation for generating series is identified with the Dyson--Schwinger equation of formal Hermitian matrix models. Although this bijection and the resulting functional relations are well known, we recall them here in order to fix notations and prepare for the iterative construction that follows. Since our main interest is in higher topologies, we assume $(g,n)\neq(0,1)$.

In the following, we set $I=\set{2,\ldots,n}$, and for every subset $J\subseteq I$ we write $\bm{\ell}_J=(\ell_j)_{j\in J}$. For $i\in I$ we also set $I_i=\set{2,\ldots,\widehat{i},\ldots,n}$.

We now recall Tutte's procedure. Given a map $\mathfrak{m}$ of topology $(g,n)$, it consists in erasing the edge\footnote{
	After erasing the edge, we must reroot the new boundary. Provided that the choice is Markovian, the rerooting rule is somewhat arbitrary: it affects the resulting bijection, but not its properties. We choose to ``turn left'' whenever possible.
} $r_1$ supporting the root $\rho_1$. There are three possible geometric situations:
\begin{itemize}
	\item[(I)] $r_1$ bounds $\partial_1$ and an internal face;

	\item[(R)] $r_1$ bounds $\partial_1$ and another boundary;

	\item[(D)] $r_1$ bounds $\partial_1$ on both sides.
\end{itemize}
We call these cases internal-face removal, boundary reduction, and degeneration, respectively. In view of the skinning construction, we also distinguish them according to whether they preserve or change topology: case (I) is topology-preserving, case (R) is topology-changing, while case (D) splits into a topology-preserving case (D${}^{=}$), where a disc is separated off, and a topology-changing case (D${}^{>}$), where no disc component is produced. We now describe these cases in detail.

(I) If $r_1$ bounds an internal face of degree $k$, erasing it is equivalent to removing this internal face, and yields a map of genus $g$ with $n$ boundaries of respective degrees $\ell_1+k-2,\bm{\ell}_I$, see~\Cref{fig:TutteI}. We root the new first boundary at $\overline{\rho_1}{}^-$ if $k \geq 2$, or $\rho_1^+$ if $k = 1$ and $\ell_1 \geq 2$. If $k = \ell_1 = 1$, the resulting map is a trivial disc and does not need a root.

\begin{figure}[H]
	\centering

	\caption{Case (I): the first root separates $\partial_1$ from an internal face.}
	\label{fig:TutteI}
\end{figure}

(R) If $r_1$ bounds $\partial_i$ for some $i\in\set{2,\ldots,n}$, erasing it merges the two boundary faces and yields a map of genus $g$ with $n-1$ boundaries of degrees $\ell_1+\ell_i-2,\bm{\ell}_{I_i}$, see~\Cref{fig:TutteR}. We root the first new boundary at $\overline{\rho_1}^-$ if $\ell_i \geq 2$. The case $\ell_i = 1$ can only occur if $(g,n) = (0,2)$ and $\ell_1 = 1$. In that case, the resulting map is trivial and does not need a root. We also remember the number $j_i\in\set{0,\ldots,\ell_i-1}$ of half-edges between $\overline{\rho_1}$ and $\rho_i$ around $\partial_i$, following its orientation.
\begin{figure}[H]
	\centering

	\caption{Case (R): the first root bounds $\partial_1$ and $\partial_i$.}
	\label{fig:TutteR}
\end{figure}

(D) If $r_1$ bounds $\partial_1$ on both sides, erasing it splits the first boundary into two new boundaries of respective degree $k'$ and $k''$ such that $k' + k'' = \ell_1 - 2$. We order them by saying that the one containing the endpoint of $\rho_1$ is first, and that the one containing the starting point of $\rho_1$ is second. We root these new boundaries at the half-edges $\overline{\rho_1}-$ and $\rho_1^-$ unless they coincide with $\overline{\rho_1}$. If $\overline{\rho_1}^-$ or $\rho_1^-$ coincides with $\overline{\rho_1}$, we regard the corresponding component of the resulting map as a trivial disc, and it does not need a root. To go further, we distinguish two cases, according to whether the resulting map has a disc component or not.

\begin{enumerate}
	\item[(D${}^{>}$)]
	No disc component is produced. This forces $k',k'' \geq 1$. There are two possibilities.
	
	\noindent
	\emph{Connected case.} If the result remains connected, then a handle connects the two sides, see~\Cref{fig:TutteDconn}. Thus, we obtain a map of genus $g-1$ with $n+1$ boundaries of respective degrees $k',k'',\bm{\ell}_I$.
	\begin{figure}[H]
		\centering

		\caption{Case (D${}^{>}$), connected sub-case: the first root bounds $\partial_1$ on both sides, and removing it leaves the map connected.}
		\label{fig:TutteDconn}
	\end{figure}

	\noindent
	\emph{Disconnected case.} If the result is disconnected, then it consists of two maps, neither of which is a disc, see~\Cref{fig:TutteDdisc}. The first component contains $\overline{\rho_1}^-$, it has genus $h'$, and its remaining boundaries are $\bm{\partial}_{J'}$ with respective degrees $k',\bm{\ell}_{J'}$. The second component contains $\rho_1^-$, it has genus $h''$, and its remaining boundaries are $\bm{\partial}_{J''}$ with respective degrees $k'',\bm{\ell}_{J''}$. We have $h'+h''=g$ and $J'\sqcup J''=I$. Moreover, $(h',J')$ and $(h'',J'')$ are both different from $(0,\emptyset)$.	
 	
	\begin{figure}[H]
		\centering

		\caption{Case (D${}^{>}$), disconnected sub-case: $r_1$ bounds $\partial_1$ on both sides, and removing it disconnects the map into two maps, neither of which is a disc.}
		\label{fig:TutteDdisc}
	\end{figure}

	\item[(D${}^{=}$)]
	Similar to the disconnected case above, but one of the two resulting maps is a disc: $(h',J')=(0,\emptyset)$ or $(h'',J'')=(0,\emptyset)$, see~\Cref{fig:TutteD01}. The degree $k'$ or $k''$ is allowed to be $0$, meaning that the corresponding component is a trivial disc.
	\begin{figure}[H]
		\centering
		%
		\caption{Case (D${}^{=}$): $r_1$ is bounded by $\partial_1$ on both sides, and removing it disconnects the map into two maps, one of which is a disc.}
		\label{fig:TutteD01}
	\end{figure}
\end{enumerate}

The resulting maps, together with the auxiliary information given by the integer $j_i$ in case (R), uniquely determine the original map. We have therefore described a bijection
\begin{equation}
	\Tutte
	\colon
	\MM_{g;\ell_1,\bm{\ell}_I}
	\overset{\simeq}{\longrightarrow}
	\mathbb{I}_{g;\ell_1,\bm{\ell}_I}
	\sqcup
	\mathbb{R}_{g;\ell_1,\bm{\ell}_I}
	\sqcup
	\mathbb{D}_{g;\ell_1,\bm{\ell}_I}^{>}
	\sqcup
	\mathbb{D}^{=}_{g;\ell_1,\bm{\ell}_I}.
\end{equation}
The sets appearing on the right-hand side are given by
\begin{equation}
\begin{split}
	\mathbb{I}_{g;\ell_1,\bm{\ell}_I}
	& \coloneqq
		\bigsqcup_{k \geq 1} \MM_{g;\ell_1+k-2,\bm{\ell}_I}, \\
	\mathbb{R}_{g;\ell_1,\bm{\ell}_I}
	& \coloneqq
		\bigsqcup_{i=2}^n \MM_{g;\ell_1+\ell_i-2,\bm{\ell}_{I_i}} \times \set{0,\ldots,\ell_i-1}, \\
	\mathbb{D}_{g;\ell_1,\bm{\ell}_I}^{>}
	& \coloneqq
	\bigsqcup_{\substack{k',k'' \geq 1 \\ k'+k''=\ell_1-2}}
		\Biggl( \MM_{g-1;k',k'',\bm{\ell}_I}
			\sqcup
			\bigsqcup_{\substack{h'+h''=g, \\ J'\sqcup J'' = I}}^{\nodisc}
				\MM_{h';k',\bm{\ell}_{J'}} \times \MM_{h'';k'',\bm{\ell}_{J''}}
		\Biggr), \\
	\mathbb{D}_{g;\ell_1,\bm{\ell}_I}^{=}
	& \coloneqq
	\bigsqcup_{k=0}^{\ell_1-3}
	\bigl(
		\MM_{0;k} \times \MM_{g;\ell_1-k-2,\bm{\ell}_I}
	\bigr)
	\sqcup
	\bigl(
		\MM_{g;\ell_1-k-2,\bm{\ell}_I} \times \MM_{0;k}
	\bigr).
\end{split}
\end{equation}
Here ``no $(0,1)$'' means that we exclude from the disjoint union the cases in which $(h',1+\card{J'})$ or $(h'',1+\card{J''})$ is equal to $(0,1)$. We will use a similar notation for sums. If $(g,n)=(0,2)$, the case (D${}^{>}$) is absent. Translating this bijection into generating series yields the following relations.

\begin{lem}
\label{Tutte:lemma}
	Let $g\geq 0$, $n\geq 1$, and $\ell_1,\ldots,\ell_n\geq 1$, and assume $(g,n)\neq(0,1)$. Then
	\begin{equation}
	\begin{split}
		W_{g;\ell_1,\bm{\ell}_I}
		&=
		\sum_{k \geq 1} t_k \,W_{g;\ell_1+k-2,\bm{\ell}_I}
		+
		\sum_{i=2}^n \ell_i \, W_{g;\ell_1+\ell_i-2,\bm{\ell}_{I_i}} \\
		&\quad
		+
		\sum_{\substack{k',k'' \geq 1 \\ k'+k''=\ell_1-2}}
		\Biggl(
		W_{g-1;k',k'',\bm{\ell}_I}
		+
		\sum_{\substack{h'+h''=g \\ J'\sqcup J'' = I}}^{\nodisc}
		W_{h';k',\bm{\ell}_{J'}} W_{h'';k'',\bm{\ell}_{J''}}
		+ W_{0;k'}W_{g;k'',\bm{\ell}_I}
		+ W_{g;k',\bm{\ell}_I}W_{0;k''}
		\Biggr).
	\end{split}
	\end{equation}
	Equivalently, in terms of generating series with free boundary degrees, we have
	\begin{equation}
	\label{Tutte:rec}
	\begin{split}
		\mathcal{L}_{x_1} W_{g,n}(x_1,\bm{x}_I)
		&=
		\sum_{i=2}^n
		\partial_{x_i}\left(
		\frac{W_{g,n-1}(x_1,\bm{x}_{I_i}) - W_{g,n-1}(x_i,\bm{x}_{I_i})}{x_1-x_i}
		\right) \\
		&\qquad
		+
		W_{g-1,n+1}(x_1,x_1,\bm{x}_I)
		+
		\sum_{\substack{h'+h''=g \\ J'\sqcup J'' = I}}^{\nodisc}
		W_{h',1+\card{J'}}(x_1,\bm{x}_{J'})W_{h'',1+\card{J''}}(x_1,\bm{x}_{J''}).
	\end{split}
	\end{equation}
	Here $\frac{1}{x_1-x_i}$ is expanded as in~\eqref{x1:x2:expns}, and the operator $\mathcal{L}_{x}$, acting on a formal Laurent series $f(x)$, is defined by $\mathcal{L}_{x}f(x) \coloneqq \bigl[ (x-T'(x))f(x) \bigr]_{-,x} - 2\,W_{0,1}(x)f(x)$.
\end{lem}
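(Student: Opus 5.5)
The first identity follows by taking the generating function with weight $q^{v}\prod_k t_k^{N_k}$ on both sides of the bijection $\Tutte$ and checking that each case preserves the weight. In case (I), the removed internal face of degree $k$ accounts for a factor $t_k$, and the vertex count is unchanged, since erasing an edge adjacent to two distinct faces keeps all vertices. This holds also when $k=\ell_1=1$: there the result is the trivial disc, whose single vertex is counted in $W_{0;0}=q$. In case (R), the extra datum $j_i\in\set{0,\ldots,\ell_i-1}$ gives the factor $\ell_i$. In cases (D${}^{>}$) and (D${}^{=}$), vertices and internal faces are distributed among the components (with the trivial disc carrying weight $q$), so the weight is multiplicative. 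One must also check the range of $(k',k'')$ in (D${}^{=}$): the disc component has degree $k\in\set{0,\ldots,\ell_1-2}$, while the other component must have positive degree because $(g,n)\neq(0,1)$. These terms recombine with the (D${}^{>}$) sum into $\sum_{k'+k''=\ell_1-2}$, where the convention $W_{0;0}=q$ handles the degree-$0$ disc. Matching the index ranges this way is the main bookkeeping to verify.

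For the second form, multiply the first identity by $x_1^{-\ell_1-1}\prod_{i\in I}x_i^{-\ell_i-1}$ and sum over $\ell_1,\bm{\ell}_I\geq 1$, translating each term separately.

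\begin{itemize}
	\item \emph{Internal-face term.} $\sum_k t_k W_{g;\ell_1+k-2}$ becomes the negative part $[T'(x_1)W_{g,n}(x_1,\ldots)]_{-,x_1}$. Since $T'(x)=\sum t_kx^{k-1}$ and $W_{g,n}$ has no $x_1^{-1}$ term for $(g,n)\neq(0,1)$, extracting $x_1^{-\ell_1-1}$ with $\ell_1\ge1$ picks exactly these terms.
	\item \emph{Left-hand side.} Similarly, $W_{g;\ell_1}$ equals the $x_1^{-\ell_1-1}$ coefficient of $[x_1W_{g,n}]_{-,x_1}$.
	\item \emph{Disc terms.} The (D${}^{=}$) contributions $W_{0;k'}W_{g;k''}+W_{g;k'}W_{0;k''}$ become $2W_{0,1}(x_1)W_{g,n}(x_1,\ldots)$. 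The product of $x_1^{-k'-1}x_1^{-k''-1}$ gives $x_1^{-\ell_1}$, which must be matched against $x_1^{-\ell_1-1}$; I expect the shift to be absorbed by the convention that the $q/x_1$ term and a $1/x_1$ normalization appear consistently. I would recheck this power carefully, and it is likely where sign and index conventions bite.
	\item \emph{Handle and splitting terms.} These similarly give $W_{g-1,n+1}(x_1,x_1,\ldots)$ and the product sum.
	\item \emph{Boundary-reduction term.} The $(R)$ term $\ell_iW_{g;\ell_1+\ell_i-2,\ldots}$ follows from the identity
	\[
	\partial_{x_i}\frac{f(x_1)-f(x_i)}{x_1-x_i}=\sum_{a,b}\ldots,
	\]
	applied to $f=W_{g,n-1}(\cdot,\bm{x}_{I_i})$ with the expansion~\eqref{x1:x2:expns}. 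The difference quotient of $x^{-m-1}$ is $-\sum_{a+b=m}x_1^{-a-1}x_i^{-b-1}$, and differentiating in $x_i$ produces the factor $\ell_i$ after reindexing $m=\ell_1+\ell_i-2$. Signs combine with the $-\partial$ to give positivity.
\end{itemize}

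The main obstacle is purely bookkeeping: aligning the exponents in the products, including the trivial-disc convention $q/x_1$, so that the result is exactly $\mathcal{L}_{x_1}W_{g,n}$ with no leftover boundary terms.
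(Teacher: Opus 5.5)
\paragraph{Verdict.} Your approach is the paper's, and the combinatorial half is correct. The passage to free boundary degrees, however, contains an off-by-one error in the power of $x_1$ that you noticed but left unresolved, so the second identity is not yet proved.

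\paragraph{The combinatorial half.} You read the fixed-degree identity off the bijection $\Tutte$, as the paper does. Your treatment of the (D${}^{=}$) range is right, and in fact more accurate than the displayed range $k',k''\geq 1$. The trivial-disc terms with $k'=0$ or $k''=0$, weighted by $W_{0;0}=q$, must be included. This is exactly what the $q/x_1$ term of $W_{0,1}$ encodes in the generating-series form.

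\paragraph{The gap.} The free-degree step is the whole content of the second identity, and your bullets disagree with each other on the power of $x_1$.
\begin{itemize}
\item The coefficient of $x_1^{-\ell_1-1}$ in $[x_1W_{g,n}]_{-,x_1}$ is $W_{g;\ell_1+1,\bm{\ell}_I}$, not $W_{g;\ell_1,\bm{\ell}_I}$.
\item The coefficient of $x_1^{-\ell_1-1}$ in $T'(x_1)W_{g,n}$ is $\sum_k t_k W_{g;\ell_1+k-1,\bm{\ell}_I}$, not $\sum_k t_k W_{g;\ell_1+k-2,\bm{\ell}_I}$.
\end{itemize}
So the mismatch you spotted in the disc term is real, and no convention about $q/x_1$ absorbs it. The same mismatch appears in the handle and splitting terms. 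It also appears in the (R) term: your own divided-difference computation produces $\ell_i\,x_1^{-\ell_1}x_i^{-\ell_i-1}$, not $\ell_i\,x_1^{-\ell_1-1}x_i^{-\ell_i-1}$.

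\paragraph{The fix.} Multiply the fixed-degree identity by $x_1^{-\ell_1}\prod_{i\in I}x_i^{-\ell_i-1}$, as the paper does. Equivalently, keep your multiplier and observe that every term then carries the same extra factor $x_1^{-1}$, which cancels. With this choice every term lines up with no leftover:
\begin{itemize}
\item Left-hand side: $\sum_{\ell_1\geq 1}W_{g;\ell_1,\bm{\ell}_I}\,x_1^{-\ell_1}=x_1W_{g,n}=[x_1W_{g,n}]_{-,x_1}$.
\item Internal faces: the coefficient of $x_1^{-\ell_1}$ in $T'(x_1)W_{g,n}$ is $\sum_k t_kW_{g;\ell_1+k-2,\bm{\ell}_I}$. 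Restricting to $\ell_1\geq 1$ is exactly taking the negative part.
\item (D) terms: since $x_1^{-k'-1}x_1^{-k''-1}=x_1^{-\ell_1}$ when $k'+k''=\ell_1-2$, these give $W_{g-1,n+1}(x_1,x_1,\bm{x}_I)$, the product sum, and $2W_{0,1}(x_1)W_{g,n}(x_1,\bm{x}_I)$.
\item (R) term: it matches the $x_i$-derivative of the divided difference, as your own computation shows.
\end{itemize}
Moving $[T'(x_1)W_{g,n}]_{-,x_1}$ and $2W_{0,1}W_{g,n}$ to the left then gives $\mathcal{L}_{x_1}W_{g,n}$.
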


\begin{proof}
	The relation for fixed boundary degrees is immediate from the description of the bijection $\Tutte$. Multiplying both sides by $x_1^{-\ell_1}x_2^{-\ell_2-1}\cdots x_n^{-\ell_n-1}$ and summing over $\ell_1,\ldots,\ell_n \geq 1$ (for any fixed monomial in $q,\bm{t}$ the sum is finite), we obtain
	\begin{multline}
	 \bigl[
			(x_1-T'(x_1))W_{g,n}(x_1,\bm{x}_I)
		\bigr]_{-,x_1}
		-
		2\,W_{0,1}(x_1)W_{g,n}(x_1,\bm{x}_I)
		=
		\sum_{i=2}^n
		\partial_{x_i}\left(
		\frac{W_{g,n-1}(x_1,\bm{x}_{I_i}) - W_{g,n-1}(x_i,\bm{x}_{I_i})}{x_1-x_i}
		\right) \\
		+
		W_{g-1,n+1}(x_1,x_1,\bm{x}_I)
		+
		\sum_{\substack{h'+h''=g \\ J'\sqcup J'' = I}}^{\nodisc}
		W_{h',1+\card{J'}}(x_1,\bm{x}_{J'})W_{h'',1+\card{J''}}(x_1,\bm{x}_{J''}),
	\end{multline}
	where $T'(x)=\sum_{k \geq 1} t_k x^{k-1}$. This is precisely \eqref{Tutte:rec}. 
\end{proof}

\subsection{Skinning maps}
\label{ssec:skin}
We call \emph{skinning} the process of iterating Tutte's procedure as long as the topology is preserved, and stopping just before the first topology-changing step. The resulting map still has topology $(g,n)$, but would change topology if Tutte's procedure were applied once more; we call it the \emph{core map}. The pieces removed along the way can be assembled into a map, which we call the \emph{skin map}. Gluing the core map and its skin back together recovers the original map, see~\Cref{fig:skin:core}. Since this gluing does not change topology and the Euler characteristic is additive under gluing, the skin map must have vanishing Euler characteristic, that is, it must be a topological annulus.

With the notations of \Cref{ssec:Tutte}, the topology-preserving cases are precisely (I) and (D${}^{=}$), while the topology-changing cases are (R) and (D${}^{>}$). Skinning consists in applying Tutte's procedure repeatedly as long as the outcome lies in one of the topology-preserving cases, and stopping just before the first step whose outcome would lie in one of the topology-changing cases. The map at which the process stops is the core map, and the pieces removed during the previous steps assemble into the skin map.

\begin{figure}
	\centering

	\caption{Decomposition of a map $\mathfrak{m}$ into a skin map $\mathfrak{s}$ a core map $\hat{\mathfrak{m}}$.}
	\label{fig:skin:core}
\end{figure}

Let us first make the notion of core map precise.

\begin{defn}
	Let $g \geq 0$, $n \geq 1$, and $\ell_1,\ldots,\ell_n \geq 1$, with $(g,n) \neq (0,1)$. We denote by $\hat{\MM}_{g;\ell_1,\bm{\ell}_I}$ the set of maps $\hat{\mathfrak{m}}$ of topology $(g,n)$ and boundary degrees $\ell_1,\bm{\ell}_I$ such that
	\begin{equation}
		\Tutte(\hat{\mathfrak{m}})
		\in
		\mathbb{R}_{g;\ell_1,\bm{\ell}_I}
		\sqcup
		\mathbb{D}_{g;\ell_1,\bm{\ell}_I}^{>}.
	\end{equation}
	We call these maps \emph{core maps} and introduce their generating series
	\begin{equation}
	\label{Wgn:core}
		\hat{W}_{g;\ell_1,\bm{\ell}_I}
		\coloneqq
		\sum_{\hat{\mathfrak{m}} \in \hat{\MM}_{g;\ell_1,\bm{\ell}_I}}
		q^{v(\hat{\mathfrak{m}})}
		\prod_{k \geq 1} t_{k}^{N_k(\hat{\mathfrak{m}})},
		\qquad
		\hat{W}_{g,n}(x_1,\ldots,x_n)
		\coloneqq
		\sum_{\ell_1,\ldots,\ell_n \geq 1}
		\frac{\hat{W}_{g;\ell_1,\ldots,\ell_n}}
		{x_1^{\ell_1}x_2^{\ell_2+1}\cdots x_n^{\ell_n+1}}.
	\end{equation}
\end{defn}

Notice that, unlike for maps, our convention on the generating series for core maps with free boundary degrees is that the first boundary appears with weight $x_1^{-\ell_1}$ instead of $x_1^{-\ell_1-1}$. This will make the gluing formulae simpler at the level of generating series.

By definition, we immediately obtain
\begin{equation}
	\hat{W}_{g;\ell_1,\bm{\ell}_I}
	=
	\sum_{i=2}^n \ell_i\, W_{g;\ell_1+\ell_i-2,\bm{\ell}_{I_i}}
	+
	\sum_{\substack{k',k'' \geq 1 \\ k'+k''= \ell_1-2}}
	\Biggl(
	W_{g-1;k',k'',\bm{\ell}_I}
	+
	\sum_{\substack{h'+h''=g \\ J'\sqcup J''=I}}^{\nodisc}
	W_{h';k',\bm{\ell}_{J'}}\,W_{h'';k'',\bm{\ell}_{J''}}
	\Biggr).
\end{equation}
In terms of generating series with free boundary degrees, the same relation becomes
\begin{equation}
\label{Wcore:rec}
\begin{split}
	\hat{W}_{g,n}(x_1,\bm{x}_I)
	&=
	\sum_{i=2}^n
	\partial_{x_i}
	\left(
		\frac{W_{g,n-1}(x_1,\bm{x}_{I_i}) - W_{g,n-1}(x_i,\bm{x}_{I_i})}{x_1-x_i}
	\right) \\
	&\quad
	+
	W_{g-1,n+1}(x_1,x_1,\bm{x}_I)
	+
	\sum_{\substack{h'+h''=g \\ J'\sqcup J''=I}}^{\nodisc}
	W_{h',1+\card{J'}}(x_1,\bm{x}_{J'})\,W_{h'',1+\card{J''}}(x_1,\bm{x}_{J''}).
\end{split}
\end{equation}

We now turn to the pieces removed during skinning. The key observation is that cutting according to Tutte's procedure, while recording what is cut off, is inverse to a suitable gluing operation.

Let us define precisely what we mean by gluing. Gluing two half-edges $\epsilon$ and $\epsilon'$ means identifying them with opposite orientations. If $\epsilon$ (respectively $\epsilon'$) was part of a larger map where it had an opposite half-edge, the latter would disappear after gluing and be replaced by $\epsilon'$ (respectively, $\epsilon$). If $\partial$ and $\partial'$ are two distinct boundaries (possibly belonging to the same map) of common degree $\ell$ and with roots $\rho$ and $\rho'$, the operation of gluing $\partial$ and $\partial'$ consists in gluing $\rho$ to $\rho'$, then for all $j \in \set{1,\ldots,\ell - 1}$ gluing the $j$-th half-edge after $\rho$ to the $j$-th half-edge before $\rho'$, and eventually forgetting the roots. A sufficient condition for this operation to produce again a map---in which $\partial$ and $\partial'$ merge to form a cycle---is that $\partial$ or $\partial'$ is simple. Recall that a face $\mathfrak{f}$ is called \emph{simple} if every vertex in $\mathfrak{f}$ is adjacent to at most two half-edges of $\mathfrak{f}$, see for instance \cite{BG17}. In particular, a face in which two half-edges are glued to each other is not simple.

Starting from a map $\mathfrak{m}$ of topology $(g,n)\neq(0,1)$, we erase $r_1$ and, referring to the cases of \Cref{ssec:Tutte}, we associate with the removed part an \emph{elementary skin map} $\mathfrak{e}$ determined by $\Tutte(\mathfrak{m})$. This elementary skin map has genus $0$ and at least two boundaries. We denote by $\partial_i\mathfrak{e}$ the $i$-th boundary of $\mathfrak{e}$. For $i\geq 2$, the boundary $\partial_i\mathfrak{e}$ is simple, while $\partial_1\mathfrak{e}$ may or may not be simple, depending on the case.

(I) $\mathfrak{e}$ is an annulus with one internal face, see~\Cref{fig:gluingI}. The boundary $\partial_1\mathfrak{e}$ has root $\rho_1$ and degree $\ell_1$, and happens to be simple. The boundary $\partial_2\mathfrak{e}$ has degree $\ell_1+k-2$ and is rooted at $\bbarsuperscript{\rho_1}{-}$ if $k \geq 2$, or $\barsuperscript{\rho_1}{+}$ if $k = 1$. The $\ell_1-1$ half-edges after $\rho_1$ are glued to the $\ell_1-1$ half-edges before the root of $\partial_2\mathfrak{e}$. The internal face has degree $k$; one of its half-edges is glued to $\rho_1$ and the remaining $k - 1$ half-edges before it are glued to the root of $\partial_2\mathfrak{e}$ and the $k - 2$ half-edges after it.
\begin{figure}[H]
	\centering

	\caption{Case (I): $\mathfrak{e}$ is an annulus with one internal face.}
	\label{fig:gluingI}
\end{figure}

(R) $\mathfrak{e}$ is a \emph{theta map}, that is, a pair of pants without internal faces in which the root-edge of the first boundary bounds the third boundary, see~\Cref{fig:gluingR}. This case is analogous to (I), except that the internal face is now marked and forms the third boundary face. The boundary $\partial_1\mathfrak{e}$ has root $\rho_1$ and degree $\ell_1$, and is simple. The boundary $\partial_2\mathfrak{e}$ has degree $\ell_1+\ell_i-2$ and root $\bbarsuperscript{\rho_1}{-}$ if $\ell_i \geq 2$. The boundary $\partial_3\mathfrak{e}$ has degree $\ell_i$ and root $\rho_i$. We obtain the map $\mathfrak{e}$ after the following gluings. We glue $\rho_1$ to the $(j_i + 1)$-th half edge before $\rho_i$. We glue the $\ell_1 - 1$ half-edges after $\rho_1$ to the $\ell_1 - 1$ half-edges before the root of $\partial_2\mathfrak{e}$. We glue the half-edges in position $\ell_i - j_i - 1,\ell_i - j_i - 2,\ldots,-j_i$ compared to $\rho_i$ to the root of $\partial_2\mathfrak{e}$ and the $\ell_i - 1$ half-edges after it.
\begin{figure}[H]
	\centering

	\caption{Case (R): $\mathfrak{e}$ is a theta map.}
	\label{fig:gluingR}
\end{figure}

(D${}^{>}$) $\mathfrak{e}$ is a pair of glasses, that is, a pair of pants without internal faces such that a root-edge bounds the first boundary on both sides, see~\Cref{fig:gluingC1}. The boundary $\partial_1\mathfrak{e}$ has root $\rho_1$ and degree $\ell_1$, and is not simple. The boundary $\partial_2\mathfrak{e}$ has degree $k'$, root $\bbarsuperscript{\rho_1}{-}$, and all its half-edges are glued to $\partial_1\mathfrak{e}$, starting with the root which is glued to $\overline{\rho_1}^-$. Lastly, $\partial_3\mathfrak{e}$ has  degree $k''$, root $\barsuperscript{\rho_1}{-}$, and all its half-edges are glued to $\partial_1\mathfrak{e}$, starting with its root which is glued to $\rho_1^-$. The connected and disconnected sub-cases of (D${}^{>}$) give the same elementary skin map, but leave different complements.
\begin{figure}[H]
	\centering

	\caption{\label{fig:gluingC1} Case (D${}^{>}$), connected and disconnected sub-cases, top and bottom, respectively: $\mathfrak{e}$ is a pair of glasses.}
\end{figure}

(D${}^{=}$) From the point of view of Tutte's procedure, this is similar to the disconnected sub-case of (D${}^{>}$), with one of the components of  $\Tutte(\mathfrak{m})=(\mathfrak{m}',\mathfrak{m}'')$ being a disc; they cannot both be discs, since $(g,n)\neq(0,1)$. However, we treat it differently, as we wish $\mathfrak{e}$ to include this disc, see~\Cref{fig:gluingC2}. And there is a novelty, as this disc could be trivial. More precisely, $\mathfrak{e}$ is an annulus constructed as follows. If $\mathfrak{m}'$ or $\mathfrak{m}''$ is a trivial disc, $\mathfrak{e}$ is an annulus without internal faces. Its first boundary has degree $\ell_1$, is rooted at $\rho_1$, and $\rho_1$ is glued to $\overline{\rho_1}$. Its second boundary has degree $\ell_1 - 2$ and is rooted at $\bbarsuperscript{\rho_1}{-}$ if $\mathfrak{m}'$ is the trivial disc, or at $\barsuperscript{\rho_1}{-}$ if $\mathfrak{m}''$ is the trivial disc. If $\mathfrak{m}'$ and $\mathfrak{m}''$ are not trivial discs, we first construct an auxiliary pair of glasses $\mathfrak{a}$ defined like in (D${}^{>}$). Then, if $\mathfrak{m}'$ (respectively, $\mathfrak{m}''$) is a disc we glue its boundary face to $\partial_2\mathfrak{a}$ (respectively to $\partial_3\mathfrak{a}$). The result is our elementary skin $\mathfrak{e}$.

\begin{figure}[H]
	\centering

	\caption{\label{fig:gluingC2} Case (D${}^{=}$): $\mathfrak{e}$ is an annulus.}
\end{figure}

For the skinning process itself, we only need to keep track of the elementary skin maps arising from the topology-preserving cases. Indeed, if the outcome of Tutte's procedure lies in one of the topology-changing cases (R) or (D${}^{>}$), then the original map is already a core map and the process stops. By contrast, in the topology-preserving cases (I) and (D${}^{=}$), the elementary skin map is part of the skin being built, and it is an annulus.

\begin{defn}
	Let $\ell_1,\ell \geq 1$. We denote by $\mathbb{E}_{\ell_1,\ell}$ the set of elementary skin maps $\mathfrak{e}$ with boundary degrees $\ell_1$ and $\ell$ arising from the topology-preserving cases (I) and (D${}^{=}$). We define their generating series by
	\begin{equation}
	\label{eskin}
		E_{\ell_1,\ell}
		\coloneqq
		\sum_{\mathfrak{e} \in \mathbb{E}_{\ell_1,\ell}} q^{v(\mathfrak{e}) - \ell} \prod_{k \geq 1} t_k^{N_k(\mathfrak{e})}.
	\end{equation}
	Note that we do not count the vertices of $\partial_2\mathfrak{e}$ in the weight. Indeed, simple boundaries have as many vertices as their degree, here $\ell$, and $\partial_2\mathfrak{e}$ is meant to be glued to the boundary of another map; omitting to count these vertices prevents overcounting after the identification. 
\end{defn}

By definition, we have
\begin{equation}
\label{eskin:01}
	E_{\ell_1,\ell} = t_{\ell - \ell_1 + 2} + 2W_{0;\ell_1 - \ell - 2}.
\end{equation}
Here and below we use the convention $W_{0;k}=0$ for $k<0$, and $W_{0;0} = q$ counts the trivial skin map. The first term in~\eqref{eskin:01} comes from case (I), while the second comes from case (D${}^{=}$); the factor $2$ records the two possible orientations for the root, pointing towards the disc component or opposite to it.

\begin{defn}
\label{def:skin}
	A \emph{skin map} is an annulus that either has two simple boundaries of the same positive degree, rooted at the same edge, and no internal faces (\emph{trivial skin}, see~\Cref{fig:trivial:skin}), or is obtained from a sequence of $a \geq 1$ elementary skin maps $\mathfrak{e}_1,\ldots,\mathfrak{e}_{a}$ such that $\mathfrak{e}_j \in \mathbb{E}_{\ell_j,\ell_{j + 1}}$ for some $\ell_1,\ldots,\ell_{a + 1} \geq 1$, by gluing $\partial_2\mathfrak{e}_j$ to $\partial_1\mathfrak{e}_{j + 1}$ for every $j \in \set{1,\ldots,a - 1}$. We denote by $\mathbb{S}_{\ell_1,\ell}$ the set of skin maps with boundary degrees $\ell_1,\ell$, and define their generating series by
	\begin{equation}
	\label{skin:maps}
		S_{\ell_1,\ell}
		\coloneqq
		\sum_{\mathfrak{s} \in \mathbb{S}_{\ell_1,\ell}}
			q^{v(\mathfrak{s}) - \ell}
			\prod_{k \geq 1} t_k^{N_k(\mathfrak{s})},
		\qquad
		S(x_1,x)
		\coloneqq
		\sum_{\ell_1,\ell \geq 1} S_{\ell_1,\ell} \frac{x^{\ell - 1}}{x_1^{\ell_1 + 1}}.
	\end{equation}
\end{defn}

As in the generating series of elementary skin maps, we do not count the vertices of the second boundary component in the generating series of skin maps. We also stress the special way in which the degree of the second boundary is incorporated, namely with $x^{\ell - 1}$ instead of $x^{-\ell - 1}$. There is no negative power of $x$ in $S(x_1,x)$ because the degree $\ell$ is positive. This convention matches the one adopted in \cite{BG17} for simple boundaries, and it will make the gluing formulae in \Cref{ssec:skinning} cleaner.

As we will see in the next section, the significance of the trivial skin is that it acts as the identity for gluing: if the original map is already a core map, we can still regard it as glued to the trivial skin.

\begin{figure}[H]
	\centering
	\begin{tikzpicture}[baseline=-.5ex]
		\draw[thick] (0:1) -- (60:1);
		\draw[thick,postaction={decorate, decoration={markings, mark= at position 0.75 with {\arrow{stealth}}}},postaction={decorate, decoration={markings, mark= at position 0.35 with {\arrow{stealth reversed}}}}]
		(60:1) -- (120:1);
		\draw[thick] (120:1) -- (180:1) -- (240:1) -- (300:1);
		\draw[thick] (-60:1) -- (0:1);
	\end{tikzpicture}
	\caption{The trivial skin of degree $6$.}
	\label{fig:trivial:skin}
\end{figure}

We conclude by stressing that, although it was convenient to introduce elementary skin maps and skin maps by cutting along Tutte's procedure, these notions stand on their own and do not depend on the topology $(g,n)$ one started with. More precisely, using the fact that gluing is inverse to cutting, one could equivalently define skin maps as those annular maps whose second boundary is simple and that yield a trivial skin after finitely many applications of Tutte's procedure.

\subsection{Recursion for skin maps}
\label{ssec:rec:skin}
Our next goal is to compute the generating series of skin maps $S(x_1,x)$. It follows from \Cref{def:skin} that
\begin{equation}
\label{skin:elementary}
	S_{\ell_1,\ell}
	=
	\delta_{\ell_1,\ell}
	+
	\sum_{a \geq 1}
	\sum_{\substack{\ell_2,\ldots,\ell_{a} \geq 1 \\ \ell_{a + 1} = \ell}}
	\prod_{j = 1}^{a} E_{\ell_{j},\ell_{j + 1}}.
\end{equation}
The first term corresponds to the trivial skin and may be viewed as the $a=0$ term of the sum. An equivalent way to encode this formula is to introduce the transfer operators $\mathcal{E}$ and $\mathcal{S}$ acting on sequences $(f_{\ell})_{\ell \geq 1}$ by
\begin{equation}
	\mathcal{S}[f]_{\ell_1} \coloneqq \sum_{\ell \geq 1} S_{\ell_1,\ell} f_{\ell},
	\qquad\qquad
	\mathcal{E}[f]_{\ell_1} \coloneqq \sum_{\ell \geq 1} E_{\ell_1,\ell} f_{\ell}.
\end{equation}
Then \eqref{skin:elementary} becomes $\mathcal{S} = \sum_{a \geq 0} \mathcal{E}^{a} = (\textnormal{Id}-\mathcal{E})^{-1}$.
In particular, we have
\begin{equation}
\label{transfer:id}
	\mathcal{S}
	=
	\textnormal{Id} + \mathcal{E}\circ \mathcal{S},
	\qquad\qquad
	\mathcal{S}
	=
	\textnormal{Id} + \mathcal{S}\circ \mathcal{E}.
\end{equation}
Written in terms of matrix elements, these two identities yield relations for the generating series.

The first one in~\eqref{transfer:id} reads
\begin{equation}
\label{rec:skin1}
	S_{\ell_1,\ell}
	=
	\delta_{\ell_1,\ell}
	+
	\sum_{\ell' \geq 1} E_{\ell_1,\ell'} S_{\ell',\ell}
	=
	\delta_{\ell_1,\ell}
	+
	\sum_{k \geq 1} t_k\,S_{\ell_1 + k - 2,\ell}
	+
	2 \sum_{k = 0}^{\ell_1 - 3} W_{0;k}\,S_{\ell_1 - k - 2,\ell}.
\end{equation}
where in the second equality we used \eqref{eskin} and changed variables to $k=\ell'-\ell_1+2$ in the first sum and to $k=\ell_1-\ell'-2$ in the second. Since $W_{0;k}=O(q)$, this equation uniquely determines $S_{\ell_1,\ell}$ as an element in $\mathbb{Q}[t_3,t_4,\ldots]\bbraket{q,t_1,t_2}$. Combinatorially, the first equation of \eqref{rec:skin1} states that a skin map is either the trivial skin or is obtained by gluing the second boundary of an elementary skin map arising from a topology-preserving Tutte step to the first boundary of another skin map.

The second equality in~\eqref{transfer:id} gives, in terms of generating series,
\begin{equation}
	S_{\ell_1,\ell}
	=
	\delta_{\ell_1,\ell}
	+
	\sum_{\ell' \geq 1} S_{\ell_1,\ell'} E_{\ell',\ell}
	=
	\delta_{\ell_1,\ell}
	+
	\sum_{k = 1}^{\ell+1} t_k\,S_{\ell_1,\ell-k+2}
	+
	2\sum_{k\geq 0} W_{0;k}\,S_{\ell_1,\ell+k+2}.
\end{equation}
This equation also uniquely determines $S_{\ell_1,\ell}\in \mathbb{Q}[t_3,t_4,\ldots]\bbraket{q,t_1,t_2}$. Combinatorially, it means that a skin map is either the trivial skin or is obtained by gluing the first boundary of an elementary skin map to the second boundary of a skin map. In terms of generating series, the above equation reads
\begin{equation}
\label{rec:skin2}
	\left[
		S(x_1,x) \bigl(x- \partial_x T(x)-2W_{0,1}(x)\bigr)
	\right]_{+,x}
	=
	\frac{1}{x_1-x} - \frac{1}{x_1},
\end{equation}
where $\frac{1}{x_1-x}$ is expanded in non-negative powers of $x$, and subtracting $\frac{1}{x_1}$ extracts its positive part.

\subsection{Skinning formula and skin enumeration}
\label{ssec:skinning}
To recover an arbitrary map $\mathfrak{m}$ of topology $(g,n)\neq(0,1)$ and boundary degrees $\ell_1,\dots,\ell_n$, we may choose an arbitrary skin map $\mathfrak{s} \in \mathbb{S}_{\ell_1,\ell}$, an arbitrary core map $\hat{\mathfrak{m}} \in \hat{\MM}_{g;\ell,\bm{\ell}_I}$, and glue the second boundary of $\mathfrak{s}$ to the first boundary of $\hat{\mathfrak{m}}$. By construction, this defines a bijection
\begin{equation}
	\begin{array}{ccc}
		\MM_{g;\ell_1,\bm{\ell}_I} & \overset{\simeq}{\longrightarrow} & \bigsqcup_{\ell \geq 1} \mathbb{S}_{\ell_1,\ell} \times \hat{\MM}_{g;\ell,\bm{\ell}_I} \\
		\mathfrak{m} & \longmapsto & (\mathfrak{s},\hat{\mathfrak{m}}).
	\end{array}
\end{equation}
Moreover, the number of vertices of $\mathfrak{m}$ is $v(\mathfrak{m}) = v(\hat{\mathfrak{m}}) + v(\mathfrak{s}) - \ell$. Therefore, the generating series for fixed boundary degrees satisfy the skinning relation
\begin{equation}
\label{skinning:rel}
	W_{g;\ell_1,\bm{\ell}_I} = \sum_{\ell \ge 1} S_{\ell_1,\ell} \hat{W}_{g;\ell,\bm{\ell}_I}.
\end{equation}
Multiplying by $x_1^{-\ell_1-1} \cdots x_n^{-\ell_n-1}$ and summing over $\ell_1,\ldots,\ell_n \geq 1$, we obtain
\begin{equation}
	W_{g,n}(x_1,\bm{x}_I) = \big\langle S(x_1,x)\hat{W}_{g,n}(x,\bm{x}_I) \big\rangle_x.
\end{equation}
Here the coefficient extraction is taken term by term in the formal Laurent expansion in $x$. The conventions for the degree-counting variable of the second boundary of a skin map and of the first boundary of a core map were chosen precisely so that extracting the coefficient of $x^{-1}$ matches equal degrees on the two boundaries being glued.

Using \eqref{Wcore:rec} to express $\hat{W}_{g,n}$ in terms of generating series for maps of simpler topology, i.e. with higher Euler characteristic, we arrive at the following recursion.

\begin{prop}[Skinning formula for maps]
\label{skinning:maps}
	Let $g \geq 0$, $n \geq 1$ be such that $(g,n) \neq (0,1)$. Then
	\begin{multline}
	\label{preTR}
		W_{g,n}(x_1,\bm{x}_I)
		=
		\Bigg\langle
		S(x_1,x)
		\Bigg(
			\sum_{i=2}^n
				\partial_{x_i} \left(
					\frac{W_{g,n-1}(x,\bm{x}_{I_i}) - W_{g,n-1}(x_i,\bm{x}_{I_i})}{x - x_i}
				\right) \\
		+
		W_{g-1,n+1}(x,x,\bm{x}_I)
		+
		\sum_{\substack{h' + h'' = g \\ J' \sqcup J'' = I}}^{\nodisc}
			W_{h',1+\card{J'}}(x,\bm{x}_{J'})W_{h'',1+\card{J''}}(x,\bm{x}_{J''})
		\Bigg)\Bigg\rangle_x .
	\end{multline}
\end{prop}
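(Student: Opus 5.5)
The proof combines the skinning relation $W_{g,n}(x_1,\bm{x}_I)=\langle S(x_1,x)\hat{W}_{g,n}(x,\bm{x}_I)\rangle_x$ with the expression \eqref{Wcore:rec} for core maps, substituting $x_1\to x$ there. Everything is already in place; what remains is to confirm that the two identities fit together at the level of formal series.

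First I verify the skinning bijection $\mathfrak{m}\mapsto(\mathfrak{s},\hat{\mathfrak{m}})$. Skinning terminates: each topology-preserving Tutte step, of type (I) or (D${}^{=}$), decreases the number of edges. It stops at a map whose Tutte image lies in $\mathbb{R}\sqcup\mathbb{D}^{>}$, which is a core map by definition. The removed elementary skins glue in sequence into a skin map, possibly the trivial skin when $\mathfrak{m}$ is already a core map. Conversely, gluing $\partial_2\mathfrak{s}$, which is simple, to $\partial_1\hat{\mathfrak{m}}$ yields a map. Re-skinning this map peels off exactly the elementary skins $\mathfrak{e}_1,\dots,\mathfrak{e}_a$ and then stops, because the core map is unchanged and its Tutte outcome is topology-changing. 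This is where Tutte's cutting being inverse to the elementary gluings is used. The vertex count $v(\mathfrak{m})=v(\mathfrak{s})+v(\hat{\mathfrak{m}})-\ell$ and the face weights are multiplicative, which gives \eqref{skinning:rel}.

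Next comes the passage to free degrees. I multiply \eqref{skinning:rel} by $x_1^{-\ell_1-1}\prod_{i\ge2}x_i^{-\ell_i-1}$ and use the conventions $x^{\ell-1}$ in $S$ and $x^{-\ell}$ for the first boundary of $\hat W$. Extracting $\langle\cdot\rangle_x$ then pairs equal values of $\ell$, because $x^{\ell-1}x^{-\ell'}$ has residue $\delta_{\ell\ell'}$. To justify this rigorously, I note the following:
\begin{itemize}
\item for a fixed monomial in $q,t_1,t_2$ and fixed $\ell_1$, only finitely many $\ell$ contribute, because each skin map $\mathfrak{s}\in\mathbb{S}_{\ell_1,\ell}$ with large $\ell$ needs many internal faces or vertices;
\item equivalently, the product $S(x_1,x)\hat W_{g,n}(x,\cdot)$ is a well-defined formal series whose coefficients are Laurent polynomials in $x$.
\end{itemize}

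Finally I substitute \eqref{Wcore:rec} at $x_1=x$. This gives exactly the bracket in \eqref{preTR}: the boundary-reduction terms with $(x-x_i)^{-1}$ expanded with $x$ first near $\infty$, the term $W_{g-1,n+1}(x,x,\bm{x}_I)$, and the no-$(0,1)$ products.

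The main obstacle is the expansion of $\frac{1}{x-x_i}$. Inside $\hat W$ the variable $x$ plays the role of $x_1$, expanded at infinity, while $S$ carries positive powers of $x$. I must check that the coefficient extraction still commutes with the expansion convention \eqref{x1:x2:expns}. My approach is to check this degree by degree, using the fixed-degree identity for $\hat W_{g;\ell,\bm{\ell}_I}$, where $\ell_i W_{g;\ell+\ell_i-2,\ldots}$ corresponds to that expansion.
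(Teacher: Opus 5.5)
Your proposal is correct and follows the paper's route: the skinning bijection gives \eqref{skinning:rel}, the conventions $x^{\ell-1}$ for $S$ and $x^{-\ell}$ for the first boundary of $\hat W$ turn it into $W_{g,n}=\langle S\,\hat W_{g,n}\rangle_x$, and substituting \eqref{Wcore:rec} yields \eqref{preTR}.

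Two small corrections. First, finiteness of the sum over $\ell$ does not come from the skin side as you claim. A single $k$-gon already gives a skin with $\ell=\ell_1+k-2$ and weight $q^0t_k$, so $\ell$ is unbounded there. Finiteness comes from the core side instead: by Euler's relation, $\ell$ is bounded for a core map of bounded $q,t_1,t_2$-degree. Second, the expansion of $\frac{1}{x-x_i}$ is not a real obstacle. Each divided difference $\frac{x^{-m-1}-x_i^{-m-1}}{x-x_i}$ is already a polynomial in $x^{-1}$ and $x_i^{-1}$.
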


The disc case is exceptional: applying Tutte's procedure to a disc never produces a genuine core map of smaller complexity. Nevertheless, there is an analogue of the skinning formula, obtained by marking a vertex at which the procedure stops. More precisely, let $\dot{\MM}_{0;\ell_1}$ be the set of \emph{pointed discs}, that is discs with boundary degree $\ell_1$ and one marked vertex. Assigning vertex-weight $q^{v(\mathfrak{m})-1}$, that is, excluding the marked vertex from the counting, we find that the generating series of pointed discs is $\partial_q W_{0;\ell_1}$.

Given $\mathfrak{m}\in\dot{\MM}_{0;\ell_1}$, we apply Tutte's procedure erasing $r_1$ exactly as before, except that the marked vertex now plays the role of the boundary at which the process stops. Case (I) is unchanged, but we need to handle with care\footnote{
	This did not occur previously since we had $(g,n)\neq(0,1)$.
} the situation where $\ell_1=1$ and there is a single internal face of degree $1$. We call $\circ$ this map: as $r_1$ separates off the marked vertex, we declare it to be a stopping case. If the marked vertex is univalent and adjacent to $r_1$, then erasing the latter  separates off the marked vertex and leaves an unpointed disc of degree $\ell_1-2$, see~\Cref{fig:Rpointed}. This is the analogue of case (R), with the marked vertex replacing the second boundary, and we declare it to be a stopping case.

\begin{figure}[H]
	\centering

	\caption{Case (R) for pointed discs.}
	\label{fig:Rpointed}
\end{figure}

The remaining unpointed disc may lie on either side of $r_1$. If $ r_1= \set{\rho_1,\overline{\rho_1}}$ and is not adjacent to the marked vertex, then we are in the analogue of case (D${}^{=}$): after erasing $r_1$, one component contains the marked vertex and the other does not, and we include the latter in the elementary skin map, see~\Cref{fig:Dpointed}. The analogue of case (D${}^{>}$) is absent, since a pointed disc cannot produce a topology-changing degeneration before the marked vertex is separated.

\begin{figure}[H]
	\centering

	\caption{Case (D${}^{=}$) for pointed discs.}
	\label{fig:Dpointed}
\end{figure}

Iterating the procedure until the stopping case occurs, we obtain: on the one hand, a skin map in $\mathbb{S}_{\ell_1,\ell}$; on the other hand, either the disc $\circ$ (this forces $\ell = 1$) or an (unpointed) disc of degree $\ell-2$ which can lie on either side of the root-edge. This results in the bijection, for $\ell_1 \geq 1$,
\begin{equation}
	\dot{\MM}_{0;\ell_1}
	\overset{\simeq}{\longrightarrow}
	\mathbb{S}_{\ell_1,1} \times \set{\circ} \,\,\sqcup \,\,\bigsqcup_{\ell \geq 1} \mathbb{S}_{\ell_1,\ell} \times (\MM_{0;\ell - 2} \sqcup \MM_{0;\ell - 2}).
	\end{equation}
It is important to observe that the recursive definition of $\mathbb{S}_{\ell_1,\ell}$ is identical to that in \Cref{ssec:skin}; in particular, the elementary skin maps are counted by \eqref{eskin:01}. Therefore, the skinning relation for pointed discs takes the form
\begin{equation}
	\partial_q W_{0;\ell_1} = \sum_{\ell \geq 1} S_{\ell_1,\ell}(\delta_{\ell,1}t_1 + 2W_{0;\ell - 2})
\end{equation}
In terms of generating series,
\begin{equation}
\label{skinning:discs}
	\partial_q W_{0,1}(x_1)-\frac{1}{x_1}
	=
	\left[ S(x_1,x)(T'(x) + 2W_{0,1}(x))\right]_{0,x},
\end{equation}
where the subtraction of $\frac{1}{x_1}$ comes from the trivial disc term $\partial_q W_{0;0} = 1$, and $T'(0) = t_1$ accounts for the disc $\circ$. This relation can also be proved by applying the pointing operator $\partial_q$ to Tutte's recursion for discs and using the first recursion \eqref{rec:skin1} for $S$.

The pointed-disc relation above provides the remaining ingredient needed to complete the computation of the skin generating series $S(x_1,x)$. This series will play the role of the recursion kernel in the topological recursion, and we would like to express it in terms of generating series of maps.

\begin{prop}[Skin enumeration for maps]
\label{skin:enum:maps}
	We have
	\begin{equation}
		S(x_1,x)
		=
		\frac{
			{\displaystyle\int^{x}_{\infty}}
			\left(
				2W_{0,2}(x_1,y)
				+
				\frac{1}{(x_1-y)^2}
			\right)\dd y
			-
			\partial_q W_{0,1}(x_1)
		}{
			x - T'(x) - 2W_{0,1}(x)
		},
	\end{equation}
	where $\int^{x}_{\infty} \frac{1}{(x_1-y)^2}\dd y = \frac{1}{x_1-x}$, and the result is expanded as in~\eqref{x1:x2:expns}.
\end{prop}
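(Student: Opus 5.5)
We set $Y(x) \coloneqq x - T'(x) - 2W_{0,1}(x)$. The plan is to determine the full Laurent expansion in $x$ of the product $P(x_1,x) \coloneqq S(x_1,x)\,Y(x)$ and then divide by $Y(x)$. Since $S(x_1,x)$ contains only non-negative powers of $x$, and $Y(x)=x-\sum_k t_k x^{k-1}-2\sum_\ell W_{0;\ell}x^{-\ell-1}$, the product $P$ splits into three parts: its positive part in $x$, its constant term, and its negative part. Each part is already available or can be computed from earlier results.

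\emph{Positive part.} This is given by \eqref{rec:skin2}, which states that $[P]_{+,x} = \frac{1}{x_1-x}-\frac{1}{x_1}$.

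\emph{Constant term.} We have $[P]_{0,x} = [S\,x]_{0,x} - [S(T'+2W_{0,1})]_{0,x}$. The first piece vanishes because $S$ has no negative powers of $x$. The second piece is given by the pointed-disc relation \eqref{skinning:discs}. Hence $[P]_{0,x} = -\partial_q W_{0,1}(x_1)+\frac{1}{x_1}$.

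Adding these two parts, the $\pm\frac{1}{x_1}$ terms cancel, and the non-negative part of $P$ equals $\frac{1}{x_1-x}-\partial_q W_{0,1}(x_1)$. This is exactly the numerator of the proposition without the $W_{0,2}$ integral, since $\int_\infty^x \frac{\dd y}{(x_1-y)^2}=\frac{1}{x_1-x}$.

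\emph{Negative part.} It remains to identify $[P]_{-,x} = -2[S(x_1,x)W_{0,1}(x)]_{-,x}$ with $2\int_\infty^x W_{0,2}(x_1,y)\,\dd y$. The latter has only negative powers of $x$, since $W_{0,2}(x_1,y)=\sum W_{0;\ell_1,\ell}\,x_1^{-\ell_1-1}y^{-\ell-1}$ integrates to $-\sum W_{0;\ell_1,\ell}\,x_1^{-\ell_1-1}x^{-\ell}/\ell$. Comparing coefficients, the claim reduces to the identity
\begin{equation}
	\sum_{m\geq 1} S_{\ell_1,m}\,W_{0;m+\ell-1} = \frac{1}{\ell}\,W_{0;\ell_1,\ell}, \qquad \ell\geq 1.
\end{equation}
We prove this using the skinning formula (\Cref{skinning:maps}) with $(g,n)=(0,2)$, and this is the step I expect to be the main obstacle. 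In that case the core maps are only of type (R): $\hat W_{0;m,\ell}=\ell\,W_{0;m+\ell-2}$, and \eqref{skinning:rel} gives $W_{0;\ell_1,\ell}=\ell\sum_m S_{\ell_1,m}W_{0;m+\ell-2}$. The index shift $m+\ell-2$ versus $m+\ell-1$ must be handled carefully.

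I would first recheck the conventions, because the off-by-one may simply come from the weighting $x^{-\ell_1}$ versus $x^{-\ell_1-1}$ in $\hat W$, or from the $x^{\ell-1}$ convention in $S$. If the indices then match directly, the identity follows from the $(0,2)$ skinning formula. At the generating-series level this reads: $\langle S(x_1,x)\,\partial_{x_2}\frac{W_{0,1}(x)-W_{0,1}(x_2)}{x-x_2}\rangle_x = W_{0,2}(x_1,x_2)$. Integrating in $x_2$ from $\infty$ and expanding $\frac{1}{x-x_2}$ produces $-[S(x_1,x_2)W_{0,1}(x_2)]_{-,x_2}$, plus a term $\langle S\,W_{0,1}(x)/(x-x_2)\rangle$ that should be checked to vanish or to reorganize into the required form. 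Should that fail, my fallback is to verify the identity directly from the recursion \eqref{rec:skin1} by induction on the $q$-degree.

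Once these three parts are in place, we have $S\,Y = \int_\infty^x\big(2W_{0,2}+\frac{1}{(x_1-y)^2}\big)\dd y-\partial_q W_{0,1}(x_1)$. Since $Y(x)=x(1+O(q,t))$ is invertible as a formal series, dividing by $Y$ yields the proposition.
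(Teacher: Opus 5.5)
Your decomposition of $S(x_1,x)\bigl(x-T'(x)-2W_{0,1}(x)\bigr)$ into positive, constant and negative parts is exactly the paper's argument. Your positive part (from \eqref{rec:skin2}) and your constant term (from \eqref{skinning:discs}) are correct.

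The one real problem is in the negative part. There, an indexing slip creates a spurious off-by-one and leaves the key step unproved as written. Since $S(x_1,x)$ carries $x^{m-1}$ and $W_{0,1}(x)=\sum_{k\geq 0}W_{0;k}x^{-k-1}$ (including $W_{0;0}=q$), the product carries $x^{m-k-2}$. So the coefficient of $x_1^{-\ell_1-1}x^{-\ell}$ in $[S(x_1,x)W_{0,1}(x)]_{-,x}$ is $\sum_{m\geq 1}S_{\ell_1,m}W_{0;m+\ell-2}$, not $\sum_{m}S_{\ell_1,m}W_{0;m+\ell-1}$. The identity you need is therefore
\[
W_{0;\ell_1,\ell}=\ell\sum_{m\geq 1}S_{\ell_1,m}\,W_{0;m+\ell-2},
\]
which is literally \eqref{skinning:rel} for $(g,n)=(0,2)$ with $\hat{W}_{0;m,\ell}=\ell\,W_{0;m+\ell-2}$. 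The term $m=\ell=1$ uses $W_{0;0}=q$, i.e.\ the core annulus made of a single loop; the conventions $x^{-\ell_1}$ for $\hat W$ and $x^{\ell-1}$ for $S$ are consistent and are not the culprit.

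The identity with $m+\ell-1$ that you wrote is actually false. For $\ell_1=\ell=1$, its left side $W_{0;1,1}$ has coefficient $1$ on the monomial $q$ (the single-loop annulus). On the right side, by Euler's relation every $W_{0;m}$ with $m\geq 1$ lies in the ideal generated by $qt_1$ and $q^2$. So your inductive fallback could not succeed; only the corrected index closes the step.

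The same correction also makes your generating-series sketch close, with no leftover term. The divided difference $\frac{W_{0,1}(x)-W_{0,1}(x_2)}{x-x_2}$ is a polynomial in $x^{-1}$ and $x_2^{-1}$, and every term carries a negative power of $x_2$. Integrating the $(0,2)$ case of \eqref{preTR} from $\infty$ therefore gives
\[
\int_\infty^{x_2}W_{0,2}(x_1,y)\,\dd y=\bigl\langle S(x_1,x)\tfrac{W_{0,1}(x)-W_{0,1}(x_2)}{x-x_2}\bigr\rangle_x .
\]
Expand $\frac{1}{x-x_2}=-\sum_{k\geq 0}x^k x_2^{-k-1}$ in both pieces. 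The $W_{0,1}(x_2)$ piece vanishes, because $S$ has only non-negative powers of $x$. The $W_{0,1}(x)$ piece is exactly $-[S(x_1,x_2)W_{0,1}(x_2)]_{-,x_2}$.

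This is the elementary identity $\langle F(\xi)\frac{G(\xi)-G(x)}{\xi-x}\rangle_\xi=-[F(x)G(x)]_{-,x}$ that the paper uses for this step. After it, your assembly of the three parts and the division by $x-T'(x)-2W_{0,1}(x)$ go through as you describe.
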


Before giving the proof, let us comment on the shape of the formula. Roughly speaking, it says that skin maps are obtained from annulus maps whose second boundary is unrooted (accounted by first term in the numerator), up to a few exceptional cases (the other two terms in the numerator) and up to the bubbling of internal faces and discs (the denominator, expanded as geometric series). Such an interpretation dates back to \cite{EO09}.

In a precise way for us, it reflects the recursive definition of skin maps in terms of elementary ones, together with the skinning relations for annuli and pointed discs. Indeed, the denominator is the generating series of the operator obtained from Tutte for skin maps: the term $T'(x)$ comes from case (I), while the term $2W_{0,1}(x)$ comes from case (D${}^{=}$). Dividing by $x - T'(x) - 2W_{0,1}(x)$ amounts to solving the second recursion for $S(x_1,x)$. The numerator is obtained by reconstructing the full series of $S(x_1,x)(x - T'(x) - 2W_{0,1}(x))$ in three parts. The positive powers of $x$ are fixed by the recursive construction of skin maps and give the positive part of $\frac{1}{x_1-x}$. The negative powers of $x$ are fixed by the skinning formula for annuli, in which we unroot the second boundary by taking the antiderivative. Finally, the constant term in $x$ is fixed by the skinning relation for pointed discs, and gives the contribution $-\partial_q W_{0,1}(x_1)$, together with the constant term of $\frac{1}{x_1-x}$. While the non-trivial annulus contribution appears with a factor of $2$, the pointed-disc contribution does not; this difference is due to the orientation of the root in the pointed-disc case.
 
\begin{proof}
	We determine separately the positive, negative, and constant parts, as a series in $x$, of $S(x_1,x)(x-T'(x)-2W_{0,1}(x))$. First, the recursive definition of skin maps, \Cref{rec:skin2}, gives the positive part:
	\begin{equation}
		\left[
			S(x_1,x)\bigl(x-T'(x)-2W_{0,1}(x)\bigr)
		\right]_{+,x}
		=
		\frac{1}{x_1-x}-\frac{1}{x_1}.
	\end{equation}
	Second, the constant term is fixed by the pointed-disc skinning relation \eqref{skinning:discs}:
	\begin{equation}
		\left[
			S(x_1,x)\bigl(x-T'(x)-2W_{0,1}(x)\bigr)
		\right]_{0,x}
		=
		-\left[
			S(x_1,x)(T'(x) + 2W_{0,1}(x))
		\right]_{0,x}
		=
		 -\partial_q W_{0,1}(x_1)+\frac{1}{x_1}.
	\end{equation}
	It remains to determine the negative part. By the skinning relation for annuli, \eqref{preTR}, after integration from $\infty$ to $x$, we get
	\begin{equation}
		\int_\infty^x W_{0,2}(x_1,y)\dd y
		=
		\biggl\langle
			S(x_1,\xi)
			\frac{W_{0,1}(\xi)-W_{0,1}(x)}{\xi-x}
		\biggr\rangle_\xi.
	\end{equation}
	We now use the elementary identity
	\begin{equation}
		\biggl\langle
			F(\xi)
			\frac{G(\xi)-G(x)}{\xi-x}
		\biggr\rangle_\xi
		=
		-\bigl[F(x)G(x)\bigr]_{-,x},
	\end{equation}
	valid whenever $F$ has only non-negative powers and $G$ has only negative powers. Applying it to $F(\xi)=S(x_1,\xi)$ and $G(\xi)=W_{0,1}(\xi)$ gives $\int_\infty^x W_{0,2}(x_1,y)\dd y = -[ S(x_1,x)W_{0,1}(x) ]_{-,x}.$ Since $S(x_1,x)(x-T'(x))$ has no negative powers of $x$, we find
	\begin{equation}
		\left[
			S(x_1,x)\bigl(x-T'(x)-2W_{0,1}(x)\bigr)
		\right]_{-,x}
		=
		\int_\infty^x 2W_{0,2}(x_1,y)\dd y.
	\end{equation}
	Combining the negative, positive, and constant parts, we obtain
	\begin{equation}
	\begin{split}
		S(x_1,x)\bigl(x-T'(x)-2W_{0,1}(x)\bigr)
		&=
		\int_\infty^x 2W_{0,2}(x_1,y)\dd y
		+
		\left(
			\frac{1}{x_1-x}-\frac{1}{x_1}
		\right)
		+
		\left(
			-\partial_q W_{0,1}(x_1)+\frac{1}{x_1}
		\right) \\
		&=
		\int_\infty^{x}
		\left(
			2W_{0,2}(x_1,y)
			+
			\frac{1}{(x_1-y)^2}
		\right)\dd y
		-
		\partial_q W_{0,1}(x_1).
	\end{split}
	\end{equation}
	Dividing by $x-T'(x)-2W_{0,1}(x)$ gives the result.
\end{proof}

\subsection{Example and comments}
Before concluding this section with a few remarks, we illustrate the skinning process step by step in \Cref{fig:stepbystep}.

\begin{figure}
	\centering

	\caption{Example of a step-by-step skinning process. Starting from a map $\mathfrak{m}$ (top left), we repeatedly apply Tutte, producing a map with one fewer edge (left column), an elementary piece (middle column), and the partial skin (right column) at each step. The process terminates with the core map $\hat{\mathfrak{m}}$ (bottom left) and the skin $\mathfrak{s}$ (bottom right). The root of the first boundary is shown in green throughout the construction.}
	\label{fig:stepbystep}
\end{figure}

\subsubsection{Refined enumeration}
The skinning procedure suggests two natural refinements of the enumeration problem. First, one may assign a weight $u^a$ to the number of layers of the skin map. Then
\begin{equation}
	S^{(u)}(x_1,x)
	=
	\frac{
		{\displaystyle \int_{\infty}^{x}}
		\left(
			2W^{(u)}_{0,2}(x_1,y) + \frac{1}{(x_1 - y)^2}
		\right) \dd y
		-
		\partial_q W_{0,1}^{(u)}(x_1)
	}{
		x - u T'(x) - 2uW_{0,1}(x)
	},
\end{equation}
where the superscript $(u)$ indicates the refined generating series. Second, one may assign a weight $\lambda^{\ell}$ in~\eqref{skinning:rel}, i.e. keep track of the length of the curve along which the skin map and the core map are glued. This amounts to replacing $S(x_1,x)$ by $\lambda S(x_1,\lambda x)$.

\subsubsection{Relation to Eynard--Orantin topological recursion}
The formulae in \Cref{skinning:maps,skin:enum:maps} are reminiscent of the Eynard--Orantin topological recursion formula \cite{EO07}. We will show in \Cref{sec:analytic} that our formal-series formulae imply Eynard's analytic form of topological recursion for maps under off-criticality assumptions on the weights of the model \cite{Eyn04}.

In fact, as explained in the introduction, our approach gives a bijective meaning to the topological recursion formula for maps as the excision of a pair of pants embedded into the map itself. This will be explained in greater generality in \Cref{sec:pop}. For the moment, let us indicate where the pair of pants hides. Recall that, by iteratively cutting the root-edge of the first boundary according to Tutte's procedure, one first cut off a skin map, and then, after one further step, the topology changes. This last step is inverse to gluing to the second boundary of the skin map an additional elementary skin map arising this time from one of the topology-changing cases (R) or (D${}^{>}$). The result is a thickened version of the skin map, which is topologically a pair of pants (see~\Cref{fig:pants:decomp}). In case (R), this pair of pants is singular and is glued to the remaining core map only along one boundary, corresponding to the contribution of the derivative term in~\eqref{preTR}. In every case, it is completely determined by the data of the skin map and the core map. Thus one may regard $S(x_1,x)$ as enumerating these pairs of pants.

\subsubsection{Difference from earlier works}
At this point, it is natural to compare our procedure with other decompositions of maps appearing in the literature.

In \cite{BGM22,BGM25}, the authors introduce the \emph{trumpet decomposition} for maps, in which a trumpet is cut off from each boundary so that all boundaries of the remaining map are tight, meaning that they realise the shortest paths in their homotopy class. The similarity with our skinning process is that the trumpets, just like our skin maps, are annuli with one ordinary and one simple boundary. They are, however, different: trumpets are cut off in order to make the remaining boundaries tight, whereas our skins are obtained by following Tutte's procedure until the next step would change the topology.

Another procedure based on Tutte's decomposition is \emph{peeling} \cite{Wat95,Bud15,Bud18,Cur23}, which has mainly been used to study the random geometry of planar maps. One may describe it as a construction of discs with boundary degree $\ell$, starting from what we call an empty skin of degree $\ell$ together with internal faces, and then iteratively gluing pairs of edges. At each step, three types of moves may occur. Gluing an edge of the second boundary of the skin to an edge of a separate internal face leaves the number of boundaries unchanged. Gluing two neighbouring edges of the same boundary also leaves the number of boundaries unchanged, unless they are neighbouring on both sides; in that exceptional case, the boundary must have degree $2$, and the gluing removes it. Finally, gluing two non-neighbouring edges of the same boundary increases the number of boundaries. The process starts with two boundaries, namely those of the empty skin, and stops as soon as only one boundary remains. Just before stopping, it produces a skin map whose second boundary has degree $2$. In this way, peeling defines a Markov chain.

There are, however, important differences between peeling and our procedure. First, our construction applies to maps of semi-stable topology, whereas peeling is carried out for discs. As a consequence, both the role of the skin map and the stopping rule are different: in peeling, the process ends when all boundaries but one have disappeared, while in our setting it ends just before a topology-changing Tutte outcome, namely (R) or (D${}^{>}$). Second, our steps (I), (D${}^=$) and the connected sub-case of (D${}^{>}$) are themselves sequences of peeling steps, and are therefore closer to the lazy peeling of \cite{Bud15}. Third, in peeling the re-rooting is governed by an arbitrary Markovian rule, whereas in our setting it must be chosen so that the skin remains connected. This raises the question whether peeling could be combined with our procedure to study the random geometry of maps of fixed topology through the random geometry of skin maps and their recursive gluing.

\section{Generalisation to maps with tubes or self-avoiding loops}
\label{sec:maps:tubes}

We now extend the previous discussion to the case of \emph{maps with tubes}. They form the first non-trivial extension of maps in which internal faces may have the topology of annuli, rather than only discs. Moreover, they are equivalent to the better-known model of \emph{maps with self-avoiding loops}, as explained in the \Cref{ssec:sef-avoiding}. They also exhibit, in a simplified setting, the new phenomena that arise for general stuffed maps. For this reason, we treat them with less detail than maps, while emphasising the genuinely new features.

\subsection{Definition and generating series}
Maps with tubes are defined in the same way as maps, except that we also allow internal faces with annular topology \cite{BEO15}. The degree of an annular face is the unordered pair $\set{k_1,k_2}$ of the degrees of its two boundary components, and we only allow $\set{k_1,k_2} \neq \set{0,0}$. 
We denote by $\MM_{g;\ell_1,\ldots,\ell_n}^{\circ}$ the set of maps with tubes of topology $(g,n)$ and respective boundary degrees $\ell_1,\ldots,\ell_n$. The trivial map is the only map with a boundary of degree $0$, so $\MM_{0;0}^{\circ}$ is a singleton. For a map with tubes $\mathfrak{m}$ we set
\begin{equation}
	N_{k_1,k_2}(\mathfrak{m}) \coloneqq \card*{\Set{ \text{annular faces of degree $\set{k_1,k_2}$} }}.
\end{equation}
We then define the generating series of maps with tubes of topology $(g,n)$ with fixed boundary degrees $\ell_1,\ldots,\ell_n$ by
\begin{equation}
	W_{g;\ell_1,\ldots,\ell_n}^{\circ}
	\coloneqq
	\sum_{\mathfrak{m} \in \MM_{g;\ell_1,\ldots,\ell_n}^\circ}
	\frac{q^{v(\mathfrak{m})}}{\card{\Aut(\mathfrak{m})}}
	\prod_{k \geq 1} t_k^{N_{k}(\mathfrak{m})}
	\prod_{\substack{k_1 \geq k_2 \geq 0 \\ k_1+k_2>0}} t_{k_1,k_2}^{N_{k_1,k_2}(\mathfrak{m})},
\end{equation}
and, with free boundary degrees, by
\begin{equation}
	W_{g,n}^{\circ}(x_1,\ldots,x_n)
	\coloneqq
	\delta_{g,0}\delta_{n,1}\frac{q}{x_1}
	+
	\sum_{\ell_1,\ldots,\ell_n \geq 1}
	\frac{W_{g;\ell_1,\ldots,\ell_n}^{\circ}}{x_1^{\ell_1 + 1} \cdots x_n^{\ell_n + 1}}.
\end{equation}
Unlike maps, maps with tubes may have non-trivial automorphisms, so
\begin{equation}
	W_{g,n}^{\circ}
	\in
	\mathbb{Q}[x_1^{-1},\ldots,x_n^{-1},t_3,t_4,\ldots,t_{1,0},t_{2,0},\ldots,t_{1,1},t_{2,1},\ldots]\bbraket{q,t_1,t_2}.
\end{equation}
It is convenient to introduce Boltzmann weights $t_{k_1,k_2}=t_{k_2,k_1}$ for all $k_1,k_2\geq 0$, and to set $t_{k_1,k_2}=0$ whenever $(k_1,k_2)=(0,0)$ or one of $k_1,k_2$ is negative. In our constructions we will also encounter faces that are labelled but not rooted; we still refer to them as boundaries, and we will make their special role explicit when needed. Maps are recovered by setting $t_{k_1,k_2}=0$ for all $k_1,k_2$. We introduce the disc and annular potentials
\begin{equation}
	T_{0,1}(x)
	\coloneqq
	\sum_{k \geq 1} \frac{t_k}{k} x^k,
	\qquad
	T_{0,2}(x_1,x_2)
	\coloneqq
	\sum_{\substack{k_1,k_2\geq 0 \\ k_1+k_2>0}} \frac{t_{k_1,k_2}}{[k_1] [k_2]}x_1^{k_1}x_2^{k_2} ,
\end{equation}
where
\begin{equation}
	[k] \coloneqq
	\begin{cases}
		k, & k>0,\\
		1, & k=0.
	\end{cases}
\end{equation}
Comparing to \Cref{sec:ord:maps}, we have $T_{0,1}(x) = T(x)$, and in the notations of \cite{BEO15} the generating series of ``rings'' is $R(x_1,x_2) = e^{T_{0,2}(x_1,x_2)}$. 

\subsection{Maps with self-avoiding loops}
\label{ssec:sef-avoiding}
Maps with tubes are in correspondence with the better-known model of maps with self-avoiding loops. The latter has been extensively studied in the literature, due to its rich phase diagram displaying a continuous family of universality classes \cite{Kos89,KS92,EK95,BBG12a,BBG12b,Bud18,Kho22}. The former has rarely been considered, but it is better adapted to the discussion of skinning and to the further generalisation to stuffed maps in \Cref{sec:stuffed:maps}. Although it will play no role in the subsequent analysis, it may therefore be useful to describe the correspondence.

A map with self-avoiding loops is a map with two types of internal faces of disc topology, normal and special, and two types of half-edges, normal and special. Normal faces have only normal half-edges. Special faces carry exactly two special half-edges, the other half-edges being normal: going around a special face, we see $k_1 \geq 0$ normal edges, one special edge, $k_2 \geq 0$ normal edges, and one special edge. We call the pair $\set{k_1,k_2}$ the \emph{interspaced degree}. We impose that special faces have at least one normal half-edge, that is, $k_1 + k_2 > 0$. Crucially, edges can only be made of two half-edges of the same type. In each special face, we draw a simple path connected the two special edges, and due to the previous edge-rule these paths must assemble into loops, and loops are mutually avoiding. This configuration of self-avoiding loops (up to isotopy in the map minus the vertices) fully determines which faces and which edges are special, namely those crossed by a loop. The degree of a loop is the number of special faces it crosses. In particular, two special edges of a given special face glued to each other indicate a loop of degree $1$. We denote by $\MM_{g;\ell_1,\ldots,\ell_n}^{\alpha}$ the set of maps with self-avoiding loops of topology $(g,n)$ and with $n$ normal boundary faces of respective degrees $\ell_1,\ldots,\ell_n \geq 1$. For a map with self-avoiding loops $\mathfrak{m}$, we set
\begin{equation}
\begin{split}
	L(\mathfrak{m}) & \coloneqq \card*{\Set{\text{loops}}}, \\
	N_{k}(\mathfrak{m}) & \coloneqq \card*{\Set{\text{normal faces of degree $k$}}}, \\
	P_{k_1,k_2}(\mathfrak{m}) & \coloneqq \card*{\Set{ \text{special faces of interspaced degree $\set{k_1,k_2}$} }}.
\end{split}
\end{equation}
We then define the generating series
\begin{equation}
	W_{g;\ell_1,\ldots,\ell_n}^{\alpha}
	\coloneqq
	\sum_{\mathfrak{m} \in \MM_{g;\ell_1,\ldots,\ell_n}^{\alpha}}
		\frac{q^{v(\mathfrak{m})}\mu^{L(\mathfrak{m})}}{\card{\Aut(\mathfrak{m})}}
		\prod_{k \geq 1} t_k^{N_k(\mathfrak{m})}
		\prod_{\substack{k_1 \geq k_2 \geq 0 \\ k_1 + k_2 > 0}}
			s_{k_1,k_2}^{P_{k_1,k_2}(\mathfrak{m})}.
\end{equation}
It is convenient to introduce Boltzmann weights $s_{k_1,k_2} = s_{k_2,k_1}$ for all $k_1,k_2 \geq 0$, and to set $s_{0,0} = 0$.

A map with self-avoiding loops determines a map with tubes, whose annular faces are given by the sequences of special faces crossed by loops, see~\Cref{fig:loops:tubes}. Keeping track of all possible ways in which an annular face can be obtained from a sequence of special faces along a loop, one sees that
\begin{equation}
	W_{g;\ell_1,\ldots,\ell_n}^{\alpha} = W_{g;\ell_1,\ldots,\ell_n}^{\circ}
\end{equation}
after the invertible change of formal variables
\begin{equation}
	\sum_{k_1,k_2 \geq 0} t_{k_1,k_2}x_1^{k_1}x_2^{k_2} = - \mu \log\left(1 - \sum_{k_1,k_2 \geq 0} s_{k_1,k_2}x_1^{k_1}x_2^{k_2}\right),
\end{equation}
where $t_{k_1,k_2}$ are the annular weights in maps with tubes and $s_{k_1,k_2}$ are the weights of special faces in maps with self-avoiding loops. Note the agreement between the conventions $t_{0,0} = 0$ and $s_{0,0} = 0$. The correspondence is stated and proved in general in \cite{BGS}, but its origin and its realisation in special cases can already be found in \cite{BBG12a,BBG12b}.

\begin{figure}

	\caption{An annular face built from a cyclic sequence of special disc faces.}
	\label{fig:loops:tubes}
\end{figure}

\subsection{Tutte's procedure}
\label{ssec:Tutte:tubes}
Tutte's procedure for maps with tubes follows the same prescription as for maps: we erase the edge $r_1$ supporting the root $\rho_1$ and distinguish the cases according to the face or boundary lying on the other side. The cases (R) and (D) are unchanged, except that the remaining maps may now carry tubes. The case (I), however, is richer, because the internal face adjacent to $r_1$ may be annular; as a result, case (I) now has both topology-preserving and topology-changing outcomes. We therefore review only this case, subdividing it into (I${}^{=}$) and (I${}^{>}$).

(I${}^{=}$) This includes two types of topology-preserving internal-face removal. First, if $r_1$ bounds an internal disc face of degree $k\geq 1$, erasing it is equivalent to removing this internal face, and yields a map of genus $g$ with $n$ boundaries of respective degrees $\ell_1+k-2,\bm{\ell}_I$.

Second, suppose that $r_1$ bounds $\partial_1$ and the boundary $\delta'$ of degree $k'\geq 1$ of an annular face whose other boundary $\delta''$ has degree $k'' \geq 0$. When erasing $r_1$, we also erase the interior of the annular face. If one of the two resulting components is a disc, then we treat the operation as topology-preserving, by including this disc in the removed piece. The boundary $\partial'' \coloneqq \delta''$ survives as a boundary of the resulting object, but remains exceptionally unrooted, see~\Cref{fig:I:tubes}.

\begin{figure}[H]
	\centering

	\caption{Case (I${}^=$), generic situation.}
	\label{fig:I:tubes}
\end{figure}

The following special cases are naturally included here. If $\partial_1$ is glued to $\delta'$ and $\ell_1=k'\geq 1$, the first component is a disc with boundary degree $2k'-2$ and no internal faces, whose first $k'-1$ half-edges are glued to the next $k'-1$ ones, see~\Cref{fig:deg1}; this includes the case $k'=1$, with the first component being the trivial map. If $k''=0$, then the second component is the trivial map, see~\Cref{fig:deg2}.

\begin{figure}[H]
	\centering
	%
	\caption{Case (I${}^=$), degenerate sub-case: the first component is a disc without internal faces, possibly the trivial map.}
	\label{fig:deg1}
\end{figure}

\begin{figure}[H]
	\centering
	%
	\caption{Case (I${}^=$), degenerate sub-case: the second component is the trivial map.}
	\label{fig:deg2}
\end{figure}

(I${}^{>}$) This is the topology-changing annular-face case. Suppose that $r_1$ bounds $\partial_1$ and the boundary $\delta'$ of degree $k' \geq 1$ of an annular face whose other boundary $\delta''$ has degree $k' \geq 1$. After erasing $r_1$ we get a new boundary $\partial'$ of degree $\ell_1 + k' - 2$ which we root at $\overline{\rho_1}{}^{-}$ (being topology-changing actually forces $k' \geq 2$). As before, the boundary $\partial'' \coloneqq \delta''$ survives as a boundary of the resulting object but remains  unrooted.

As in the (D$^{>}$) case, there are two possibilities.

\emph{Connected case.} If the result remains connected, it is a map of genus $g-1$ with $n+1$ boundaries of respective degrees $\ell_1+k'-2,k'',\bm{\ell}_I$. We declare that the first boundary is rooted at $\overline{\rho_1}{}^{-}$, while the second boundary is unrooted.

\begin{figure}[H]
	\centering

	\caption{\label{fig:Tutte:tubes:conn} Case (I${}^>$), connected sub-case.}
\end{figure}

\emph{Disconnected case.} If the result is disconnected and neither component is a disc, then the first component is the one with boundary $\partial'$, its other boundaries are $\bm{\partial}_{J'}$ with respective degrees $\ell_1+k'-2,\bm{\ell}_{J'}$, and it has genus $h'$. The second component has genus $h''$, its first boundary is $\partial''$, and its other boundaries are $\bm{\partial}_{J''}$ with respective degrees $k'',\bm{\ell}_{J''}$. We have $h'+h''=g$ and $J'\sqcup J''=I$, and moreover $(h',J')$ and $(h'',J'')$ are both different from $(0,\emptyset)$.

\begin{figure}[H]
	\centering

	\caption{\label{fig:Tutte:tubes:disc} Case (I${}^>$), disconnected sub-case.}
\end{figure}

This gives rise to a bijection of orbifold sets\footnote{
	By an orbifold set, we mean a set in which each point is equipped with its finite isotropy group, namely the automorphism group of the corresponding map with tubes. A bijection of orbifold sets consists of a bijection of the underlying sets together with an isomorphism between the isotropy groups of each pair of corresponding points.
}
\begin{equation}
	\Tutte
	\colon
	\MM_{g;\ell_1,\bm{\ell}_I}^{\circ}
	\overset{\simeq}{\longrightarrow}
	\mathbb{I}_{g;\ell_1,\bm{\ell}_I}^{=}
	\sqcup
	\mathbb{I}_{g;\ell_1,\bm{\ell}_I}^{>}
	\sqcup
	\mathbb{R}_{g;\ell_1,\bm{\ell}_I}
	\sqcup
	\mathbb{D}_{g;\ell_1,\bm{\ell}_I}^{>}
	\sqcup
	\mathbb{D}_{g;\ell_1,\bm{\ell}_I}^{=},
\end{equation}
where we have defined
\begin{equation}
\begin{split}
	\mathbb{I}_{g;\ell_1,\bm{\ell}_I}^{=}
	&\coloneqq
	\bigsqcup_{k \ge 1}
		\MM^{\circ}_{g;\ell_1 + k - 2,\bm{\ell}_I}
	\sqcup
	\bigsqcup_{k',k'' \geq 1}
		\bigl(\MM_{0;\ell_1 + k' - 2}^{\circ} \times \MM^{\circ}_{g;\underline{k}'',\bm{\ell}_{I}}\bigr)
	\sqcup
	\bigsqcup_{\substack{k' \geq 1 \\ k'' \geq 0}}
		\bigl(\MM_{g;\ell_1 + k' - 2,\bm{\ell}_I}^{\circ} \times \MM^{\circ}_{0;\underline{k}''}\bigr), \\
	\mathbb{I}_{g;\ell_1,\bm{\ell}_I}^{>}
	&\coloneqq
	\bigsqcup_{k',k'' \geq 1} \Biggl(
		\MM^{\circ}_{g-1;\ell_1 + k' - 2,\underline{k}'',\bm{\ell}_I}
		\sqcup
		\bigsqcup_{\substack{h' + h'' = g \\ J' \sqcup J'' = I}}^{\nodisc}
			\MM_{h';\ell_1 + k' - 2,\bm{\ell}_{J'}}^{\circ} \times \MM^{\circ}_{h'';\underline{k}'',\bm{\ell}_{J''}}
	\Biggr), \\
	\mathbb{R}_{g;\ell_1,\bm{\ell}_I}
	&\coloneqq
	\bigsqcup_{i = 2}^{n}
		\MM^{\circ}_{g;\ell_1 + \ell_i - 2,\bm{\ell}_{I_i}} \times \set{0,\ldots,\ell_i - 1}, \\
	\mathbb{D}_{g;\ell_1,\bm{\ell}_I}^{>}
	&\coloneqq
	\bigsqcup_{\substack{k',k'' \geq 1 \\ k' + k'' = \ell_1 - 2}} \Biggl(
		\MM^{\circ}_{g - 1;k',k'',\bm{\ell}_I}
		\sqcup
		\bigsqcup_{\substack{h' + h'' = g \\ J' \sqcup J'' = I}}^{\nodisc}
			\MM^{\circ}_{h';k',\bm{\ell}_{J'}} \times \MM^{\circ}_{h'';k'',\bm{\ell}_{J''}}
	\Biggr), \\
	\mathbb{D}_{g;\ell_1,\bm{\ell}_{I}}^{=}
	&\coloneqq
	\bigsqcup_{k = 0}^{\ell_1 - 3}
		\bigl(\MM^{\circ}_{0;k} \times \MM_{g;\ell_1 - k - 2,\bm{\ell}_I}^{\circ}\bigr)
		\sqcup
		\bigl(\MM^{\circ}_{g;\ell_1  - k - 2,\bm{\ell}_I} \times \MM_{0;k}^{\circ}\bigr).
\end{split}
\end{equation}
An underlined index $\underline{k}''$ means that we consider a quotient set in which maps carry a distinguished boundary of degree $k''$ that is not rooted. This translates into the following relations at the level of generating series, cf.~\cite{BEO15}.
 
\begin{lem}
	Let $g \geq 0$, $n \geq 1$, and $\ell_1,\ldots,\ell_n \geq 1$ be such that $(g,n) \neq (0,1)$. We have
		\begin{equation}
	\begin{split}
		W^{\circ}_{g;\ell_1,\bm{\ell}_I}
		&=
		\sum_{k \geq 1} t_k \, W^{\circ}_{g;\ell_1+k-2,\bm{\ell}_I}
		+
		\sum_{k',k'' \geq 1}
			\frac{t_{k',k''}}{k''}
			W^{\circ}_{0;\ell_1+k'-2}
			W^{\circ}_{g;k'',\bm{\ell}_{I}}
		+
		\sum_{\substack{k'\geq 1 \\ k''\geq 0}}
			\frac{t_{k',k''}}{[k'']}
			W^{\circ}_{g;\ell_1+k'-2,\bm{\ell}_I}
			W^{\circ}_{0;k''} \\
		&\quad+
		\sum_{k',k'' \geq 1}
			\frac{t_{k',k''}}{k''}
			\Biggl(
				W^{\circ}_{g-1;\ell_1+k'-2,k'',\bm{\ell}_I}
				+
				\sum_{\substack{h' + h'' = g \\ J' \sqcup J'' = I}}^{\nodisc}
				W^{\circ}_{h';\ell_1 + k' - 2,\bm{\ell}_{J'}}
				W^{\circ}_{h'';k'',\bm{\ell}_{J''}}
			\Biggr)
		+
		\sum_{i=2}^n \ell_i \, W^{\circ}_{g;\ell_1+\ell_i-2,\bm{\ell}_{I_i}} \\
		&\quad+
		\sum_{\substack{k',k'' \geq 1 \\  k' + k'' = \ell_1 - 2}}
		\Biggl(
			W^{\circ}_{g-1;k',k'',\bm{\ell}_{I}}
			+
			\sum_{\substack{h' + h'' = g \\ J' \sqcup J'' = I}}^{\nodisc}
			W^{\circ}_{h';k',\bm{\ell}_{J'}}
			W^{\circ}_{h'';k'',\bm{\ell}_{J''}}
			+
			W^{\circ}_{0;k'}W^{\circ}_{g;k'',\bm{\ell}_I}
			+
			W^{\circ}_{g;k',\bm{\ell}_I}W^{\circ}_{0;k''}
		\Biggr).
	\end{split}
	\end{equation}
	For generating series with free boundary degrees, this becomes
	\begin{equation}
	\label{Tutte:rec:tubes}
	\begin{split}
		\mathcal{L}^{\circ}_{x_1}W^{\circ}_{g,n}(x_1,\bm{x}_I)
		&=
		\Bigg\langle
			\frac{\partial_{\xi} T_{0,2}(\xi,\eta)}{x_1 - \xi}
			\biggl(
				W^{\circ}_{g-1,n+1}(\xi,\eta,\bm{x}_I)
				+
				\sum_{\substack{h' + h'' = g \\ J' \sqcup J'' = I}}^{\nodisc}
				W^{\circ}_{h',1+\card{J'}}(\xi,\bm{x}_{J'})W^{\circ}_{h'',1+\card{J''}}(\eta,\bm{x}_{J''})
			\biggr)
		\Bigg\rangle_{\xi,\eta} \\
		&\qquad
		+
		\sum_{i=2}^n
		\partial_{x_i}\left(
			\frac{W^{\circ}_{g,n-1}(x_1,\bm{x}_{I_i}) - W^{\circ}_{g,n-1}(x_i,\bm{x}_{I_i})}{x_1 - x_i}
		\right) \\
		&\qquad
		+
		W^{\circ}_{g-1,n+1}(x_1,x_1,\bm{x}_I)
		+
		\sum_{\substack{h' + h'' = g \\ J' \sqcup J'' = I}}^{\nodisc}
			W^{\circ}_{h',1+\card{J'}}(x_1,\bm{x}_{J'})W^{\circ}_{h'',1+\card{J''}}(x_1,\bm{x}_{J''}).
	\end{split}
	\end{equation}
	where $\frac{1}{x_1 - y}$ is expanded as in~\eqref{x1:x2:expns} and $\mathcal{L}_{x_1}$ is the operator
	\begin{equation}
		\mathcal{L}^{\circ}_{x} f(x)
		\coloneqq
		\bigl[
			(x- T_{0,1}'(x))f(x)
		\bigr]_{-,x}
		-
		2\,W^{\circ}_{0,1}(x)f(x)
		-
		\Bigg\langle
			\frac{\partial_{\xi}T_{0,2}(\xi,\eta)}{x-\xi}
			\bigl(
				W^{\circ}_{0,1}(\xi) f(\eta)
				+
				f(\xi) W^{\circ}_{0,1}(\eta)
			\bigr)
		\Bigg\rangle_{\xi,\eta}.
	\end{equation}
\end{lem}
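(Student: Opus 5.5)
The proof follows the same two-step pattern as \Cref{Tutte:lemma}. First, we read off the fixed-degree relation from the orbifold bijection $\Tutte$ of \Cref{ssec:Tutte:tubes}, summand by summand. Then we pass to free boundary degrees by multiplying by $x_1^{-\ell_1}x_2^{-\ell_2-1}\cdots x_n^{-\ell_n-1}$ and summing. The terms coming from disc faces, (R) and (D) are literally those of \Cref{Tutte:lemma}, with $W$ replaced by $W^{\circ}$. The factor $\ell_i$ in (R) again counts the choices of $j_i$. So the only new work concerns cases (I${}^{=}$) and (I${}^{>}$) with an annular face.

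For an annular face of degree $\set{k',k''}$ whose boundary $\delta'$ (degree $k'$) is adjacent to $r_1$, we track the weights as follows.
\begin{itemize}
\item The half-edge of $\delta'$ glued to $\rho_1$ is determined by the root and by the rerooting rule, so it contributes no extra factor.
\item The reconstruction of the original map from the image data needs to know how $\delta''$ is glued back. This is encoded by working with an \emph{unrooted} boundary of degree $k''$.
\item In the orbifold counting, a map with a distinguished unrooted boundary of degree $k''\geq 1$ has weight $W^{\circ}/k''$. This is because the $k''$ rootings form a free orbit up to automorphisms, and the automorphism factors $1/\card{\Aut}$ are exactly designed to make this hold. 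When $k''=0$, the second component is the trivial map and the factor is $1=1/[0]$.
\end{itemize}
This yields the coefficient $t_{k',k''}/[k'']$ in each annular term. Sub-case by sub-case, we get:
\begin{itemize}
\item in (I${}^{=}$), the disc component lies either on the $\partial'$ side or on the $\partial''$ side, giving the two products with $W^{\circ}_{0;\cdot}$;
\item in (I${}^{>}$), we get the connected term $W^{\circ}_{g-1}$ and the disconnected ``no $(0,1)$'' sum.
\end{itemize}
The step I expect to be the main obstacle is checking that the isotropy groups match on both sides, that is, that $\Tutte$ is genuinely a bijection of orbifold sets. In particular, the degenerate configurations of \Cref{fig:deg1,fig:deg2} must be counted exactly once, and with the correct factor.

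For the generating-series form, we use
\begin{equation}
\partial_\xi T_{0,2}(\xi,\eta)=\sum_{k'\geq 1,\,k''\geq 0}\frac{t_{k',k''}}{[k'']}\,\xi^{k'-1}\eta^{k''}.
\end{equation}
The residue in $\eta$ of $\eta^{k''}F(\eta)$ extracts the degree-$k''$ coefficient of a series with weight $\eta^{-k''-1}$. The residue in $\xi$ of $\xi^{k'-1}(x_1-\xi)^{-1}G(\xi)$, with $(x_1-\xi)^{-1}$ expanded as in \eqref{x1:x2:expns}, turns a coefficient of $\xi^{-m-1}$ into $x_1^{-(m-k'+2)}$. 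This enforces $m=\ell_1+k'-2$, which is exactly the degree shift in (I). The same coefficient identification handles the disc-face term, the (R) term and the (D) terms, as in the proof of \Cref{Tutte:lemma}.

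Finally, we move to the left-hand side all terms that are linear in $W^{\circ}_{g,n}$, with a factor $W^{\circ}_{0,1}$ or $T'_{0,1}$. These are the disc-face term, the (D${}^{=}$) terms, and the two (I${}^{=}$) annular terms. Together they assemble into $\mathcal{L}^{\circ}_{x_1}W^{\circ}_{g,n}$. One must check that the trivial-map term $q/x$ in $W^{\circ}_{0,1}$ accounts precisely for the cases $k''=0$ and $\ell_1+k'-2=0$. One must also check that taking negative parts $[\cdot]_{-,x_1}$ matches the constraint $\ell_1\geq 1$.
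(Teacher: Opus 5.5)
Your proposal is correct and is the argument the paper has in mind: the paper gives no separate proof and simply reads the lemma off the orbifold bijection $\Tutte$, with the coefficient extraction of \Cref{Tutte:lemma}. Your $1/[k'']$ weight for the unrooted boundary $\delta''$ (properly an orbit--stabilizer count rather than literally a free orbit) and your expansion of $\partial_{\xi}T_{0,2}(\xi,\eta)$ supply exactly the details the paper leaves implicit.

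One caveat, which your $q/x$ check will expose once you apply it to (D${}^{=}$) as well as to the annular terms. The trivial-disc contributions $k'=0$ or $k''=0$ belong to $\mathbb{D}^{=}$ and are produced by the $q/x$ part of $-2W^{\circ}_{0,1}f$ in $\mathcal{L}^{\circ}$. The printed fixed-degree sum over $k',k''\geq 1$, copied from \Cref{Tutte:lemma}, omits them. So the fixed-degree identity should be read with those terms restored, and ``literally those of \Cref{Tutte:lemma}'' holds at the level of the bijection rather than of that printed formula.
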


\subsection{Skinning maps}
\label{ssec:skin:tubes}
As for maps, we now separate the topology-changing and topology-preserving cases of Tutte's procedure, and use this distinction to define core maps and skin maps. The new feature is that annular faces may appear in the detached part, producing elementary skin maps with tubes and, in particular, unrooted boundary components.

\begin{defn}
	Let $g \geq 0$, $n \geq 1$, and $\ell_1,\ldots,\ell_n \geq 1$ be such that $(g,n) \neq (0,1)$. We denote by $\hat{\MM}^{\circ}_{g;\ell_1,\bm{\ell}_I}$ the set of maps with tubes $\hat{\mathfrak{m}}$ of topology $(g,n)$ with boundary degrees $\ell_1,\bm{\ell}_I$ such that applying Tutte's procedure changes the topology, i.e.
	\begin{equation}
		\Tutte(\hat{\mathfrak{m}})
		\in
		\mathbb{I}_{g;\ell_1,\bm{\ell}_I}^{>}
		\sqcup
		\mathbb{R}_{g;\ell_1,\bm{\ell}_I}
		\sqcup
		\mathbb{D}_{g;\ell_1,\bm{\ell}_I}^{>}.
	\end{equation}
	These are the \emph{core maps with tubes}. We introduce their generating series
	\begin{equation}
	\begin{split}
		\hat{W}_{g;\ell_1,\bm{\ell}_I}^{\circ}
		&\coloneqq
		\sum_{\hat{\mathfrak{m}} \in \hat{\MM}^{\circ}_{g;\ell_1,\bm{\ell}_I}}
		\frac{q^{v(\hat{\mathfrak{m}})}}{\card{\Aut(\hat{\mathfrak{m}})}}
		\prod_{k \geq 1} t_{k}^{N_k(\hat{\mathfrak{m}})}
		\prod_{\substack{k_1 \geq k_2 \geq 0 \\ k_1+k_2>0}} t_{k_1,k_2}^{N_{k_1,k_2}(\hat{\mathfrak{m}})}, \\
		\hat{W}_{g,n}^{\circ}(x_1,\ldots,x_n)
		&\coloneqq
		\sum_{\ell_1,\ldots,\ell_n \geq 1}
		\frac{\hat{W}_{g;\ell_1,\bm{\ell}_I}^{\circ}}{x_1^{\ell_1}x_2^{\ell_2 + 1} \cdots x_n^{\ell_n + 1}}.
	\end{split}
	\end{equation}
\end{defn}

By retaining only the topology-changing terms in~\eqref{Tutte:rec:tubes}, we obtain the following relation for fixed boundary degrees:
\begin{equation}
\begin{split}
	\hat{W}^{\circ}_{g;\ell_1,\bm{\ell}_I}
	&=
	\sum_{k',k'' \geq 1}
	\frac{t_{k',k''}}{k''}
	\Biggl(
		W^{\circ}_{g-1;\ell_1+k'-2,k'',\bm{\ell}_I}
		+
		\sum_{\substack{h' + h'' = g \\ J' \sqcup J'' = I}}^{\nodisc}
			W^{\circ}_{h';\ell_1 + k' - 2,\bm{\ell}_{J'}}W^{\circ}_{h'';k'',\bm{\ell}_{J''}}
	\Biggr) \\
	&\quad
	+
	\sum_{i=2}^n \ell_i \, W^{\circ}_{g;\ell_1+\ell_i-2,\bm{\ell}_{I_i}}
	+
	\sum_{\substack{k',k'' \geq 1 \\ k' + k'' = \ell_1 - 2}}
	\Biggl(
		W^{\circ}_{g-1;k',k'',\bm{\ell}_{I}}
		+
		\sum_{\substack{h' + h'' = g \\ J' \sqcup J'' = I}}^{\nodisc}
			W^{\circ}_{h';k',\bm{\ell}_{J'}}W^{\circ}_{h'';k'',\bm{\ell}_{J''}}
	\Biggr).
\end{split}
\end{equation}
For free boundary degrees, this becomes
\begin{equation}
\label{Wcore:tubes:rec}
\begin{split}
	\hat{W}^{\circ}_{g,n}(x_1,\bm{x}_I)
	&=
	\Bigg\langle
		\frac{\partial_{\xi} T_{0,2}(\xi,\eta)}{x_1 - \xi}
		\biggl(
			W^{\circ}_{g-1,n+1}(\xi,\eta,\bm{x}_I)
			+
			\sum_{\substack{h' + h'' = g \\ J' \sqcup J'' = I}}^{\nodisc}
			W^{\circ}_{h',1+\card{J'}}(\xi,\bm{x}_{J'})W^{\circ}_{h'',1+\card{J''}}(\eta,\bm{x}_{J''})
		\biggr)\Bigg\rangle_{\xi,\eta} \\
	&\quad
	+
	\sum_{i=2}^n
	\partial_{x_i}\left(
		\frac{W^{\circ}_{g,n-1}(x_1,\bm{x}_{I_i}) - W^{\circ}_{g,n-1}(x_i,\bm{x}_{I_i})}{x_1 - x_i}
	\right) \\
	&\quad
	+
	W^{\circ}_{g-1,n+1}(x_1,x_1,\bm{x}_I)
	+
	\sum_{\substack{h' + h'' = g \\ J' \sqcup J'' = I}}^{\nodisc}
		W^{\circ}_{h',1+\card{J'}}(x_1,\bm{x}_{J'})W^{\circ}_{h'',1+\card{J''}}(x_1,\bm{x}_{J''}).
\end{split}
\end{equation}

We define \emph{elementary skin maps with tubes}  $\mathfrak{e}$ by examining the various cases of Tutte's procedure. All the boundaries of $\mathfrak{e}$ are simple except perhaps the first one. For the cases (R) and (D), the construction is the same as in \Cref{ssec:skin}, so we discuss only the new cases.

(I${}^{>}$) The elementary skin map $\mathfrak{e}$ is a pair of pants with one internal annular face $\mathfrak{f}$. The boundary $\partial_1\mathfrak{e}$ has root $\rho_1$ and degree $\ell_1$, and happens to be simple. The boundary $\partial_2\mathfrak{e}$ has degree $\ell_1 + k' - 2$ and root $\bbarsuperscript{\rho_1}{-}$. This root and the $\ell_1-2$ half-edges before are glued to the $\ell_1-1$ half-edges after $\rho_1$. Finally, $\partial_3\mathfrak{e}$ has degree $k''$ and is unrooted. $\mathfrak{f}$ has boundary components of degree $k'$ and $k''$. The latter is glued to $\partial_3\mathfrak{e}$. The former has a half-edge glued to $\rho_1$, and the $k' - 1$ half-edges before it are glued to the root of $\partial_2\mathfrak{e}$ and the $k' - 2$ half-edges after it.

(I${}^=$) If a disc face is removed, the construction is the same as in \Cref{ssec:skin}. We therefore consider the case of an annular face. From the point of view of Tutte's procedure, this case is similar to the disconnected case (I${}^>$), except that one of the two components in $\Tutte(\mathfrak{m})=(\mathfrak{m}',\mathfrak{m}'')$ is a disc. The elementary skin map $\mathfrak{e}$ is now an annulus. It is obtained by first constructing an auxiliary pair of pants $\mathfrak{a}$ as in (I${}^{>}$), and then, if $\mathfrak{m}'$ (respectively, $\mathfrak{m}''$) is a disc, gluing the boundary face of this disc to $\partial_2\mathfrak{a}$ (respectively, to $\partial_3\mathfrak{a}$). The boundary\footnote{
	In contrast to case (I${}^{=}$), if $\mathfrak{m}'$ is a disc, gluing it to $\mathfrak{a}$ may cause the boundary $\partial_1\mathfrak{e}$ to become non-simple.
} $\partial_1\mathfrak{e}$ has degree $\ell_1$ and root $\rho_1$. If $\mathfrak{m}'$ is a disc, $\partial_2\mathfrak{e}$ has degree $k''$, is unrooted, and is glued to the second boundary $\delta''$ of an annular face of degree $k',k''$. For this gluing, roots must be chosen and matched, but the resulting map is independent of this choice. If $\mathfrak{m}''$ is a disc, $\partial_2\mathfrak{e}$ has degree $\ell_1+k'-2$ and is rooted at $\bbarsuperscript{\rho_1}{-}$. This root and the following $k'-2$ half-edges are glued to $k'-1$ consecutive half-edges of the first boundary $\delta'$ of an annular face of degree $k',k''$, while the last half-edge of $\delta'$ is glued to $\rho_1$.

\begin{defn}
	Let $\ell_1,\ell \geq 1$. We denote by $\mathbb{E}_{\ell_1,\ell}^{\circ}$ the set of elementary skin maps with tubes $\mathfrak{e}$ with boundary degrees $\ell_1$ and $\ell$, arising from the topology-preserving cases (I${}^=$) and (D${}^{=}$). We define their generating series by
	\begin{equation}
		E_{\ell_1,\ell}^{\circ}
		\coloneqq
		\sum_{\mathfrak{e} \in \mathbb{E}_{\ell_1,\ell}^{\circ}}
		\frac{q^{v(\mathfrak{e}) - \ell}}{\card{\Aut(\mathfrak{e})}}
		\prod_{k \geq 1} t_k^{N_k(\mathfrak{e})}
		\prod_{\substack{k_1 \geq k_2 \geq 0 \\ k_1+k_2 > 0}} t_{k_1,k_2}^{N_{k_1,k_2}(\mathfrak{e})}.
	\end{equation}
\end{defn}

Our construction implies
\begin{equation}
\label{eskin:tubes:bij}
	E^\circ_{\ell_1,\ell}
	=
	t_{\ell - \ell_1 + 2}
	+
	2W_{0;\ell_1 - \ell - 2}^\circ
	+
	\sum_{k' \geq 1} \frac{t_{k',\ell}}{\ell}W_{0;\ell_1 + k' - 2}^\circ
	+
	\sum_{k'' \geq 0} \frac{t_{\ell - \ell_1 + 2,k''}}{[k'']} W_{0;k''}^\circ.
\end{equation}
In some sense, the last two terms mix the features of the first two terms already present for maps. As before, we can now define skin maps with tubes by successively gluing elementary skin maps.

\begin{defn}
\label{def:skin:tubes}
	A \emph{skin map with tubes} is an annulus that either has two simple boundaries of the same positive degree rooted at the same edge and no internal faces (\emph{trivial skin}), or is obtained from a sequence of $a \geq 1$ elementary skin maps with tubes $\mathfrak{e}_1,\ldots,\mathfrak{e}_{a}$ such that $\mathfrak{e}_j \in \mathbb{E}^{\circ}_{\ell_j,\ell_{j + 1}}$ (for some $\ell_1,\ldots,\ell_{a + 1} \geq 1$) by gluing $\partial_2\mathfrak{e}_j$ to $\partial_1\mathfrak{e}_{j + 1}$ for every $j \in \set{1,\ldots,a - 1}$. We denote by $\mathbb{S}_{\ell_1,\ell}^{\circ}$ the set of skin maps with tubes with boundaries of respective degrees $\ell_1,\ell$, and define their generating series by
	\begin{equation}
	\label{skin:tubes}
		S_{\ell_1,\ell}^{\circ}
		\coloneqq
		\sum_{\mathfrak{s} \in \mathbb{S}_{\ell_1,\ell}^{\circ}}
			\frac{q^{v(\mathfrak{s}) - \ell}}{\card{\Aut(\mathfrak{s})}}
			\prod_{k \geq 1} t_k^{N_k(\mathfrak{s})}
			\prod_{\substack{k_1 \geq k_2 \geq 0 \\ k_1+k_2>0}} t_{k_1,k_2}^{N_{k_1,k_2}(\mathfrak{s})},
		\qquad
		S^{\circ}(x_1,x)
		\coloneqq
		\sum_{\ell_1,\ell \geq 1} S_{\ell_1,\ell}^{\circ} \frac{x^{\ell - 1}}{x_1^{\ell_1 + 1}}.
	\end{equation}
\end{defn}

The argument of \Cref{ssec:rec:skin} now gives the recursive relations
\begin{equation}
	S_{\ell_1,\ell}^{\circ}
	=
	\delta_{\ell_1,\ell}
	+
	\sum_{\ell' \geq 1} E_{\ell_1,\ell'}^{\circ} S_{\ell',\ell}^{\circ},
	\qquad
	S_{\ell_1,\ell}^{\circ}
	=
	\delta_{\ell_1,\ell}
	+
	\sum_{\ell' \geq 1} S_{\ell_1,\ell'}^{\circ} E_{\ell',\ell}^{\circ}.
\end{equation}
These become explicit in terms of the weights and the disc generating series upon inserting \eqref{eskin:tubes:bij}. In particular, the second equality becomes
\begin{equation}
	\label{second:skinning:tube} S_{\ell_1,\ell}^\circ
	=
	\delta_{\ell_1,\ell}
	+
	\sum_{\ell' \geq 1}
		S_{\ell_1,\ell'}^\circ
		\bigg(
			t_{\ell-\ell'+2}
			+
			2W_{0;\ell'-\ell-2}^\circ
			+
			\sum_{\ell'' \geq 1} \frac{t_{\ell'',\ell}}{\ell}W_{0;\ell'+\ell''-2}^\circ
			+
			\sum_{\ell'' \geq 0} \frac{t_{\ell-\ell'+2,\ell''}}{[\ell'']}W_{0;\ell''}^\circ
		\bigg).
\end{equation}
In generating-series form, this can be written
\begin{multline}
\label{rec:skin:tubes}
	\Bigg[
		S^\circ(x_1,x)
		\biggl(
			x-T_{0,1}'(x)
			-2W^\circ_{0,1}(x)
			-
			\bigl\langle
				\partial_xT_{0,2}(x,\xi)W^\circ_{0,1}(\xi)
			\bigr\rangle_\xi
		\biggr)
	\Bigg]_{+,x} \\
	=
		\left[\frac{1}{x_1 - x} +  \bigl\langle S^{\circ}(x_1,\xi) \partial_{\xi} T_{0,2}(x,\xi) W^{\circ}_{0,1}(\xi) \bigr\rangle_{\xi}\right]_{+,x}.
\end{multline}
In the last term, extracting the positive powers of $x$ amounts to removing the constant term in $x$.

\subsection{Skinning formula and skin enumeration}
\label{ssec:skinning:tubes}
Given a map with tubes $\mathfrak{m}$ of topology $(g,n)\neq(0,1)$, iterating Tutte's procedure until the topology changes produces a pair $(\mathfrak{s},\hat{\mathfrak{m}})$ consisting of a skin map with tubes and a core map with tubes. There is, however, a subtlety in this iteration. In order to apply Tutte again, we need a root on the new boundary produced by the first step, but no such root is provided when an annular face is erased. To address this, we choose a root arbitrarily on the new boundary. Technically, this means replacing in $\mathbb{I}^{=}_{g;\ell_1,\bm{\ell}_I}$ the sets $\mathbb{M}^{\circ}_{g;\underline{k''},\bm{\ell}_I}$ (respectively, $\mathbb{M}^{\circ}_{0;\underline{k''}}$) of maps with unrooted first boundary by arbitrary subsets of $\mathbb{M}^{\circ}_{g;k'',\bm{\ell}_I}$ (respectively, $\mathbb{M}_{0;k''}^{\circ}$) containing a single rooted representative of each map with unrooted first boundary, and making the analogous replacements in $\mathbb{I}^{>}_{g;\ell_1,\bm{\ell}_I}$. The Markovian nature of the skinning process is preserved under any such choice, which is crucial for ensuring that the skin and core can be chosen ``independently''. The erased annular face has a rotational automorphism, which explains why gluing the skin and core back together still yields the inverse bijection described below.

This said, let us return to the description of $\mathfrak{s}$ and $\hat{\mathfrak{m}}$. The boundary $\partial_2\mathfrak{s}$ is simple, but may fail to be rooted, while $\partial_1\hat{\mathfrak{m}}$ is rooted, but may fail to be simple. If $\partial_2\mathfrak{s}$ is rooted, then we can glue it directly to $\partial_1\hat{\mathfrak{m}}$ and recover $\mathfrak{m}$. If $\partial_2\mathfrak{s}$ is not rooted, then it is necessarily a boundary component of an annular face in $\mathfrak{s}$. In that case, after choosing a root on $\partial_2\mathfrak{s}$, we can glue it to $\partial_1\hat{\mathfrak{m}}$; the resulting map is independent of this choice and is again $\mathfrak{m}$. Thus, the map $\mathfrak{m}\mapsto (\mathfrak{s},\hat{\mathfrak{m}})$ is a bijection, and we obtain the skinning relation
\begin{equation}
	W_{g;\ell_1,\bm{\ell}_I}^{\circ}
	=
	\sum_{\ell \geq 1} S^{\circ}_{\ell_1,\ell} \hat{W}^{\circ}_{g;\ell,\bm{\ell}_I}
	\qquad\text{or}\qquad
	W_{g,n}^{\circ}(x_1,\bm{x}_I)
	=
	\Big\langle
		S^{\circ}(x_1,x) \hat{W}^{\circ}_{g,n}(x,\bm{x}_I)
	\Big\rangle_{x}.
\end{equation}
Equivalently, in terms of generating series with free boundary degrees and after inserting \eqref{Wcore:tubes:rec}:

\begin{prop}[Skinning for maps with tubes]
	Let $g \geq 0$, $n \geq 1$ be such that $(g,n) \neq (0,1)$. Then
	\begin{equation}\label{eq:skinning:tubes}
	\begin{split}
		W_{g,n}^{\circ}(x_1,\bm{x}_I)
		&=
		\bigg\langle
			S^{\circ}(x_1,x)
			\frac{\partial_{\xi} T_{0,2}(\xi,\eta)}{x - \xi}
			\biggl(
				W^{\circ}_{g-1,n+1}(\xi,\eta,\bm{x}_I)
				+
				\sum_{\substack{h' + h'' = g \\ J' \sqcup J'' = I}}^{\nodisc}
				W^{\circ}_{h',1+\card{J'}}(\xi,\bm{x}_{J'})W^{\circ}_{h'',1+\card{J''}}(\eta,\bm{x}_{J''})
			\biggr)
		\bigg\rangle_{x,\xi,\eta} \\
		&
		\qquad +
		\Bigg\langle
			S^{\circ}(x_1,x)
			\Biggl(
				\sum_{i=2}^n
				\partial_{x_i}\left(
					\frac{W_{g,n-1}^{\circ}(x,\bm{x}_{I_i}) - W_{g,n-1}^{\circ}(x_i,\bm{x}_{I_i})}{x - x_i}
				\right)
				\\
				&\qquad\qquad\qquad\qquad
				+
				W_{g-1,n+1}^{\circ}(x,x,\bm{x}_I)
				+
				\sum_{\substack{h' + h'' = g \\ J' \sqcup J'' = I}}^{\nodisc}
				W_{h',1+\card{J'}}^{\circ}(x,\bm{x}_{J'})W_{h'',1+\card{J''}}^{\circ}(x,\bm{x}_{J''})
			\Biggr)
		\Bigg\rangle_{x}.
	\end{split}
	\end{equation}
\end{prop}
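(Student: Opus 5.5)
The plan is to combine the skinning relation just established,
\begin{equation*}
	W_{g,n}^{\circ}(x_1,\bm{x}_I) = \big\langle S^{\circ}(x_1,x)\,\hat{W}^{\circ}_{g,n}(x,\bm{x}_I)\big\rangle_x ,
\end{equation*}
with the expression \eqref{Wcore:tubes:rec} for the generating series of core maps with tubes. We substitute $x_1 \to x$ in \eqref{Wcore:tubes:rec} and insert it into the residue. The only bookkeeping point is that \eqref{Wcore:tubes:rec} contains its own residue $\langle\cdot\rangle_{\xi,\eta}$. We therefore have to check that the residues in $x$ and in $(\xi,\eta)$ can be combined into the single extraction $\langle\cdot\rangle_{x,\xi,\eta}$ appearing in the first line of \eqref{eq:skinning:tubes}. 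The remaining terms (the $\partial_{x_i}$ term, the $W^{\circ}_{g-1,n+1}(x,x,\bm{x}_I)$ term, and the product term) are already in the required form after renaming $x_1\to x$.

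The main work is to make the skinning bijection $\mathfrak{m}\mapsto(\mathfrak{s},\hat{\mathfrak{m}})$ rigorous as a bijection of orbifold sets, so that the weighted identity $W^{\circ}_{g;\ell_1,\bm{\ell}_I}=\sum_{\ell}S^{\circ}_{\ell_1,\ell}\hat{W}^{\circ}_{g;\ell,\bm{\ell}_I}$ holds including automorphism factors. I would proceed as follows.

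\begin{enumerate}
	\item Show that, once a Markovian choice of root representatives on unrooted boundaries has been fixed (as described above), each Tutte step in the topology-preserving cases (I${}^{=}$) and (D${}^{=}$) is a bijection onto $\mathbb{E}^{\circ}_{\ell_1,\ell}\times(\text{maps with first boundary of degree }\ell)$.
	\item Check that the weights match. The factor $q^{-\ell}$ in $E^{\circ}$ and $S^{\circ}$ compensates for the vertices shared along the glued simple boundary.
	\item Check that the automorphism weights match. When $\partial_2\mathfrak{e}$ is unrooted, it lies on an annular face of degree $\{k',k''\}$. The $k''$ choices of rooting are all identified by the rotation of that face, which gives the factor $t_{k',k''}/k''$, or $1/[k'']$ when $k''=0$, already present in \eqref{eskin:tubes:bij}. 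Gluing $\partial_2\mathfrak{s}$ to $\partial_1\hat{\mathfrak{m}}$ does not depend on this choice of root.
	\item Iterate by induction on the number of edges. This terminates because each Tutte step removes an edge. The iteration produces the decomposition into a skin map with tubes in the sense of \Cref{def:skin:tubes} and a core map in $\hat{\MM}^{\circ}$, and gluing recovers $\mathfrak{m}$.
\end{enumerate}

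Finally, multiply by $x_1^{-\ell_1-1}\prod_{i\ge2}x_i^{-\ell_i-1}$ and sum over the degrees. The conventions $x^{\ell-1}$ for $S^{\circ}$ and $x^{-\ell}$ for $\hat{W}^{\circ}$ make $\langle\cdot\rangle_x$ match equal degrees on the glued boundaries. All sums are finite at fixed order in $q,t_1,t_2$, because each nontrivial elementary skin contributes positive order.

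I expect the main obstacle to be step 3, the automorphism accounting, when the unrooted boundary of the skin is glued to a core map. We must ensure that arbitrary rooting followed by gluing neither overcounts nor undercounts maps with symmetries. I would handle this by working at the level of orbifold sets. The rotational symmetry of the annular face acts freely on rootings of $\partial_2\mathfrak{s}$, and the stabiliser of the glued map corresponds exactly to the stabiliser of the pair $(\mathfrak{s},\hat{\mathfrak{m}})$. With that in place, the statement reduces to the substitution described in the first paragraph.
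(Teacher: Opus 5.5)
Your proposal is correct and follows the paper's own argument. The skinning bijection uses rooted representatives on the boundaries left unrooted by erased annular faces, and the rotation of such a face makes the gluing independent of that choice. This gives $W^{\circ}_{g,n}(x_1,\bm{x}_I)=\langle S^{\circ}(x_1,x)\hat{W}^{\circ}_{g,n}(x,\bm{x}_I)\rangle_x$, into which \eqref{Wcore:tubes:rec} is substituted with $x_1\to x$.

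Two side remarks need correcting.
\begin{enumerate}
	\item Finiteness at fixed order in $q,t_1,t_2$ does not come from elementary skins having positive order. A type (I) elementary skin with a face of degree $k\geq 3$ has weight exactly $t_k$, which has order zero. Finiteness comes instead from the Euler relation; applied to the core map, it bounds $\ell$ in terms of the core map's numbers of vertices and $1$-gons.
	\item The factor $1/[k'']$ with $k''=0$ in \eqref{eskin:tubes:bij} does not come from an unrooted $\partial_2\mathfrak{e}$, whose degree is always positive. It comes from an unrooted disc glued into $\delta''$, in which case $\partial_2\mathfrak{e}$ is rooted.
\end{enumerate}
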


The skinning of pointed discs works as in \Cref{ssec:skinning}, except that the topology-preserving annulus case (I${}^{=}$) may also occur, adding two stopping cases where the marked vertex is separated off. In the first, the outer boundary contains the marked vertex and has degree $1$: after erasing $r_1$ and the interior of the annular face, only the unrooted and unpointed disc glued to the inner boundary remains. In the second, the inner boundary of the annular face has degree $0$ and consists only of the marked vertex: after erasing $r_1$, we obtain the outer unpointed disc. This gives
\begin{equation}
	\partial_q W_{0;\ell_1}^{\circ}
	=
	\sum_{\ell \geq 1}
		S_{\ell_1,\ell}\left(
			\delta_{\ell,1} t_1
			+
			2W_{0;\ell - 2}^{\circ}
			+
			\delta_{\ell,1} \sum_{m \geq 0} \frac{t_{1,m}}{[m]} W_{0;m}^{\circ}
			+
			\sum_{m \geq 1} t_{m,0} W^{\circ}_{0;\ell + m - 2}
		\right).
\end{equation}
In generating-series form this is
\begin{equation}
\begin{split}
\label{skinning:discs:tubes}
	\partial_q W^\circ_{0,1}(x_1)-\frac{1}{x_1}
	& =
	\bigg[
		S^\circ(x_1,x)\Big(
			T_{0,1}'(x) + 2W^\circ_{0,1}(x)
			+
			\bigl\langle
				\partial_xT_{0,2}(x,\xi)W^\circ_{0,1}(\xi)
			\bigr\rangle_\xi\Big)
	\bigg]_{0,x} \\
	& \quad +
	\big\langle S^{\circ}(x_1,\xi) \partial_{\xi}T_{0,2}(0,\xi)W^\circ_{0,1}(\xi)\big\rangle_{\xi}.		
\end{split}
\end{equation}

\begin{prop}[Skin enumeration for maps with tubes]
\label{skin:enum:maps:tubes}
	We have
	\begin{equation}
		S^{\circ}(x_1,x)
		=
		\frac{
			{\displaystyle\int_{\infty}^{x}}
			\left(
				2W_{0,2}^{\circ}(x_1,y) + \frac{1}{(x_1 - y)^2}
			\right) \dd y
			+
			\big\langle W_{0,2}^{\circ}(x_1,\xi) T_{0,2}(\xi,x) \big\rangle_{\xi}
			-
			\partial_q W_{0,1}^{\circ}(x_1)
		}{
			x - \partial_{x} T_{0,1}(x) - \big\langle \partial_{x} T_{0,2}(x,\xi) W_{0,1}^{\circ}(\xi)\big\rangle_{\xi} - 2W_{0,1}^{\circ}(x) 
		},
	\end{equation}
	where again $\int_{\infty}^x \frac{1}{(x_1 - y)^2} \dd y = \frac{1}{x_1 - x}$, and the result is expanded as in~\eqref{x1:x2:expns}.
\end{prop}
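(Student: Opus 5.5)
I will follow the proof of \Cref{skin:enum:maps} verbatim, determining separately the positive, constant and negative parts in $x$ of
\begin{equation}
	\Phi(x_1,x) \coloneqq S^{\circ}(x_1,x)\,\Delta(x),
	\qquad
	\Delta(x) \coloneqq x - T_{0,1}'(x) - \bigl\langle \partial_x T_{0,2}(x,\xi)W^{\circ}_{0,1}(\xi)\bigr\rangle_\xi - 2W^{\circ}_{0,1}(x),
\end{equation}
and then dividing by $\Delta(x)$. The denominator is invertible as a formal series: its leading term is $x$ and the corrections are $O(q,t_1,t_2)$ or polynomial in the other weights.

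\textbf{Positive part.} By \eqref{rec:skin:tubes}, $[\Phi]_{+,x}$ equals the positive part of $\frac{1}{x_1-x} + \langle S^{\circ}(x_1,\xi)\partial_\xi T_{0,2}(x,\xi)W^{\circ}_{0,1}(\xi)\rangle_\xi$. I will then identify this extra bracket with a piece of the new numerator term $\langle W^{\circ}_{0,2}(x_1,\xi)T_{0,2}(\xi,x)\rangle_\xi$. The skinning formula \eqref{eq:skinning:tubes} for $(g,n)=(0,2)$ contains, in its first line, the term with $W^{\circ}_{0,1}(\xi)W^{\circ}_{0,1}(\eta)$ excluded by ``no $(0,1)$'' only partially. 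More precisely, one must check which $(0,1)\times(0,1)$ products survive in the $(0,2)$ case of \eqref{Wcore:tubes:rec}. For $n=2$ there is the boundary-reduction term, while the tube terms involve $W^{\circ}_{-1,3}=0$ and products with $J'\sqcup J''=\{2\}$.

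\textbf{Negative part.} Next I will integrate the $(0,2)$ skinning formula from $\infty$ to $x_2=x$. The derivative term gives, exactly as for maps via the identity $\langle F(\xi)\frac{G(\xi)-G(x)}{\xi-x}\rangle_\xi=-[F(x)G(x)]_{-,x}$, the contribution $-[S^{\circ}W^{\circ}_{0,1}]_{-,x}$. The tube term of \eqref{Wcore:tubes:rec} for $(g,n)=(0,2)$ is
\begin{equation}
	\Bigl\langle \frac{\partial_\xi T_{0,2}(\xi,\eta)}{x_1-\xi}\bigl(W^{\circ}_{0,2}(\xi,x_2)W^{\circ}_{0,1}(\eta) + W^{\circ}_{0,1}(\xi)W^{\circ}_{0,2}(\eta,x_2)\bigr)\Bigr\rangle_{\xi,\eta},
\end{equation}
because in the $(0,2)$ case these disc factors are exactly the (I${}^{>}$) disconnected outcomes. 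I expect these to produce, after pairing with $S^{\circ}$ and integrating in $x_2$, both $-\bigl[S^{\circ}\langle\partial_xT_{0,2}(x,\xi)W^{\circ}_{0,1}(\xi)\rangle_\xi\bigr]_{-,x}$-type pieces and the term $\langle W^{\circ}_{0,2}(x_1,\xi)T_{0,2}(\xi,x)\rangle_\xi$, whose positive part then cancels against the extra bracket in the positive-part relation above. In practice I will instead read off $\langle W^{\circ}_{0,2}(x_1,\xi)T_{0,2}(\xi,x)\rangle_\xi$ directly. $T_{0,2}$ has only nonnegative powers of $x$, so this term contributes only to the nonnegative part. One then verifies via \eqref{second:skinning:tube} combined with the skinning relation $W^{\circ}_{0;\ell_1,\ell}=\sum_{\ell'} S^\circ_{\ell_1,\ell'}\hat W^\circ_{0;\ell',\ell}$ that its positive part is exactly $[\langle S^{\circ}(x_1,\xi)\partial_\xi T_{0,2}(x,\xi)W^{\circ}_{0,1}(\xi)\rangle_\xi]_{+,x}$.

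\textbf{Constant part.} This will come from \eqref{skinning:discs:tubes}. It gives
\begin{equation}
	[\Phi]_{0,x} = \frac{1}{x_1} - \partial_q W^{\circ}_{0,1}(x_1) + \bigl\langle S^{\circ}(x_1,\xi)\partial_\xi T_{0,2}(0,\xi)W^{\circ}_{0,1}(\xi)\bigr\rangle_\xi,
\end{equation}
and the last term must be matched with the constant term $\langle W^{\circ}_{0,2}(x_1,\xi)T_{0,2}(\xi,0)\rangle_\xi$. This constant term corresponds to the annular faces with $k''=0$, and it is the same identification as in the positive part evaluated at $x=0$.

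\textbf{Main obstacle.} The main obstacle is this coefficient bookkeeping for annular faces. It involves the factors $1/[k'']$ coming from unrooted boundaries, and it requires showing that $\langle W^{\circ}_{0,2}(x_1,\xi)T_{0,2}(\xi,x)\rangle_\xi$ is the correct antiderivative-free repackaging. I will handle it at fixed degrees, comparing coefficients of $x^{\ell-1}x_1^{-\ell_1-1}$ on both sides. Summing the three parts and dividing by $\Delta(x)$ then yields the claim, with $\frac{1}{x_1-x}$ reassembled from its positive and constant parts as before.
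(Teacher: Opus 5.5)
Your three-part strategy (positive, constant and negative parts of $S^{\circ}\Delta$) is the same as the paper's. However, it rests on a false description of the annulus core, and this breaks both your negative part and the identity you flag as the main obstacle.

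For $(g,n)=(0,2)$ there are no (I${}^{>}$) outcomes. Erasing an annular face adjacent to $\partial_1$ in an annulus always leaves a disc and an annulus. That is the topology-preserving case (I${}^{=}$), which is already absorbed into the skin through the last two terms of \eqref{eskin:tubes:bij}. Accordingly, the tube line of \eqref{Wcore:tubes:rec} vanishes for $(0,2)$: $W^{\circ}_{-1,3}=0$, and the only products allowed by $J'\sqcup J''=\{2\}$, namely $W^{\circ}_{0,2}W^{\circ}_{0,1}$ and $W^{\circ}_{0,1}W^{\circ}_{0,2}$, are both excluded by the ``no $(0,1)$'' condition. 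There are no $(0,1)\times(0,1)$ products at all, partially excluded or not. So the annulus core is pure boundary reduction, $\hat{W}^{\circ}_{0;\ell',k}=k\,W^{\circ}_{0;\ell'+k-2}$.

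The nonzero tube term you write down would double count these configurations. If you keep it, the integrated annulus relation no longer gives $[S^{\circ}\Delta]_{-,x}=\int_{\infty}^{x}2W^{\circ}_{0,2}(x_1,y)\,\dd y$, and a fixed-degree check of your identity based on that $\hat{W}^{\circ}$ will not close. Note also that the pieces you expect this term to produce in the negative part, $S^{\circ}(x_1,x)\langle\partial_xT_{0,2}(x,\xi)W^{\circ}_{0,1}(\xi)\rangle_\xi$ and $\langle W^{\circ}_{0,2}(x_1,\xi)T_{0,2}(\xi,x)\rangle_\xi$, contain no negative powers of $x$ at all.

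Once the core is corrected, the missing step is short. The annulus skinning relation becomes $W^{\circ}_{0,2}(x_1,\xi)=\langle S^{\circ}(x_1,\eta)W^{\circ}_{0,1}(\eta)(\xi-\eta)^{-2}\rangle_{\eta}$. The $W^{\circ}_{0,1}(\xi)$ piece of the divided difference drops, because $S^{\circ}$ has only non-negative powers of $\eta$. This relation gives the negative part exactly as for maps.

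Now pair it with $T_{0,2}(\xi,x)$, either via \Cref{lem:triple:prod} as the paper does, or by direct coefficient comparison. In the latter, both sides equal $\sum_{\ell_1,\ell',k\geq 1,\,k''\geq 0} S^{\circ}_{\ell_1,\ell'}\,\frac{t_{k,k''}}{[k'']}\,W^{\circ}_{0;\ell'+k-2}\,x^{k''}x_1^{-\ell_1-1}$; the factor $k$ recording the root position in case (R) cancels the $1/[k]$ in $T_{0,2}$. Either way you obtain $\langle S^{\circ}(x_1,\xi)\partial_{\xi}T_{0,2}(x,\xi)W^{\circ}_{0,1}(\xi)\rangle_{\xi}=\langle W^{\circ}_{0,2}(x_1,\xi)T_{0,2}(\xi,x)\rangle_{\xi}$. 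This holds as an identity of full series in $x$, not only of positive parts, and it is the whole numerator term, not "a piece" of it. Its $x^{0}$ coefficient is exactly what your constant part needs, and \eqref{second:skinning:tube} plays no role in proving it.
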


\begin{proof}
	Set
	\begin{equation}
		D^{\circ}(x)
		\coloneqq
		x-T_{0,1}'(x)
		-
		\bigl\langle
			\partial_xT_{0,2}(x,\xi)W^\circ_{0,1}(\xi) 
		\bigr\rangle_\xi -2W^\circ_{0,1}(x).
	\end{equation}
	As in the proof of \Cref{skin:enum:maps}, we determine separately the positive, negative, and constant parts of $S^\circ(x_1,x)D^\circ(x)$ as a Laurent series in $x$. First, the recursive construction of skin maps with tubes gives the positive part, see \eqref{rec:skin:tubes}:
	\begin{equation}
	\label{tubes:positive}
		\bigl[
			S^\circ(x_1,x)D^\circ(x)
		\bigr]_{+,x}
		=
		\left[
			\frac{1}{x_1-x}
			+
			\bigl\langle S^{\circ}(x_1,\xi) \partial_{\xi} T_{0,2}(x,\xi) W_{0,1}^{\circ}(\xi)\big\rangle_{\xi}\right]_{+,x}.
	\end{equation}
	A novelty for maps with tubes is the new term involving $S^{\circ}$ on the right-hand side, which we would like to eliminate. To do so, we transform it with the help of the triple product identity of \Cref{lem:triple:prod}:
	\begin{equation}
	\label{unwanted}
		\big\langle S^{\circ}(x_1,\xi) \partial_{\xi} T_{0,2}(x,\xi) W_{0,1}^{\circ}(\xi) \big\rangle_{\xi}
		=
		\Bigg\langle T_{0,2}(x,\xi) \Bigg\langle \frac{S^{\circ}(x_1,\eta)W_{0,1}^{\circ}(\eta)}{(\xi - \eta)^2}\Bigg\rangle_{\eta} \Bigg\rangle_{\xi}.
	\end{equation}
	We can then simplify the innermost bracket using the skinning relation for annuli with tubes. Indeed, for $(g,n) = (0,2)$ only the second line survives in the skinning relation, \Cref{eq:skinning:tubes}, and we find
	\begin{equation}
		W_{0,2}^{\circ}(x_1,\xi)
		=
		\Bigg\langle S^{\circ}(x_1,\eta) \partial_{\xi}\left(\frac{W_{0,1}^{\circ}(\eta) - W_{0,1}^{\circ}(\xi)}{\eta - \xi}\right)\Bigg\rangle_{\eta}
		=
		\Bigg\langle S^{\circ}(x_1,\eta) \partial_{\xi}\left(\frac{W_{0,1}^{\circ}(\xi) - W_{0,1}^{\circ}(\eta)}{\xi - \eta}\right)\Bigg\rangle_{\eta},
	\end{equation}
	where we used the fact that the ratio is a formal series in $\xi^{-1}$ and $\eta^{-1}$ symmetric in the two variables. The advantage of the second form is that we can split the ratio into two pieces and, recalling the convention that $\frac{1}{\xi - \eta}$ is expanded in non-negative powers of $\eta$, the piece involving $W_{0,1}^{\circ}(\xi)$ does not contribute to the bracket because $S^\circ(x_1,\eta)$ has only non-negative powers of $\eta$. Hence,
	\begin{equation}
		W_{0,2}^{\circ}(x_1,\xi)
		=
		\Bigg \langle S^{\circ}(x_1,\eta)\frac{W_{0,1}^{\circ}(\eta)}{(\xi - \eta)^2}\Bigg\rangle_{\eta}.
	\end{equation}
	Inserting this into \eqref{unwanted} yields
	\begin{equation}
	\label{STW:TW}
		\big\langle S^{\circ}(x_1,\xi) \partial_{\xi} T_{0,2}(x,\xi) W_{0,1}^{\circ}(\xi) \big\rangle_{\xi}
		=
		\big\langle W_{0,2}^{\circ}(x_1,\xi) T_{0,2}(\xi,x) \big\rangle_{\xi}.
	\end{equation}
	This allows us to write the positive part of $S^\circ(x_1,x)D^\circ(x)$, \Cref{tubes:positive}, in the form
	\begin{equation}
	\label{tubes:positive2}
		\bigl[S^\circ(x_1,x)D^\circ(x)\bigr]_{+,x}
		=
		\left[\frac{1}{x_1 - x} + \big\langle W_{0,2}^{\circ}(x_1,\xi) T_{0,2}(\xi,x) \big\rangle_{\xi}\right]_{+,x}.
	\end{equation}
	Second, the constant term is given by
	\begin{equation}
		\bigl[
				S^\circ(x_1,x)D^\circ(x)
		\bigr]_{0,x}
		=
		-\Bigg[
			S^\circ(x_1,x)
			\Big(
				T_{0,1}'(x) 
				+
				\bigl\langle
					\partial_x T_{0,2}(x,\xi)W^\circ_{0,1}(\xi)
				\bigr\rangle_\xi + 2W^\circ_{0,1}(x)
			\Big)
		\Bigg]_{0,x}.
	\end{equation}
	We recognise part of the expression in the skinning relation for pointed discs with tubes \eqref{skinning:discs:tubes}:
	\begin{equation}
	\label{tubes:constant}
	\begin{split}
		\bigl[
			S^\circ(x_1,x)D^\circ(x)
		\bigr]_{0,x}
		&=
		\frac{1}{x_1} - \partial_q W_{0,1}^{\circ}(x_1)
		+
		\big\langle S^{\circ}(x_1,\xi) \partial_{\xi}T_{0,2}(0,\xi) W_{0,1}^{\circ}(\xi) \big\rangle_{\xi} \\
		&=
		\left[\frac{1}{x_1 - x} + \big\langle W_{0,2}^{\circ}(x_1,\xi) T_{0,2}(\xi,x) \big\rangle_{\xi}\right]_{0,x}  - \partial_q W_{0,1}^{\circ}(x_1),
	\end{split}
	\end{equation}
	where we used the transformation \eqref{STW:TW} a second time.
	
	It remains to determine the negative part. This follows from the skinning of annuli with tubes and is unchanged from ordinary maps, since the annulus skinning relation has the same form:
	\begin{equation}
	\label{tubes:negative}
		\bigl[
			S^\circ(x_1,x)D^\circ(x)
		\bigr]_{-,x}
		=
		\int_\infty^x 2W^\circ_{0,2}(x_1,y)\dd y.
	\end{equation}
	Assembling the positive part \eqref{tubes:positive2}, the constant term \eqref{tubes:constant}, and the negative part \eqref{tubes:negative} reconstructs the full series in the form
	\begin{equation}
		S^\circ(x_1,x)D^\circ(x)
		=
		\int_\infty^x 2W^\circ_{0,2}(x_1,y)\dd y
		+
		\frac{1}{x_1-x}
		+
		\bigl\langle
			T_{0,2}(x,\xi)W^\circ_{0,2}(x_1,\xi)
		\bigr\rangle_\xi
		-
		\partial_q W^\circ_{0,1}(x_1).
	\end{equation}
	Pulling $\frac{1}{x_1-x}$ inside the integral and dividing by $D^\circ(x)$ gives the result.
\end{proof}

\section{Generalisation to stuffed maps}
\label{sec:stuffed:maps}

The last generalisation we consider is that of \emph{stuffed maps}, in which internal faces may carry arbitrary topology. The overall picture remains similar, but with additional features. Although the formulae, which now have to keep track of arbitrary face topologies, become more cumbersome, the underlying logic for handling the various cases becomes clearer.

\subsection{Definition and generating series}
\label{ssec:stuffed:defn}
Stuffed maps are defined in the same way as maps, except that we allow internal faces of arbitrary topology \cite{Bor14}. The degree of an internal face of topology $(h,m)$ is the multiset $\set{k_1,\ldots,k_m}$ of the degrees of its boundary components, and we only allow $\set{k_1,\ldots,k_m} \neq \set{0,\ldots,0}$. We use Boltzmann weights $t_{h;k_1,\ldots,k_m}$ symmetric in the $k_i$, and set $t_{h;k_1,\ldots,k_m}=0$ whenever $\set{k_1,\ldots,k_m} = \set{0,\ldots,0}$ or one of the $k_i$ is negative.

We denote by $\MM_{g;\ell_1,\ldots,\ell_n}^{\ast}$ the set of stuffed maps of topology $(g,n)$ with respective boundary degrees $\ell_1,\ldots,\ell_n$. The trivial map is the only one with a boundary of degree $0$, so $\MM_{0;0}^{\ast}$ is a singleton. For a stuffed map $\mathfrak{m}$, we set
\begin{equation}
	N_{h;k_1,\ldots,k_m}(\mathfrak{m})
	\coloneqq
	\card*{\Set{\textnormal{internal faces of topology $(h,m)$ and degree $\{k_1,\ldots,k_m\}$}}}.
\end{equation}
We then define the generating series of stuffed maps of topology $(g,n)$ with fixed boundary degrees $\ell_1,\ldots,\ell_n$ by
\begin{equation}
	W_{g;\ell_1,\ldots,\ell_n}^{\ast}
	\coloneqq
	\sum_{\mathfrak{m} \in \MM_{g;\ell_1,\ldots,\ell_n}^{\ast}}
	\frac{q^{v(\mathfrak{m})}}{\card{\Aut(\mathfrak{m})}}
	\prod_{\substack{h\geq 0 \\ m\geq 1}}
	\prod_{\substack{k_1 \geq \cdots \geq k_m \geq 0 \\ k_1+\cdots+k_m>0}}
		t_{h;k_1,\ldots,k_m}^{N_{h;k_1,\ldots,k_m}(\mathfrak{m})},
\end{equation}
and, with free boundary degrees, by
\begin{equation}
	W_{g,n}^{\ast}(x_1,\ldots,x_n)
	\coloneqq
	\delta_{g,0}\delta_{n,1}\frac{q}{x_1}
	+
	\sum_{\ell_1,\ldots,\ell_n \geq 1}
	\frac{W_{g;\ell_1,\ldots,\ell_n}^{\ast}}{x_1^{\ell_1 + 1} \cdots x_n^{\ell_n + 1}}.
\end{equation}
Maps with tubes are recovered by setting to zero Boltzmann weights with $2h-2+m>0$. Maps correspond to the further specialisation in which only the weights $t_{0;k}=t_k$ may be non-zero.

For each topology $(h,m)$ we define the corresponding potential by
\begin{equation}
	T_{h,m}(x_1,\ldots,x_m)
	\coloneqq
	\sum_{\substack{k_1,\ldots,k_m\geq 0 \\ k_1 + \cdots + k_m > 0}}
	\frac{t_{h;k_1,\ldots,k_m}}{[k_1]\cdots [k_m]}
	x_1^{k_1}\cdots x_m^{k_m}.
\end{equation}

\subsection{Tutte's procedure}
\label{ssec:Tutte:stuffed}
Tutte's procedure for stuffed maps follows the same case distinction as for maps with tubes. The cases where $r_1$ bounds only boundary faces are unchanged, giving (R), (D${}^{=}$), and (D${}^{>}$). If $r_1$ bounds an internal face, we again distinguish the topology-preserving and topology-changing cases (I${}^{=}$) and (I${}^{>}$). The only new feature is that the internal face may now have arbitrary topology.

Let us describe the internal-face cases in more detail. Suppose that the edge supporting $\rho_1$ belongs both to $\partial_1$ and to a boundary $\delta_1$ of degree $k_1\geq 1$ of an internal face $\mathfrak{f}$ of topology $(h,m)$. The face $\mathfrak{f}$ may have further boundary components $\delta_2,\ldots,\delta_m$ of degrees $k_2,\ldots,k_m$, which are neither rooted nor ordered. We use this notation throughout this subsection for the boundary components of the internal face. When erasing $r_1$, we also erase the interior of $\mathfrak{f}$. The result is a collection of maps $\mathfrak{m}_1,\ldots,\mathfrak{m}_c$ for some $c\in\set{1,\ldots,m}$. If $k_1 \geq 2$ the \emph{outer map} $\mathfrak{m}_1$ is the one containing $\overline{\rho_1}{}^{-}$; this half-edge is declared to be the new root of the new boundary, of degree $\ell_1+k_1-2$. If $k_1 = 1$, we are forced to have $\ell_1 = 1$ and the outer map $\mathfrak{m}_1$ is the trivial disc. Despite the notation, the remaining maps $\mathfrak{m}_2,\ldots,\mathfrak{m}_c$ do not come with a preferred ordering.

To describe the topology of the components $\mathfrak{m}_1,\ldots,\mathfrak{m}_c$, we introduce a set $\mathcal{T}_{g,n;h,m;c}$ of triples $(\bm{p},\bm{K},\bm{J})$ where: the tuple $\bm{p}=(p_1,\ldots,p_c)$ records the genera of the components, the partition $\bm{K}=(K_1,\ldots,K_c)$ records how the boundaries $\delta_2,\ldots,\delta_m$ of $\mathfrak{f}$ are distributed, and the partition $\bm{J}=(J_1,\ldots,J_c)$ records how the original boundaries $\partial_2,\ldots,\partial_n$ are distributed among the components. More precisely:
\begin{itemize}
	\item $\bm{p} = (p_1,\ldots,p_c)$ is a $c$-tuple of non-negative integers satisfying the genus constraint
	\begin{equation}
	\label{tot:genus}
		g = h + \sum_{b=1}^{c} \bigl(p_b + \card{K_b} - 1\bigr).
	\end{equation}
	\item $\bm{K} = (K_1,\ldots,K_c)$ is a partition of $\set{2,\ldots,m}$ into $c$ subsets, all non-empty except possibly $K_1$.
	\item $\bm{J} = (J_1,\ldots,J_c)$ is a partition of $\set{2,\ldots,n}$ into $c$ subsets, which may be empty.
	\end{itemize}
Then, for each $b \in \set{2,\ldots,c}$, the map $\mathfrak{m}_b$ has genus $p_b$, contains the new unrooted boundaries $\bm{\delta}_{K_b}$, and also the original boundaries $\bm{\partial}_{J_b}$. The outer map $\mathfrak{m}_1$ has genus $p_1$, new unrooted and unordered boundaries $\bm{\delta}_{K_1}$, as well as the new rooted boundary and the original boundaries $\bm{\partial}_{J_1}$. We give an example in \Cref{fig:stuffed}. The genus constraint \eqref{tot:genus} expresses that, after gluing back the face $\mathfrak{f}$ of topology $(h,m)$, one recovers a stuffed map of genus $g$. We denote by $\mathcal{T}^{>}_{g,n;h,m;c}$ the subset of $\mathcal{T}_{g,n;h,m;c}$ that can occur when none of the maps $\mathfrak{m}_1,\ldots,\mathfrak{m}_c$ has topology $(g,n)$.

\begin{figure}
\centering

	\caption{A stuffed map $\mathfrak{m}$ of topology $(g,n)=(5,1)$, and the bookkeeping after deleting the root-edge bounding an internal face $\mathfrak{f}$. Here $\mathfrak{f}$ has type $(h,m)=(2,4)$, and its deletion produces $c=2$ components of genera $p_1=1$ and $p_2=0$. The boundary components of $\mathfrak{f}$ are distributed as $K_1=\set{3}$ and $K_2=\set{2,4}$, and since there are no boundary components besides $\partial_1$, we have $J_1=J_2=\varnothing$.}
	\label{fig:stuffed}
\end{figure}

With this notation, we make the following case distinction.

\begin{itemize}
	\item[(I${}^{=}$)] $r_1$ bounds an internal face, and one of the maps $\mathfrak{m}_{b_0}$, for some $b_0 \in \set{1,\ldots,c}$, has topology $(g,n)$, while all the others have disc topology, and moreover $h=0$. In that case, either the outer map $\mathfrak{m}_1$ has topology $(g,n)$, or one of the other components does. In the latter case,  up to relabelling $\delta_2,\ldots,\delta_m$ we may assume that $\mathfrak{m}_{b_0 = 2}$ is the component of topology $(g,n)$.

	\item[(I${}^{>}$)] $r_1$ bounds an internal face, and none of the maps $\mathfrak{m}_1,\ldots,\mathfrak{m}_c$ has topology $(g,n)$.

	\item[(R)] $r_1$ bounds $\partial_i$ for some $i \in I$.

	\item[(D${}^{=}$)] $r_1$ bounds $\partial_1$ on both sides, and erasing it produces two maps, one of which has disc topology.

	\item[(D${}^{>}$)] $r_1$ bounds $\partial_1$ on both sides, and erasing it either produces one map or produces two maps, neither of which has disc topology.
\end{itemize}

The cases (I${}^{=}$) and (D${}^{=}$) are topology-preserving, whereas the cases (I${}^{>}$), (R), and (D${}^{>}$) are topology-changing. Compared with the case of maps with tubes (\Cref{ssec:Tutte:tubes}), the topology-preserving internal-face cases are now subsumed into (I${}^{=}$), while the topology-changing internal-face cases are subsumed into (I${}^{>}$). Altogether, we obtain a bijection
\begin{equation}
	\Tutte \colon
	\MM_{g;\ell_1,\bm{\ell}_I}^{\ast}
	\overset{\simeq}{\longrightarrow}
	\mathbb{I}^{=}_{g;\ell_1,\bm{\ell}_I}
	\sqcup
	\mathbb{I}^{>}_{g;\ell_1,\bm{\ell}_I}
	\sqcup
	\mathbb{R}_{g;\ell_1,\bm{\ell}_I}
	\sqcup
	\mathbb{D}_{g;\ell_1,\bm{\ell}_I}^{=}
	\sqcup
	\mathbb{D}_{g;\ell_1,\bm{\ell}_I}^{>}.
\end{equation}
Here
\begin{equation}
\begin{split}
	\mathbb{I}_{g;\ell_1,\bm{\ell}_I}^{=}
	&\coloneqq
	\bigsqcup_{m \geq 1}
	\Biggl(\Biggl(
	\bigsqcup_{\substack{k_1 \geq 1 \\ k_2,\ldots,k_m \geq 0}}
	\MM^{\ast}_{g;\ell_1 + k_1 - 2,\bm{\ell}_I}
	\times
	\prod_{b = 2}^{m} \MM^{\ast}_{0;\underline{k}_b}
	\Biggr)^{\sim} \\
	&\qquad\qquad
	\sqcup
	\Biggl(
	\bigsqcup_{\substack{k_1,k_2 \geq 1 \\ k_3,\ldots,k_m \geq 0}}
	\MM_{0;\ell_1 + k_1 - 2}^{\ast}
	\times
	\MM^{\ast}_{g;\underline{k}_2,\bm{\ell}_{I}}
	\times
	\prod_{b = 3}^{m} \MM^{\ast}_{0;\underline{k}_b}
	\Biggr)^{\sim}
	\Biggr), \\
	\mathbb{I}_{g;\ell_1,\bm{\ell}_I}^{>}
	&\coloneqq
	\bigsqcup_{\substack{h \geq 0 \\ m \geq 1}}
	\bigsqcup_{\substack{k_1 \geq 1 \\ k_2,\ldots,k_m \geq 0}}
	\Biggl(
	\bigsqcup_{\substack{1 \leq c \leq m \\ (\bm{p},\bm{K},\bm{J}) \in \mathcal{T}^{>}_{g,n;h,m;c}}}
	\MM^{\ast}_{p_1;\ell_1 + k_1 - 2,\bm{\ell}_{J_1},\underline{\bm{k}}_{K_1}}
	\times
	\prod_{b = 2}^{c} \MM^{\ast}_{p_b;\bm{\ell}_{J_b},\underline{\bm{k}}_{K_b}}
	\Biggr)^{\sim}, \\
	\mathbb{R}_{g;\ell_1,\bm{\ell}_I}
	&\coloneqq
	\bigsqcup_{i = 2}^{n} \MM^{\ast}_{g;\ell_1 + \ell_i - 2,\bm{\ell}_{I_i}} \times \set{0,\ldots,\ell_i - 1}, \\
	\mathbb{D}^=_{g;\ell_1,\bm{\ell}_I}
	&\coloneqq
	\bigsqcup_{\substack{k',k'' \geq 1 \\ k' + k'' = \ell_1 - 2}} \Biggl(
		\MM^{\ast}_{g - 1;k',k'',\bm{\ell}_I}
		\sqcup  \;
		\bigsqcup_{\substack{h' + h'' = g \\ J' \sqcup J'' = I}}^{\nodisc}
		\MM^{\ast}_{h';k',\bm{\ell}_{J'}} \times \MM^{\ast}_{h'';k'',\bm{\ell}_{J''}}
	\Biggr), \\
	\mathbb{D}_{g;\ell_1,\bm{\ell}_{I}}^{>}
	&\coloneqq
	\bigsqcup_{k = 0}^{\ell_1 - 3}
		\bigl(\MM^{\ast}_{0;k} \times \MM_{g;\ell_1 - k - 2,\bm{\ell}_I}^{\ast}\bigr)
		\sqcup
		\bigl(\MM^{\ast}_{g;\ell_1  - k - 2,\bm{\ell}_I} \times \MM_{0;k}^{\ast}\bigr).
\end{split}
\end{equation}
An index $\underline{k}_i$ means that we consider a quotient set in which the corresponding boundary of degree $k_i$ is not rooted. The symbol $\sim$ indicates that we quotient by the permutation of the indices $2,\ldots,m$ in the first contribution to (I${}^{=}$), by the permutation of the indices $3,\ldots,m$ in the second contribution to (I${}^{=}$), and by the permutation of the indices $2,\ldots,c$ in the contribution to (I${}^{>}$). Note that the two separate contributions to (I${}^{=}$) reflect whether or not $\mathfrak{m}_1$ is a disc. This translates into the following relations at the level of generating series, cf.~\cite{Bor14}.

\begin{lem}
\label{Tutte:lemma:stuffed}
	Let $g \geq 0$, $n \geq 1$, and $\ell_1,\ldots,\ell_n \geq 1$ be such that $(g,n) \neq (0,1)$. We have
	\begin{multline}
		W^{\ast}_{g;\ell_1,\bm{\ell}_I}
		=
		\sum_{\substack{h \geq 0 \\ m \geq 1}}
		\sum_{\substack{k_1 \geq 1 \\ k_2,\ldots,k_m \geq 0}}
		\sum_{\substack{1 \leq c \leq m \\ (\bm{p},\bm{K},\bm{J}) \in \mathcal{T}_{g,n;h,m;c}}}
		\frac{t_{h;k_1,\ldots,k_m}}{[k_2] \cdots [k_m]}
		\frac{W^{\ast}_{p_1;\ell_1 + k_1 - 2,\bm{k}_{K_1},\bm{\ell}_{J_1}}}{(c - 1)!}
		\prod_{b = 2}^{c} W^{\ast}_{p_b;\bm{k}_{K_b},\bm{\ell}_{J_b}} \\
		+
		\sum_{i=2}^n \ell_i \, W^{\ast}_{g;\ell_1+\ell_i-2,\bm{\ell}_{I_i}}
		+
		\sum_{\substack{k',k'' \geq 0 \\ k' + k'' = \ell_1 - 2}}
		\Biggl(
		W^{\ast}_{g-1;k',k'',\bm{\ell}_{I}}
		+
		\sum_{\substack{h' + h'' = g \\ J' \sqcup J'' = I}}
		W^{\ast}_{h';k',\bm{\ell}_{J'}}W^{\ast}_{h'';k'',\bm{\ell}_{J''}}
		\Biggr).
	\end{multline}
	For generating series with free boundary degrees this translates into
	\begin{equation}
	\label{Tutte:rec:stuffed}
	\begin{split}
		&
		W^{\ast}_{g,n}(x_1,\bm{x}_I) \\
		&\quad
		=
		\sum_{\substack{h \geq 0 \\ m \geq 1}}
		\sum_{\substack{1 \leq c \leq m \\ (\bm{p},\bm{K},\bm{J}) \in \mathcal{T}_{g,n;h,m;c}}}
		\Bigg\langle
		\frac{\partial_{\xi_1} T_{h,m}(\xi_1,\ldots,\xi_m)}{(c - 1)!(x_1 - \xi_1)}
		W^{\ast}_{p_1,1 + \card{K_1} + \card{J_1}}(\xi_1,\bm{\xi}_{K_1},\bm{x}_{J_1})
		\prod_{b = 2}^c W^{\ast}_{p_b,\card{K_b} + \card{J_b}}(\bm{\xi}_{K_b},\bm{x}_{J_b})
		\Bigg\rangle_{\bm{\xi}} \\
		&\qquad
		+
		\sum_{i=2}^n
		\partial_{x_i}\left(
		\frac{W^{\ast}_{g,n-1}(x_1,\bm{x}_{I_i}) - W^{\ast}_{g,n-1}(x_i,\bm{x}_{I_i})}{x_1 - x_i}
		\right) \\
		&\qquad
		+
		W^{\ast}_{g-1,n+1}(x_1,x_1,\bm{x}_I)
		+
		\sum_{\substack{h' + h'' = g \\ J' \sqcup J'' = I}}
		W^{\ast}_{h',1+\card{J'}}(x_1,\bm{x}_{J'})W^{\ast}_{h'',1+\card{J''}}(x_1,\bm{x}_{J''}).
	\end{split}
	\end{equation}
\end{lem}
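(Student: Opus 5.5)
The first relation follows from the bijection $\Tutte$ for stuffed maps described just above, by taking the weight of each side; the second follows from the first by a generating-series translation, exactly as in \Cref{Tutte:lemma}. I would go through the five blocks of the target set one at a time.

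\textbf{Cases (R) and (D).} These contribute exactly as for ordinary maps. Case (R) gives the factor $\ell_i$ from the $\ell_i$ choices of $j_i$. In case (D), I would not separate (D${}^{=}$) from (D${}^{>}$). Summing over all splittings $k'+k''=\ell_1-2$ with $k',k''\ge 0$, with no exclusion on $(0,1)$ components, recovers the full sum as written: the disc-component terms $W^\ast_{0;k'}W^\ast_{g;k'',\bm\ell_I}$ are precisely the $(h',J')=(0,\emptyset)$ summands. One check is needed here. In the connected term $W^\ast_{g-1;k',k'',\bm\ell_I}$, boundary degree $0$ can only occur for the trivial map, so those terms vanish automatically unless $k',k''\geq 1$.

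\textbf{Internal-face case.} This is the new ingredient. Again I would merge (I${}^{=}$) and (I${}^{>}$) into a single sum over $\mathcal{T}_{g,n;h,m;c}$. Given the face $\mathfrak{f}$ of topology $(h,m)$ and degrees $(k_1,\dots,k_m)$, with $\delta_1$ the component adjacent to $r_1$, the erased configuration is encoded by four pieces of data:
\begin{itemize}
\item the outer map, whose new boundary has degree $\ell_1+k_1-2$ and is rooted;
\item the unordered collection of the other components $\mathfrak{m}_2,\dots,\mathfrak{m}_c$;
\item for each component, the sets $K_b$ and $J_b$ of which face boundaries and which original boundaries it carries;
\item the genera $p_b$, constrained by \eqref{tot:genus}.
\end{itemize}
The weight bookkeeping then has two parts.
\begin{enumerate}
\item Each $\delta_b$ with $b\ge 2$ is unrooted in the remaining map. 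Rooting it multiplies the count by $k_b$, except that a degree-$0$ boundary is the trivial map and has a single state, which explains the factor $1/[k_b]$. The orbifold convention (automorphisms of $\mathfrak{f}$ together with rotations of its boundaries) makes this division exact rather than approximate.
\item The components $\mathfrak{m}_2,\dots,\mathfrak{m}_c$ are unlabelled. Choosing a labelling of them, with all $K_b$ non-empty so that labellings are free, overcounts by $(c-1)!$.
\end{enumerate}
The labelling of $\delta_2,\dots,\delta_m$ is also a choice, but it is absorbed by the symmetry of $t_{h;k_1,\dots,k_m}$ and the $\sim$ quotient. I expect this bookkeeping to be the main obstacle: one must verify carefully that the $1/|\Aut|$ weights, the symmetrisation over $\delta_2,\ldots,\delta_m$, and the unrooting factors combine to give exactly $t_{h;\bm k}/([k_2]\cdots[k_m](c-1)!)$.

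For this I would argue via an orbit–stabiliser argument on the set of fully labelled and rooted configurations, i.e. with all $\delta_b$ rooted and labelled and all components ordered. On that set the bijection is free of automorphisms. Dividing by the group of relabellings and rerootings then yields the stated weight.

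\textbf{Passing to free boundary degrees.} To get \eqref{Tutte:rec:stuffed}, I would multiply by $x_1^{-\ell_1-1}\prod_{i\ge2}x_i^{-\ell_i-1}$ and sum; each coefficient is a finite sum, as in \Cref{Tutte:lemma}. The (R) and (D) parts give the same expressions as in \eqref{Tutte:rec}, except that the $(0,1)$ terms are now kept on the right-hand side rather than moved into $\mathcal L$. Their degree-$0$ parts are accounted for by the $q/x_1$ term in $W^\ast_{0,1}$.

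For the internal-face term, I would use the following residue identities.
\begin{itemize}
\item $\langle \xi_b^{k_b}\, W(\xi_b,\dots)\rangle_{\xi_b}$ extracts the coefficient of degree $k_b$ (or the trivial-map coefficient $q$ when $k_b=0$). The factor $x^{k}/[k]$ in $T_{h,m}$ supplies both $\xi_b^{k_b}$ and the $1/[k_b]$.
\item For the first boundary, $\partial_{\xi_1}T_{h,m}$ supplies $t\,\xi_1^{k_1-1}$, and $1/(x_1-\xi_1)=\sum_j\xi_1^j/x_1^{j+1}$.
\item Pairing these against $W(\xi_1,\dots)$ with $\xi_1^{-(\ell_1+k_1-2)-1}$ gives $\sum_{\ell_1}x_1^{-\ell_1-1}W_{\ell_1+k_1-2}$, as required.
\end{itemize}
This last step is routine checking of exponents.
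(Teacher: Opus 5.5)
Your route is the one the paper takes: it reads the weights off the bijection $\Tutte$ and translates them as in \Cref{Tutte:lemma}, with no further argument. However, the two checks you call routine both fail, and doing them correctly shows that the identities cannot be proved in the form displayed.

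\textbf{Exponents.} In the internal-face term, the three factors contribute as follows:
\begin{itemize}
\item $\partial_{\xi_1}T_{h,m}$ gives $\xi_1^{k_1-1}$;
\item the kernel $\frac{1}{x_1-\xi_1}$ gives $\xi_1^{j}x_1^{-j-1}$;
\item $W^\ast$ gives $\xi_1^{-(\ell_1+k_1-1)}$.
\end{itemize}
The residue therefore selects $j=\ell_1-1$ and produces $x_1^{-\ell_1}$, not $x_1^{-\ell_1-1}$. The (R) and (D) expressions of \eqref{Tutte:rec} likewise come from multiplying by $x_1^{-\ell_1}$. For example, $W^\ast_{g-1,n+1}(x_1,x_1,\bm x_I)$ carries $x_1^{-(k'+1)-(k''+1)}=x_1^{-\ell_1}$. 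So multiplying by $x_1^{-\ell_1-1}$, as you propose, puts an extra $1/x_1$ on every term of the right-hand side. With the consistent weight $x_1^{-\ell_1}\prod_{i\ge 2}x_i^{-\ell_i-1}$ (the one used for $\hat W^\ast_{g,n}$), you get the printed right-hand side, but the left-hand side becomes $x_1W^\ast_{g,n}(x_1,\bm x_I)$. Your exponent check should have exposed this instead of confirming the display.

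\textbf{Symmetry factor.} The claim that the labelling of $\delta_2,\ldots,\delta_m$ is ``absorbed by the symmetry of $t$ and the $\sim$ quotient'' is exactly where the argument breaks. The summand is symmetric under permuting $(k_2,\ldots,k_m)$ together with the indices in $\bm K$, but that symmetry creates a multiplicity rather than removing it. Moreover, the $\sim$ in $\mathbb{I}^{>}$ only permutes components. Your own orbit--stabiliser argument divides by a group of order $(m-1)!\,[k_2]\cdots[k_m]\,(c-1)!$. It therefore yields the weight $t_{h;k_1,\ldots,k_m}/\bigl((m-1)!\,[k_2]\cdots[k_m]\,(c-1)!\bigr)$, not the printed one.

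Test this on $h=0$, $m=3$, $k_2\neq k_3$, with $\delta_2,\delta_3$ each filled by a disc and the outer map of topology $(g,n)$ (using the corrected genus constraint below).
\begin{itemize}
\item The two $\delta$'s are distinguished by their degrees, so the true contribution is $t\,W^\ast_{g;\ell_1+k_1-2,\bm\ell_I}\tfrac{W^\ast_{0;k_2}}{[k_2]}\tfrac{W^\ast_{0;k_3}}{[k_3]}$.
\item The displayed sum counts this configuration for both orderings $(k_2,k_3)$ and $(k_3,k_2)$. Each time, the two ordered $\bm K$ cancel the $1/(c-1)!$, so the sum is twice too large.
\end{itemize}
The missing $1/(m-1)!$ is exactly the factor in \eqref{eskin:01:stuffed}, which covers case (I${}^{=}$), where $\sim$ does quotient by $S_{m-1}$. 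It is also the factor in $O,\widetilde O$ of \eqref{ann:pot}, so it must appear here too.

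\textbf{Genus constraint.} You also invoke \eqref{tot:genus} without checking it, and it is off by one: for $m=c=1$ it forces $p_1=g-h+1$. An Euler-characteristic count gives $g=h+\sum_b p_b+m-c$ instead.
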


In the right-hand side of \eqref{Tutte:rec:stuffed}, the first line collects the contributions of type (I), the second those of type (R), and the third those of type (D). Unlike for maps and maps with tubes, the quadratic term in the third line is summed without the superscript ``no $(0,1)$'': the contributions of type (D${}^{>}$) are not absorbed into the first line, and hence remain in the quadratic term. For the same reason $k' = 0$ or $k'' = 0$ is allowed, corresponding to contributions of trivial discs.

\subsection{Core and skin maps}
As in the previous sections, we now separate topology-changing and topology-preserving cases of Tutte's procedure, and use this distinction to define core maps and skin maps. In the stuffed setting, however, the detached piece may already have arbitrary topology, so the corresponding bookkeeping becomes longer.

\begin{defn}
\label{core:stuffed}
	Let $g \geq 0$, $n \geq 1$, and $\ell_1,\ldots,\ell_n \geq 1$ be such that $(g,n) \neq (0,1)$. We denote by $\hat{\MM}^{\ast}_{g;\ell_1,\bm{\ell}_I}$ the set of stuffed maps $\hat{\mathfrak{m}}$ of topology $(g,n)$ with boundary degrees $\ell_1,\bm{\ell}_I$ such that applying Tutte's procedure changes the topology, that is,
	\begin{equation}
		\Tutte(\hat{\mathfrak{m}})
		\in
		\mathbb{I}^{>}_{g;\ell_1,\bm{\ell}_I}
		\sqcup
		\mathbb{R}_{g;\ell_1,\bm{\ell}_I}
		\sqcup
		\mathbb{D}^{>}_{g;\ell_1,\bm{\ell}_I}.
	\end{equation}
	These are the \emph{stuffed core maps}. We introduce their generating series
	\begin{equation}
	\begin{split}
		\hat{W}_{g;\ell_1,\bm{\ell}_I}^{\ast}
		&\coloneqq
		\sum_{\hat{\mathfrak{m}} \in \hat{\MM}^{\ast}_{g;\ell_1,\ldots,\ell_n}}
			\frac{q^{v(\hat{\mathfrak{m}})}}{\card{\Aut(\hat{\mathfrak{m}})}}
			\prod_{\substack{h\geq 0 \\ m\geq 1}}
			\prod_{\substack{k_1 \geq \cdots \geq k_m \geq 0 \\ k_1 + \cdots + k_m > 0}}
				t_{h;k_1,\ldots,k_m}^{N_{h;k_1,\ldots,k_m}(\hat{\mathfrak{m}})}, \\
		\hat{W}_{g,n}^{\ast}(x_1,\bm{x}_I)
		&\coloneqq
		\sum_{\ell_1,\ldots,\ell_n \geq 1}
			\frac{\hat{W}_{g;\ell_1,\bm{\ell}_I}^{\ast}}{x_1^{\ell_1}x_2^{\ell_2 + 1} \cdots x_n^{\ell_n + 1}}.
	\end{split}
	\end{equation}
\end{defn}

Keeping only the topology-changing terms on the right-hand side of \eqref{Tutte:rec:stuffed}, we find
\begin{equation}
\begin{split}
	&
	\hat{W}_{g,n}^{\ast}(x_1,\bm{x}_I) \\
	&\quad =
	\sum_{\substack{h \geq 0 \\ m \geq 1}}
	\sum_{\substack{1 \leq c \leq m \\ (\bm{p},\bm{K},\bm{J}) \in \mathcal{T}_{g,n;h,m;c}^{>}}}
	\Bigg\langle
	\frac{\partial_{\xi_1} T_{h,m}(\xi_1,\ldots,\xi_m)}{(c - 1)!(x_1 - \xi_1)}
	W^{\ast}_{p_1,1 + \card{K_1} + \card{J_1}}(\xi_1,\bm{\xi}_{K_1},\bm{x}_{J_1})
	\prod_{b = 2}^c W^{\ast}_{p_b,\card{K_b} + \card{J_b}}(\bm{\xi}_{K_b},\bm{x}_{J_b})
	\Bigg\rangle_{\bm{\xi}} \\
	&\qquad
	+
	\sum_{i=2}^n
	\partial_{x_i}\left(
	\frac{W^{\ast}_{g,n-1}(x_1,\bm{x}_{I_i}) - W^{\ast}_{g,n-1}(x_i,\bm{x}_{I_i})}{x_1 - x_i}
	\right) \\
	&\qquad
	+
	W^{\ast}_{g-1,n+1}(x_1,x_1,\bm{x}_I)
	+
	\sum_{\substack{h' + h'' = g \\ J' \sqcup J'' = I}}^{\nodisc}
	W^{\ast}_{h',1+\card{J'}}(x_1,\bm{x}_{J'})W^{\ast}_{h'',1+\card{J''}}(x_1,\bm{x}_{J''}).
\end{split}
\end{equation}

We define \emph{stuffed elementary skin maps} by considering each case of Tutte's procedure. In cases (R) and (D), the construction is the same as in \Cref{ssec:skin}. The novelty lies in cases (I${}^{=}$) and (I${}^{>}$). In these cases, the elementary skin map has topology $(h,m+1-d)$, where $d \in \set{1,\ldots,m}$ is the number of maps $\mathfrak{m}_{j_1},\ldots,\mathfrak{m}_{j_d}$ among $\mathfrak{m}_1,\ldots,\mathfrak{m}_c$ that have disc topology. We first construct an auxiliary map $\mathfrak{a}$ of topology $(h,m+1)$ with one internal face of topology $(h,m)$. The boundary $\partial_1\mathfrak{a}$ has degree $\ell_1$, root $\rho_1$, and is simple. The simple boundary $\partial_2\mathfrak{a}$ has degree $\ell_1+k_1-2$ and root $\bbarsuperscript{\rho_1}{-}$. The $\ell_1-1$ edges preceding this root are glued to the $\ell_1-1$ edges following $\rho_1$. The remaining boundaries $\partial_3\mathfrak{a},\ldots,\partial_{m+1}\mathfrak{a}$ are unrooted and have respective degrees $k_2,\ldots,k_m$. Furthermore, we glue the internal face $\mathfrak{f}$ as follows: one half-edge of $\delta_1$ is glued to $\rho_1$, while its next $k_1-1$ half-edges are glued to the root of $\partial_2\mathfrak{a}$ and the $k_1-2$ half-edges following it. For each $i \in \set{2,\ldots,m}$, we glue $\delta_i$ to $\partial_{i+1}\mathfrak{a}$.

We then obtain the elementary skin map $\mathfrak{e}$ from $\mathfrak{a}$ by declaring\footnote{
	Note that, with this convention, the boundary labels may not be consecutive.
} $\partial_i\mathfrak{e}=\partial_i\mathfrak{a}$ for all $i \in \set{2,\ldots,m} \setminus \set{j_1+1,\ldots,j_d+1}$, and by gluing the boundary faces of $d$ discs of respective degrees $k_{j_1},\ldots,k_{j_d}$ to the boundaries $\partial_{j_1+1}\mathfrak{a},\ldots,\partial_{j_d+1}\mathfrak{a}$; see~\Cref{fig:stuffed:elementary}. Since these boundaries are unrooted, roots must be chosen for the gluing, but the resulting map is independent of these choices. The absence of a preferred ordering of $\delta_2,\ldots,\delta_m$ translates into the absence of a preferred ordering of the boundaries of $\mathfrak{e}$ other than $\partial_1\mathfrak{e}$ and $\partial_2\mathfrak{e}$. A disc may or may not be glued into $\partial_2\mathfrak{a}$, and this distinction will be visible in several equations, such as~\eqref{eskin:01:stuffed}.

In the case (I${}^{=}$), the map $\mathfrak{e}$ is an annulus, since then $d=m-1$ and $h=0$. In the case (I${}^{>}$), $\mathfrak{e}$ has negative Euler characteristic.

\begin{figure}
	\centering

	\caption{Construction of the stuffed elementary skin map $\mathfrak{e}$ in the internal-face cases. In the displayed situation, the map $\mathfrak{e}$ (on the right) has negative Euler characteristic. It is obtained from $\mathfrak{a}$ (on the left) with $d=2$ and $j_1=1,j_2=4$ by gluing the discs $\mathfrak{m}_1$ and $\mathfrak{m}_4$ into $\partial_2\mathfrak{a}$ and $\partial_5\mathfrak{a}$.}
	\label{fig:stuffed:elementary}
\end{figure}

\begin{defn}
	Let $\ell_1,\ell \geq 1$. We denote by $\mathbb{E}_{\ell_1,\ell}^{\ast}$ the set of elementary skin maps $\mathfrak{e}$ with boundary degrees $\ell_1$ and $\ell$ arising from the topology-preserving cases (I${}^{=}$) or (D${}^{=}$). We define their generating series by
	\begin{equation}
		E_{\ell_1,\ell}^{\ast}
		\coloneqq
		\sum_{\mathfrak{e} \in \mathbb{E}_{\ell_1,\ell}^{\ast}}
		\frac{q^{v(\mathfrak{e}) - \ell}}{\card{\Aut(\mathfrak{e})}}
		\prod_{\substack{h\geq 0 \\ m \geq 1}}
		\prod_{\substack{k_1 \geq \cdots \geq k_m \geq 0 \\ k_1 + \cdots + k_m > 0}}
		t_{h;k_1,\ldots,k_m}^{N_{h;k_1,\ldots,k_m}(\mathfrak{e})}.
	\end{equation}
\end{defn}

Our construction implies
\begin{multline}
\label{eskin:01:stuffed}
	E^\ast_{\ell_1,\ell}
	=
	2W_{0;\ell_1 - \ell - 2}^{\ast}
	+
	\sum_{m \geq 1}
	\sum_{k_2,\ldots,k_m \geq 0}
	\frac{1}{(m - 1)!} \frac{t_{0;\ell - \ell_1 + 2,k_2,\ldots,k_m}}{[k_2] \cdots [k_m]}
	\prod_{b = 2}^{m} W_{0;k_b}^{\ast} \\
	+
	\sum_{m \geq 2}
	\sum_{\substack{k_1 \geq 1 \\ k_3,\ldots,k_m \geq 0}}
	\frac{1}{(m - 2)!} \frac{t_{0;k_1,\ell,k_3,\ldots,k_m}}{\ell\,[k_3]\cdots [k_m]}
	W_{0;\ell_1 + k_1 - 2}^{\ast}
	\prod_{b = 3}^{m} W_{0;k_b}^{\ast}.
\end{multline}

\begin{defn}
	A \emph{stuffed skin map} is either the trivial skin, or is obtained from a sequence of stuffed elementary skin maps $\mathfrak{e}_1,\ldots,\mathfrak{e}_a$, with $a \geq 1$, such that $\mathfrak{e}_j \in \mathbb{E}^{\ast}_{\ell_j,\ell_{j + 1}}$ for some $\ell_1,\ldots,\ell_{a + 1} \geq 1$, by gluing $\partial_2\mathfrak{e}_j$ to $\partial_1\mathfrak{e}_{j + 1}$ for every $j \in \set{1,\ldots,a-1}$. We denote by $\mathbb{S}_{\ell_1,\ell}^{\ast}$ the set of stuffed skin maps with boundaries of respective degrees $\ell_1,\ell$, and introduce their generating series
	\begin{equation}
		S_{\ell_1,\ell}^{\ast}
		\coloneqq
		\sum_{\mathfrak{s} \in \mathbb{S}_{\ell_1,\ell}^{\ast}}
		\frac{q^{v(\mathfrak{s}) - \ell}}{\card{\Aut(\mathfrak{s})}}
		\prod_{\substack{h\geq 0 \\ m \geq 1}}
		\prod_{\substack{k_1 \geq \cdots \geq k_m \geq 0 \\ k_1 + \cdots + k_m > 0}}
		t_{h;k_1,\ldots,k_m}^{N_{h;k_1,\ldots,k_m}(\mathfrak{s})},
		\qquad
		S^{\ast}(x_1,x)
		\coloneqq
		\sum_{\ell_1,\ell \geq 1} S_{\ell_1,\ell}^{\ast} \frac{x^{\ell - 1}}{x_1^{\ell_1 + 1}}.
	\end{equation}
\end{defn}

Mutatis mutandis, the argument of \Cref{ssec:rec:skin} yields the recursive relations
\begin{equation}
\label{rec:skin:stuffed}
	S_{\ell_1,\ell}^{\ast}
	=
	\delta_{\ell_1,\ell}
	+
	\sum_{\ell' \geq 1} E_{\ell_1,\ell'}^{\ast} S_{\ell',\ell}^{\ast},
	\qquad
	S_{\ell_1,\ell}^{\ast}
	=
	\delta_{\ell_1,\ell}
	+
	\sum_{\ell' \geq 1} S_{\ell_1,\ell'}^{\ast} E_{\ell',\ell}^{\ast}.
\end{equation}
The recursive formula can be written compactly by introducing the stuffed \emph{annular potentials}, already considered in \cite[Section~3.3]{Bor14}:
\begin{equation}
\label{ann:pot}
\begin{split}
	O(x,y)
	&\coloneqq
	\sum_{m\geq 2}\frac{1}{(m-2)!}
	\bigg\langle
		T_{0,m}(x,\xi_2,\ldots,\xi_{m-1},y)
		\prod_{b=2}^{m-1} W_{0,1}^{\ast}(\xi_b)
	\bigg\rangle_{\bm{\xi}}, \\
	\widetilde{O}(x,y)
	&\coloneqq
	\sum_{m\geq 2}\frac{1}{(m-1)!}
	\bigg\langle
		T_{0,m}(x,\xi_2,\ldots,\xi_{m-1},y)
		\prod_{b=2}^{m-1} W_{0,1}^{\ast}(\xi_b)
	\bigg\rangle_{\bm{\xi}}.
\end{split}
\end{equation}
These potentials package the planar part of the auxiliary-map construction above. Namely, one starts from a genus-zero internal face with several boundary components, keeps two of them as the boundary components of an annulus, and fills the others with arbitrary stuffed discs. The resulting object has the topology of an annulus, which explains the terminology. We use two versions because the number of boundaries playing a symmetric role differs. In $O(x,y)$, the two boundary components carrying $x$ and $y$ are distinguished, so only the remaining $m-2$ boundaries play a symmetric role. In $\widetilde{O}(x,y)$, only the $x$-boundary is distinguished first, while the boundary carrying $y$ is selected among the other $m-1$ boundaries.

After inserting \eqref{eskin:01:stuffed} into the second equality in~\eqref{rec:skin:stuffed} and taking generating series, this gives
\begin{multline}
\label{rec:skin:stuffed:gen}
	\bigg[
		S^\ast(x_1,x)
		\Big(
			x
			-
			T_{0,1}'(x)
			-
			\big\langle \partial_{x} \widetilde{O}(x,\xi) W_{0,1}^{\ast}(\xi)\big\rangle_{\xi} -
			2W_{0,1}^{\ast}(x)
		\Big)
	\bigg]_{+,x} \\
	=
	\bigg[
		\frac{1}{x_1-x}
		+
		\big\langle
			S^\ast(x_1,\xi)
			\partial_{\xi} O(x,\xi)
			W^\ast_{0,1}(\xi)
		\big\rangle_{\xi}
	\bigg]_{+,x}.
\end{multline}

\subsection{Skinning formula and skin enumeration}
Given a stuffed map $\mathfrak{m}$ of topology $(g,n)\neq(0,1)$, iterating Tutte's procedure\footnote{
	As discussed in \Cref{ssec:skinning:tubes}, in order for the iteration to be well defined, arbitrary choices of roots must be made whenever Tutte's procedure encounters an internal face with more than one boundary.
} until the topology changes produces a pair $(\mathfrak{s},\hat{\mathfrak{m}})$ consisting of a stuffed skin map and a stuffed core map. The boundary $\partial_2\mathfrak{s}$ is simple but may not be rooted, whereas $\partial_1\hat{\mathfrak{m}}$ is rooted but may not be simple. As before, one can glue $\partial_2\mathfrak{s}$ to $\partial_1\hat{\mathfrak{m}}$ even when $\partial_2\mathfrak{s}$ is unrooted: any choice of root on $\partial_2\mathfrak{s}$ produces, after gluing, the same stuffed maps. This gives the inverse bijection, and hence the skinning relation
\begin{equation}
\label{skinning:rel:stuffed}
	W_{g;\ell_1,\bm{\ell}_I}^{\ast}
	=
	\sum_{\ell \geq 1} S_{\ell_1,\ell}^{\ast} \hat{W}^{\ast}_{g;\ell,\bm{\ell}_I}
	\qquad\qquad
	W_{g,n}^{\ast}(x_1,\bm{x}_I)
	=
	\Big\langle
		S^{\ast}(x_1,x) \hat{W}^{\ast}_{g,n}(x,\bm{x}_I)
	\Big\rangle_{x}.
\end{equation}
Its generating-series form is parallel to the previous cases.

\begin{prop}[Skinning for stuffed maps]
	\label{prop:skinning:stuf}
	Let $g \geq 0$, $n \geq 1$ be such that $(g,n) \neq (0,1)$. Then
	\begin{equation}\label{eq:skinning:stuf}
	\begin{split}
		& \quad  W_{g,n}^{\ast}(x_1,\bm{x}_I) \\
		&\quad=
		\Bigg\langle
			S^{\ast}(x_1,x)
			\sum_{\substack{h \geq 0 \\ m \geq 1}}
			\sum_{\substack{1 \leq c \leq m \\ (\bm{p},\bm{K},\bm{J}) \in \mathcal{T}^{>}_{g,n;h,m;c}}}
				\!\!\!\!\frac{\partial_{\xi_1} T_{h,m}(\xi_1,\ldots,\xi_m)}{(c - 1)!(x - \xi_1)}
				W^{\ast}_{p_1,1 + \card{K_1} + \card{J_1}}(\xi_1,\bm{\xi}_{K_1},\bm{x}_{J_1})
				\prod_{b = 2}^c W^{\ast}_{p_b,\card{K_b} + \card{J_b}}(\bm{\xi}_{K_b},\bm{x}_{J_b})
		\Bigg\rangle_{x,\bm{\xi}} \\
		&\qquad
		+
		\Bigg\langle
			S^{\ast}(x_1,x)
			\Biggl(
				\sum_{i=2}^n
				\partial_{x_i}\left(
					\frac{W_{g,n-1}^{\ast}(x,\bm{x}_{I_i}) - W_{g,n-1}^{\ast}(x_i,\bm{x}_{I_i})}{x - x_i}
				\right)
				\\
				&\qquad\qquad
				+
				W_{g-1,n+1}^{\ast}(x,x,\bm{x}_I)
				+
				\sum_{\substack{h' + h'' = g \\ J' \sqcup J'' = I}}^{\nodisc}
				W_{h',1+\card{J'}}^{\ast}(x,\bm{x}_{J'})W_{h'',1+\card{J''}}^{\ast}(x,\bm{x}_{J''})
			\Biggr)
		\Bigg\rangle_{x}.
	\end{split}
	\end{equation}
\end{prop}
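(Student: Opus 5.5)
The plan is to combine the skinning relation \eqref{skinning:rel:stuffed}, $W^{\ast}_{g,n}(x_1,\bm{x}_I)=\langle S^{\ast}(x_1,x)\hat W^{\ast}_{g,n}(x,\bm{x}_I)\rangle_x$, with the expression for the generating series of stuffed core maps obtained just before, by keeping only the topology-changing terms of \Cref{Tutte:lemma:stuffed}. Both ingredients are already available, so the proposition follows by substituting the second into the first.

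First I will justify the skinning bijection $\mathfrak m\mapsto(\mathfrak s,\hat{\mathfrak m})$ for stuffed maps of topology $(g,n)\neq(0,1)$.

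\emph{Termination.} Each Tutte step removes an edge. Topology-preserving steps keep a component of topology $(g,n)$ with fewer edges. Hence the iteration stops at a core map in $\hat{\MM}^{\ast}_{g;\ell,\bm{\ell}_I}$.

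\emph{The skin.} The removed elementary pieces come from cases (I${}^{=}$) and (D${}^{=}$) only. They are annuli by construction, since $h=0$ and $d=m-1$ in case (I${}^{=}$). They assemble along their simple second boundaries into a stuffed skin map in $\mathbb{S}^{\ast}_{\ell_1,\ell}$.

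\emph{Inverse map.} Gluing $\partial_2\mathfrak s$ to $\partial_1\hat{\mathfrak m}$ undoes the cutting. This is legitimate because $\partial_2\mathfrak s$ is simple. When $\partial_2\mathfrak s$ is unrooted, it is a boundary of an internal face, and the result is independent of the chosen root.

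\emph{Independence of skin and core.} The root choices made whenever an internal face with several boundaries is erased are fixed once and for all, as in \Cref{ssec:skinning:tubes}. This makes the process Markovian, so skin and core range independently over their sets.

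\emph{Weights.} Vertex weights are additive, since $v(\mathfrak m)=v(\mathfrak s)+v(\hat{\mathfrak m})-\ell$. Face weights are multiplicative. Automorphism factors match as a bijection of orbifold sets. This gives \eqref{skinning:rel:stuffed} at fixed degrees. The conventions $x^{\ell-1}$ for $S^{\ast}$ and $x^{-\ell}$ for $\hat W^{\ast}$ then turn the sum over $\ell$ into $\langle\cdot\rangle_x$.

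Second, I will insert the core-map formula. It has three groups of terms, corresponding to cases (I${}^{>}$), (R) and (D${}^{>}$), with the no-$(0,1)$ restriction on the quadratic term. I will replace $x_1$ by $x$ and multiply by $S^{\ast}(x_1,x)$. The (I${}^{>}$) contribution already carries the brackets $\langle\cdot\rangle_{\bm\xi}$, so the two residue extractions combine into $\langle\cdot\rangle_{x,\bm\xi}$, producing the first line of \eqref{eq:skinning:stuf}. The (R) and (D${}^{>}$) terms produce the second bracket.

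The one point needing care is that these formal manipulations are legitimate. For each fixed monomial in $q$ and the $t$'s, only finitely many $\ell$ contribute, because the skin has at least $\ell$ vertices on its second boundary up to the $q^{-\ell}$ normalisation. This is the same finiteness used for ordinary maps, so exchanging sums and extractions is harmless.

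The main obstacle I expect is the bookkeeping in the bijectivity claim when internal faces of higher topology are erased. Specifically, I must check two things. First, that the symmetry factors $1/(c-1)!$ and $1/([k_2]\cdots[k_m])$ in the core formula are exactly compensated by the unrooted, unordered boundaries and by the rotational automorphisms of the erased face. Second, that the orbifold isotropy groups are preserved when skin and core are reglued. For the first check I would follow the orbifold argument of \Cref{ssec:skinning:tubes}, applied boundary by boundary.
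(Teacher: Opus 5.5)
Your proposal is correct and is the paper's own argument: the skinning bijection $\mathfrak{m}\mapsto(\mathfrak{s},\hat{\mathfrak{m}})$ gives $W^{\ast}_{g,n}(x_1,\bm{x}_I)=\langle S^{\ast}(x_1,x)\,\hat{W}^{\ast}_{g,n}(x,\bm{x}_I)\rangle_x$, and substituting the core-map formula, which keeps only the topology-changing terms (I${}^{>}$), (R), (D${}^{>}$) of Tutte's recursion, gives the statement. Two side remarks need adjusting: the sum over $\ell$ is finite at each monomial because, under the bijection, the pairs with that monomial correspond to the finitely many maps $\mathfrak{m}$ with that monomial, not because of the vertices of $\partial_2\mathfrak{s}$ (the normalisation $q^{v(\mathfrak{s})-\ell}$ removes exactly those); and the factors $1/(c-1)!$ and $1/([k_2]\cdots[k_m])$ need no re-checking, since they are already built into Tutte's lemma, so the only symmetry point specific to skinning is that regluing does not depend on the root chosen on an unrooted $\partial_2\mathfrak{s}$.
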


The skinning of pointed discs works as in \Cref{ssec:skinning:tubes} and yields
\begin{multline}
\label{skinning:discs:stuf}
	\partial_q W^\ast_{0,1}(x_1)-\frac{1}{x_1}
	=
	\bigg[
		S^\ast(x_1,x)\Big(
			T_{0,1}'(x) + 2W^\ast_{0,1}(x)
			+
			\bigl\langle
				\partial_x \widetilde{O}(x,\xi)W^\ast_{0,1}(\xi)
			\bigr\rangle_\xi\Big)
	\bigg]_{0,x} \\
	+
	\big\langle S^{\ast}(x_1,\xi) \partial_{\xi} O(0,\xi) W^\ast_{0,1}(\xi)\big\rangle_{\xi},
\end{multline}
where $O(x,y)$ and $\widetilde{O}(x,y)$ are the annular potentials defined in \Cref{ann:pot}.

As for maps and maps with tubes, the skin enumeration follows by combining the recursive computation of stuffed skin maps \eqref{rec:skin:stuffed:gen}, the skinning formula for annuli, and the skinning formula for pointed discs \eqref{skinning:discs:stuf}. The argument is parallel to the proof of \Cref{skin:enum:maps:tubes}, so we omit the details.

\begin{prop}[Skin enumeration for stuffed maps]
\label{skin:enum:stuf}
	We have
	\begin{equation}
		S^{\ast}(x_1,x)
		=
		\frac{
			{\displaystyle\int^{x}_{\infty}}
			\left(
				2W_{0,2}^{\ast}(x_1,y) + \frac{1}{(x_1 - y)^2}
			\right)\dd y
			+
			\big\langle W_{0,2}^{\ast}(x_1,\xi) O(\xi,x) \big\rangle_{\xi}
			-
			\partial_q W_{0,1}^{\ast}(x_1)
		}{
			x -T_{0,1}'(x) 
			-
			\big\langle \partial_{x} \widetilde{O}(x,\xi) W_{0,1}^{\ast}(\xi)\big\rangle_{\xi} - 2W_{0,1}^{\ast}(x)
		}.
	\end{equation}
\end{prop}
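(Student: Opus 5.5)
Following the proof of \Cref{skin:enum:maps:tubes}, set
\begin{equation}
	D^{\ast}(x) \coloneqq x - T_{0,1}'(x) - \big\langle \partial_x \widetilde{O}(x,\xi) W_{0,1}^{\ast}(\xi) \big\rangle_\xi - 2W_{0,1}^{\ast}(x).
\end{equation}
We determine separately the positive part, the constant term and the negative part in $x$ of $S^{\ast}(x_1,x)D^{\ast}(x)$, and then divide by $D^{\ast}(x)$. This division is legitimate as a formal series, since $D^{\ast}(x) = x(1 + O(q,\bm{t}))$ after factoring out the leading $x$.

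\emph{Positive part.} By \eqref{rec:skin:stuffed:gen}, this part equals the positive part of $\frac{1}{x_1-x} + \langle S^{\ast}(x_1,\xi)\partial_\xi O(x,\xi) W_{0,1}^{\ast}(\xi)\rangle_\xi$. The unwanted term involving $S^{\ast}$ is eliminated exactly as in \eqref{unwanted}--\eqref{STW:TW}. The triple product identity rewrites it as $\langle O(x,\xi)\langle S^{\ast}(x_1,\eta)W_{0,1}^{\ast}(\eta)(\xi-\eta)^{-2}\rangle_\eta\rangle_\xi$. The skinning formula \eqref{eq:skinning:stuf} at $(g,n)=(0,2)$ identifies the inner bracket with $W_{0,2}^{\ast}(x_1,\xi)$.

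Two checks are needed here.
\begin{itemize}[label={--}]
	\item First, at $(g,n)=(0,2)$ the set $\mathcal{T}^{>}_{0,2;h,m;c}$ is empty. Indeed, with a single original boundary $\partial_2$ and genus $0$, any internal face produces either components all of which are discs and one annulus, which is case (I${}^{=}$), or a component of genus $h>0$, which is impossible. The first line of \eqref{eq:skinning:stuf} therefore vanishes, and only the $\partial_{x_2}$ term survives.
	\item Second, $O(x,\xi)$ is a power series in $x$ and $\xi$ with non-negative exponents, so the triple product identity applies with $O$ in place of $T_{0,2}$. Its symmetry $O(x,\xi)=O(\xi,x)$, inherited from the symmetry of $T_{0,m}$, then turns the result into $\langle W_{0,2}^{\ast}(x_1,\xi)O(\xi,x)\rangle_\xi$.
\end{itemize}

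\emph{Constant term.} We write $[S^{\ast}D^{\ast}]_{0,x} = -[S^{\ast}(T_{0,1}' + 2W_{0,1}^{\ast} + \langle\partial_x\widetilde{O}W_{0,1}^{\ast}\rangle)]_{0,x}$, using that $[S^{\ast}(x_1,x)\,x]_{0,x}=0$ because $S^{\ast}$ has only non-negative powers of $x$. The pointed-disc relation \eqref{skinning:discs:stuf} then turns this into $\frac{1}{x_1} - \partial_q W_{0,1}^{\ast}(x_1) + \langle S^{\ast}(x_1,\xi)\partial_\xi O(0,\xi)W_{0,1}^{\ast}(\xi)\rangle_\xi$. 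Applying the same transformation \eqref{STW:TW} with $x=0$ identifies the last term with the constant term of $\langle W_{0,2}^{\ast}(x_1,\xi)O(\xi,x)\rangle_\xi$. The positive part and the constant term together therefore give $\frac{1}{x_1-x} + \langle W_{0,2}^{\ast}(x_1,\xi)O(\xi,x)\rangle_\xi - \partial_q W_{0,1}^{\ast}(x_1)$, restricted to non-negative powers. This is all of that expression, since $O(\xi,x)$ has no negative powers of $x$.

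\emph{Negative part.} Only $-2W_{0,1}^{\ast}$ in $D^{\ast}$ contributes to negative powers. Indeed, $x - T_{0,1}'(x)$ and $\langle\partial_x\widetilde O(x,\xi)W_{0,1}^{\ast}(\xi)\rangle_\xi$ are power series in $x$, and $S^{\ast}$ has non-negative powers of $x$. Integrating the annulus skinning relation from $\infty$ and using the identity $\langle F(\xi)\frac{G(\xi)-G(x)}{\xi-x}\rangle_\xi = -[F(x)G(x)]_{-,x}$, as in the proof of \Cref{skin:enum:maps}, gives $[S^{\ast}D^{\ast}]_{-,x} = \int_\infty^x 2W_{0,2}^{\ast}(x_1,y)\,\dd y$.

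Summing the three parts, absorbing $\frac{1}{x_1-x}$ into the integral, and dividing by $D^{\ast}(x)$ yields the statement. I expect the main obstacle to be the vanishing of the (I${}^{>}$) contribution at $(g,n)=(0,2)$ together with the correct symmetry factors. One must check that the $\frac{1}{(m-2)!}$ normalisation in $O$, rather than the $\frac{1}{(m-1)!}$ in $\widetilde{O}$, is precisely what appears in the second-boundary-unrooted term of \eqref{eskin:01:stuffed}, which is what feeds both \eqref{rec:skin:stuffed:gen} and \eqref{skinning:discs:stuf}. I would verify this first by comparing the third term of \eqref{eskin:01:stuffed} with the definition \eqref{ann:pot}.
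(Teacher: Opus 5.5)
Your proposal is correct and is the argument the paper intends: the paper omits the proof as ``parallel to \Cref{skin:enum:maps:tubes}'', and you carry out that parallel argument. You take the positive part from \eqref{rec:skin:stuffed:gen} together with the triple-product identity using $O$ in place of $T_{0,2}$, the constant term from \eqref{skinning:discs:stuf}, and the negative part from the annulus skinning relation. Your two checks are exactly the details the paper leaves implicit: that the (I${}^{>}$) contribution vanishes at $(g,n)=(0,2)$, and that $O$ is a symmetric power series in both variables.
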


\section{Pair-of-pants excision}
\label{sec:pop}

The goal of this section is to bring the recursive decomposition coming from skinning into a form as close as possible to the excision of an embedded  pair of pants, in the spirit of topological recursion.  Most of the time, it is easy to identify a pair of pants, except in the case (I${}^{>}$).  In that case, removing an internal face can change the topology in a more drastic way than simply removing a pair of pants. We therefore need an alternative procedure, which separates off the relevant part of the internal face. In any case, we will construct a pair of pants $\mathfrak{p}$ and a stuffed \emph{thinned core map} $\check{\mathfrak{m}}$ whose Euler characteristic is one larger, namely $(2-2g-n)+1$:
\begin{equation}
	\mathfrak{m} \longmapsto (\mathfrak{p},\check{\mathfrak{m}}).
\end{equation}
The boundaries of $\mathfrak{p}$ and $\check{\mathfrak{m}}$ are ordered and denoted
\begin{equation}
	(\partial_1\mathfrak{p}, \partial'\mathfrak{p},\partial''\mathfrak{p})
	\qquad\text{and}\qquad
	(\partial'\check{\mathfrak{m}},\partial''\check{\mathfrak{m}},
		\partial_2\check{\mathfrak{m}}, \ldots, \partial_n\check{\mathfrak{m}}
	).
\end{equation}
The original stuffed map $\mathfrak{m}$ is recovered by gluing $\partial'\mathfrak{p}$ to $\partial'\check{\mathfrak{m}}$ and $\partial''\mathfrak{p}$ to $\partial''\check{\mathfrak{m}}$. In particular, $\check{\mathfrak{m}}$ can only have one or two components. When $\check{\mathfrak{m}}$ has no component of topology $(g,n)$, we say that $\mathfrak{p}$ is \emph{relevant}: excising it gives us a recursion on the complexity $2g - 2 + n$. When $\check{\mathfrak{m}}$ has a component of topology $(g,n)$, the excision of $\mathfrak{p}$ does not reduce the complexity and requires a different treatment.

We work throughout in the stable range $2g-2+n>0$, where pair-of-pants excision is meaningful, and directly with stuffed maps, since the organising principles appear most clearly in that generality; the cases of maps and maps with self-avoiding loops are recovered by specialisation.

\subsection{From skinning to pair-of-pants excision}
\label{ssec:skinning:to:pop}
Recall the usual skinning procedure
\begin{equation}
	\mathfrak{m} \longmapsto (\mathfrak{s},\hat{\mathfrak{m}}),
\end{equation}
which scraps off a stuffed skin map $\mathfrak{s}$ and leaves a stuffed core map $\hat{\mathfrak{m}}$. We now apply Tutte's procedure once more to $\hat{\mathfrak{m}}$, and call $\mathfrak{e}$ the resulting stuffed elementary skin map. The topology-changing cases at this last step fall into three types which reorganise the previous classification.
\begin{itemize}
	\item The first type is the case (D${}^{>}$). Apart from degree matching, the pair of pants and the complementary map will be arbitrary. We say that the boundaries of the complementary map are \emph{standard}.

	\item The second type consists of the case (R) together with the internal-face cases (I${}^{>}$) that behave in the same way, i.e. give a relevant pair of pants. In such cases, one of the two new boundaries of the complementary stuffed map is adjacent to a single face: we say that the complementary stuffed map is \emph{special} along that boundary.
	
	\item The third type consists of the remaining internal-face case. Here one complementary component is a stuffed disc, while the other still has topology $(g,n)$. The complexity cannot be reduced by excise the pair of pants, but only by getting rid of the internal face which has negative Euler characteristic. We say these configurations as \emph{indecomposable}.
\end{itemize}
We now describe the combinatorics of these three cases separately. In the first two cases, we also describe how to excise a relevant pair of pants $\mathfrak{p}$, obtained by gluing one additional layer to the skin map $\mathfrak{s}$.

\subsubsection{The standard case}
This is the case in which the stuffed core map $\hat{\mathfrak{m}}$ is of type (D${}^{>}$). The stuffed map $\check{\mathfrak{m}}$ is obtained from $\hat{\mathfrak{m}}$ by applying Tutte's procedure once more. Since $r_1$ bounds the first boundary on both sides, erasing it creates two new boundaries $\partial'\check{\mathfrak{m}}$ and $\partial''\check{\mathfrak{m}}$, rooted at $\rho_1^+$ and $\rho_1^-$, respectively. $\mathfrak{e}$ is a pair of glasses, see~\Cref{ssec:skin}, with boundaries $\partial_1 \mathfrak{e} = \partial_1 \hat{\mathfrak{m}}$, and $\partial' \mathfrak{e},\partial'' \mathfrak{e}$ that can be glued to $\partial' \check{\mathfrak{m}},\partial'' \check{\mathfrak{m}}$ to get back $\hat{\mathfrak{m}}$. The boundary degrees satisfy
\begin{equation}
	\ell(\partial'\check{\mathfrak{m}})
	+
	\ell(\partial''\check{\mathfrak{m}})
	=
	\ell(\partial_1\hat{\mathfrak{m}}) - 2.
\end{equation}

The stuffed map $\mathfrak{p}$ is obtained by gluing $\partial_1\mathfrak{e}$ to $\partial_2\mathfrak{s}$. The resulting stuffed map is a pair of pants, with $\partial_1\mathfrak{p} = \partial_1\mathfrak{s}$, while $\partial'\mathfrak{p}$ and $\partial''\mathfrak{p}$ are opposite to $\partial'\check{\mathfrak{m}}$ and $\partial''\check{\mathfrak{m}}$, respectively.

We denote by $\hat{\MM}^{\std}_{g;\ell_1,\bm{\ell}_I}$ the set of stuffed core maps $\hat{\mathfrak{m}}$ of topology $(g,n)$ and boundary degrees $\ell_1,\bm{\ell}_{I}$ in the standard case. Then the above construction gives the bijection
\begin{equation}
\label{Mstd}
	\hat{\MM}^{\std}_{g;\ell_1,\bm{\ell}_I}
	\overset{\simeq}{\longrightarrow}
	\bigsqcup_{\substack{k',k'' \geq 1 \\ k'+k''=\ell_1-2}}
	\Biggl(
		\MM^\ast_{g-1;k',k'',\bm{\ell}_I}
		\sqcup
		\bigsqcup_{\substack{h'+h''=g \\ J'\sqcup J'' = I}}^{\nodisc}
			\MM^\ast_{h';k',\bm{\ell}_{J'}}
			\times
			\MM^\ast_{h'';k'',\bm{\ell}_{J''}}
	\Biggr),
	\qquad
	\hat{\mathfrak{m}} \longmapsto \check{\mathfrak{m}}.
\end{equation}
The inverse bijection is given as follows. We take a copy $\partial'\mathfrak{e}$ of $\partial'\check{\mathfrak{m}}$ and a copy $\partial''\mathfrak{e}$ of $\partial''\check{\mathfrak{m}}$ that we orient in the opposite way. We add an edge joining the endpoint of the root of $\partial'\mathfrak{e}$ to the starting point of the root of $\partial''\mathfrak{e}$, so as to form a pair of glasses $\mathfrak{e}$, where $\partial_1\mathfrak{e}$ is the longest boundary. Eventually, we get $\hat{\mathfrak{m}}$ by gluing $\partial'\mathfrak{e}$ to $\partial'\check{\mathfrak{m}}$ and $\partial''\mathfrak{e}$ to $\partial''\check{\mathfrak{m}}$.

\subsubsection{The special case.}
This case consists of the case (R), together with the sub-case of (I${}^{>}$) in which the pair-of-pants is still relevant. We first analyse the case (R).

(R) Suppose the stuffed core map $\hat{\mathfrak{m}}$ is of type (R): its root-edge $r_1$ bounds $\partial_i\hat{\mathfrak{m}}$ for some $i\in\set{2,\ldots,n}$. We define $\check{\mathfrak{m}} = \check{\mathfrak{m}}'\sqcup \check{\mathfrak{m}}''$, where $\check{\mathfrak{m}}'$ is obtained from $\hat{\mathfrak{m}}$ by applying Tutte's procedure once more, and $\check{\mathfrak{m}}''$ is the trivial skin of degree $\ell_i$, i.e. an annulus with no internal faces. The component $\check{\mathfrak{m}}'$ carries the new boundary $\partial'\check{\mathfrak{m}}$ together with the boundaries $\partial_j\check{\mathfrak{m}}$ for $j\in\set{2,\ldots,n}\setminus\set{i}$, inherited from $\hat{\mathfrak{m}}$. We keep this trivial skin $\check{\mathfrak{m}}''$ in the game so that the stuffed thinned core map $\check{\mathfrak{m}}$ still has two distinguished boundaries to be glued back to the last two boundaries of the pair of pants. 

The stuffed map $\mathfrak{p}$ is obtained as in the standard case. The map $\mathfrak{e}$ is a theta map in this case (see~\Cref{ssec:skin}) and we glue $\partial_2\mathfrak{s}$ to $\partial_1\mathfrak{e}$. The result is a pair of pants $\mathfrak{p}$ with boundaries $(\partial_1\mathfrak{p},\partial'\mathfrak{p},\partial''\mathfrak{p}) = (\partial_1\mathfrak{s},\partial'\mathfrak{e},\partial''\mathfrak{e})$. The original stuffed map $\mathfrak{m}$ is recovered by gluing $\partial'\mathfrak{p}$ to $\partial'\check{\mathfrak{m}}$ and $\partial''\mathfrak{p}$ to $\partial''\check{\mathfrak{m}}$.

(I${}^{>}$) We now turn to the internal-face contribution. A new phenomenon occurs, as applying Tutte's procedure may change the topology more drastically than a usual increase of the Euler characteristic by $1$. Suppose that the stuffed core map $\hat{\mathfrak{m}}$ is of type (I${}^{>}$), so that $r_1$ belongs both to $\partial_1\hat{\mathfrak{m}}$ and to a boundary $\delta_1$ of degree $k_1$ of an internal face $\mathfrak{f}$ of topology $(h,m)\neq(0,1)$. We first describe the construction common to all such cases, and then separate them further depending on whether the stuffed thinned core map has a component of topology $(g,n)$ or not.

The map $\check{\mathfrak{m}}$ is obtained from $\hat{\mathfrak{m}}$ by cutting along $\delta_1$ (this is the inverse operation to gluing). The net effect is to replace $\partial_1\hat{\mathfrak{m}}$ by two labelled boundaries $\partial'\check{\mathfrak{m}}$ and $\partial''\check{\mathfrak{m}}$. The boundary $\partial'\check{\mathfrak{m}}$ has degree $\ell_1 + k_1 - 2$ and we root it at $\overline{\rho_1}{}^{-}$. The boundary $\partial''\check{\mathfrak{m}}$ has degree $k_1$, is glued to $\delta_1$ (this defines unambiguously its root to be the half-edge opposite to $\overline{\rho_1}$). The rest of $\check{\mathfrak{m}}$ is unchanged compared to $\hat{\mathfrak{m}}$, and the boundary degrees satisfy
\begin{equation}
	\ell(\partial_1\hat{\mathfrak{m}})
	=
	\ell(\partial'\check{\mathfrak{m}})
	-
	\ell(\partial''\check{\mathfrak{m}})
	+
	2.
\end{equation}

To construct the pair of pants, we introduce an auxiliary theta map $\mathfrak{e}$ where $\partial_1\mathfrak{e} = \partial_1\hat{\mathfrak{m}}$, where $\partial'\mathfrak{e}$ is a copy of $\partial'\check{\mathfrak{m}}$ with opposite orientation, and where $\partial''\mathfrak{e}$ is a copy of  $\partial''\check{\mathfrak{m}} = \delta_1$ with opposite orientation. We glue $\partial_2\mathfrak{s}$ to $\partial_1\mathfrak{e}$. The result is a pair of pants $\mathfrak{p}$, with $(\partial_1\mathfrak{p},\partial'\mathfrak{p},\partial''\mathfrak{p}) = (\partial_1\mathfrak{s},\partial'\mathfrak{e},\partial''\mathfrak{e})$, see~\Cref{fig:pop:I}.

\begin{figure}
	\centering

	\caption{Starting from a map $\mathfrak{m}$ (top left), the skinning process produces the skin $\mathfrak{s}$ and the core map $\hat{\mathfrak{m}}$ (top right). The internal face is then separated off along $\delta_1$ (in blue) to produce the thinned core map $\check{\mathfrak{m}}$, and the auxiliary elementary piece $\mathfrak{e}$ is prepared (bottom left). Finally, gluing $\mathfrak{e}$ to $\mathfrak{s}$ produces the pair of pants $\mathfrak{p}$ (bottom right).}
	\label{fig:pop:I}
\end{figure}

With these definitions, $\check{\mathfrak{m}}$ is either connected or has two labelled connected components. If disconnected, we denote its components by $\check{\mathfrak{m}}'$ and $\check{\mathfrak{m}}''$, where $\check{\mathfrak{m}}'$ contains $\partial'\check{\mathfrak{m}}$ and $\check{\mathfrak{m}}''$ contains the special boundary $\partial''\check{\mathfrak{m}}$. The sub-case where $\check{\mathfrak{m}}'$ is not a disc belongs to the special case: it gives a pair-of-pants excision with special complement. The remaining sub-case, where $\check{\mathfrak{m}}'$ is a disc and $\check{\mathfrak{m}}''$ still has topology $(g,n)$, defines the indecomposable case. Notice that $\check{\mathfrak{m}}''$ cannot be a disc, otherwise $\Tutte$ applied to $\hat{\mathfrak{m}}$ would not be topology-changing.

The case (R) can now be viewed as the degenerate boundary-face version of the disconnected internal-face case: viewing the boundary as a degenerate annular face, we apply the same operation as in (I${}^{>}$) by cutting along its edges. This yields the trivial skin for $\check{\mathfrak{m}}''$, and again a theta map $\mathfrak{e}$ agreeing with the elementary skin map $\mathfrak{e}$ from before.

Since in all these cases the boundary $\partial'' \check{\mathfrak{m}}$ is adjacent to a single face, either boundary or internal, it is natural to isolate this property.

\begin{defn}
\label{del:special}
	A stuffed map in $\MM^\ast_{g;\ell_1,\ldots,\ell_n}$ is called $\partial_i$-\emph{special} if $\partial_i$ is adjacent to a single face, either boundary or internal. We denote by $\mathcal{R}_i\MM^\ast_{g;\ell_1,\ldots,\ell_n}$ the set of $\partial_i$-special stuffed maps of topology $(g,n)$ and boundary degrees $\ell_1,\ldots,\ell_n$. The generating series of $\partial_i$-special stuffed maps with fixed boundary degrees is
	\begin{equation}
		\mathcal{R}_i W_{g;\ell_1,\ldots,\ell_n}^{\ast}
		\coloneqq
		\sum_{\mathfrak{m} \in \mathcal{R}_i \MM_{g;\ell_1,\ldots,\ell_n}^\ast}
		\frac{q^{v(\mathfrak{m})}}{\card{\Aut(\mathfrak{m})}}
		\prod_{\substack{h\geq 0 \\ m \geq 1}}
		\prod_{\substack{k_1 \geq \cdots \geq k_m \geq 0 \\ k_1 + \cdots + k_m > 0}}
		t_{h;k_1,\ldots,k_m}^{N_{h;k_1,\ldots,k_m}(\mathfrak{m})}.
	\end{equation}
	With free boundary degrees, we set
	\begin{equation}
		\mathcal{R}_{x_i}W_{g,n}^{\ast}(x_1,\ldots,x_n)
		\coloneqq
		- x_1\delta_{g,0}\delta_{n,1}
		+
		\sum_{\ell_1,\ldots,\ell_n \geq 1}
			\mathcal{R}_i W_{g;\ell_1,\ldots,\ell_n}^{\ast}  x_i^{\ell_i - 1} \prod_{j \neq i} x_j^{-(\ell_j + 1)}. 
	\end{equation}
	The initial term for $(g,n) = (0,1)$ is a convention that will be convenient for the formula in \Cref{revisit:skin}. In the case of annuli, we also need to single out those $\partial_1$-special maps for which $\partial_1$ is adjacent to an internal face, rather than to a boundary face. We denote their generating series by $\mathcal{R}_{x_1}\widetilde{W}^{\ast}_{0,2}(x_1,x_2)$.
\end{defn}
\begin{figure}[H]
	\centering

	\caption{Examples of $\partial_1$-special maps. The left one is $\partial_1$-indecomposable (cf.~\Cref{def:indec}), while the right one is not.}
	\label{fig:special}
\end{figure}

The symbol $\mathcal{R}_{x_i}$ is not an operator in the strict sense: it is only defined on the specific series $W_{g,n}^{\ast}$, and by independently enumerating a restricted class of maps. Nevertheless, it is convenient to use operator-like notation. This notation anticipates the analytic interpretation, where $\mathcal{R}_{x_i}$ corresponds to the \emph{regular part} of the associated series across the cut.

We denote by $\hat{\MM}^{\spl}_{g;\ell_1,\bm{\ell}_I}$ the set of core maps $\hat{\mathfrak{m}}$ of topology $(g,n)$ and boundary degrees $\ell_1,\bm{\ell}_{I}$ in the special case. Then the above construction for $(g,n)\neq(1,1)$ gives the bijection
\begin{equation}
\label{spl}
	\hat{\MM}^{\spl}_{g;\ell_1,\bm{\ell}_I}
	\overset{\simeq}{\longrightarrow}
	\bigsqcup_{k_1 \geq 1}
	\Biggl(
		\mathcal{R}_2 \MM^\ast_{g-1;\ell_1+k_1-2,k_1,\bm{\ell}_I}
		\sqcup
		\bigsqcup_{\substack{h'+h''=g \\ J'\sqcup J''=I}}^{\nodisc}
			\MM^\ast_{h';\ell_1+k_1-2,\bm{\ell}_{J'}}
			\times
			\mathcal{R}_1 \MM^\ast_{h'';k_1,\bm{\ell}_{J''}}
	\Biggr),
	\qquad
	\hat{\mathfrak{m}} \longmapsto \check{\mathfrak{m}}.
\end{equation}
The case (R) is included in the second summand, with the special boundary adjacent to a boundary face rather than to an internal face (i.e. one connected component is a trivial cylinder). The bijection inverse to \eqref{spl} is obtained similarly to \eqref{Mstd}, by constructing a theta map instead of a pair of glasses, and gluing its first and third boundaries with $\partial'\check{\mathfrak{m}}$ and $\partial''\check{\mathfrak{m}}$.

Note that maps in which the special boundary is adjacent to another boundary never appear in the first summand, as desired, since they correspond to case (R), which does not change the genus. For $(g,n)=(1,1)$, however, such maps are contained in $\mathcal{R}_2\MM^\ast_{g-1;\ell_1+k_1-2,k_1,\bm{\ell}_I}$ and must therefore be excluded to obtain a bijection. Thus, for $(g,n)=(1,1)$,
\begin{equation}
\label{spl:1:1}
	\hat{\MM}^{\spl}_{1;\ell_1}
	\overset{\simeq}{\longrightarrow}
	\bigsqcup_{k_1 \geq 1}
		\mathcal{R}_2\widetilde{\MM}^\ast_{0;\ell_1+k_1-2,k_1},
	\qquad
	\hat{\mathfrak{m}} \longmapsto \check{\mathfrak{m}},
\end{equation}
where $\mathcal{R}_2\widetilde{\MM}^\ast$ denotes the set of $\partial_2$-special cylinders other than the trivial cylinder.

\subsubsection{The indecomposable case.}
This is the remaining sub-case of (I${}^{>}$). With the notation above, the thinned core map $\check{\mathfrak{m}}$ is disconnected, the component $\check{\mathfrak{m}}'$ containing $\partial'\check{\mathfrak{m}}$ is a disc, and the component $\check{\mathfrak{m}}''$ containing the special boundary $\partial''\check{\mathfrak{m}}$ still has topology $(g,n)$. In this case, we modify our construction:  we glue back the boundary face of the disc $\check{\mathfrak{m}}'$ to $\partial'\mathfrak{p}$. This produces a stuffed annulus $\mathfrak{o}$ with boundaries $\partial_1\mathfrak{o}$ and $\partial''\mathfrak{o}$, see~\Cref{fig:blob}, and gives a modified assignment
\begin{equation}
\label{ind:bij}
	\mathfrak{m} \longmapsto (\mathfrak{o},\check{\mathfrak{m}}'').
\end{equation}
This assignment is a bijection onto its image. The reverse bijection is the gluing of the $\partial''$ boundaries, but we still need to describe the image of \eqref{ind:bij}.

First, $\mathfrak{o}$ is an arbitrary stuffed annulus: the map $(\mathfrak{p},\check{\mathfrak{m}}') \mapsto \mathfrak{o}$ is inverse to annulus skinning, the only difference being that the theta map is glued to $\mathfrak{s}$ to form $\mathfrak{p}$. Second, $\check{\mathfrak{m}}''$ must be a stuffed map of topology $(g,n)$ whose first boundary is special, is adjacent to an internal face, and is such that applying Tutte's procedure to it changes topology. The only compatibility constraint is that the degrees of $\partial''\mathfrak{o}$ should match the degree of $\partial_1\check{\mathfrak{m}}''$.  This leads to the following definition.

\begin{figure}
	\centering

	\caption{Starting from a map $\mathfrak{m}$ (top left), the skinning process produces the skin $\mathfrak{s}$ and the core map $\hat{\mathfrak{m}}$ (top right). The internal face is then separated off along $\delta_1$ (in blue) to produce the pair of pants $\mathfrak{p}$ and the thinned core map $\check{\mathfrak{m}} = \check{\mathfrak{m}}' \sqcup \check{\mathfrak{m}}''$ (bottom left). Finally, gluing $\check{\mathfrak{m}}'$ to $\mathfrak{p}$ produces the annulus $\mathfrak{o}$ (bottom right).}
	\label{fig:blob}
\end{figure}

\begin{defn}\label{def:indec}
	A $\partial_1$-special stuffed map is called $\partial_1$-\emph{indecomposable} if $\partial_1$ is adjacent to an internal face and applying Tutte's procedure at $\partial_1$ changes topology. We denote by $\mathbb{V}_{g;\underline{\ell_1};\ell_2,\ldots,\ell_n}$ the set of $\partial_1$-indecomposable stuffed maps of topology $(g,n)$ and boundary degrees $\ell_1,\ldots,\ell_n$, in which $\partial_1$ is unrooted. The generating series of $\partial_1$-indecomposable stuffed maps with fixed boundary degrees is
	\begin{equation}
	\label{eq:indecomp}
		V_{g;\underline{\ell_1};\bm{\ell}_I}
		\coloneqq
		\sum_{\mathfrak{m} \in \mathbb{V}_{g;\underline{\ell_1};\bm{\ell}_I}}
		\frac{q^{v(\mathfrak{m})}}{\card{\Aut(\mathfrak{m})}}
		\prod_{\substack{h\geq 0 \\ m \geq 1}} \;
		\prod_{\substack{k_1 \geq \cdots \geq k_m \geq 0 \\ k_1 + \cdots + k_m > 0}}
		t_{h;k_1,\ldots,k_m}^{N_{h;k_1,\ldots,k_m}(\mathfrak{m})}.
	\end{equation}
	With free boundary degrees, we set
	\begin{equation}
		V_{g,n}(x_1;\bm{x}_I)
		\coloneqq
		\sum_{\ell_1,\ldots,\ell_n \geq 1}
			V_{g;\underline{\ell_1};\bm{\ell}_I}
			 x_1^{\ell_1 - 1} \prod_{j = 2}^{n} x_j^{-(\ell_j + 1)}.
	\end{equation}
\end{defn}

Note that a $\partial_1$-special map is not $\partial_1$-indecomposable if and only if it is of topology $(0,k)$ with $k-2$ discs glued to its boundaries, see for example \Cref{fig:special}. These maps should be viewed as stuffed generalisations of annular faces, since they cannot change the topology on their own.

We denote by $\hat{\MM}^{\ind}_{g;\ell,\bm{\ell}_I}$ the set of core maps $\hat{\mathfrak{m}}$ of topology $(g,n)$ and boundary degrees $\ell,\bm{\ell}_I$ in the indecomposable case. The discussion above gives the bijection
\begin{equation}
\label{eq:blob:bijection}
	\bigsqcup_{\ell\geq 1}
		\mathbb{S}^\ast_{\ell_1,\ell}
		\times
		\hat{\MM}^{\ind}_{g;\ell,\bm{\ell}_I}
	\overset{\simeq}{\longrightarrow}
	\bigsqcup_{\ell\geq 1}
		\MM^\ast_{0;\ell_1,\ell}
		\times
		\mathbb{V}_{g;\underline{\ell};\bm{\ell}_I},
	\qquad
	(\mathfrak{s},\hat{\mathfrak{m}})
	\longmapsto
	(\mathfrak{o},\check{\mathfrak{m}}'').
\end{equation}
Indeed, the left-hand side records the original skin together with a core map in the indecomposable case. Applying the modified construction described above, namely thinning $\hat{\mathfrak{m}}$ and then gluing back the disc component $\check{\mathfrak{m}}'$ to the corresponding boundary of the pair of pants, produces an arbitrary stuffed annulus $\mathfrak{o}$, while the remaining component $\check{\mathfrak{m}}''$ is precisely a $\partial''$-indecomposable stuffed map with unrooted $\partial''$ boundary. Conversely, choosing any root on the $\partial''$ boundary of the indecomposable map and gluing the two factors on the right-hand side along their boundaries of degree $\ell$ and applying the skinning procedure retrieves a unique pair $(\mathfrak{s},\hat{\mathfrak{m}})$ with $\hat{\mathfrak{m}}$ in the indecomposable case. 

This reformulation reflects an intrinsic property of the stuffed maps that fall into the indecomposable case. Informally, these are the stuffed maps for which the change of topology seen from the first boundary comes from the topology of an internal face, rather than from the gluing of edges. More precisely, these are the stuffed maps containing an internal face $\mathfrak{f}$ with one boundary $\delta_1$ freely homotopic to the first boundary, but neither null-homotopic nor homotopic to another boundary of $\mathfrak{f}$. Such maps cannot be decomposed in a way that increases the Euler characteristic of each component unless one includes the face $\mathfrak{f}$ in the excised part; we refrain from doing this because then the excised piece could have arbitrary topology. These indecomposable configurations are absent for maps and maps with tubes.

\subsubsection{The excision formula.}
Finally, we relate skin maps to the pairs of pants arising in the recursive cases. In the standard and special cases, the pair of pants $\mathfrak{p}$ is obtained by gluing the stuffed skin map $\mathfrak{s}$ to the elementary skin map $\mathfrak{e}$ extracted from the first boundary of the core map. Conversely, once the thinned core map $\check{\mathfrak{m}}$ is fixed, the second and third boundaries of $\mathfrak{p}$ determine $\mathfrak{e}$, so recovering $\mathfrak{p}$ is equivalent to recovering the original skin map. Thus, for fixed boundary degrees, stuffed skin maps are in bijection with the pairs of pants arising in the standard and special cases.

Combining the three cases above, for $(g,n)\neq(1,1)$ we obtain the bijection
\begin{equation}
\label{eq:pop:global:bijection}
\begin{split}
	\MM^\ast_{g;\ell_1,\bm{\ell}_I}
	\overset{\simeq}{\longrightarrow}
	&\bigsqcup_{\ell\geq 1}
		\mathbb{S}^\ast_{\ell_1,\ell} \times
		\bigsqcup_{\substack{k',k'' \geq 1 \\ k'+k''=\ell-2}}
		\Biggl(
			\MM^\ast_{g-1;k',k'',\bm{\ell}_I}
			\sqcup
			\bigsqcup_{\substack{h'+h''=g \\ J'\sqcup J'' = I}}^{\nodisc}
				\MM^\ast_{h';k',\bm{\ell}_{J'}}
				\times
				\MM^\ast_{h'';k'',\bm{\ell}_{J''}}
		\Biggr) \\
	&\sqcup \bigsqcup_{\ell\geq 1}
		\mathbb{S}^\ast_{\ell_1,\ell} \times
		\bigsqcup_{k\geq 1}
		\Biggl(
			\mathcal{R}_2\MM^\ast_{g-1;\ell+k-2,k,\bm{\ell}_I}
			\sqcup
			\bigsqcup_{\substack{h'+h''=g \\ J'\sqcup J''=I}}^{\nodisc}
				\MM^\ast_{h';\ell+k-2,\bm{\ell}_{J'}}
				\times
				\mathcal{R}_1\MM^\ast_{h'';k,\bm{\ell}_{J''}}
		\Biggr) \\
	&\sqcup
	\bigsqcup_{\ell\geq 1}
		\big(\MM^\ast_{0;\ell_1,\ell}
		\times
		\mathbb{V}_{g;\underline{\ell};\bm{\ell}_I}\big).
\end{split}
\end{equation}
The first two lines correspond to the genuine pair-of-pants excision: the factor $\mathbb{S}^\ast_{\ell_1,\ell}$ records the pair of pants, or equivalently the skin map, while the remaining factors record the recursive thinned core in the standard and special cases. The last line is the part where excising a pair of pants would not decrease the topology: it has been replaced by an arbitrary stuffed annulus in $\MM^\ast_{0;\ell_1,\ell}$, and the remaining map is $\partial_1$-indecomposable in $\mathbb{V}_{g;\underline{\ell};\bm{\ell}_I}$.

In terms of generating series with free boundary degrees, the above bijection yields the following excision formula:
\begin{equation}
\begin{split}
	&W_{g,n}^{\ast}(x_1,x_2,\ldots,x_n) = \\
	&
	\Bigg\langle
		S^{\ast}(x_1,x)
		\bigg(
			W_{g-1,n+1}^{\ast}(x,x,x_2,\ldots,x_n)
			+
			\sum_{\substack{h' + h'' = g \\ J' \sqcup J'' = \set{2,\ldots,n}}}^{\nodisc}
				W_{h',1+\card{J'}}^{\ast}(x,\bm{x}_{J'})
				\mathcal{R}_{x} W_{h'',1+\card{J''}}^{\ast}(x,\bm{x}_{J''})
		\bigg) \\
	& +
		S^{\ast}(x_1,x)
		\bigg(
			\mathcal{R}_{y} W_{g-1,n+1}^{\ast}(x,y,x_2,\ldots,x_n) \big|_{y=x}
			+
			\sum_{\substack{h' + h'' = g \\ J' \sqcup J'' = \set{2,\ldots,n}}}^{\nodisc}
				W_{h',1+\card{J'}}^{\ast}(x,\bm{x}_{J'})
				\mathcal{R}_{x} W_{h'',1+\card{J''}}^{\ast}(x,\bm{x}_{J''})
		\bigg) \\
	& +
		W_{0,2}^{\ast}(x_1,x)V_{g,n}(x,x_2,\ldots,x_n)
	\Bigg\rangle_x
\end{split}
\end{equation}
Notice that the combination that appears naturally in the excision formula is the following.

\begin{defn}
\label{def:mndrm}
	Given $i \in \set{1,\ldots,n}$, we define the \emph{formal monodromy} by
	\begin{equation}
		\mathcal{M}_{x_i}W_{g,n}^{\ast}(x_1,\ldots,x_n)
		\coloneqq
		W_{g,n}^{\ast}(x_1,\ldots,x_n) + \mathcal{R}_{x_i}W_{g,n}^{\ast}(x_1,\ldots,x_n).
	\end{equation}
	We also need the combination
	\begin{equation}
		\mathcal{M}_{x_2}\widetilde{W}^{\ast}_{0,2}(x_1,x_2)
		\coloneqq
		W_{0,2}^{\ast}(x_1,x_2) + \mathcal{R}_{x_2}\widetilde{W}^{\ast}_{0,2}(x_1,x_2).
	\end{equation}
\end{defn}

As for $\mathcal{R}_{x_i}$, the symbol $\mathcal{M}_{x_i}$ is not an operator in the strict sense, but it is convenient to use operator-like notation and write symbolically $\mathcal{M}_{x_i}=\textnormal{Id}+\mathcal{R}_{x_i}$. The name `formal monodromy' anticipates the analytic interpretation in the next section: it corresponds to evaluating the generating series on the opposite sheet across the cut. With this notation in place, the excision formula takes the form stated in the introduction, \Cref{intro:excision}.

\begin{thm}[Excision formulae]
\label{thm:excision}
	We have the excision relations
	\begin{multline}
		W_{g,n}^{\ast}(x_1,x_2,\ldots,x_n)
		=
		\Bigg\langle
			S^{\ast}(x_1,x)
			\bigg(
				\mathcal{M}_{y}W_{g-1,n+1}^{\ast}(x,y,x_2,\ldots,x_n)\big|_{y=x} \\
				\qquad\qquad\qquad
				+
				\sum_{\substack{h' + h'' = g \\ J' \sqcup J'' = \set{2,\ldots,n}}}^{\nodisc}
					W_{h',1+\card{J'}}^{\ast}(x,\bm{x}_{J'})
					\mathcal{M}_{x} W_{h'',1+\card{J''}}^{\ast}(x,\bm{x}_{J''})
			\bigg)
			+ W_{0,2}^{\ast}(x_1,x)V_{g,n}(x,x_2,\ldots,x_n)
		\Bigg\rangle_x
	\end{multline}
	for $(g,n) \neq (0,1),(0,2),(1,1)$, and the low-topology cases
	\begin{equation}
	\begin{split}
		W_{0,2}^{\ast}(x_1,x_2)
		&=
		\bigg \langle \frac{S^{\ast}(x_1,x) W_{0,1}^{\ast}(x)}{(x_2 - x)^2} \bigg\rangle_{x}, \\
		W_{1,1}^{\ast}(x_1)
		&=
		\Big\langle
			S^{\ast}(x_1,x) \mathcal{M}_{y}\widetilde{W}^{\ast}_{0,2}(x,y)\big|_{y = x}
			+
			W_{0,2}^{\ast}(x_1,x) V_{1,1}(x)
		\Big\rangle_{x}.
	\end{split}
	\end{equation}
\end{thm}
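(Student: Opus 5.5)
The plan is to turn the global bijection \eqref{eq:pop:global:bijection} into generating series, as was done for the skinning relation \eqref{skinning:rel:stuffed}, and then collect terms with \Cref{def:mndrm}. First I would make sure the construction is a bijection. The skinning map $\mathfrak{m}\mapsto(\mathfrak{s},\hat{\mathfrak{m}})$ is a bijection onto $\bigsqcup_\ell \mathbb{S}^\ast_{\ell_1,\ell}\times\hat{\MM}^\ast_{g;\ell,\bm{\ell}_I}$. By the case analysis of Tutte's procedure applied to $\hat{\mathfrak{m}}$, the set of core maps splits as $\hat{\MM}^{\std}\sqcup\hat{\MM}^{\spl}\sqcup\hat{\MM}^{\ind}$. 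Composing with \eqref{Mstd} and \eqref{spl} on the first two pieces, and with \eqref{eq:blob:bijection} on the third, gives \eqref{eq:pop:global:bijection} for $(g,n)\neq(1,1)$. For $(1,1)$ the second line is replaced by \eqref{spl:1:1}.

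Next comes the translation to generating series. For the standard line, I would weight everything as in \eqref{skinning:rel}: the vertices of $\partial_2\mathfrak{s}$ are not counted, and $x^{\ell-1}$ pairs with $x^{-\ell}$ in the core. Summing over $k'+k''=\ell-2$ with weights $x^{-k'-1}x^{-k''-1}$ turns into evaluation at $y=x$. This gives $\langle S^\ast(x_1,x)(W^\ast_{g-1,n+1}(x,x,\bm{x}_I)+\sum^{\nodisc}W^\ast W^\ast)\rangle_x$, exactly the $(D^>)$ part of $\hat W^\ast_{g,n}$ in \eqref{Wcore:rec}.

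The special line is the step that needs care. The special boundary of degree $k$ in $\check{\mathfrak{m}}$ is glued to $\delta_1$, and $\partial'\check{\mathfrak{m}}$ has degree $\ell+k-2$. We need a weight in $x$ such that, after summing over $k$, the residue in $x$ matches the exponents. The convention $x_i^{\ell_i-1}$ in $\mathcal{R}_{x_i}$ should do this. The factor $x^{-(\ell+k-2)-1}$ from the ordinary boundary times $x^{k-1}$ from the special boundary gives $x^{-\ell}$, which pairs with $x^{\ell-1}$ in $S^\ast$. I would also check the vertex count. The $k$ vertices of the special boundary are identified with vertices of $\partial'$ through the theta map, so $\mathcal{R}W$ should count them, and the total should match $v(\hat{\mathfrak{m}})$. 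Under \eqref{spl}, the $\mathcal{R}_1$ factor sits in the second slot, which gives the term $W^\ast\,\mathcal{R}_xW^\ast$. The genus-reducing piece gives $\mathcal{R}_yW^\ast_{g-1,n+1}(x,y,\bm{x}_I)|_{y=x}$. Adding this to the standard line and using $\mathcal{M}=\mathrm{Id}+\mathcal{R}$ yields the two $\mathcal{M}$-terms. One point to verify is that the ``no $(0,1)$'' exclusion matches in both lines. The $(0,1)$ convention $-x_1$ in $\mathcal{R}_{x_1}W^\ast_{0,1}$ never enters, because those terms are excluded.

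For the indecomposable line, \eqref{eq:blob:bijection} gives $\sum_\ell W^\ast_{0;\ell_1,\ell}V_{g;\underline{\ell};\bm{\ell}_I}$. Since $W^\ast_{0,2}$ carries $x^{-\ell-1}$ and $V$ carries $x^{\ell-1}$, this equals $\langle W^\ast_{0,2}(x_1,x)V_{g,n}(x;\bm{x}_I)\rangle_x$. The unrootedness of $\partial_1$ in $\mathbb{V}$ compensates exactly for the arbitrary root choice, as in \Cref{ssec:skinning:tubes}; automorphisms need the orbifold remark.

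The low-topology cases are handled separately. For $(0,2)$, only case (R) occurs with $\check{\mathfrak{m}}''$ trivial. I would read the formula directly from \Cref{prop:skinning:stuf}: only the derivative term survives, and it simplifies to $\langle S^\ast W^\ast_{0,1}(x)/(x_2-x)^2\rangle_x$ by the computation already done in the proof of \Cref{skin:enum:maps:tubes}. For $(1,1)$, I would use \eqref{spl:1:1}, which produces $\mathcal{R}_y\widetilde W^\ast_{0,2}$. The standard term $W^\ast_{0,2}(x,x)$ combines with it into $\mathcal{M}_y\widetilde W^\ast_{0,2}$, and there is no disconnected term since no $(0,1)$ is allowed.
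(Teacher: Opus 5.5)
Your route is the paper's: compose skinning with the split of core maps into standard, special and indecomposable ones to get \eqref{eq:pop:global:bijection}, translate line by line, and merge the standard and special lines through $\mathcal{M}=\mathrm{Id}+\mathcal{R}$. Your degree count in the special line, the matching of the no-$(0,1)$ exclusions, and your treatment of $(0,2)$ and $(1,1)$ are all fine. However, the translation is the whole content of the proof, and two of the checks you spell out come out wrong.

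The first is the vertex weight in the special line. Cutting along $\delta_1$ is inverse to gluing a simple boundary, so $\partial''\check{\mathfrak m}$ is a fresh $k$-gon whose $k$ vertices are not vertices of $\hat{\mathfrak m}$. The theta map merges them into $k$ vertices of $\partial'\check{\mathfrak m}$, and those are already weighted (by the $W^\ast$-factor, or by the same map in the connected case). Hence $v(\hat{\mathfrak m})=v(\check{\mathfrak m})-k$. The identity therefore requires $\mathcal{R}_iW^\ast$ \emph{not} to weight the vertices of the special boundary, i.e. to use $q^{v(\mathfrak m)-\ell_i}$, exactly as is done for $\partial_2\mathfrak s$ in \eqref{skin:maps}. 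If $\mathcal{R}W^\ast$ counted them, as you propose, every special term would pick up a spurious $q^{k}$. Already the (R) contribution would become $q^{\ell_i}\ell_i\,W^\ast_{g;\ell+\ell_i-2,\bm{\ell}_{I_i}}$ instead of Tutte's $\ell_i\,W^\ast_{g;\ell+\ell_i-2,\bm{\ell}_{I_i}}$. The correct normalisation is the one built into \Cref{lem:R1W:unstable}, where trivial cylinders contribute $1/(x_2-x_1)^2$ with no power of $q$, and into the $\mathcal{O}$-terms. This holds even though \Cref{del:special} as printed writes $q^{v(\mathfrak m)}$.

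The second is the indecomposable line. Since $(-\ell-1)+(\ell-1)=-2$, if $V_{g,n}$ carries $x^{\ell-1}$ then the bracket $\langle W^\ast_{0,2}(x_1,x)V_{g,n}(x;\bm{x}_I)\rangle_x$ pairs $W^\ast_{0;\ell_1,\ell}$ with $V_{g;\underline{\ell+1};\bm{\ell}_I}$. The bijection \eqref{eq:blob:bijection} instead demands the pairing with $V_{g;\underline{\ell};\bm{\ell}_I}$. The blob term comes out right only if $V_{g,n}$ carries $x_1^{\ell_1}$, rather than the $x_1^{\ell_1-1}$ of \Cref{def:indec}. This is also what \Cref{lem:V:explicit} actually produces, since $T_{h,m}$ contributes $x_1^{k_1}/[k_1]$. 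It is likewise what makes $\partial_{x_1}V_{g,n}$ the rooted count in \Cref{lem:R1W:stable}.

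The vertex issue recurs here as well. $W^\ast_{0,2}(x_1,x)$ already weights every vertex of $\mathfrak o$, including those on $\partial''\mathfrak o$. Gluing the simple boundary $\partial_1\check{\mathfrak m}''$ onto it merges its $\ell$ vertices into them, so $V_{g,n}$ must be weighted by $q^{v(\mathfrak m)-\ell}$.

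With $\mathcal{R}_iW^\ast$ and $V_{g,n}$ normalised this way, your line-by-line translation goes through and yields the theorem. The paper itself only asserts this translation, so these are precisely the checks that had to be done correctly.
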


The case $(0,2)$ is precisely the skinning formula of \Cref{prop:skinning:stuf}, since cylinders do not admit a pair-of-pants decomposition. The case $(1,1)$ follows from \Cref{spl:1:1}. We remark that \Cref{thm:excision} can also be derived from \Cref{prop:skinning:stuf} by straightforward algebraic manipulations. This derivation mirrors the bijective construction above, and we omit it for brevity.

\subsection{Boundary special maps, indecomposable maps, and the skin enumeration}
\label{ssec:special:indecomp:skin}
The excision formula of \Cref{thm:excision} is not yet fully explicit. Besides the usual generating series $W_{g,n}^{\ast}$ and the skin series $S^{\ast}$, it involves the regular parts $\mathcal{R}_{x_i}W_{g,n}^{\ast}$, which enumerate $\partial_i$-special maps, and the series $V_{g,n}$, which enumerates $\partial_1$-indecomposable maps. The goal of this subsection is to express these auxiliary series in terms of the basic generating series introduced earlier. We also revisit the formula for the enumeration of stuffed skin maps in \Cref{intro:skin:enum} in a form better adapted to the analytic interpretation in the next section.

\subsubsection{Boundary special and indecomposable maps}
We first treat the cases $2g -2  +n > 0$. The $\partial_1$-special stuffed maps have two possible origins: either they are indecomposable, or they are obtained by gluing a planar face to the first boundary and filling all but one of its boundaries with stuffed discs. The building blocks for the latter operation are enumerated by the annular potential $O(x,y)$, and we use it as a linear operator on generating series.

\begin{defn}
\label{def:O:operator}
	For a series $F(x)=\sum_{\ell \geq 1} F_\ell x^{-\ell-1}$, we define
	\begin{equation}
	\label{eq:O:operator}
		\mathcal{O}F(x)
		\coloneqq
		\big\langle \partial_x O(x,\xi) F(\xi)\big\rangle_{\xi}
		=
		\sum_{m\geq 2}\frac{1}{(m-2)!}
		\bigg\langle
			\partial_x T_{0,m}(x,\xi,\xi_3,\ldots,\xi_m)\,
			F(\xi)
			\prod_{b=3}^{m}W_{0,1}^\ast(\xi_b)
		\bigg\rangle_{\bm{\xi}}.
	\end{equation}
	When the series depends on several variables, we write $\mathcal{O}_{x_i}$ to indicate that the operator acts on the variable $x_i$.
\end{defn}

With this notation, the informal description we just gave of $\partial_1$-special stuffed maps translates into the following identity.

\begin{lem}
\label{lem:R1W:stable}
	For $2g-2+n>0$,
	\begin{equation}
	\label{R1W:stable}
		\mathcal{R}_{x_1}W_{g,n}^{\ast}(x_1,\bm{x}_I)
		=
		\partial_{x_1} V_{g,n}(x_1;\bm{x}_I)
		+
		\mathcal{O}_{x_1}W_{g,n}^{\ast}(x_1,\bm{x}_I).
	\end{equation}
\end{lem}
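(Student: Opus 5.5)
We plan to prove \eqref{R1W:stable} bijectively, by classifying $\partial_1$-special stuffed maps $\mathfrak{m}$ of topology $(g,n)$ with $2g-2+n>0$ according to the face $\mathfrak{f}$ adjacent to $\partial_1$. Since $(g,n)\neq(0,2)$, this face cannot be another boundary: $\partial_1$ adjacent to a boundary face only is a trivial annulus. So $\mathfrak{f}$ is an internal face, of some topology $(h,m)$, and one of its boundary components, say $\delta_1$, is glued entirely to $\partial_1$. Applying Tutte's procedure at $\partial_1$ is then the same as erasing $\mathfrak{f}$. The procedure either changes topology or preserves it, and these two outcomes should produce exactly the two terms on the right-hand side.

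\emph{Topology-changing case.} Here $\mathfrak{m}$ is $\partial_1$-indecomposable in the sense of \Cref{def:indec}, with $\partial_1$ rooted. The set $\mathbb{V}_{g;\underline{\ell_1};\bm{\ell}_I}$ counts the same maps with $\partial_1$ unrooted. Rooting a boundary of degree $\ell_1$ contributes a factor $\ell_1$ up to automorphisms; this is the usual orbifold count, as in the (I) cases of the Tutte lemmas, which produce factors $1/[k]$. For special maps the weight convention is $x_1^{\ell_1-1}$, whereas $V$ uses $x_1^{\ell_1-1}$ with $\partial_1$ unrooted. We therefore have to check that $\partial_{x_1}$ converts $\ell_1 x_1^{\ell_1-1}$ correctly. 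Strictly, $\partial_{x_1}x_1^{\ell_1}=\ell_1x_1^{\ell_1-1}$, so the exponent bookkeeping in the definition of $V$ must be compared carefully with that of $\mathcal{R}$. We expect the mismatch to be an index shift, and that the conventions were chosen so that $\partial_{x_1}V$ reproduces the rooted count $\ell_1\,V_{g;\underline{\ell_1};\bm{\ell}_I}$.

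\emph{Topology-preserving case.} By the classification (I${}^{=}$), the face $\mathfrak{f}$ has $h=0$. After erasing it, exactly one component has topology $(g,n)$ and all others are discs. The component of topology $(g,n)$ cannot be the outer component: the outer map is bounded by $\partial_1$ together with $\delta_1$ minus the root edge, and since $\delta_1$ is fully glued to $\partial_1$, that outer piece is the disc of degree $2k_1-2$ without internal faces (or a trivial disc). Hence the $(g,n)$ component is glued to some other boundary $\delta_j$ of $\mathfrak{f}$, which we relabel as $\delta_2$, of degree $k\ge1$, and the remaining $m-2$ boundaries carry arbitrary stuffed discs.

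We then plan to check that this decomposition is a bijection onto the set of tuples consisting of a planar face weight $t_{0;\ell_1,k,k_3,\ldots,k_m}$, discs attached to $\delta_3,\ldots,\delta_m$ (unordered, giving $1/(m-2)!$ and $1/[k_b]$ factors), and a map in $\MM^\ast_{g;k,\bm{\ell}_I}$ glued along $\delta_2$ with its root absorbing the $1/[k]$ factor. At the level of generating series, $\langle\partial_x O(x,\xi)F(\xi)\rangle_\xi$ is designed to do exactly this. The $\partial_x$ removes the $1/\ell_1$ symmetry factor of the rooted $\delta_1$ and yields $x^{\ell_1-1}$, and the residue in $\xi$ pairs the $\xi^{k}$ coming from $T_{0,m}$ with the $\xi^{-k-1}$ coming from $W^\ast_{g,n}$.

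The main obstacle is the precise normalisation: the $1/[k]$ factors, the automorphism weights when $\mathfrak{f}$ has symmetric boundaries, and the edge cases $k_b=0$. We intend to handle these with the same orbifold-set argument used for the (I) cases in \Cref{Tutte:lemma:stuffed}. The stability hypothesis $2g-2+n>0$ is used to exclude two situations: the trivial-annulus configuration, and the convention term $-x_1$ which appears only in the $(0,1)$ case.
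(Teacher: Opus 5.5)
Your proposal is correct and follows the paper's proof. You split $\partial_1$-special maps according to whether Tutte's procedure at $\partial_1$ changes topology: the topology-changing ones are exactly the $\partial_1$-indecomposable maps, giving $\partial_{x_1}V_{g,n}$, and the topology-preserving ones are a planar face carrying one $(g,n)$ component and stuffed discs, giving $\mathcal{O}_{x_1}W^{\ast}_{g,n}$. The exponent mismatch you flagged is real and resolves as you expected: $V_{g,n}$ must carry $x_1^{\ell_1}$ rather than $x_1^{\ell_1-1}$, as confirmed by the factor $x_1^{k_1}/[k_1]$ from $T_{h,m}(x_1,\ldots)$ in \Cref{lem:V:explicit} and by the pairing $\langle W_{0,2}^{\ast}(x_1,x)V_{g,n}(x;\ldots)\rangle_x$ in the excision formula.
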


\begin{proof}
	Let $\mathfrak{m}$ be a $\partial_1$-special stuffed map with $2g - 2 + n > 0$. Since $\partial_1$ is adjacent to a single face, and since a boundary-face contribution can only occur for annular topology, this face is internal. If applying Tutte's procedure at $\partial_1$ changes topology, then $\mathfrak{m}$ is $\partial_1$-indecomposable. In the series $V_{g,n}(x_1;\bm{x}_I)$ the variable $x_1$ is carried by an unrooted boundary, while in $\mathcal{R}_{x_1}W_{g,n}^{\ast}$ it is carried by a rooted boundary; applying $\partial_{x_1}$ roots this boundary and gives the contribution $\partial_{x_1}V_{g,n}(x_1;\bm{x}_I)$. Otherwise, the topology is preserved. Then the internal face must be planar, one of its other boundary components carries a stuffed map of topology $(g,n)$, and the remaining boundary components are filled by stuffed discs. This is exactly the operation encoded by $\mathcal{O}_{x_1}W_{g,n}^{\ast}(x_1,\bm{x}_I)$.
\end{proof}

For $(g,n) = (0,1)$ or $(0,2)$  the same decomposition works provided the indecomposable term is replaced by something simpler. For discs, this is just the disc face contribution $T_{0,1}$; for annuli, this is the annulus without internal face.  We record the resulting formulae separately.

\begin{lem}
\label{lem:R1W:unstable}
	For $(g,n)=(0,1)$ and $(0,2)$,
	\begin{equation}
	\label{R1W:unstable}
	\begin{split}
		\mathcal{R}_{x_1}W_{0,1}^{\ast}(x_1)
		&=
		-
		x_1
		+
		T_{0,1}'(x_1)
		+
		\widetilde{\mathcal{O}}_{x_1}W^\ast_{0,1}(x_1), \\
		\mathcal{R}_{x_1}W_{0,2}^{\ast}(x_1,x_2)
		&=
		\frac{1}{(x_2-x_1)^2}
		+
		\mathcal{O}_{x_1}W^\ast_{0,2}(x_1,x_2), \\
		\mathcal{R}_{x_1}\widetilde{W}^{\ast}_{0,2}(x_1,x_2)
		&=
		\mathcal{O}_{x_1}W^{\ast}_{0,2}(x_1,x_2).
	\end{split}
	\end{equation}
	Here $\widetilde{\mathcal{O}}$ is defined as $\mathcal{O}$, with the annular potential $O$ replaced by $\widetilde{O}$.
\end{lem}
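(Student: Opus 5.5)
The plan is to repeat the proof of \Cref{lem:R1W:stable} and classify $\partial_1$-special stuffed maps of topology $(0,1)$ or $(0,2)$ by the unique face adjacent to $\partial_1$. The difference from the stable case is that the indecomposable contribution $\partial_{x_1}V_{g,n}$ cannot occur. Suppose $\partial_1$ is fully glued to a boundary $\delta_1$ of an internal face $\mathfrak{f}$ of topology $(h,m)$, and erasing $\mathfrak{f}$ leaves components $\mathfrak{m}_1,\ldots,\mathfrak{m}_c$. The genus constraint \eqref{tot:genus} gives $0 = h + \sum_b (p_b + \card{K_b} - 1)$. This forces $h=0$, all $p_b=0$ and $\card{K_b}=1$, so every component is planar and touches exactly one boundary of $\mathfrak{f}$. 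Moreover, since $\partial_1$ is entirely glued to $\delta_1$, the component on the $\delta_1$ side is empty. Hence $\mathfrak{m}$ is a planar face whose other $m-1$ boundaries are filled by stuffed maps. Tutte's procedure never changes topology here, so nothing indecomposable appears.

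For the disc, all $m-1$ fillings are stuffed discs, and they play symmetric roles. The case $m=1$ gives the disc-face weights $t_{0;k}$. With $\partial_1$ rooted, there are $k$ ways to match the root against the unrooted $\delta_1$. This turns $t_{0;k}/[k]$ into $t_{0;k}$ and produces $T_{0,1}'(x_1)=\sum_k t_{0;k}x_1^{k-1}$ with the convention $x_1^{\ell_1-1}$. For $m\ge 2$ the weight is $\frac{1}{(m-1)!}\partial_{x_1}T_{0,m}(x_1,\xi_2,\ldots,\xi_m)\prod_b W^\ast_{0,1}(\xi_b)$, again with $\partial_{x_1}$ accounting for the rooting. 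I would check that this equals $\widetilde{\mathcal{O}}_{x_1}W^\ast_{0,1}(x_1)$ directly from \eqref{ann:pot}. The key point is that in $\widetilde{O}$ the variable $y$ is singled out among $m-1$ symmetric boundaries, so the factor is $1/(m-1)!$ and not $1/(m-2)!$. The remaining term $-x_1$ is not enumerative; it is just the convention built into \Cref{del:special}. I would also verify that the trivial disc, of degree $0$, is excluded because only $\ell_1\ge 1$ is summed.

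For the annulus, $\partial_1$ is adjacent either to $\partial_2$ or to an internal face. In the first case the map is an annulus with no internal faces and both boundaries of equal degree $\ell$. There are $\ell$ relative positions of the two roots, giving $\sum_{\ell\ge1}\ell\,x_1^{\ell-1}x_2^{-\ell-1}$. This is $\frac{1}{(x_2-x_1)^2}$ in the convention \eqref{x1:x2:expns}, with the leftmost variable $x_2$ expanded at $\infty$. In the second case the argument above applies. Exactly one filling carries $\partial_2$, so it is a stuffed annulus, and the other $m-2$ fillings are symmetric discs. This is precisely $\mathcal{O}_{x_1}W^\ast_{0,2}$, with the factor $1/(m-2)!$ of $O$. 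Dropping the first case gives the formula for $\mathcal{R}_{x_1}\widetilde{W}^\ast_{0,2}$.

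I expect the main obstacle to be the bookkeeping of symmetry factors and rootings, rather than the topology. One must confirm that gluing a rooted $\partial_1$ to an unrooted $\delta_1$, with unrooted discs filling the other boundaries, reproduces exactly $\partial_x T_{0,m}/(m-1)!$ or $/(m-2)!$, with the correct automorphism weights. This would follow the orbifold-set reasoning of \Cref{ssec:skinning:tubes}. The other step needing care is the sign and expansion-order convention behind $\frac{1}{(x_2-x_1)^2}$.
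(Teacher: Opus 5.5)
Your proposal is correct and follows the paper's proof essentially step for step. You classify $\partial_1$-special maps by the single face fully glued to $\partial_1$. For discs this gives $T_{0,1}'$ and $\widetilde{\mathcal{O}}_{x_1}W^\ast_{0,1}$, with $-x_1$ a pure convention; for annuli it gives the boundary-face term $\frac{1}{(x_2-x_1)^2}$ plus $\mathcal{O}_{x_1}W^\ast_{0,2}$.

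One small repair is needed in how you use \eqref{tot:genus}. As printed, that constraint is off by one: with $K_1=\emptyset$ allowed, the $b=1$ term can equal $-1$, so it does not by itself force $h=0$. Use the Euler count $g=h+\sum_b p_b+(m-c)$ instead. This forces $h=0$, all $p_b=0$ and $c=m$, hence $K_1=\emptyset$ and $\card{K_b}=1$ for $b\geq 2$. That is exactly the ``empty $\delta_1$-side'' picture you rely on; your stated $\card{K_1}=1$ should be $K_1=\emptyset$.
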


\begin{proof}
	For $(g,n) = (0,1)$, the special face is either a planar disc face, giving $T_{0,1}'(x_1)$, or a planar face with at least two boundary components, whose remaining boundaries are filled by stuffed discs. In both cases, the derivative with respect to $x_1$ roots the boundary glued to $\partial_1$. Since in the second case all the remaining boundaries play a symmetric role (instead of all but one in \Cref{lem:R1W:stable}), the contribution is $\widetilde{\mathcal{O}}_{x_1}W^\ast_{0,1}(x_1)$. The term $-x_1$ is the initial term in the definition of $\mathcal{R}_{x_1}W_{0,1}^{\ast}$.

	For $(g,n) = (0,2)$, $\partial_1$ may be adjacent to $\partial_2$. This gives
	\begin{equation}
		\sum_{\ell\geq 1} \ell \frac{x_1^{\ell-1}}{x_2^{\ell+1}}
		=
		\frac{1}{(x_2-x_1)^2},
	\end{equation}
	where the factor $\ell$ records the possible roots on the second boundary. The remaining contribution is obtained by gluing $\partial_1$ to a planar internal face, with one other boundary component carrying the annulus and all remaining ones filled by discs without preferred order. The derivative in the definition of $\mathcal{O}_{x_1}$ puts a root on the face glued to $\partial_1$, hence this contribution is $\mathcal{O}_{x_1}W^\ast_{0,2}(x_1,x_2)$. Excluding the boundary-face contribution gives the formula for $\mathcal{R}_{x_1}\widetilde{W}^{\ast}_{0,2}(x_1,x_2)$.
\end{proof}

It remains to make the indecomposable contribution explicit. By definition, a $\partial_1$-indecomposable stuffed map is a $\partial_1$-special map in which $\partial_1$ is adjacent to an internal face and Tutte's procedure at $\partial_1$ changes topology. Equivalently, in the notations of the internal-face case, the outer map is a disc, while the remaining components carry all the non-disc topology.

\begin{lem}
\label{lem:V:explicit}
	For $2g-2+n>0$,
	\begin{equation}
	\label{V:explicit}
		V_{g,n}(x_1;\bm{x}_I)
		=
		\sum_{\substack{h \geq 0 \\ m \geq 1}}
		\sum_{\substack{1 \le c \le m \\ (\bm{p},\bm{K},\bm{J}) \in \mathcal{T}_{g,n;h,m;c}^{\bullet}}}
		\Bigg\langle
			\frac{T_{h,m}(x_1,\xi_2,\ldots,\xi_m)}{(c-1)!}
			\prod_{b = 2}^{c}
				W^{\ast}_{p_b,\card{K_b}+\card{J_b}}(\bm{\xi}_{K_b},\bm{x}_{J_b})
		\Bigg\rangle_{\bm{\xi}},
	\end{equation}
	where $\mathcal{T}_{g,n;h,m;c}^{\bullet}$ is the subset of triples in $\mathcal{T}^{>}_{g,n;h,m;c}$ such that $(p_1,K_1,J_1)=(0,\emptyset,\emptyset)$.
\end{lem}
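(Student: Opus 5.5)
The plan is to read the identity straight off the internal-face part of Tutte's bijection, specialised to the configurations that define $\mathbb{V}_{g;\underline{\ell_1};\bm{\ell}_I}$. Let $\mathfrak{m}$ be $\partial_1$-indecomposable, with $\partial_1$ temporarily rooted. By \Cref{def:indec}, $\partial_1$ is adjacent to a single internal face $\mathfrak{f}$ of topology $(h,m)$, and applying $\Tutte$ at $\partial_1$ falls into case (I${}^{>}$). Since $\partial_1$ is special, every half-edge of $\partial_1$ is glued to $\mathfrak{f}$. The natural reading is that $\partial_1$ is glued entirely to one boundary component $\delta_1$ of $\mathfrak{f}$, so that $k_1=\ell_1$ and the outer map $\mathfrak{m}_1$ has degree $\ell_1+k_1-2=2\ell_1-2$.

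The first step is to identify $\mathfrak{m}_1$ concretely. Tracing the gluing, it is the disc without internal faces obtained by folding (as in \Cref{fig:deg1}), or the trivial disc when $\ell_1=1$. In particular it has genus $0$, it carries none of the $\delta_b$ for $b\geq 2$, and it carries none of $\partial_2,\ldots,\partial_n$. This forces $(p_1,K_1,J_1)=(0,\emptyset,\emptyset)$. Conversely, given any triple in $\mathcal{T}^{\bullet}_{g,n;h,m;c}$ and any choice of the components $\mathfrak{m}_2,\ldots,\mathfrak{m}_c$, we reconstruct $\mathfrak{m}$ by gluing $\partial_1$ directly onto $\delta_1$ of a face of type $(h,m)$ and gluing the components $\mathfrak{m}_b$ onto the remaining $\delta$'s. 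The constraint defining $\mathcal{T}^{>}$ guarantees that the resulting map is topology-changing, hence indecomposable. This gives a bijection of orbifold sets.

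The second step is to translate this bijection into generating series. Because $\partial_1$ is unrooted and glued bijectively to $\delta_1$, no factor $k_1$ and no derivative in $\xi_1$ appears. Each of the other boundaries of $\mathfrak{f}$ is unrooted and glued to a rooted boundary of some $\mathfrak{m}_b$, which produces the factor $1/[k_b]$ already built into $T_{h,m}$. Since $K_1$ is empty, every $\xi_b$ with $b\geq 2$ is paired with a boundary of some $\mathfrak{m}_b$, $b\geq 2$, via $\langle\cdot\rangle_{\bm{\xi}}$. The components $\mathfrak{m}_2,\ldots,\mathfrak{m}_c$ are unordered, which gives the factor $1/(c-1)!$ exactly as in \Cref{Tutte:lemma:stuffed}. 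For the degree of $\partial_1$, the weight $x_1^{\ell_1-1}$ matches the monomial $x_1^{k_1}/[k_1]$ in $T_{h,m}(x_1,\ldots)$ up to the convention for the simple unrooted boundary. I will check this normalisation against \Cref{lem:R1W:stable}: there $\partial_{x_1}$ roots the boundary and turns $x_1^{k}/k$ into $x_1^{k-1}$. This fixes that $V_{g,n}$ is the non-derivative version.

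The main obstacle is the first step, namely showing rigorously that a special $\partial_1$ forces gluing to a single $\delta_1$ with $\mathfrak{m}_1$ a contentless disc. The worry is configurations where $\partial_1$ wraps around several boundaries of $\mathfrak{f}$. Specialness says all edges of $\partial_1$ border $\mathfrak{f}$, but a priori these edges could border different $\delta_b$. To rule this out I would argue via the folding description: consecutive edges around $\partial_1$ are consecutive edges around the same $\delta$ unless a vertex of $\partial_1$ has other incident edges, and that would create a face other than $\mathfrak{f}$ adjacent to $\partial_1$. The last thing to verify is the vertex-weight bookkeeping, so that the vertices of $\partial_1$ are counted once.
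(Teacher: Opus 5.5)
Your overall route is the paper's: you read $V_{g,n}$ off the (I${}^{>}$) branch of Tutte's decomposition with a trivial outer component, which produces the factors $1/[k_b]$ and $1/(c-1)!$, and the paper's proof is just that translation. The gap is the step you flag as the main obstacle, and it cannot be closed the way you propose.

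You read $\partial_1$-special as ``every edge of $\partial_1$ borders $\mathfrak{f}$''. From this you want to deduce that $\partial_1$ is glued onto a single $\delta_1$, with $k_1=\ell_1$ and a contentless outer map. Your local argument is wrong. Further edges at a vertex of $\partial_1$ only create new corners at that vertex, and all of these corners can lie in $\mathfrak{f}$, so no new face becomes edge-adjacent to $\partial_1$.

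In fact the deduction is false. Let $\partial_1$ be the inside of a loop $r_1$ at a vertex $v$, attach a pendant edge $vw$ outside the loop, and let the outside region be a face $\mathfrak{f}$ of type $(1,1)$, so $\delta_1$ has degree $3$.
\begin{itemize}
\item This is a stuffed map of topology $(1,1)$.
\item The only edge of $\partial_1$ borders $\mathfrak{f}$.
\item Tutte's procedure leaves the one-edge disc of degree $2$, so the topology changes.
\end{itemize}
Under your reading this map is $\partial_1$-indecomposable with $\ell_1=1$. But the right-hand side of \eqref{V:explicit} has no factor at all for the outer component, so it never produces the pendant edge.

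A second example: take the theta graph, let $\partial_1$ be the region between two of its edges, and let the two remaining regions be the two boundary components of one annular face. Both edges of $\partial_1$ border $\mathfrak{f}$, but through $\delta_1$ and $\delta_2$ respectively. Tutte's procedure leaves an annulus, so the topology changes, and $K_1\neq\emptyset$. Under your reading this would be an indecomposable map with tubes, which the paper says cannot exist.

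The missing idea is definitional. In the paper, $\partial_i$-special means that $\partial_i$ is \emph{fully glued} to one boundary component of a single face of the same degree, in the sense of the gluing operation of Section~\ref{ssec:skin}. The phrase ``adjacent to a single face'' is used in this sense, as three places in the paper show:
\begin{itemize}
\item Theorem~\ref{intro:excision} describes $\mathcal{M}_{x_i}$ via boundaries ``fully glued'' to a face.
\item Special boundaries are produced exactly by cutting $\hat{\mathfrak{m}}$ along $\delta_1$.
\item The term $\frac{1}{(x_2-x_1)^2}$ in \Cref{lem:R1W:unstable} counts only trivial cylinders.
\end{itemize}
With this definition, $k_1=\ell_1$, the outer map is the folded disc of degree $2\ell_1-2$ (trivial if $\ell_1=1$), and $(p_1,K_1,J_1)=(0,\emptyset,\emptyset)$, all by construction. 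Your first step is then immediate, and the rest of your translation goes through.

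On the normalisations you left open, two points.
\begin{itemize}
\item Consistency with \Cref{lem:R1W:stable} forces the unrooted $\partial_1$ to be weighted by $x_1^{\ell_1}$, not the displayed $x_1^{\ell_1-1}$. Then $\partial_{x_1}$ supplies the rooting factor $\ell_1$, and the weight matches $x_1^{k_1}/[k_1]$ in $T_{h,m}$.
\item The right-hand side gives no weight $q$ to the $\ell_1$ vertices of $\partial_1$. As for $\partial_2$ of skin maps, these vertices are counted exactly once, after gluing to the annulus in $\langle W_{0,2}^{\ast}(x_1,x)V_{g,n}(x;\bm{x}_I)\rangle_x$.
\end{itemize}
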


\begin{proof}
	The formula is a direct translation of the internal-face decomposition for the topology-changing cases. The condition $(p_1,K_1,J_1)=(0,\emptyset,\emptyset)$ says that the outer component, namely the one attached to the boundary containing $\partial_1$, is a disc. The remaining components record the non-disc part of the map, and the symmetry factor $(c-1)!$ accounts for the lack of preferred ordering among them.
\end{proof}

Combining \Cref{lem:R1W:stable,lem:R1W:unstable,lem:V:explicit}, the regular parts, hence the formal monodromies, and the indecomposable contribution in the excision formula of \Cref{thm:excision} are explicit. More precisely, they are expressed in terms of the stuffed potentials $T_{h,m}$ and stuffed generating series $W^{\ast}_{g',n'}$ with $2g'-2+n' < 2g-2+n$. Together with the previously obtained expression for $S^\ast$, this makes the recursion closed.

\subsubsection{Skin enumeration and formal discontinuity}
We finally rewrite the skin formula in a form closer to the usual expression of the kernel in topological recursion. To do so, it is convenient to introduce one further combination.

\begin{defn}
\label{def:dscnt}
	Given $i \in \set{1,\ldots,n}$, we define the \emph{formal discontinuity} by
	\begin{equation}
		\mathcal{D}_{x_i}W_{g,n}^{\ast}(x_1,\ldots,x_n)
		\coloneqq
		W_{g,n}^{\ast}(x_1,\ldots,x_n)
		+
		\frac{1}{2}\mathcal{R}_{x_i}W_{g,n}^{\ast}(x_1,\ldots,x_n).
	\end{equation}
\end{defn}

As for the formal monodromy, $\mathcal{D}_{x_i}$ is not an operator in the strict sense: it is only defined here on the generating series for which $\mathcal{R}_{x_i}$ has been defined. The terminology anticipates the analytic interpretation of the next section, where this expression is related to the discontinuity across the cut.

\begin{thm}[Skin enumeration, revisited]
\label{revisit:skin}
	The generating series of stuffed skin maps takes the form
	\begin{equation}
		S^{\ast}(x_1,x)
		=
		\frac{\displaystyle\int^{x}\mathcal{D}_{y}W^{\ast}_{0,2}(x_1,y)\,\dd y}{-\mathcal{D}_{x} W_{0,1}^{\ast}(x)},
	\end{equation}
	where the antiderivative is chosen so that
	\begin{equation}
		2\int^{x}\mathcal{D}_{y}W^{\ast}_{0,2}(x_1,y)\,\dd y
		=
		\int_{\infty}^{x}
		\left(
			2W_{0,2}^{\ast}(x_1,y)
			+
			\frac{1}{(x_1-y)^2}
		\right)\dd y
		+
		\big\langle O(x,\xi)W_{0,2}^{\ast}(x_1,\xi)\big\rangle_{\xi}
		-
		\partial_q W_{0,1}^{\ast}(x_1).
	\end{equation}
\end{thm}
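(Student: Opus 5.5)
The plan is to start from \Cref{skin:enum:stuf}, which already gives $S^{\ast}(x_1,x)$ as a ratio whose numerator is exactly twice the prescribed antiderivative. It therefore suffices to prove that the denominator of \Cref{skin:enum:stuf} equals $-2\mathcal{D}_x W^{\ast}_{0,1}(x)$, and then cancel the factor $2$ between numerator and denominator.

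For the denominator, I would use \Cref{def:dscnt} together with \Cref{lem:R1W:unstable} in the case $(g,n)=(0,1)$. By definition,
\begin{equation}
	-2\mathcal{D}_xW^{\ast}_{0,1}(x) = -2W^{\ast}_{0,1}(x) - \mathcal{R}_xW^{\ast}_{0,1}(x).
\end{equation}
\Cref{lem:R1W:unstable} gives $\mathcal{R}_{x}W^{\ast}_{0,1}(x) = -x + T_{0,1}'(x) + \widetilde{\mathcal{O}}_xW^{\ast}_{0,1}(x)$. Here $\widetilde{\mathcal{O}}_xW^{\ast}_{0,1}(x) = \langle \partial_x\widetilde{O}(x,\xi)W^{\ast}_{0,1}(\xi)\rangle_\xi$ by \Cref{def:O:operator} with $O$ replaced by $\widetilde{O}$. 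Substituting, we get
\begin{equation}
	-2\mathcal{D}_xW^{\ast}_{0,1}(x) = x - T_{0,1}'(x) - \langle \partial_x\widetilde{O}(x,\xi)W^{\ast}_{0,1}(\xi)\rangle_\xi - 2W^{\ast}_{0,1}(x),
\end{equation}
which is exactly the denominator in \Cref{skin:enum:stuf}.

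The only point needing care is the convention in $\mathcal{R}_{x}W^{\ast}_{0,1}$. The term $-x_1\delta_{g,0}\delta_{n,1}$ built into \Cref{del:special} is precisely what produces the $x$ in the denominator. I would also check that the $\widetilde{O}$ term, with symmetry factor $1/(m-1)!$, matches between the pointed-disc bookkeeping \eqref{skinning:discs:stuf} and \Cref{lem:R1W:unstable}.

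For the numerator, \Cref{skin:enum:stuf} reads $N(x_1,x)/D(x)$, where $N$ is the right-hand side of the stated normalisation. Defining $\int^x\mathcal{D}_yW^{\ast}_{0,2}(x_1,y)\,\dd y \coloneqq \tfrac12 N(x_1,x)$ then gives the claimed formula.

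To justify calling this an antiderivative of $\mathcal{D}_yW^{\ast}_{0,2}$, I would differentiate $N$ in $x$:
\begin{itemize}
	\item the integral term contributes $2W^{\ast}_{0,2}(x_1,x) + (x_1-x)^{-2}$;
	\item the term $\partial_q W^{\ast}_{0,1}(x_1)$ is constant in $x$ and drops out;
	\item the $O$ term gives $\langle \partial_xO(x,\xi)W^{\ast}_{0,2}(x_1,\xi)\rangle_\xi = \mathcal{O}_{x}W^{\ast}_{0,2}(x_1,x)$, acting on the second variable.
\end{itemize}
By the symmetric counterpart of \Cref{lem:R1W:unstable} for $\mathcal{R}_{x_2}W^{\ast}_{0,2}$, this sum equals $2W^{\ast}_{0,2}+\mathcal{R}_{y}W^{\ast}_{0,2}|_{y=x} = 2\mathcal{D}_yW^{\ast}_{0,2}|_{y=x}$. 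The symmetric counterpart follows by relabelling the two boundaries, since $1/(x_1-x_2)^2$ is invariant under the swap.

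I expect the main obstacle to be one detail of \Cref{skin:enum:stuf}, whose proof was omitted: checking that the $O$ term arises with $O(\xi,x)$ rather than $\widetilde{O}$, in the variable order that makes $\partial_x$ produce $\mathcal{O}_x$. I would verify this by rerunning the tube argument (identity \eqref{STW:TW}) with $\partial_\xi O(x,\xi)$ in place of $\partial_\xi T_{0,2}$. The rest is bookkeeping.
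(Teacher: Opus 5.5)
Your proposal is correct and follows the paper's proof. The paper likewise combines \Cref{skin:enum:stuf} with \Cref{lem:R1W:unstable} and \Cref{def:dscnt}: the numerator becomes $2\int^x\mathcal{D}_yW^{\ast}_{0,2}(x_1,y)\,\dd y$, the denominator becomes $-2\mathcal{D}_xW^{\ast}_{0,1}(x)$, and the factor $2$ cancels. The variable-order worry you flag is moot, because $O(x,y)=O(y,x)$ by the symmetry of $T_{0,m}$, so the numerator of \Cref{skin:enum:stuf} already coincides with your $N(x_1,x)$.
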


\begin{proof}
	By \Cref{lem:R1W:unstable} and the definition of formal discontinuity, we have
	\begin{equation}
	\begin{split}
		2\mathcal{D}_{y}W_{0,2}^{\ast}(x_1,y)
		&=
		2W_{0,2}^{\ast}(x_1,y)
		+
		\frac{1}{(x_1-y)^2}
		+
		\mathcal{O}_{y}W_{0,2}^{\ast}(x_1,y), \\
		2\mathcal{D}_{x}W_{0,1}^{\ast}(x)
		&=
		2W_{0,1}^{\ast}(x)
		+
		T_{0,1}'(x)
		+
		\widetilde{\mathcal{O}}_{x}W_{0,1}^{\ast}(x)
		-
		x.
	\end{split}
	\end{equation}
	With the choice of antiderivative specified above, the first identity gives the numerator of \Cref{skin:enum:stuf}. The denominator in \Cref{skin:enum:stuf} is $-2\mathcal{D}_{x}W_{0,1}^{\ast}(x)$. Dividing numerator and denominator by $2$ gives the stated formula.
\end{proof}

\subsection{Excision as a Mirzakhani--McShane type identity}
\label{ssec:Mir}
As promised in the introduction, we now discuss the analogy with Mirzakhani's pair-of-pants decomposition for hyperbolic surfaces \cite{Mir07}.

Given a hyperbolic surface with geodesic boundaries, Mirzakhani starts from a point on the first boundary $\partial_1$ and shoots a geodesic $\gamma$ orthogonal to it. She shows that, outside a measure-zero set of starting points, $\gamma$ either intersects itself, returns to $\partial_1$, or hits or spirals around another boundary $\partial_i$. She terminates the geodesic the first time one of these events occurs, and distinguishes the following cases:
\begin{itemize}
	\item The geodesic $\gamma$ intersects either itself or $\partial_1$. Thickening $\partial_1\cup\gamma$ determines the homotopy class of an embedded pair of pants, one of whose boundaries is $\partial_1$. If $\gamma$ intersects itself, one of the two boundaries $\partial'$ is homotopic to the loop part of $\gamma$, while $\partial''$ is freely homotopic to the loop $\partial_1\cup\gamma$; see the left panel of~\Cref{fig:lasso}. If $\gamma$ ends on $\partial_1$, the other two boundaries $\partial',\partial''$ are freely homotopic to the loops obtained by concatenating either side of $\gamma$ with a segment of $\partial_1$; see the middle panel of~\Cref{fig:lasso}.

	\item The geodesic $\gamma$ hits or spirals around another boundary $\partial_i$. In this case, one instead defines the homotopy class of an embedded pair of pants by thickening $\partial_1\cup\gamma\cup\partial_i$. Its boundaries are $\partial_1$, $\partial_i$, and a third boundary $\partial''$ homotopy equivalent to the loop $\partial_1\cup\gamma\cup\partial_i$; see the right panel of~\Cref{fig:lasso}.
\end{itemize}
Mirzakhani then takes the representative of this homotopy class with geodesic boundaries. This decomposition yields a partition of unity, obtained by measuring the set of initial points giving rise to each homotopy class of pairs of pants, commonly called the ``Mirzakhani--McShane identity''. Integrating this identity over all hyperbolic structures, she derives a topological recursion for the Weil--Petersson volumes.

\begin{figure}[h!]
\begin{center}

	\caption{The boundaries of Mirzakhani's pair of pants, up to homotopy.}
	\label{fig:lasso}
\end{center}
\end{figure}

We now explain how the skinning process can be reinterpreted along the lines of Mirzakhani's construction. We discuss only ordinary maps and then briefly indicate how the picture adapts to stuffed maps. In our setting, we can likewise define a path $\gamma$ in the initial map $\mathfrak{m}$, emanating from $\partial_1$ and starting at the root edge, whose thickening produces a pair of pants $\mathfrak{p}$ realised as a submap of $\mathfrak{m}$, namely the same pair of pants as in \Cref{ssec:skinning:to:pop}. The path consists of the edges that carry the root of the first boundary at some stage of the skinning procedure. We now describe how to construct this path without referring to skinning. Recall that cutting along a sequence of edges is inverse to gluing. We say that two edges $e,e'$ are \emph{neighbours} if they are adjacent to a common internal or boundary face and the corresponding half-edges are consecutive along that face.

The path $\gamma$ is given by a sequence $e_1,\ldots,e_\ell$ of edges such that $e_i$ and $e_{i+1}$ are neighbours for each $i \in \set{1,\ldots,\ell-1}$. It starts with $e_1=r_1$, the root edge of $\partial_1$. The edge $e_{i+1}$ is obtained from $e_i$ as follows.
\begin{itemize}
	\item Assume that $e_i$ is adjacent to an internal face $\mathfrak{f}$ that has not yet been met by the path, i.e. it is not adjacent to any of $e_1,\ldots,e_{i-1}$. Then $e_{i+1}$ is the edge preceding $e_i$ in $\mathfrak{f}$. This is the analogue of case (I) of Tutte.
	
	\item Assume that $e_i$ is adjacent to two internal or boundary faces $\mathfrak{f},\mathfrak{f}'$ that have already been met by the path. Up to exchanging them, we may assume that $\mathfrak{f}$ is adjacent to $e_{i-1}$ and that $\mathfrak{f}'$ is adjacent to $e_j$, where $j \in \set{1,\ldots,i-1}$ is chosen maximal. Assume furthermore that cutting along the path $e_j,\ldots,e_i$ separates $\mathfrak{m}$ into a disc and a map of topology $(g,n)$. We then define $e_{i+1}$ to be the edge of $\mathfrak{f}$ neighbouring $e_i$ in the component of topology $(g,n)$. We use the same rule for $i=1$ if $r_1$ bounds $\partial_1$ on both sides and separates $\mathfrak{m}$ into a disc and a map of topology $(g,n)$. This is the analogue of case (D${}^=$) of Tutte.
\end{itemize}
The path terminates at $e_i=e_\ell$ in either of the following situations.
\begin{itemize}
	\item The edge $e_i$ is adjacent to a boundary $\partial_j\neq\partial_1$. This is the analogue of case (R) of Tutte.
	
	\item The edge $e_i$ is adjacent on both sides to a boundary or face that has already been met by the path, and the path $e_j,\ldots,e_i$, defined as above, separates $\mathfrak{m}$ into two maps, both of topology different from $(g,n)$. This includes the case $i=1$, when $r_1$ bounds $\partial_1$ on both sides and separates $\mathfrak{m}$ into two maps of topology different from $(g,n)$. This is the analogue of case (D${}^{>}$) of Tutte.
\end{itemize}

Note that both the skin $\mathfrak{s}$ of \Cref{ssec:skin} and the pair of pants $\mathfrak{p}$ of \Cref{ssec:skinning:to:pop} can be reconstructed as submaps of $\mathfrak{m}$ solely from the path $\gamma$. The set $\mathfrak{F}$ of faces of $\mathfrak{s}$ is obtained by thickening $\gamma$, that is, by collecting all faces adjacent to one of the edges $e_i$, together with all discs separated by some segment $e_i,\ldots,e_j$ of the path. The edges of $\mathfrak{s}$ consist of the edges adjacent to faces in $\mathfrak{F}$, together with the remaining edges of $\partial_1$. This describes $\mathfrak{s}$ as a submap of $\mathfrak{m}$.

To obtain $\mathfrak{s}$ itself, that is, to skin $\mathfrak{m}$, we cut the map along the edges in $\partial_1\setminus(\gamma\setminus\{e_\ell\})$ and along the edges adjacent to faces in $\mathfrak{F}$ but not belonging to $\gamma\setminus\{e_\ell\}$. This produces $\partial_2\mathfrak{s}$ and $\partial_1\hat{\mathfrak{m}}$ as new boundaries.

To obtain $\mathfrak{p}$, we proceed differently depending on the terminating step. As for $\mathfrak{s}$, its set of faces is $\mathfrak{F}$, and the edges of $\partial_1$ are included and form $\partial_1\mathfrak{p}$. In case (R), we also include the edges of $\partial_i$, which form $\partial'\mathfrak{p}$, while $\partial''\mathfrak{p}$ consists of the edges in $\partial_i\setminus\{e_\ell\}$ together with those in $\partial_2\mathfrak{s}\setminus\{e_\ell\}$. To excise the pair of pants, we simply cut $\mathfrak{m}$ along $\partial''\mathfrak{p}$. In case (D${}^{>}$), the edges of $\partial_2\mathfrak{s}$ form two loops connected by the last edge $e_\ell$. We declare $\partial'\mathfrak{p}$ and $\partial''\mathfrak{p}$ to correspond to these two loops, and obtain the pair-of-pants excision by cutting $\mathfrak{m}$ along these two boundaries. Notice that, in case (R), this does not include the trivial skin in $\check{\mathfrak{m}}$, unlike the construction of \Cref{ssec:skinning:to:pop}.

\begin{figure}[H]
	\centering

	\caption{Example of the path $\gamma$, in blue, in a map terminating in case (D). The red edges are those adjacent to a face in $\mathfrak{F}$ but not belonging to $\gamma\setminus\set{e_\ell}$. The green edges form the union of this set and the set of edges of $\partial_1\setminus(\gamma\setminus\{e_\ell\})$.}
	\label{fig:Mir}
\end{figure}

The similarity with Mirzakhani's procedure is now apparent. We terminate the path in the same situations, namely when it reaches a different boundary $\partial_i$ (there is no spiralling phenomenon in the discrete setting), and when it meets either itself or $\partial_1$. Furthermore, the homotopy class of the pair of pants is obtained by the same thickening of $\gamma$. In our setting, we even obtain a distinguished representative, without passing to homotopy. The bijection underlying the pair-of-pants excision of \Cref{ssec:skinning:to:pop} is the analogue of the Mirzakhani--McShane identity. Integration is replaced by counting the elements of the corresponding sets of maps, and gives rise to the topological recursion of \Cref{intro:excision}.

To conclude, let us briefly discuss the case of stuffed maps. One can still define $\mathfrak{p}$ through the path $\gamma$ of root edges. Two additional situations may occur while developing the path. When the path meets the boundary of an internal face $\mathfrak{f}$ that is not a disc but is not topology-changing (in the sense that it has topology $(0,k)$ and is glued to $k-1$ stuffed discs), we extend the path across $\mathfrak{f}$ to an arbitrary point $o$ on the remaining boundary. The choice of $o$ is analogous to the arbitrary choice of a root on this boundary: different choices lead to isomorphic maps $\mathfrak{m}$, although they produce different pairs of pants. If the path meets the boundary $\delta$ of an internal face $\mathfrak{f}$ that is not a disc and is topology-changing, then we terminate it there. The pair of pants $\mathfrak{p}$ is then constructed from $\gamma$ exactly as in case (R), except that $\delta$ plays the role of $\partial_i$. The case in which $\partial_1$ is homotopic to $\delta$ is precisely our indecomposable case and has no analogue in hyperbolic geometry: by the Gauss--Bonnet theorem, hyperbolic cylinders with geodesic boundaries do not exist.

\section{The analytic topological recursion}
\label{sec:analytic}

When the generating series are sufficiently convergent and analytically well behaved, we show that the excision formulae of \Cref{thm:excision} imply the topological recursion of \cite{Eyn04,Eyn16} for maps, the topological recursion of \cite{BE11,BEO15} for maps with tubes, and the blobbed topological recursion of \cite{Bor14,BS17} for stuffed maps. As in \Cref{sec:pop}, we carry out the discussion at the general level of stuffed maps.

\subsection{A model case of analytic continuation}
We begin with a warm-up on a model case of analytic continuation. Let $[b,a] \subset \mathbb{R}$ be a compact interval with non-empty interior, and let $\widehat{\mathbb{C}}$ be the Riemann sphere. The Zhukovsky change of variable
\begin{equation}
	X(z)
	=
	\frac{a+b}{2}
	+
	\frac{a-b}{4}\bigg(z+\frac{1}{z}\bigg)
\end{equation}
defines a biholomorphic map from the open unit disc $\mathbb{D}$, and also from $\widehat{\mathbb{C}} \setminus \overline{\mathbb{D}}$, to $\widehat{\mathbb{C}} \setminus [b,a]$. The inverse maps are
\begin{equation}
	Z_{\pm}(x)
	=
	\frac{2}{a-b}
	\left(
		x-\frac{a+b}{2}
		\pm
		\sqrt{(x-a)(x-b)}
	\right).
\end{equation}
The images of the two copies of $\widehat{\mathbb{C}} \setminus [b,a]$ are exchanged by the involution $z \mapsto 1/z$. The map $X$ also extends continuously from the unit circle $\mathbb{U}$ to $[b,a]$. In particular, $z=1$ is mapped to $a$ and $z=-1$ is mapped to $b$, while $z=0$ and $z=\infty$ are mapped to $x=\infty$.

Consider a model situation in which we are given a holomorphic function $F \colon \mathbb{C} \setminus [b,a] \rightarrow \mathbb{C}$ such that
\begin{itemize}
	\item the limits $F(x^{\pm}) \coloneqq \lim_{\epsilon \rightarrow 0^+} F(x \pm \ii\epsilon)$ exist and depend continuously on $x \in (b,a)$;

	\item there exists an open neighbourhood $\Omega \subset \mathbb{C}$ of $[b,a]$ such that $R(x) \coloneqq F(x^+) + F(x^-)$ extends to a meromorphic function on $\Omega$, with possible poles located at $\set{-1,+1}$;

	\item there exists $\kappa > 0$ such that $|(x-a)(x-b)|^{\kappa}|F(x)|$ is bounded on $\Omega \setminus [b,a]$.
\end{itemize}
Then
\begin{equation}
	f(z)
	=
	\begin{cases}
		F(X(z)),
		& |z| \geq 1, \\
		R(X(z)) - F(X(z)),
		& |z| < 1 \text{ and } z \in X^{-1}(\Omega),
	\end{cases}
\end{equation}
extends to a meromorphic function on the domain $\Sigma \coloneqq (\mathbb{C} \setminus \mathbb{D}) \cup X^{-1}(\Omega)$, whose poles are contained in $\set{-1,+1}$, and satisfies
\begin{equation}
	\forall z \in X^{-1}(\Omega),
	\qquad
	f(z) + f\bigl(\tfrac{1}{z}\bigr) = R(X(z)).
\end{equation}
Equivalently, we may work with the $1$-form $F(x)\dd x$, which extends to the meromorphic $1$-form $f(z)\dd X(z)$ on $\Sigma$, with poles only at $z=\pm 1$. We call $\Sigma_{\textnormal{out}} \coloneqq \mathbb{C} \setminus \overline{\mathbb{D}}$ the \emph{outer sheet} and define $\Omega_{\textnormal{out}} \coloneqq X^{-1}(\Omega) \setminus \overline{\mathbb{D}}$. We call $\Sigma_{\textnormal{in}} \coloneqq \Omega_{\textnormal{in}} \coloneqq X^{-1}(\Omega) \cap \mathbb{D}$ the \emph{inner sheet}, see~\Cref{fig:Zhukovsky}.

\begin{figure}
\centering

\caption{The Zhukovsky map, its two sheets, and the preimages of the domain $\Omega$.}
\label{fig:Zhukovsky}
\end{figure}

A toy model is $F(x)=\sqrt{(x-a)(x-b)}$, with a branch chosen to be univalued in $\mathbb{C}\setminus [b,a]$. Then $F(x^+) + F(x^-)=0$ on $(b,a)$ and, after the Zhukovsky change of variable, $f(z) = \frac{a-b}{4}(z-\frac{1}{z})$ becomes meromorphic in $z$. This illustrates how the parametrisation resolves a square-root branch cut.

We remark that
\begin{equation}
	X'(z)
	=
	\frac{1}{Z'(X(z))}
	\mathop{\sim}_{z \rightarrow \pm 1}
	\pm \frac{1}{\sqrt{(x-a)(x-b)}}.
\end{equation}

\subsection{Analytically good weights}
In the rest of this section, we restrict to assignments of weights with good analytic behaviour: the generating series converge, admit the required analytic continuations, and become meromorphic multidifferentials after the Zhukovsky change of variables. We make this precise in the following definition.

\begin{defn}
\label{def:anagood}
	We say that an assignment of non-negative values to the vertex and face weights $(q,\bm{t})$ is \emph{analytically good} if there exist $b < 0 < a$ and an open disc $\Omega$ centred at $0$ and containing $[b,a]$ such that the following properties hold.

	\smallskip

	\textit{(A) Convergence.}
	\begin{itemize}
		\item[(A1)] For any $g \geq 0$ and $n \geq 1$, the coefficients $W_{g;\ell_1,\ldots,\ell_n}^{\ast}$ are well-defined by absolutely convergent series, and the generating series $W_{g,n}^{\ast}(x_1,\ldots,x_n)$ in the boundary degrees has non-zero radius of convergence.

		\item[(A2)] The series $O(x,\xi)$ and $\widetilde{O}(x,\xi)$ defined in \Cref{ann:pot} have non-zero radius of convergence.

		\item[(A3)] For any $g \geq 0$ and $n \geq 1$ such that $2g-2+n>0$, the series $V_{g,n}(x_1;x_2,\ldots,x_n)$ defined in~\eqref{eq:indecomp} has non-zero radius of convergence.
	\end{itemize}

	\textit{(B) Analytic continuation and holomorphy.}
	\begin{itemize}
		\item[(B1)] For any $g \geq 0$ and $n \geq 1$, $W_{g,n}^{\ast}(x_1,\ldots,x_n)$ defines a holomorphic function of $x_i \in \mathbb{C} \setminus [b,a]$.

		\item[(B2)] The series $O(x,\xi)$ and $\widetilde{O}(x,\xi)$ define holomorphic functions on $\Omega^2$.

		\item[(B3)] For any $g \geq 0$ and $n \geq 1$ such that $2g - 2 + n > 0$, $V_{g,n}(x_1;x_2\ldots,x_n)$ defines a holomorphic function of $x_1 \in \Omega$ and $x_2,\ldots,x_n \in \mathbb{C} \setminus [b,a]$.
	\end{itemize}

	\textit{(C) Analytic behaviour in Zhukovsky variables.}
	\begin{itemize}
		\item[(C1)] For any $g \geq 0$ and $n \geq 1$, the multidifferential
		\begin{equation}
			\omega_{g,n}^{\ast}(z_1,\ldots,z_n)
			\coloneqq
			\bigg(
				W^{\ast}_{g,n}(X(z_1),\ldots,X(z_n))
				+
				\frac{\delta_{g,0}\delta_{n,2}}{(X(z_1) - X(z_2))^2}
			\bigg)
			\prod_{i = 1}^{n} \dd X(z_i),
		\end{equation}
		which is defined for $z_1,\ldots,z_n \in \mathbb{C} \setminus \overline{\mathbb{D}}$ by \textnormal{(B1)}, admits an analytic continuation to a meromorphic multidifferential on $\Sigma^n$, where $\Sigma \coloneqq (\mathbb{C} \setminus \mathbb{D}) \cup X^{-1}(\Omega)$. Moreover, for $2g-2+n>0$, its only singularities in $\Sigma^n$ are poles located at $z_i=\pm 1$, for $i=1,\ldots,n$.

		\item[(C2)] The $1$-form $\omega_{0,1}^{\ast}(z)$ is holomorphic in $X^{-1}(\Omega)$, and the only zeros of $\omega_{0,1}^{\ast}(z)-\omega_{0,1}^{\ast}(1/z)$ in $X^{-1}(\Omega)$ are double zeros at $z=\pm 1$.

		\item[(C3)] The bidifferential $\omega_{0,2}^{\ast}(z_1,z_2)-\frac{\dd z_1 \dd z_2}{(z_1-z_2)^2}$ is holomorphic on $(X^{-1}(\Omega))^2 \subset \Sigma^2$.
	\end{itemize}
\end{defn}


There are large classes of assignments of weights which are analytically good; see \cite{Eyn16} for maps, \cite{BEO15} for maps with tubes, and \cite{Bor14} for stuffed maps. For instance, one may assign sufficiently small real values to finitely many weights and set all the others to zero.

For maps, the model situation above holds for the disc generating series $W_{0,1}(x)$, for off-critical weights, by the one-cut lemma; see \cite[Section 6]{BBG12b}. For maps with tubes, the same conclusion holds for the disc generating series $W_{0,1}^{\circ}(x)$, since the substitution approach of \cite{BBG12a,BBG12b} identifies it with $W_{0,1}(x)$ for renormalised weights satisfying a fixed-point equation. A similar substitution approach also applies to the stuffed disc \cite[Section 3.2]{Bor14}, with a fully nonlinear fixed-point equation.

For maps, the phase diagram, and in particular the off-critical region, in the space of Boltzmann weights is relatively well studied. In contrast, it is very difficult to describe the phase diagram for maps with tubes, and even more so for stuffed maps; it is well understood only for a few families of models, such as self-avoiding rigid loops on quadrangulations \cite{Bud18} or self-avoiding loops on triangulations \cite{Kho22}. For this reason, we prefer to take as assumptions the analytic properties \textnormal{(B2)} and \textnormal{(B3)}, together with the off-criticality condition \textnormal{(C1)}. The latter means that
\begin{equation}
	\rho(x)
	\coloneqq
	\frac{W_{0,1}^{\ast}(x^-) - W_{0,1}^{\ast}(x^+)}{2\pi\ii}
\end{equation}
has no zeros in $(b,a)$, and vanishes exactly like a square root as $x \rightarrow a$ or $x \rightarrow b$. In \cite{Bor14}, the properties in \Cref{def:anagood} were combined with Tutte's equation, reproduced here as \Cref{Tutte:lemma:stuffed}, to prove the analytic continuation in \textnormal{(B1)}, to locate the singularities as in \textnormal{(C2)} and \textnormal{(C3)}, and to establish the linear loop equations, namely \Cref{lem:lloop} below.

Determining optimal assumptions on the weight values that guarantee \Cref{def:anagood} is an interesting but difficult problem. It is more demanding than determining the phase diagram, which only concerns the analytic behaviour of $W_{0,1}(x)$. In what follows, we take analytically good weights as input and explain how the formal excision formulae imply blobbed topological recursion.

\subsection{Analytic properties of the formal monodromy and the skin generating series}
Since we shall need them shortly, we recall the linear loop equations for the reader's convenience.

\begin{lem}[Linear loop equations {\cite[Lemma~3.3 and Theorem~4.2]{Bor14}}]
\label{lem:lloop}
	Fix analytically good real values for the face and vertex weights. Then, for any $g \geq 0$ and $n \geq 1$, the following relations hold for $x_1 \in (b,a)$ and $x_2,\ldots,x_n \in \mathbb{C} \setminus [b,a]$.

	For $(g,n) = (0,1)$, we have
	\begin{equation}
	\label{lloop1}
		W_{0,1}^{\ast}(x_1^+) + W_{0,1}^{\ast}(x_1^-)
		+
		\widetilde{\mathcal{O}}_{x_1}W_{0,1}^{\ast}(x_1)
		=
		x_1 - \partial_{x_1} T_{0,1}(x_1).
	\end{equation}
	For $(g,n) \neq (0,1)$, we have
	\begin{equation}
		W_{g,n}^{\ast}(x_1^+,\bm{x}_I)
		+
		W_{g,n}^{\ast}(x_1^-,\bm{x}_I)
		+
		\frac{\delta_{g,0}\delta_{n,2}}{(x_2 - x_1)^2}
		+
		\mathcal{O}_{x_1}W_{g,n}^{\ast}(x_1,\bm{x}_I)
		+
		\partial_{x_1}V_{g,n}(x_1;\bm{x}_I)
		=
		0.
	\end{equation}
	By \Cref{def:anagood}, all quantities in these equations are well-defined in the specified range.\footnote{
		In \cite{Bor14}, the term $\frac{1}{(x_2 - x_1)^2}$ is absorbed into the definition of the corresponding $(0,2)$ term, whereas in our conventions it is written separately.
	}
\end{lem}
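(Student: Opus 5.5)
The statement is quoted from \cite{Bor14}, and the plan is to rederive it from the Tutte equation of \Cref{Tutte:lemma:stuffed}, read analytically. The guiding reformulation is this. By \Cref{lem:R1W:stable} and \Cref{lem:R1W:unstable}, the left-hand sides can be written uniformly. In the stable range, the identity to prove is
\[
W_{g,n}^{\ast}(x_1^+,\bm{x}_I)+W_{g,n}^{\ast}(x_1^-,\bm{x}_I)+\mathcal{R}_{x_1}W_{g,n}^{\ast}(x_1,\bm{x}_I)=0 .
\]
For $(g,n)=(0,2)$ the same identity holds with the extra term $\frac{1}{(x_2-x_1)^2}$ being exactly the boundary-face part of $\mathcal{R}_{x_1}W_{0,2}^{\ast}$. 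For $(g,n)=(0,1)$, the convention $-x_1$ in $\mathcal{R}_{x_1}W_{0,1}^{\ast}$ turns \eqref{lloop1} into the same form. So the goal is the analytic meaning of the formal monodromy: $\mathcal{M}_{x_1}W_{g,n}^{\ast}$ is minus the value of $W_{g,n}^{\ast}$ on the other side of the cut. I will argue by induction on $2g-2+n$.

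\emph{Step 1: contour form of Tutte's equation.} Rewrite \eqref{Tutte:rec:stuffed} as an identity of holomorphic functions of $x_1\in\mathbb{C}\setminus[b,a]$. Each formal residue $\langle\cdot\rangle_{\xi}$ against a series convergent near $\infty$ becomes $\oint_{\mathcal{C}}\frac{\dd\xi}{2\pi\ii}$ on a contour $\mathcal{C}\subset\Omega$ surrounding $[b,a]$, using (A1), (A2), (B1), (B2). In particular, the kernel $\frac{\partial_{\xi_1}T_{h,m}}{x_1-\xi_1}$ becomes a Cauchy integral. Deforming $\mathcal{C}$ past $x_1$ then splits every internal-face term into two parts: a part evaluated at $\xi_1=x_1$, and a part holomorphic in $x_1\in\Omega$. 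The former reproduces $\mathcal{O}_{x_1}$ or $\widetilde{\mathcal{O}}_{x_1}$ applied to the relevant series, and, on the topology-changing components with a disc outer map, $\partial_{x_1}V_{g,n}$ (via \Cref{lem:V:explicit}). Likewise, $[(x_1-T_{0,1}'(x_1))W]_{-,x_1}$ equals $(x_1-T_{0,1}'(x_1))W$ minus a function holomorphic on $\Omega$. The outcome is an equation of the form
\[
\bigl(2W_{0,1}^{\ast}(x_1)-Y(x_1)\bigr)W_{g,n}^{\ast}(x_1,\bm{x}_I)+\mathcal{Q}_{g,n}(x_1,\bm{x}_I)=H_{g,n}(x_1,\bm{x}_I).
\]
Here $Y(x)=x-T'_{0,1}(x)-\widetilde{\mathcal{O}}_xW_{0,1}^{\ast}(x)$, the function $H_{g,n}$ is holomorphic in $x_1\in\Omega$, and $\mathcal{Q}_{g,n}$ collects the terms $\mathcal{O}_{x_1}W$, $\partial_{x_1}V$, the (R)-derivative terms, and the terms $W_{g-1,n+1}(x_1,x_1,\ldots)$ and products of lower topologies.

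\emph{Step 2: the disc case.} For $(g,n)=(0,1)$ the equation reads $W_{0,1}^{\ast 2}-Y W_{0,1}^{\ast}=-H_{0,1}$, with $Y$ and $H_{0,1}$ holomorphic across $(b,a)$. Therefore $W_{0,1}^{\ast}(x^+)$ and $W_{0,1}^{\ast}(x^-)$ are both roots of the same quadratic. By off-criticality they are distinct on $(b,a)$, since $\rho\neq0$. Hence their sum is $Y(x)$, which is \eqref{lloop1}.

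\emph{Step 3: induction.} Take the jump-sum $f(x^+)+f(x^-)$ of the equation in Step 1. We have $H(x^+)=H(x^-)$. Write $W^{\pm}=W_{0,1}^{\ast}(x_1^\pm)$ and use $2W^+-Y=W^+-W^-=-(2W^--Y)$. Expanding the products in $\mathcal{Q}$ with the induction hypothesis $U(x^+)=-U(x^-)-\mathcal{R}U$ for lower topologies, the equation should collapse to
\[
(W^+-W^-)\bigl(W_{g,n}^{\ast}(x_1^+)+W_{g,n}^{\ast}(x_1^-)+\mathcal{R}_{x_1}W_{g,n}^{\ast}\bigr)=0 .
\]
The non-vanishing of $W^+-W^-$ on $(b,a)$ then gives the claim.

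The main obstacle is the bookkeeping in Step 3. One must check that the quadratic terms (including $W_{g-1,n+1}(x,x)$) and the mixed $\mathcal{O}$ and $V$ terms recombine exactly. This relies on the pair-of-pants sums being symmetric under exchanging which factor is on which sheet. One must also justify Step 1 rigorously: the contour deformations are legitimate only thanks to (B2), (B3) and the integrability near $a,b$ implicit in (C1). I would first verify the recombination for $(g,n)=(0,2)$, where $\mathcal{Q}$ contains only the (R) term and $\mathcal{O}_{x_1}W_{0,2}^{\ast}$, and then extend it verbatim.
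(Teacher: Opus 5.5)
The paper does not prove this lemma itself. It quotes it from \cite{Bor14}, where it is obtained from Tutte's equation together with the analytic assumptions. Your route is the standard one: read Tutte's equation as an identity of functions, get the disc case from the quadratic relation, and induct on $2g-2+n$. Steps 1 and 2 are sound in outline.

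The decisive Step 3 fails as written, because you take $f(x^+)+f(x^-)$. With the sum, $H_{g,n}$ does not drop out: it contributes $2H_{g,n}$, which is unknown. Moreover, since $2W^+-Y=-(2W^--Y)$, the leading terms combine into $(W^+-W^-)\bigl(W^{\ast}_{g,n}(x_1^+,\bm{x}_I)-W^{\ast}_{g,n}(x_1^-,\bm{x}_I)\bigr)$. That is the jump of $W^{\ast}_{g,n}$, not its sum. What you need is the difference $f(x^+)-f(x^-)$. Then $H_{g,n}$ cancels, which is where $H(x^+)=H(x^-)$ is actually used. Writing $A=W^{\ast}_{g,n}$, you get $(2W^+-Y)A^+-(2W^--Y)A^-=(W^+-W^-)(A^++A^-)$.

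For the collapse to $(W^+-W^-)\bigl(A^++A^-+\mathcal{R}_{x_1}A\bigr)=0$, you must also make two facts about $\mathcal{Q}_{g,n}$ explicit.

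First, $\mathcal{O}_{x_1}W^{\ast}_{g,n}$ and $\partial_{x_1}V_{g,n}$ do not appear alone; they are multiplied by the outer disc $W^{\ast}_{0,1}(x_1)$. They arise from the residue at $\xi_1=x_1$ of internal-face terms whose outer map is a disc. These are case (I${}^{=}$) with $\mathfrak{m}_2$ of topology $(g,n)$, and the disc-outer part of (I${}^{>}$). Since $\mathcal{R}_{x_1}W^{\ast}_{g,n}$ is holomorphic on $\Omega$ by (B2)--(B3), only this factor makes these terms contribute $(W^+-W^-)\mathcal{R}_{x_1}W^{\ast}_{g,n}$ to the jump. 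Without it they would cancel, and you would wrongly conclude $A^++A^-=0$.

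Second, all remaining non-holomorphic terms must assemble into
\[
\Lambda(x)=\mathcal{M}_yW^{\ast}_{g-1,n+1}(x,y,\bm{x}_I)\big|_{y=x}+\sum^{\nodisc}W^{\ast}_{h',1+|J'|}(x,\bm{x}_{J'})\,\mathcal{M}_xW^{\ast}_{h'',1+|J''|}(x,\bm{x}_{J''}),
\]
which is the same combination as in the excision formula. Concretely, the (I${}^{>}$) terms with non-disc outer map must supply exactly the $\mathcal{R}$-completions of the connected and disconnected (D${}^{>}$) terms. The (R) terms supply the boundary part $(x_i-x)^{-2}$ of $\mathcal{R}_xW^{\ast}_{0,2}(x,x_i)$.

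Use the induction hypothesis in the form $\mathcal{M}_yU(\ldots,y^{\pm})=-U(\ldots,y^{\mp})$, applied in any variable and with the other argument also on the cut. This gives $\Lambda(x^{\pm})=-W^{\ast}_{g-1,n+1}(x^{\pm},x^{\mp},\bm{x}_I)-\sum^{\nodisc}W^{\ast}_{h',1+|J'|}(x^{\pm},\bm{x}_{J'})W^{\ast}_{h'',1+|J''|}(x^{\mp},\bm{x}_{J''})$. This has no jump, by symmetry of $W^{\ast}_{g-1,n+1}$ and of the ordered sum over $(h',J'),(h'',J'')$. For $(g,n)=(1,1)$ one uses $\mathcal{M}_y\widetilde{W}^{\ast}_{0,2}$ instead. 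The diagonal value is then the finite limit of $W^{\ast}_{0,2}(x^{\pm},y^{\mp})+(x-y)^{-2}$, which is symmetric under $\pm\leftrightarrow\mp$.

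With the difference in place of the sum and these two points checked, your argument goes through.
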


Comparing the linear loop equations with \Cref{lem:R1W:stable,R1W:unstable} and \Cref{def:mndrm,def:dscnt} leads to three observations, which justify the names \emph{regular part}, \emph{formal monodromy}, and \emph{formal discontinuity}.
\begin{itemize}
	\item The analytic continuation of the regular part $\mathcal{R}_{x_1}W_{g,n}^{\ast}(x_1,\bm{x}_I)$ to $x_1 \in (b,a)$ and $x_2,\ldots,x_n \in \mathbb{C} \setminus [b,a]$ coincides with
	\begin{equation}
		-\bigl(W_{g,n}^{\ast}(x_1^+,\bm{x}_I) + W_{g,n}^{\ast}(x_1^-,\bm{x}_I)\bigr).
	\end{equation}

	\item The analytic continuation of the formal monodromy $\mathcal{M}_{x_1}W_{g,n}^{\ast}(x_1,\bm{x}_I)$ in the same range coincides with
	\begin{equation}
		-W_{g,n}^{\ast}(x_1^-,\bm{x}_I).
	\end{equation}

	\item The analytic continuation of the formal discontinuity $\mathcal{D}_{x_1}W_{g,n}^{\ast}(x_1,\bm{x}_I)$ in the same range coincides with
	\begin{equation}
		\frac{1}{2}\bigl(W_{g,n}^{\ast}(x_1^+,\bm{x}_I) - W_{g,n}^{\ast}(x_1^-,\bm{x}_I)\bigr).
	\end{equation}
\end{itemize}

Let $w_{g,n}^{\ast}(z_1,\ldots,z_n) \coloneqq W_{g,n}^{\ast}(X(z_1),\ldots,X(z_n))$ for $z_1,\ldots,z_n$ in the outer sheet $\Sigma_{\textnormal{out}}$. By assumption \textnormal{(C1)}, this admits an analytic continuation to a meromorphic function for $z_1,\ldots,z_n \in \Sigma$. We can reformulate the previous observations as follows. Let $\bm{z}_I \in \Sigma_{\textnormal{out}} \setminus \overline{X^{-1}(\Omega)}$.
\begin{itemize}
	\item For $z_1 \in X^{-1}(\Omega)$, we have
	\begin{equation}
		\mathcal{R}_{x_1}W_{g,n}^{\ast}(X(z_1),X(\bm{z}_I))
		=
		-\bigl(w^{\ast}_{g,n}(z_1,\bm{z}_I) + w_{g,n}^{\ast}(1/z_1,\bm{z}_I)\bigr).
	\end{equation}

	\item The function $\mathcal{M}_{x_1}W_{g,n}^{\ast}(X(z_1),X(\bm{z}_I))$, initially defined for $z_1 \in \Omega_{\textnormal{out}}$, admits the analytic continuation
	\begin{equation}
		-w_{g,n}^{\ast}(1/z_1,\bm{z}_I)
	\end{equation}
	for $z_1 \in X^{-1}(\Omega)$.

	\item The function $\mathcal{D}_{x_1}W_{g,n}^{\ast}(X(z_1),X(\bm{z}_I))$, initially defined for $z_1 \in \Omega_{\textnormal{out}}$, admits the analytic continuation
	\begin{equation}
		\frac{1}{2}\bigl(w_{g,n}^{\ast}(z_1,\bm{z}_I) - w_{g,n}^{\ast}(1/z_1,\bm{z}_I)\bigr)
	\end{equation}
	for $z_1 \in X^{-1}(\Omega)$.
\end{itemize}
In other words, after analytic continuation in the Zhukovsky variable, we have the correspondence
\begin{equation}
\label{intermono}
	F(x) \longmapsto \mathcal{M}_{x}F(x)
	\qquad \longleftrightarrow \qquad
	f(z) \longmapsto -f(1/z).
\end{equation}
This principle is valid for the assignment $W_{g,n}^{\ast} \mapsto \mathcal{M}_{x_i}W_{g,n}^{\ast}$ with respect to any variable $x_i$. For $\widetilde{W}_{0,2}^{\ast}$, there is a shift; see~\Cref{def:mndrm}. Namely, after analytic continuation in the Zhukovsky variables, we have
\begin{equation}
\label{monotilde}
	\widetilde{W}_{0,2}^{\ast}(x_1,x_2)
	\longmapsto
	\mathcal{M}_{x_2}\widetilde{W}_{0,2}^{\ast}(x_1,x_2)
	\qquad
	\longleftrightarrow
	\qquad
	w_{0,2}^{\ast}(z_1,z_2)
	\longmapsto
	-\left(
		w_{0,2}^{\ast}(z_1,1/z_2)
		+
		\frac{1}{(x_1 - x_2)^2}
	\right).
\end{equation}
This perspective clarifies the formula of \Cref{revisit:skin} for the skin generating series and allows us to determine its analytic properties.

\begin{lem}
\label{kernel:an}
	Assume that the face and vertex weights $(q,\bm{t})$ are analytically good. Then, for all $x_1 \in \mathbb{C} \setminus [b,a]$, we have
	\begin{equation}
		\bigg(\int_{\ii 0^+}^{\infty} + \int_{\ii 0^-}^{\infty}\bigg)
		W_{0,2}^{\ast}(x_1,y)\dd y
		=
		\frac{1}{x_1} - \partial_q W_{0,1}^{\ast}(x_1),
	\end{equation}
	independently of the choice of paths of integration in $\widehat{\mathbb{C}} \setminus [b,a]$. Moreover, $S^{\ast}(x_1,x)$ defines a holomorphic function of $x_1 \in \mathbb{C} \setminus \overline{\Omega}$ and $x \in \Omega$.

	Besides, $s^{\ast}(z_1,z) \coloneqq S^{\ast}(X(z_1),X(z))$, initially defined for $z_1 \in \Sigma_{\textnormal{out}}$ and $z \in X^{-1}(\Omega)$, admits an analytic continuation to a meromorphic function of $z_1 \in \Sigma$ and $z \in X^{-1}(\Omega)$ given by
	\begin{equation}
		s^{\ast}(z_1,z)
		=
		\frac{
			-\displaystyle\int_{1/z}^{z} w_{0,2}^{\ast}(z_1,\zeta)\dd X(\zeta)
		}{
			w_{0,1}^{\ast}(z) - w_{0,1}^{\ast}(1/z)
		}.
	\end{equation}
	This function is invariant under $z \mapsto 1/z$ and has no singularity when $z_1 \in \Sigma \setminus \overline{X^{-1}(\Omega)}$ and $z \in X^{-1}(\Omega)$.
\end{lem}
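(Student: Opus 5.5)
The plan is to start from the formal formula of \Cref{revisit:skin}, or equivalently \Cref{skin:enum:stuf}, and rewrite both numerator and denominator in the Zhukovsky variables using the dictionary \eqref{intermono} coming from the linear loop equations (\Cref{lem:lloop}).

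\emph{Denominator.} By \Cref{lem:R1W:unstable}, $-2\mathcal{D}_xW_{0,1}^\ast(x) = x - T_{0,1}'(x) - \widetilde{\mathcal{O}}_xW_{0,1}^\ast(x) - 2W_{0,1}^\ast(x)$. By (B2) this is holomorphic for $x\in\Omega\setminus[b,a]$. By \eqref{lloop1}, $x - T_{0,1}'(x) - \widetilde{\mathcal{O}}_xW_{0,1}^\ast(x)$ equals $w_{0,1}^\ast(z)+w_{0,1}^\ast(1/z)$. Hence the denominator becomes $-(w_{0,1}^\ast(z)-w_{0,1}^\ast(1/z))$, which is odd under $z\mapsto1/z$. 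By (C2) its only zeros in $X^{-1}(\Omega)$ are at $z=\pm1$.

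\emph{Numerator.} Write it as $2\int^x\mathcal{D}_yW_{0,2}^\ast(x_1,y)\,\dd y$. On the outer sheet, $\mathcal{D}_y$ continues to $\tfrac12(w_{0,2}^\ast(z_1,\zeta)-w_{0,2}^\ast(z_1,1/\zeta))$ in $\zeta$. Here the $(0,2)$ shift by $\frac{1}{(x_1-y)^2}$ is accounted for exactly as in (C1), so I will work with $\omega_{0,2}^\ast$ including the double-pole term. Integrating against $\dd X(\zeta)$, and using that $\zeta\mapsto1/\zeta$ preserves $X$, turns the numerator into $\int_{1/z}^{z}w_{0,2}^\ast(z_1,\zeta)\,\dd X(\zeta)$ up to an additive constant, independent of $z$. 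That constant must vanish for the stated formula. It is fixed by comparing behaviour as $x\to\infty$, i.e.\ $z\to\infty$ and $1/z\to0$ (both mapping to $x=\infty$). The prescribed normalisation of the antiderivative involves $\int_\infty^x$, the term $\langle O(x,\xi)W^\ast_{0,2}(x_1,\xi)\rangle_\xi$ and $-\partial_qW_{0,1}^\ast(x_1)$. So the first displayed claim of the lemma is exactly what is needed: the integral from $\infty$ on one sheet to $\infty$ on the other, written as $(\int_{\ii0^+}^\infty+\int_{\ii0^-}^\infty)W_{0,2}^\ast\,\dd y$, equals $\frac1{x_1}-\partial_qW_{0,1}^\ast(x_1)$.

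To prove that identity, I would use the constant-term (pointed-disc) relation \eqref{skinning:discs:stuf} together with the annulus skinning formula. The difference of the two boundary values of the antiderivative across the cut is the contour integral of $2\mathcal{D}W_{0,2}^\ast$ around $[b,a]$. Deforming to $\infty$ picks the $\xi^{-1}$ coefficients, which by the proof of \Cref{skin:enum:maps:tubes} are exactly the constant-term data. Path independence follows because the only singularities of $w_{0,2}^\ast(z_1,\cdot)$ on $\Sigma$ are at $\zeta=z_1$ and $\zeta=1/z_1$ (via the double pole of (C3)), and their residues vanish since the pole is double. I expect this matching of the integration constant to be the main obstacle: it requires carefully tracking the $\frac1{x_1-x}$ term and the $O$-term across sheets, using \eqref{STW:TW}-type identities.

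\emph{Regularity.} Invariance of $s^\ast$ under $z\mapsto1/z$ is immediate: the numerator is odd (swapping the endpoints) and the denominator is odd. The denominator has double zeros at $\pm1$, but the numerator also vanishes there: it is odd and holomorphic near $\pm1$ for $z_1$ away from $X^{-1}(\Omega)$ by (C3), so it vanishes to odd order. This yields simple poles at worst. Invariance then forces the Laurent expansion in the local coordinate $z\mp1$ to be even, so it is regular, giving removable singularities. Holomorphy of $S^\ast(x_1,x)$ for $x_1\notin\overline\Omega$ and $x\in\Omega$ follows, since the invariant function descends to a function of $x=X(z)$ on $\Omega$. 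Meromorphic continuation in $z_1\in\Sigma$ is inherited from that of $w_{0,2}^\ast$ via (C1).
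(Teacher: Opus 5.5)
Your reductions are sound as far as they go. Via \eqref{lloop1} the denominator is $-(w_{0,1}^\ast(z)-w_{0,1}^\ast(1/z))$. Via the $(0,2)$ linear loop equation the numerator $N^\ast$ equals $\int_{1/z}^{z}w_{0,2}^\ast(z_1,\zeta)\,\dd X(\zeta)+C(x_1)$, with $C(x_1)=\tfrac12(N^\ast(x_1,x^+)+N^\ast(x_1,x^-))$ independent of $x$. So everything hinges on $C(x_1)=0$, which is the first displayed identity. That is precisely the step you leave open, and the route you sketch cannot close it.

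\begin{itemize}
\item The pointed-disc relation \eqref{skinning:discs:stuf} is a formal coefficient identity that is already fully built into $N^\ast$: it is what fixed the constant term of $S^\ast D^\ast$ in \Cref{skin:enum:stuf}. It therefore says nothing about the boundary values $\int_\infty^{0^\pm}W_{0,2}^\ast(x_1,y)\,\dd y$ of the analytic continuation at an interior point of the cut.
\item A contour around $[b,a]$ only measures a jump, i.e.\ a difference of boundary values, and for $W^\ast_{0,2}(x_1,y)=O(y^{-2})$ it simply vanishes. But $C(x_1)$ involves the \emph{sum} of the two boundary values.
\item ``Comparing behaviour as $x\to\infty$'' is unavailable. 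The formula is only asserted for $z\in X^{-1}(\Omega)$, the point $1/z\to 0$ does not lie in $\Sigma$, and $S^\ast(x_1,x)$ is a power series in $x$ with no meaning near $x=\infty$.
\end{itemize}
Since $C$ is a constant, evaluating anywhere helps only if you have an independent handle on $S^\ast$ at that point. Your proposal never uses any analytic property of $S^\ast$ itself.

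That handle is the missing idea in the paper's proof. $S^\ast(x_1,x)$ has only non-negative powers of $x$ and a non-zero radius of convergence, because skin maps are annular maps and (A1)--(A2) bound their counts. Hence it is continuous at $x=0$, which lies inside $(b,a)$. By \eqref{lloop1} and (C2), $D^\ast(x^+)=-D^\ast(x^-)\neq 0$ on $(b,a)$. It follows that $S^\ast(x_1,x^+)-S^\ast(x_1,x^-)=2C(x_1)/D^\ast(x^+)$, and continuity at $0$ forces $C(x_1)=0$.

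Only then is your numerator odd under $z\mapsto 1/z$, so that your removable-singularity argument at $z=\pm1$ applies; it is equivalent to the paper's bound on $\sqrt{(x-a)(x-b)}\,S^\ast$. As written, your final paragraph derives holomorphy of $S^\ast$ from the formula, which is circular. For $C\neq0$ the quotient $N^\ast/D^\ast$ would jump across $(b,a)$ and have simple poles at $z=\pm1$, and nothing in your argument rules this out.

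A side remark for when you do the bookkeeping: integrating the $(0,2)$ loop equation from $x=0$ leaves the boundary term $\langle O(0,\xi)W^\ast_{0,2}(\xi,x_1)\rangle_\xi$ in $C(x_1)$. It vanishes for ordinary maps, or whenever planar faces with a boundary component of degree $0$ have zero weight, but not in general.
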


\begin{proof}
	Set
	\begin{equation}
	\begin{split}
		N^{\ast}(x_1,x)
		&\coloneqq
		\int_{\infty}^{x} 2W_{0,2}^{\ast}(x_1,y)\dd y
		+
		\frac{1}{x_1 - x}
		+
		\big\langle O(x,\xi) W_{0,2}^{\ast}(x_1,\xi)\big\rangle_{\xi}
		-
		\partial_q W_{0,1}^{\ast}(x_1),
		\\
		D^{\ast}(x)
		&\coloneqq
		x - \partial_{x} T_{0,1}(x)
		-
		2W_{0,1}^{\ast}(x)
		-
		\bigl\langle \partial_{x}\widetilde{O}(x,\xi) W_{0,1}^{\ast}(\xi)\bigr\rangle_{\xi}.
	\end{split}
	\end{equation}
	According to \Cref{skin:enum:stuf}, we have the equality of formal series, with negative powers of $x_1$ and all integer powers of $x$,
	\begin{equation}
		S^{\ast}(x_1,x)D^{\ast}(x) = N^{\ast}(x_1,x).
	\end{equation}
	By \textnormal{(B1)} and \textnormal{(B2)}, the series defining $S^{\ast}(x_1,x)D^{\ast}(x)$ and $D^{\ast}(x)$ converge to holomorphic functions of $x_1 \in \mathbb{C} \setminus \overline{\Omega}$ and $x \in \Omega \setminus [b,a]$. The linear loop equation for $(g,n) = (0,1)$ yields the relation between boundary values
	\begin{equation}
	\label{discoD}
		\forall x \in (b,a),
		\qquad
		D^{\ast}(x^+) + D^{\ast}(x^-) = 0.
	\end{equation}
	Assumption \textnormal{(C2)} guarantees that these boundary values do not vanish on $(b,a)$, and that they vanish at most like a square root near $a$ and $b$.

	To study the boundary values of $N^*$, we use the linear loop equation for $(g,n) = (0,2)$, which gives, for $x \in (b,a)$,
	\begin{equation}
		W_{0,2}^{\ast}(x_1,x^+)
		+
		W_{0,2}^{\ast}(x_1,x^-)
		+
		\frac{1}{(x_1 - x)^2}
		+
		\big\langle \partial_x O(x,\xi) W_{0,2}^{\ast}(\xi,x_1)\big\rangle_{\xi}
		=
		0.
	\end{equation}
	Integrating from $x=0$, we obtain, for $x \in (b,a)$,
	\begin{equation}
	\label{lloop02}
		\bigg(\int_{\ii 0^+}^{\infty} + \int_{\ii 0^-}^{\infty}\bigg)
		W_{0,2}^{\ast}(x_1,y)\dd y
		+
		\bigg(\int_{\infty}^{x^+} + \int_{\infty}^{x^-}\bigg)
		W_{0,2}^{\ast}(x_1,y)\dd y
		+
		\frac{1}{x_1 - x}
		-
		\frac{1}{x_1}
		+
		\big\langle O(x,\xi)W_{0,2}^{\ast}(\xi,x_1)\big\rangle_{\xi}
		=
		0.
	\end{equation}
	Since all terms except the integral of $W_{0,2}^{\ast}$ are holomorphic functions of $x \in \Omega \setminus [b,a]$, we get
	\begin{equation}
		\frac{N^{\ast}(x_1,x^+) + N^{\ast}(x_1,x^-)}{2}
		=
		\frac{1}{x_1}
		-
		\partial_q W_{0,1}^{\ast}(x_1)
		-
		\bigg(\int_{\ii 0^+}^{\infty} + \int_{\ii 0^-}^{\infty}\bigg)
		W_{0,2}^{\ast}(x_1,y)\dd y
		\eqqcolon
		C(x_1).
	\end{equation}
	This implies
	\begin{equation}
		S^{\ast}(x_1,x^+) - S^{\ast}(x_1,x^-)
		=
		\frac{2C(x_1)}{D^{\ast}(x^+)}.
	\end{equation}
	Since $0$ belongs to $(b,a)$ by \Cref{def:anagood}, the function $x \mapsto S^{\ast}(x_1,x)$ is continuous in a neighbourhood of $0$ if and only if $C(x_1)=0$. But $S^{\ast}(x_1,x)$ must have non-zero radius of convergence, because the set of stuffed skin maps is contained in the set of stuffed annular maps, and assumptions \textnormal{(A1)}--\textnormal{(A2)} require the generating series enumerating the latter to have non-zero radius of convergence. Thus $C(x_1)=0$. We have made the argument for $x_1 \in \mathbb{C} \setminus \overline{\Omega}$ to avoid discussing the simple pole at $x_1=x$ in the numerator of $S^{\ast}$, but the vanishing extends to the whole domain $\mathbb{C} \setminus [b,a]$, where $C$ is analytic.

	At this point, we have shown that $S^{\ast}(x_1,x)$ is continuous across $(b,a)$, leaving only isolated singularities at $a$ and $b$ in $\Omega$. Assumption \textnormal{(C3)} implies that $\sqrt{(x-a)(x-b)} W_{0,2}^{\ast}(x_1,x)$ remains bounded as $x$ approaches $a$ or $b$. Therefore, $\int_{\infty}^{x} 2W_{0,2}^{\ast}(x_1,y)\dd y$ remains bounded as $x$ approaches $a$ or $b$. Taking into account the vanishing of the denominator, we see that $\sqrt{(x-a)(x-b)}S^{\ast}(x_1,x)$ remains bounded as $x$ approaches $a$ or $b$, so these singularities are removable. Thus, $S^{\ast}(x_1,x)$ is a holomorphic function of $x \in \Omega$ and $x_1 \in \mathbb{C} \setminus \overline{\Omega}$, and it extends to a meromorphic function of $x \in \Omega$ and $x_1 \in \mathbb{C} \setminus [b,a]$ because of the simple pole at $x_1=x$ in the numerator.

	By assumptions \textnormal{(C2)} and \textnormal{(C3)}, the meromorphic function $s^{\ast}(z_1,z) = S^{\ast}(X(z_1),X(z))$, initially defined for $(z_1,z) \in \Omega_{\textnormal{in}}^2$, extends to a meromorphic function of $(z_1,z) \in \Sigma^2$. Moreover, for $(z_1,z) \in \Sigma_{\textnormal{out}} \times X^{-1}(\Omega)$, we have
	\begin{equation}
	\label{preformS}
		s^{\ast}(z_1,z)
		=
		\frac{
			\displaystyle\int_{\infty}^{z} 2w_{0,2}^{\ast}(z_1,\zeta)\dd X(\zeta)
			+
			\frac{1}{X(z_1) - X(z)}
			+
			\big\langle O(X(z),\xi) W_{0,2}^{\ast}(\xi,X(z_1))\big\rangle_{\xi}
			-
			\partial_q W_{0,1}^{\ast}(X(z_1))
		}{
			X(z)
			-
			\partial_{x} T_{0,1}(X(z))
			-			-
			\big\langle\partial_{x}\widetilde{O}(X(z),\xi) W_{0,1}^{\ast}(\xi)\big\rangle_{\xi} -
			2w_{0,1}^{\ast}(z)
		}.
	\end{equation}
	In the Zhukovsky variable, the linear loop equation \eqref{lloop1} reads, for $z \in \mathbb{U}$,
	\begin{equation}
		w_{0,1}^{\ast}(z)
		+
		w_{0,1}^{\ast}(1/z)
		-
		X(z)
		+
		\partial_{x} T_{0,1}(X(z))
		+
		\big\langle \partial_{x}\widetilde{O}(X(z),\xi) W_{0,1}^{\ast}(\xi)\big\rangle_{\xi}
		=
		0.
	\end{equation}
	The integrated linear loop equation \eqref{lloop02}, taking into account that $C(x_1)=0$, reads, for $z \in \mathbb{U}$,
	\begin{equation}
		\bigg(\int_{\infty}^{z} + \int_{\infty}^{1/z}\bigg)
		w_{0,2}^{\ast}(\zeta,z_1)\dd X(\zeta)
		+
		\frac{1}{(X(z_1) - X(z))^2}
		+
		\big\langle \partial_{x} O(X(z),\xi) W_{0,2}^{\ast}(\xi,X(z_1))\big\rangle_{\xi}
		=
		\partial_q W_{0,1}^{\ast}(X(z_1)).
	\end{equation}
	These are identities between meromorphic functions of $z$, and hence they hold for all $z \in \Sigma$. Using them, we can simplify \eqref{preformS} to
	\begin{equation}
		s^{\ast}(z_1,z)
		=
		\frac{
			-\displaystyle\int_{1/z}^{z} w_{0,2}^{\ast}(\zeta,z_1)\dd X(\zeta)
		}{
			w_{0,1}^{\ast}(z) - w_{0,1}^{\ast}(1/z)
		},
	\end{equation}
	where the fact that $w_{0,2}^{\ast}(\zeta,z_1)\dd X(\zeta)$ has no residues allows us to choose any path of integration between $1/z$ and $z$ that avoids $z_1$. Since this formula holds for all $z_1 \in \Sigma_{\textnormal{out}}$, it also holds for all $z_1 \in \Sigma$. The invariance under the involution $z \mapsto 1/z$ is manifest.
\end{proof}

\subsection{Deriving topological recursion}
\label{ssec:analytic:TR}
We now show that, once analytically good weights are fixed, the excision formula of \Cref{thm:excision} can be transformed into the usual form of blobbed topological recursion. The first ingredient is the interpretation of the pairing $\langle F(x)G(x)\rangle_x$ as a contour integral. The proof is immediate from Cauchy's residue theorem, so we omit it.

\begin{lem}
\label{lem:pairing}
	Let $\Omega' \subset \Omega$ be two open discs centred at $0$, and let
	\begin{equation}
		F(x) = \sum_{\ell \geq 1} F_{\ell}x^{\ell - 1},
		\qquad
		G(x) = \sum_{\ell \in \mathbb{Z}} G_{\ell}x^{-\ell}
	\end{equation}
	be two formal series. Assume that $F(x)$ converges absolutely for $x \in \Omega$, and that $G(x)$ converges absolutely for $x$ in the annulus $\Omega \setminus \overline{\Omega'}$. Then the pairing $\langle F(x)G(x) \rangle_x \coloneqq \sum_{\ell \geq 1} F_{\ell}G_{\ell}$ converges absolutely and is given by
	\begin{equation}
		\langle F(x)G(x)\rangle_x
		=
		\oint_\Gamma F(x)G(x)\frac{\dd x}{2\pi\ii}
	\end{equation}
	for any circle $\Gamma \subset \Omega \setminus \overline{\Omega'}$ with counterclockwise orientation.
\end{lem}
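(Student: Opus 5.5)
The statement is that the pairing $\langle F(x)G(x)\rangle_x=\sum_{\ell\ge1}F_\ell G_\ell$ converges absolutely and equals the contour integral of $F(x)G(x)$ over any circle $\Gamma\subset\Omega\setminus\overline{\Omega'}$. The plan is to reduce this to termwise integration of absolutely convergent series on the circle $\Gamma$, using the orthogonality relation
\begin{equation}
	\oint_\Gamma x^{k}\frac{\dd x}{2\pi\ii}=\delta_{k,-1}.
\end{equation}

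First fix a circle $\Gamma=\set{|x|=r}$ with radius $r$ strictly between the radius $r'$ of $\Omega'$ and the radius $R$ of $\Omega$. Since $F$ converges absolutely on the disc $\Omega$, we have $\sum_\ell |F_\ell| r^{\ell-1}<\infty$. Since $G$ converges absolutely on the annulus, we also have $\sum_{\ell\in\mathbb{Z}}|G_\ell| r^{-\ell}<\infty$. Both series therefore converge uniformly on $\Gamma$.

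Next, form the double series
\begin{equation}
	F(x)G(x)=\sum_{\ell\ge1}\sum_{m\in\mathbb{Z}}F_\ell G_m x^{\ell-1-m}.
\end{equation}
On $\Gamma$ it is dominated by $\bigl(\sum_\ell|F_\ell|r^{\ell-1}\bigr)\bigl(\sum_m|G_m|r^{-m}\bigr)<\infty$. By Fubini/Tonelli, integration can be exchanged with the double sum. The orthogonality relation then retains only the terms with $\ell-1-m=-1$, that is $m=\ell$, so the integral equals $\sum_{\ell\ge1}F_\ell G_\ell$.

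Absolute convergence of the pairing comes from the same domination. Each term satisfies $|F_\ell G_\ell|=|F_\ell|r^{\ell-1}\,|G_\ell|r^{-\ell}\cdot r$, and a single summand in each factor is bounded by that factor's full sum. Hence $\sum_\ell|F_\ell G_\ell|\le r\,\bigl(\sum_\ell|F_\ell|r^{\ell-1}\bigr)\bigl(\sum_m|G_m|r^{-m}\bigr)<\infty$.

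The result does not depend on the choice of $\Gamma$ in the annulus. This follows because the value computed is always $\sum_\ell F_\ell G_\ell$; alternatively, $FG$ is holomorphic on the annulus, so Cauchy's theorem gives the same conclusion.

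The only point needing care is justifying the interchange of sum and integral for the bilateral series $G$. This is the step handled by the absolute-convergence bound on $\Gamma$ above, so I do not expect any genuine obstacle.
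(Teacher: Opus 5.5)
Your proof is correct and is essentially the argument the paper has in mind when it says the lemma is immediate from Cauchy's residue theorem (the paper omits the proof). You integrate termwise on a centred circle $|x|=r$, justify the interchange by the bound $\bigl(\sum_\ell|F_\ell|r^{\ell-1}\bigr)\bigl(\sum_m|G_m|r^{-m}\bigr)<\infty$, which isolates the diagonal $m=\ell$, and get absolute convergence of the pairing from the same bound. The only implicit point, which the paper shares, is that $\Gamma$ must wind once around $\overline{\Omega'}$ (for instance, be centred at $0$): Cauchy's theorem gives independence of $\Gamma$ only among such circles, and a small circle in the annulus not enclosing the origin would give $0$.
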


In the setup above, we now use the Zhukovsky change of variables in the integral representation. The functions $F(X(z))$ and $G(X(z))$ are initially defined and holomorphic for $z \in \Omega_{\textnormal{out}} \setminus \overline{\Omega'_{\textnormal{out}}}$. The domains $X^{-1}(\Omega') \subset X^{-1}(\Omega)$ are neighbourhoods of the unit circle, and are topological annuli. The contour $X^{-1}(\Gamma)$ is the union of two topological circles: $\Gamma_{\textnormal{out}} \subset \mathbb{C} \setminus \overline{\mathbb{D}}$, with counterclockwise orientation, and $\Gamma_{\textnormal{in}} \subset \mathbb{D}$, with clockwise orientation. The involution $z \mapsto 1/z$ sends $\Gamma_{\textnormal{out}}$ to $\Gamma_{\textnormal{in}}$. To proceed, we need additional assumptions on $F$ and $G$ in the Zhukovsky variable.

\begin{lem}
\label{lem:pairing:residue}
	Adopt the assumptions of \Cref{lem:pairing}. Assume that $f(z) = F(X(z))$ and $g(z) = G(X(z))$ admit analytic continuations as meromorphic functions in $X^{-1}(\Omega)$, with poles only at $z = \pm 1$, and that $f(z) = f(1/z)$ and $g(z) = g(1/z)$ for $z \in X^{-1}(\Omega)$. Then
	\begin{equation}
		\big\langle F(x)G(x) \big\rangle_x
		=
		\frac{1}{2}
		\Res_{z= \pm 1}
		f(z)g(z)\dd X(z).
	\end{equation}
\end{lem}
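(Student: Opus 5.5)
The plan is to start from \Cref{lem:pairing}, which gives $\langle F(x)G(x)\rangle_x = \oint_\Gamma F(x)G(x)\frac{\dd x}{2\pi\ii}$ for a circle $\Gamma\subset\Omega\setminus\overline{\Omega'}$, and to pull this contour back through the Zhukovsky map. Since $X$ is a biholomorphism from $\Sigma_{\textnormal{out}}$ onto $\widehat{\mathbb{C}}\setminus[b,a]$, we have $\oint_\Gamma F(x)G(x)\,\dd x = \oint_{\Gamma_{\textnormal{out}}} f(z)g(z)\,\dd X(z)$, with $\Gamma_{\textnormal{out}}$ counterclockwise. Pulling back instead through the inner branch $Z_-$ gives the same integral over $\Gamma_{\textnormal{in}}$ with its induced orientation. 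This uses the symmetry $f(z)=f(1/z)$, $g(z)=g(1/z)$ together with $X(1/z)=X(z)$: the substitution $z\mapsto 1/z$ maps $\Gamma_{\textnormal{out}}$ onto $\Gamma_{\textnormal{in}}$ and preserves the integrand $f(z)g(z)\,\dd X(z)$. Averaging the two expressions yields
\[
	\langle F(x)G(x)\rangle_x = \frac{1}{2}\oint_{X^{-1}(\Gamma)} f(z)g(z)\,\dd X(z)\,\frac{1}{2\pi\ii},
\]
where $X^{-1}(\Gamma)=\Gamma_{\textnormal{out}}\cup\Gamma_{\textnormal{in}}$ carries the orientations described before the statement.

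Next I will observe that $X^{-1}(\Gamma)$, with these orientations, is the oriented boundary of the topological annulus $A\subset X^{-1}(\Omega)$ lying between $\Gamma_{\textnormal{in}}$ and $\Gamma_{\textnormal{out}}$ and containing the unit circle. One needs to check that the orientations agree: the outer circle is counterclockwise and the inner one clockwise, so the region $A$ lies to the left of both. By hypothesis, $f$ and $g$ extend meromorphically to $X^{-1}(\Omega)\supset \overline{A}$ with poles only at $z=\pm1$, and $\dd X(z)=\frac{a-b}{4}(1-z^{-2})\,\dd z$ is holomorphic there. Cauchy's residue theorem on $A$ then gives $\frac{1}{2\pi\ii}\oint_{\partial A} f g\,\dd X = \Res_{z=\pm1} f(z)g(z)\,\dd X(z)$, meaning the sum of the residues at $+1$ and $-1$. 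This gives the claim.

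The only delicate point is the orientation and sign bookkeeping in the averaging step. One must confirm that the substitution $z\mapsto 1/z$ carries the counterclockwise $\Gamma_{\textnormal{out}}$ exactly onto $\Gamma_{\textnormal{in}}$ with the clockwise orientation. In fact, inversion reverses the winding number about $0$, so the image of a counterclockwise loop winds clockwise, and hence no extra sign appears. One must also confirm that both pullbacks compute the same $x$-contour integral. Under the residue convention stated above for ``$\Res_{z=\pm1}$'', this gives exactly the stated formula with the factor $\frac12$.
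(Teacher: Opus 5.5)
Your proposal is correct and follows essentially the same route as the paper's proof. Both pull the contour back to $\Gamma_{\textnormal{out}}$, use the invariance of $X$, $f$, $g$ under $z\mapsto 1/z$ to rewrite the same integral over the clockwise $\Gamma_{\textnormal{in}}$, average the two expressions, and then apply the residue theorem on the annulus bounded by $\Gamma_{\textnormal{out}}+\Gamma_{\textnormal{in}}$. The paper phrases that last step as this contour being homologous to small counterclockwise circles around $z=\pm1$.
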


\begin{proof}
	We write
	\begin{equation}
		\big\langle F(x)G(x) \big\rangle_x
		=
		\oint_{\Gamma_{\textnormal{out}}}
		f(z)g(z)\frac{\dd X(z)}{2\pi\ii}
		=
		\frac{1}{2}
		\left(
			\oint_{\Gamma_{\textnormal{out}}}
			f(z)g(z)\frac{\dd X(z)}{2\pi\ii}
			+
			\oint_{\Gamma_{\textnormal{in}}}
			f(1/z)g(1/z)\frac{\dd X(1/z)}{2\pi\ii}
		\right).
	\end{equation}
	Since $X$, $f$ and $g$ are invariant under the involution $z \mapsto 1/z$, we obtain
	\begin{equation}
		\big\langle F(x)G(x) \big\rangle_x
		=
		\frac{1}{2}
		\oint_{\Gamma_{\textnormal{out}} + \Gamma_{\textnormal{in}}}
		f(z)g(z)\frac{\dd X(z)}{2\pi\ii}.
	\end{equation}
	We conclude by observing that, in the domain $X^{-1}(\Omega) \setminus \{\pm 1\}$ where the integrand is holomorphic, and the contour $\Gamma_{\textnormal{out}} + \Gamma_{\textnormal{in}}$ is homologous to the sum of two small counterclockwise circles centered at $+1$ and $-1$; see~\Cref{fig:contour:residue}.
\end{proof}

\begin{figure}
	\begin{tikzpicture}
		\draw [NavyBlue,->] (0,1.1) arc (90:-90:1.1);
		\draw [NavyBlue,->] (0,-1.1) arc (270:90:1.1);
		\node [NavyBlue] at (135:.7) {\small$\Gamma_{\textup{in}}$};

		\draw (1.5,0) arc (0:360:1.5);
		\node [BrickRed] at (1.5,0) {\tiny$\bullet$};
		\node [BrickRed] at (-1.5,0) {\tiny$\bullet$};

		\draw [NavyBlue,->] (0,-1.9) arc (-90:90:1.9);
		\draw [NavyBlue,->] (0,1.9) arc (90:270:1.9);
		\node [NavyBlue] at (135:2.3) {\small$\Gamma_{\textup{out}}$};


		\begin{scope}[xshift=5cm]
			\draw [NavyBlue,->] (0,-1.6) arc (-90:90:1.6);
			\draw [NavyBlue,->] (0,1.6) arc (90:270:1.6);

			\draw [NavyBlue,->] (0,1.4) arc (90:-90:1.4);
			\draw [NavyBlue,->] (0,-1.4) arc (270:90:1.4);

			\fill[white] (1.5,0) circle (.4);
			\draw [NavyBlue,->] (1.9,0) arc (0:180:.4);
			\draw [NavyBlue,->] (1.1,0) arc (180:360:.4);

			\fill[white] (-1.5,0) circle (.4);
			\draw [NavyBlue,->] (-1.1,0) arc (0:180:.4);
			\draw [NavyBlue,->] (-1.9,0) arc (180:360:.4);

			\draw [white,line width=1.13ex] (1.5,0) arc (0:20:1.5);
			\draw [white,line width=1.13ex] (1.5,0) arc (0:-20:1.5);
			\draw [white,line width=1.13ex] (-1.5,0) arc (180:160:1.5);
			\draw [white,line width=1.13ex] (-1.5,0) arc (180:200:1.5);

			\draw (1.5,0) arc (0:360:1.5);
			\node [BrickRed] at (1.5,0) {\tiny$\bullet$};
			\node [BrickRed] at (-1.5,0) {\tiny$\bullet$};

		\end{scope}

		\begin{scope}[xshift=10cm]
			\draw [NavyBlue,->] (1.9,0) arc (0:180:.4);
			\draw [NavyBlue,->] (1.1,0) arc (180:360:.4);

			\draw [NavyBlue,->] (-1.1,0) arc (0:180:.4);
			\draw [NavyBlue,->] (-1.9,0) arc (180:360:.4);

			\draw (1.5,0) arc (0:360:1.5);
			\node [BrickRed] at (1.5,0) {\tiny$\bullet$};
			\node [BrickRed] at (-1.5,0) {\tiny$\bullet$};
		\end{scope}
	\end{tikzpicture}
	\caption{\label{fig:contour:residue}
		Transformation of the contour integral into residues at $z=\pm1$.
	}
\end{figure}

The conclusion of the proof of blobbed topological recursion is then close to the original proofs in \cite{Eyn04,Eyn16,BEO15,Bor14}. It uses the analytic realisation of the formal monodromy in~\eqref{intermono} and~\eqref{monotilde}, together with the analytic properties of the recursion kernel provided here by \Cref{kernel:an}. We nevertheless include the proof for completeness, and to correct the spurious factor of $\frac{1}{2}$ in the blob term in \cite{Bor14}.

\begin{thm}[Blobbed topological recursion]
\label{thm:blobbed:TR}
	Assume that the face and vertex weights $(q,\bm{t})$ are analytically good. Then the multidifferentials $\omega_{g,n}^{\ast}$ satisfy the blobbed topological recursion: for $2g-2+n>0$,
	\begin{equation}
	\label{eq:blobbed:TR}
	\begin{split}
		\omega_{g,n}^\ast(z_1,\bm{z}_I)
		&=
		\Res_{z=\pm1}
		K^\ast(z_1,z)
		\Bigg(
			\omega_{g-1,n+1}^\ast(z,1/z,\bm{z}_I)
			+
			\sum_{\substack{h' + h'' = g \\ J' \sqcup J'' = I}}^{\nodisc}
			\omega^\ast_{h',1+\card{J'}}(z,\bm{z}_{J'})
			\omega^\ast_{h'',1+\card{J''}}(1/z,\bm{z}_{J''})
		\Bigg) \\
		&\quad
		+
		\Phi_{g,n}(z_1,\bm{z}_I),
	\end{split}
	\end{equation}
	where the recursion kernel and the blob term are
	\begin{equation}
		K^{\ast}(z_1,z)
		\coloneqq
		\frac{
			\dfrac{1}{2}\displaystyle\int_{1/z}^z \omega_{0,2}^{\ast}(z_1,\cdot)
		}{
			\omega_{0,1}^{\ast}(z) - \omega_{0,1}^{\ast}(1/z)
		},
		\qquad
		\Phi_{g,n}(z_1,\ldots,z_n)
		\coloneqq
		\frac{1}{2\pi\ii}
		\oint_{\mathbb{U}}
		\omega_{0,2}^{\ast}(z_1,\zeta)
		V_{g,n}(X(\zeta);\bm{z}_I).
	\end{equation}
\end{thm}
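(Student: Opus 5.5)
Starting from the excision formula of \Cref{thm:excision}, we convert each pairing $\langle\cdot\rangle_x$ into a residue at $z=\pm1$ using \Cref{lem:pairing,lem:pairing:residue}. We fix $\bm{z}_I$ in the outer sheet, away from $\overline{X^{-1}(\Omega)}$, and $z_1$ in $\Sigma_{\textnormal{out}}\setminus\overline{X^{-1}(\Omega)}$. For the first (pair-of-pants) term, the first factor $F(x)=S^\ast(x_1,x)$ has only non-negative powers of $x$ and, by \Cref{kernel:an}, is holomorphic on $\Omega$; after the Zhukovsky change its continuation $s^\ast(z_1,z)$ is invariant under $z\mapsto 1/z$. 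The second factor $G(x)$ is the bracket $\mathcal{M}_yW^\ast_{g-1,n+1}(x,y,\ldots)|_{y=x}+\sum W^\ast\,\mathcal{M}_xW^\ast$.

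By \eqref{intermono}, $G(X(z))$ continues to $-\bigl(w^\ast_{g-1,n+1}(z,1/z,\bm{z}_I)+\sum w^\ast w^\ast(1/z)\bigr)$. This expression is symmetric under $z\mapsto1/z$, since the $W_{g-1,n+1}$ term is symmetric in its first two arguments and the sum over $(h',J')\leftrightarrow(h'',J'')$ is symmetric. Its only poles in $X^{-1}(\Omega)$ lie at $\pm1$ by (C1); the factor $\tfrac{1}{(x-y)^2}$ is absent here because all terms are stable. Applying \Cref{lem:pairing:residue} then gives $\tfrac12\Res_{z=\pm1} s^\ast(z_1,z)\,g(z)\,\dd X(z)$.

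Next we insert the formula for $s^\ast$ from \Cref{kernel:an} and multiply through by $\prod_i\dd X(z_i)$, so that the functions become the multidifferentials $\omega^\ast$. One factor $\dd X(z)$ is absorbed into the denominator $\omega^\ast_{0,1}(z)-\omega^\ast_{0,1}(1/z)$. A second $\dd X(z)$ must be distributed over the two factors of $g$, where $\dd X(1/z)=\dd X(z)$. The numerator $-\int_{1/z}^z w^\ast_{0,2}\,\dd X$, multiplied by $\dd X(z_1)$, becomes $-\int_{1/z}^z\omega^\ast_{0,2}(z_1,\cdot)$, with no diagonal correction since $z_1$ is far from the contour. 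The signs $(-)(-)$ cancel and the factor $\tfrac12$ matches the definition of $K^\ast$. For $(g,n)=(1,1)$ the same computation uses \eqref{monotilde}: the extra $\tfrac{1}{(x_1-x_2)^2}$ term there combines with $w^\ast_{0,2}(z,1/z)$ to give exactly $\omega^\ast_{0,2}(z,1/z)$, including its double-pole correction, so the $(1,1)$ case also fits the uniform formula.

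For the blob term $\langle W^\ast_{0,2}(x_1,x)V_{g,n}(x;\bm{x}_I)\rangle_x$, we apply \Cref{lem:pairing} directly. Here $V$ has non-negative powers of $x$ and is holomorphic on $\Omega$ by (B3), while $W^\ast_{0,2}(x_1,\cdot)$ converges on an annulus. This gives $\oint_\Gamma W^\ast_{0,2}(x_1,x)V_{g,n}(x)\tfrac{\dd x}{2\pi\ii}$. Pulling the integral back to $\Gamma_{\textnormal{out}}$ and deforming it to $\mathbb{U}$ is permitted because $w^\ast_{0,2}(z_1,\cdot)$ is analytic there for $z_1$ far away, by (C3). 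Multiplying by $\dd X(z_1)$ yields $\Phi_{g,n}$; because no symmetrisation is used here, no factor $\tfrac12$ appears. Finally, both sides are meromorphic in $z_1\in\Sigma$, so the identity extends from the open set where it was proved.

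The main obstacle will be justifying the hypotheses of \Cref{lem:pairing:residue}. First, we need $f=s^\ast(z_1,\cdot)$ to have no poles in $X^{-1}(\Omega)$ other than possibly $\pm1$, so the zeros of the denominator must be controlled by (C2). Second, $g$ must be analytic on the whole annulus $X^{-1}(\Omega)$ rather than only on the outer sheet. This requires that the formal-monodromy series genuinely converge to the continued function $-w(1/z)$, which relies on \Cref{lem:R1W:stable} and the linear loop equations (\Cref{lem:lloop}) together with (B2)–(B3). Some care is also needed with the pole of $s^\ast$ at $X(z)=X(z_1)$, which is why $z_1$ is first kept outside $\overline{X^{-1}(\Omega)}$.
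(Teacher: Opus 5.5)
Your route is the paper's: excision formula, then \Cref{lem:pairing} for the blob term with the contour pushed to $\mathbb{U}$, then \Cref{lem:pairing:residue} together with \eqref{intermono} and \Cref{kernel:an} for the pair-of-pants term, and \eqref{monotilde} for $(1,1)$. However, there is a genuine gap where you multiply by $\prod\dd X$ and declare that the functions have become the $\omega^\ast$.

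\textbf{The gap.} Your justification, that the factor $\frac{1}{(x-y)^2}$ is absent ``because all terms are stable'', is false. The superscript ``no $(0,1)$'' removes only discs. So for each $i\in I$ the quadratic sum keeps the terms with $(h',J')=(0,\{i\})$ or $(h'',J'')=(0,\{i\})$, namely
\[
W^\ast_{0,2}(x,x_i)\,\mathcal{M}_xW^\ast_{g,n-1}(x,\bm{x}_{I_i})
\quad\text{and}\quad
W^\ast_{g,n-1}(x,\bm{x}_{I_i})\,\mathcal{M}_xW^\ast_{0,2}(x,x_i).
\]
After continuation these contain $w^\ast_{0,2}(\zeta,z_i)$ and $w^\ast_{0,2}(1/\zeta,z_i)$. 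But $w^\ast_{0,2}\,\dd X\,\dd X$ is $\omega^\ast_{0,2}$ minus the double-pole term. What your computation actually yields is therefore the stated recursion plus the residue at $\zeta=\pm1$ of $K^\ast(z_1,\zeta)$ against
\begin{equation*}
-\sum_{i=2}^{n}\frac{\dd X(\zeta)\,\dd X(z_i)}{(X(\zeta)-X(z_i))^2}\Bigl(\omega^\ast_{g,n-1}(\zeta,\bm{z}_{I_i})+\omega^\ast_{g,n-1}(1/\zeta,\bm{z}_{I_i})\Bigr).
\end{equation*}
For $(g,n)=(0,3)$, where both factors are annuli, there is also a product of two double-pole terms. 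This leftover does not vanish for trivial reasons. Each $\omega^\ast_{g,n-1}(\zeta,\cdot)$ separately has poles at $\zeta=\pm1$, and $K^\ast(z_1,\zeta)$ has a simple pole there.

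\textbf{The fix (the paper's argument).} By the linear loop equations (\Cref{lem:lloop}), the symmetric combination $w^\ast_{g,n-1}(\zeta,\cdot)+w^\ast_{g,n-1}(1/\zeta,\cdot)$ equals, up to sign, $\mathcal{O}_xW^\ast_{g,n-1}+\partial_xV_{g,n-1}$ evaluated at $x=X(\zeta)$. In the annulus case the $V$-term is replaced by $\frac{1}{(x_j-x)^2}$. This is holomorphic for $x\in\Omega$, hence holomorphic at $\zeta=\pm1$. Moreover, $(\dd X(\zeta))^2$ has a double zero there, and
\[
K^\ast(z_1,\zeta)\,(\dd X(\zeta))^2=-\tfrac12\, s^\ast(z_1,\zeta)\,\dd X(z_1)\,\dd X(\zeta)
\]
is holomorphic in $\zeta$ by \Cref{kernel:an}. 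So the leftover, including the $(0,3)$ product term, has zero residue.

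\textbf{Two minor points.} The same $w$-versus-$\omega$ bookkeeping appears in two places you gloss over, though it is harmless in both.
\begin{itemize}
\item In the blob term you obtain $\oint_{\mathbb{U}}w^\ast_{0,2}\,\dd X(z_1)\dd X(\zeta)\,V_{g,n}$, not the $\omega^\ast_{0,2}$ in $\Phi_{g,n}$. The difference drops out because $V_{g,n}(x)/(x_1-x)^2$ is holomorphic inside $\Gamma$.
\item In the kernel numerator, the double-pole part of $\int_{1/z}^{z}\omega^\ast_{0,2}(z_1,\cdot)$ vanishes because its primitive $\dd X(z_1)/(X(z_1)-X(\cdot))$ takes the same value at $z$ and $1/z$. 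It does not vanish merely because $z_1$ is far from the path.
\end{itemize}
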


\begin{proof}
	The excision formula of \Cref{thm:excision} is the identity of formal series
	\begin{equation}
	\label{eq:W:SQV}
		W^{\ast}_{g,n}(x_1,\bm{x}_I)
		=
		\big\langle
			S^{\ast}(x_1,\xi)
			Q^{\ast}_{g,n}(\xi,\xi,\bm{x}_I)
		\big\rangle_{\xi}
		+
		\big\langle
			W_{0,2}^{\ast}(x_1,\xi)
			V_{g,n}(\xi;\bm{x}_I)
		\big\rangle_{\xi},
	\end{equation}
	where
	\begin{equation}
	\label{eq:Q:def}
	\begin{split}
		Q^{\ast}_{1,1}(\xi',\xi'')
		&\coloneqq
		\mathcal{M}_{\xi''}\widetilde{W}_{0,2}^{\ast}(\xi',\xi''),
		\\
		Q^{\ast}_{g,n}(\xi',\xi'',\bm{x}_I)
		&\coloneqq
		\mathcal{M}_{\xi''}
		\Bigg(
			W_{g-1,n+1}^{\ast}(\xi',\xi'',\bm{x}_I)
			+
			\sum_{\substack{h'+h'' = g \\ J' \sqcup J'' = I}}^{\nodisc}
			W_{h',1+\card{J'}}^{\ast}(\xi',\bm{x}_{J'})
			W_{h'',1+\card{J''}}^{\ast}(\xi'',\bm{x}_{J''})
		\Bigg)
	\end{split}
	\end{equation}
	for $(g,n)\neq(1,1)$.

	Let $\Omega' \subset \Omega$ be an open disc centered at $0$ and containing $[b,a]$. After specialising the vertex and face weights to the given analytically good values, the tuple $\bm{x} \coloneqq (x_1,\ldots,x_n) \in (\mathbb{C} \setminus \overline{\Omega})^n$ and $(\xi',\xi'') \in (\Omega \setminus \overline{\Omega'})^2$ lies in the domain of absolute convergence of the following functions:
	\begin{itemize}
		\item $S^{\ast}(x_1,\xi)$ by \Cref{kernel:an}, and $W_{0,2}^{\ast}(x_1,\xi)$ by assumption \textnormal{(B1)};

		\item $Q^{\ast}_{g,n}(\xi',\xi'',\bm{x}_I)$, by assumptions \textnormal{(B1)}--\textnormal{(B2)} and the definition of the monodromy operator, together with \Cref{R1W:stable,R1W:unstable};

		\item $V_{g,n}(\xi;\bm{x}_I)$, by assumption \textnormal{(B3)}.
	\end{itemize}
	Thus, we can apply the lemmata above and compute $W_{g,n}^{\ast}$ as a residue or contour integral involving only generating series of lower complexity.

	We first treat the blob term. By \Cref{lem:pairing},
	\begin{equation}
	\label{eq:blob:pairing}
		\big\langle
			W_{0,2}^{\ast}(x_1,\xi)
			V_{g,n}(\xi;\bm{x}_I)
		\big\rangle_{\xi}
		=
		\oint_{\Gamma}
			W_{0,2}^{\ast}(x_1,\xi)
			V_{g,n}(\xi;\bm{x}_I)
			\frac{\dd \xi}{2\pi\ii}.
	\end{equation}
	Using assumption \textnormal{(C3)}, we rewrite this in terms of Zhukovsky variables as
	\begin{equation}
	\label{eq:blob:z}
		\big\langle
			W_{0,2}^{\ast}(x_1,\xi)
			V_{g,n}(\xi;\bm{x}_I)
		\big\rangle_{\xi}
		\dd X(z_1)
		=
		\oint_{\Gamma_{\textnormal{out}}}
		\left(
			\omega_{0,2}^{\ast}(z_1,\zeta)
			-
			\frac{\dd X(z_1)\dd X(\zeta)}{(X(z_1)-X(\zeta))^2}
		\right)
		V_{g,n}(X(\zeta);\bm{x}_I),
	\end{equation}
	where $z_1 = Z_+(x_1)$. Since $\omega_{0,2}^{\ast}(z_1,\zeta)$ has no singularity for $z_1$ outside $\overline{\Omega_{\textnormal{out}}}$ and $\zeta \in \Omega$, we can move the contour to the unit circle. The double-pole term is holomorphic for $\zeta \in \Omega$, because $x_1 \notin \overline{\Omega}$, so it does not contribute. Rewriting the result purely in terms of differential forms gives the announced formula for $\Phi_{g,n}$.

	We now transform the pairing involving $Q_{g,n}^{\ast}$ into a sum of residues. For this, we verify the assumptions of \Cref{lem:pairing:residue}. We first treat the case $(g,n)\neq(1,1)$. Fix $\bm{z} = (z_1,\ldots,z_n)$ in $\Sigma_{\textnormal{out}} \setminus \overline{\Omega_{\textnormal{out}}}$ and set $x_i = X(z_i)$. By the analytic interpretation of the formal monodromy in~\eqref{intermono}, based on the linear loop equations, the function
	\begin{equation}
		q^{\ast}_{g,n}(\zeta',\zeta'',\bm{z}_I)
		\coloneqq
		Q^{\ast}_{g,n}(X(\zeta'),X(\zeta''),X(\bm{z}_I)),
	\end{equation}
	initially defined for $\zeta',\zeta'' \in \Omega_{\textnormal{out}}$, admits a continuation as a meromorphic function of $\zeta',\zeta'' \in X^{-1}(\Omega)$ given by
	\begin{equation}
	\label{eq:q:analytic}
		q_{g,n}^{\ast}(\zeta',\zeta'',\bm{z}_I)
		=
		-\Bigg(
			w^{\ast}_{g-1,n+1}(\zeta',1/\zeta'',\bm{z}_I)
			+
			\sum_{\substack{h' + h'' = g \\ J' \sqcup J'' = I}}^{\nodisc}
			w^{\ast}_{h',1+\card{J'}}(\zeta',\bm{z}_{J'})
			w^{\ast}_{h'',1+\card{J''}}(1/\zeta'',\bm{z}_{J''})
		\Bigg).
	\end{equation}
	In particular, $q_{g,n}^{\ast}(\zeta,\zeta,\bm{z}_I)$ is invariant under $\zeta \mapsto 1/\zeta$, and assumptions \textnormal{(C2)}--\textnormal{(C3)} imply that its poles in $X^{-1}(\Omega)$ are located at $\zeta=\pm1$. By \Cref{kernel:an}, the function $s^{\ast}(z_1,\zeta)=S^{\ast}(X(z_1),X(\zeta))$, initially defined for $\zeta \in \Omega_{\textnormal{out}}$, admits an analytic continuation as a holomorphic function of $\zeta \in X^{-1}(\Omega)$ which is invariant under $\zeta \mapsto 1/\zeta$. Hence, by \Cref{lem:pairing:residue}, the first term of \eqref{eq:W:SQV} is equal to
	\begin{equation}
	\label{eq:SQ:residue}
		\frac{1}{2}
		\Res_{\zeta = \pm 1}
		s^{\ast}(z_1,\zeta)
		q_{g,n}^{\ast}(\zeta,\zeta,\bm{z}_I)
		\dd X(\zeta).
	\end{equation}

	Let us rewrite this in terms of differential forms. Set
	\begin{equation}
	\label{eq:kernel:Q}
	\begin{split}
		K^{\ast}(z_1,\zeta)
		&\coloneqq
		-\frac{1}{2}
		\frac{s^{\ast}(z_1,\zeta)\dd X(z_1)}{\dd X(\zeta)}
		=
		\frac{
			\dfrac{1}{2}\displaystyle\int_{1/\zeta}^{\zeta}
			\omega^{\ast}_{0,2}(z_1,\cdot)
		}{
			\omega_{0,1}^{\ast}(\zeta) - \omega_{0,1}^{\ast}(1/\zeta)
		},
		\\
		\mathcal{Q}^{\ast}_{g,n}(\zeta',\zeta'',\bm{z}_I)
		&\coloneqq
		-q_{g,n}^{\ast}(\zeta',\zeta'',\bm{z}_I)
		\dd X(\zeta')\dd X(1/\zeta'')
		\prod_{i = 2}^{n}\dd X(z_i).
	\end{split}
	\end{equation}
	Since $\dd X(\zeta)=\dd X(1/\zeta)$, \eqref{eq:SQ:residue} gives
	\begin{equation}
	\label{eq:quasi:TR}
		\omega_{g,n}^{\ast}(\bm{z})
		=
		\Res_{\zeta = \pm 1}
		K^{\ast}(z_1,\zeta)
		\mathcal{Q}_{g,n}^{\ast}(\zeta,\zeta,\bm{z}_I)
		+
		\Phi_{g,n}(\bm{z}).
	\end{equation}

	Returning to \eqref{eq:q:analytic}, we can express $\mathcal{Q}_{g,n}^{\ast}$ in terms of the multidifferentials $\omega$ of \Cref{def:anagood}, taking into account the double-pole shift in the definition of $\omega_{0,2}$:
	\begin{multline}
	\label{eq:Q:omega}
		\mathcal{Q}_{g,n}^{\ast}(\zeta,\zeta,\bm{z}_I)
		=
		\omega_{g-1,n+1}^{\ast}(\zeta,1/\zeta,\bm{z}_I)
		+
		\sum_{\substack{h' + h'' = g \\ J' \sqcup J'' = I}}^{\nodisc}
		\omega_{h',1+\card{J'}}^{\ast}(\zeta,\bm{z}_{J'})
		\omega_{h'',1+\card{J''}}^{\ast}(1/\zeta,\bm{z}_{J''})
		\\
		-
		\sum_{i = 2}^{n}
		\Bigl(
			\omega_{g,n-1}^{\ast}(\zeta,\bm{z}_{I_i})
			+
			\omega_{g,n-1}^{\ast}(1/\zeta,\bm{z}_{I_i})
		\Bigr)
		\frac{\dd X(z_i)(\dd X(\zeta))^2}{(X(z_i)-X(\zeta))^2}.
	\end{multline}
	The $i$-th term in the second line is holomorphic at $\zeta=\pm1$ by the linear loop equation of \Cref{lem:lloop}. Moreover, the quadratic differential $(\dd X(\zeta))^2$ has a double zero at $\zeta=\pm1$, compensating the simple pole in $K^{\ast}(z_1,\zeta)$ created by the division by $\dd X(\zeta)$ in~\eqref{eq:kernel:Q}. Hence, the second line of \eqref{eq:Q:omega} does not contribute to the residue in~\eqref{eq:quasi:TR}, and we obtain the claimed formula.

	For $(g,n)=(1,1)$, the discussion of the analytic properties is similar. Taking \eqref{monotilde} into account, we obtain~\eqref{eq:quasi:TR} with
	\begin{equation}
	\begin{split}
		\mathcal{Q}^{\ast}_{1,1}(\zeta',\zeta'')
		&=
		-q_{1,1}^{\ast}(\zeta',\zeta'')
		\dd X(\zeta')\dd X(\zeta'')
		\\
		&=
		\left(
			w_{0,2}^{\ast}(\zeta',1/\zeta'')
			+
			\frac{1}{(X(\zeta')-X(\zeta''))^2}
		\right)
		\dd X(\zeta')\dd X(\zeta'')
		\\
		&=
		\omega_{0,2}^{\ast}(\zeta',1/\zeta'')
	\end{split}
	\end{equation}
	for $\zeta',\zeta'' \in X^{-1}(\Omega)$. This gives the expected result.
\end{proof}

\begin{rem}
	The result of \Cref{thm:blobbed:TR} remains valid for complex, rather than non-negative, weight assignments whose absolute values define analytically good weights. Indeed, all generating series are analytic in their domains of convergence, and the identity of \Cref{thm:blobbed:TR}, valid for non-negative parameters, extends to the whole domain of analyticity by analytic continuation.
\end{rem}

\appendix
\section{Formal-series identities}

We record here a simple identity for triple products of formal series, which gives alternative ways to present
\begin{equation}
	\sum_{\substack{i,j,k \geq 0 \\ i - j + k = 0}} k \, F_{i} G_{j} H_{k}.
\end{equation}

\begin{lem}
\label{lem:triple:prod}
	Let $F(x)$ and $H(x)$ be formal power series with only non-negative powers of $x$, and let $G(x)$ be a formal series with only negative powers of $x$:
	\begin{equation}
		F(x) = \sum_{i\geq 0} G_i x^{i},
		\quad
		G(x) = \sum_{j\geq 0} G_j x^{-j-1},
		\quad
		H(x) = \sum_{k\geq 0} H_k x^{k}.
	\end{equation}
	Assume that all products appearing below are well defined. Then
	\begin{equation}
	\Bigg\langle
		F(x)\Bigg\langle
			\frac{G(\xi)}{x-\xi}\,\partial_\xi H(\xi)
		\Bigg\rangle_\xi
	\Bigg\rangle_x
	=
	\Bigg\langle
		\Bigg\langle
			\frac{F(\xi)G(\xi)}{(x-\xi)^2}
		\Bigg\rangle_\xi H(x)
	\Bigg\rangle_x
	=
	\langle F(x)G(x)\partial_x H(x)\rangle_x.
	\end{equation}
\end{lem}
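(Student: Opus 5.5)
The plan is to prove all three equalities by direct coefficient extraction: I will show that each of the three pairings equals the same triple sum $\sum k\,F_iG_jH_k$. I read the coefficients of $F$ as $F(x)=\sum_{i\ge0}F_ix^i$; the statement writes $G_i$, which I take to be a typo. All expansions of $\frac{1}{x-\xi}$ and $\frac{1}{(x-\xi)^2}$ follow the convention \eqref{x1:x2:expns}: they are expanded in non-negative powers of $\xi$ and negative powers of $x$. The standing assumption that all products are well defined lets me rearrange the resulting sums freely.

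First, the right-hand side. Write $\partial_xH(x)=\sum_k kH_kx^{k-1}$. The monomial $F_iG_jH_k\,x^{i}x^{-j-1}x^{k-1}$ contributes to $\langle F G\partial H\rangle_x$ exactly when $i+k-1=j$. Hence
\begin{equation}
\langle F(x)G(x)\partial_xH(x)\rangle_x=\sum_{\substack{i,j,k\ge0\\ i-j+k=1}}k\,F_iG_jH_k .
\end{equation}
This is the canonical form. With the normalisation $G(x)=\sum_j G_jx^{-j-1}$ the constraint reads $i-j+k=1$ rather than $i-j+k=0$. The displayed sum before the lemma therefore uses a shifted labelling of $G$, which I will note without further comment.

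Second, the left-hand side. Insert $\frac{1}{x-\xi}=\sum_{a\ge0}\xi^ax^{-a-1}$. The $\xi^{-1}$ coefficient of $\xi^{a}\,\xi^{-j-1}\,\xi^{k-1}$ forces $a=j-k+1$, and this index is admissible when $a\ge0$. The inner bracket is therefore $\sum_{j,k}kG_jH_k\,x^{-(j-k+2)}$, summed over $j\ge k-1$. Pairing with $F_ix^i$ forces $i=j-k+1$. The condition $a\ge0$ is then exactly $i\ge0$, so the outcome is the same triple sum.

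Third, the middle expression. Insert $\frac{1}{(x-\xi)^2}=\sum_{a\ge1}a\,\xi^{a-1}x^{-a-1}$. Extracting the $\xi^{-1}$ coefficient of $F(\xi)G(\xi)\xi^{a-1}$ forces $a=j-i+1$, which gives $\sum_{i,j}(j-i+1)F_iG_j\,x^{-(j-i+2)}$ over $j\ge i$. Pairing with $H_kx^k$ forces $k=j-i+1$, so the weight $a$ becomes exactly $k$, and again I recover the triple sum. The only bookkeeping point here is that $a\ge1$ excludes $k=0$, but those terms carry weight $k=0$ in the triple sum anyway, so nothing is lost.

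There is no real obstacle. The only care needed is tracking the index ranges produced by the one-sided expansions, namely $a\ge0$ versus $a\ge1$, and checking that they match the positivity constraints on $i$ and $k$ or only drop terms of weight zero.
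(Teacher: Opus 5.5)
Your proof is correct, and it takes a different route from the paper. You evaluate all three pairings coefficient by coefficient and show that each equals $\sum_{i-j+k=1} k\,F_iG_jH_k$. Your index ranges are right: $a\ge0$ matches $i\ge0$ on the left, and $a\ge1$ in the middle only discards terms of weight $k=0$. Both of your remarks on the surrounding text are also correct. The coefficients of $F$ should be $F_i$, and with $G(x)=\sum_j G_jx^{-j-1}$ the constraint in the sum displayed before the lemma should read $i-j+k=1$.

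The paper never works with coefficients; its argument has three steps.
\begin{enumerate}
\item It adds and subtracts $F(\xi)\,\partial_xH(x)$ inside the left-hand side. The numerator of $\frac{F(x)\partial_\xi H(\xi)-F(\xi)\partial_xH(x)}{x-\xi}$ vanishes on the diagonal, so this piece is a power series in $x$ and $\xi$. After pairing with $G(\xi)$ it has only non-negative powers of $x$, and $\langle\cdot\rangle_x$ kills it.
\item Formal integration by parts in $x$ turns the remaining term $\bigl\langle \bigl\langle \frac{F(\xi)G(\xi)}{x-\xi}\bigr\rangle_\xi\,\partial_xH(x)\bigr\rangle_x$ into the middle expression.
\item The projection identity $\bigl\langle \frac{F(\xi)G(\xi)}{x-\xi}\bigr\rangle_\xi=[F(x)G(x)]_{-,x}$, paired against the power series $\partial_xH$, gives the right-hand side.
\end{enumerate}
The paper's argument is coordinate-free and uses the same mechanisms as the proof of \Cref{skin:enum:maps}, namely discarding polynomial parts and projecting onto negative powers. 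It also transposes verbatim to contour integrals.

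Your version is more elementary, and it shows exactly where the hypothesis ``all products are well defined'' is used. If one forms $FG$ first, the middle and right-hand expressions are the same iterated sum $\sum_k kH_k\sum_i F_iG_{i+k-1}$. The left-hand side is $\sum_i F_i\sum_k kG_{i+k-1}H_k$, which is the same double family summed in the opposite order. So the only content of the lemma beyond bookkeeping is a Fubini-type interchange, and this is what your ``rearrange freely'' must supply. The paper's splitting and integration by parts rely on the same hypothesis implicitly.
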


\begin{proof}
	Denote the left-hand side by $[F,G,H]$. We decompose it as
	\begin{equation}
	\label{triplebra}
		[F,G,H]
		=
		\Bigg\langle
			\Bigg\langle
				G(\xi) \, \frac{F(x)\partial_\xi H(\xi)-F(\xi)\partial_x H(x)}{x-\xi}
			\Bigg\rangle_\xi
		\Bigg\rangle_x
		+
		\Bigg\langle
			\Bigg\langle
				\frac{F(\xi)G(\xi)}{x-\xi}
			\Bigg\rangle_\xi
			\partial_x H(x)
		\Bigg\rangle_x.
	\end{equation}
	In the first term, the inner coefficient extraction produces a formal power series with non-negative powers in $x$, so this term does not contribute to the outer bracket $\langle\cdot\rangle_x$. For the second term, formal integration by parts in the outer bracket gives
	\begin{equation}
		[F,G,H]
		=
		\Bigg\langle
			\Bigg\langle
				\frac{F(\xi)G(\xi)}{(x-\xi)^2}
			\Bigg\rangle_\xi H(x)
		\Bigg\rangle_x.
		\end{equation}
		On the other hand, using the expansion \eqref{x1:x2:expns}, we find
		\begin{equation}
			\Bigg\langle
				\frac{F(\xi)G(\xi)}{x-\xi}
			\Bigg\rangle_\xi
			=
			[F(x)G(x)]_{-,x}.
	\end{equation}
	Since in~\eqref{triplebra} this series is paired with the formal power series $\partial_x H(x)$, replacing $[F(x)G(x)]_{-,x}$ by $F(x)G(x)$ does not change the value of the outer bracket. This yields the second rewriting and completes the proof.
\end{proof}

\section{Maps without internal faces}

For maps with arbitrary $\bm{t}$, \Cref{skinning:maps} gives, for $2g-2+n>0$, a recursion of the form
\begin{equation}\label{TR:GUE}
	W_{g;\ell_1,\ldots,\ell_n}
	=
	\sum_{i=2}^{n}
	\sum_{\ell \geq 1}
	B_{\ell_1,\ell_i,\ell}\,
	W_{g;\ell,\bm{\ell}_{I\setminus\{i\}}}
	+
	\sum_{\ell',\ell'' \geq 1}
	C_{\ell_1,\ell',\ell''}
	\bigg(
		W_{g-1;\ell',\ell'',\bm{\ell}_I}
		+
		\sum_{\substack{h' + h'' = g \\ J' \sqcup J'' = I}}^{\textup{stable}}
		W_{h';\ell',\bm{\ell}_{J'}}
		W_{h'';\ell'',\bm{\ell}_{J''}}
	\bigg),
\end{equation}
where ``stable'' means that we exclude the terms containing generating series of discs or annuli. The cases with $2g-2+n=1$ may be regarded as initial data for the recursion. The recursion coefficients are obtained from skin maps and discs as
\begin{equation}
	B_{\ell_1,\ell_2,\ell}
	\coloneqq
	\sum_{m \geq 1} 2S_{\ell_1,\ell+m+2}W_{0;m,\ell_2}
	+
	\ell_2 S_{\ell_1,\ell-\ell_2+2},
	\qquad
	C_{\ell_1,\ell',\ell''}
	\coloneqq
	S_{\ell_1,\ell'+\ell''+2}.
\end{equation}
In other words, the initial data, together with the series $B$ and $C$, uniquely determine all higher topologies.

For maps without internal faces, that is, for $\bm{t}=0$, we can make these series completely explicit. Without loss of generality, we set the vertex weight to $q=1$. We also remark that this case is not new: maps without internal faces are already enumerated by Catalan numbers in disc topology, by Tutte's slicing formula \cite{Tut62} in the planar case with arbitrarily many boundaries, and by formulae of Lehman--Walsh \cite{WL72} or Harer--Zagier \cite{HZ86} in positive genus with a single boundary component. What we want to emphasise here is that, in this situation, the excision and skinning formulae become completely explicit and provide an effective, bijective way of computing the generating series $W_{g;\ell_1,\ldots,\ell_n}$ for arbitrary topology.

Let us define
\begin{equation}
	c_{\ell}
	\coloneqq
	\frac{\ell!}{\lfloor \frac{\ell}{2} \rfloor!\,\lfloor \frac{\ell-1}{2} \rfloor!}
	=
	\begin{cases}
		k \binom{2k}{k} & \text{if }\ell=2k, \\
		(2k+1)\binom{2k}{k} & \text{if }\ell=2k+1.
	\end{cases}
\end{equation}
Then we have
\begin{equation}\label{Catalan}
	W_{0;\ell}
	=
	\frac{4\,c_\ell}{\ell(\ell+2)}\,\delta_{\ell,\textup{even}},
	\qquad
	W_{0;\ell_1,\ell_2}
	=
	\frac{2c_{\ell_1}c_{\ell_2}}{\ell_1+\ell_2}\,\delta_{\ell_1+\ell_2,\textup{even}},
\end{equation}
for discs and annuli, and
\begin{equation}\label{ic}
	W_{0;\ell_1,\ell_2,\ell_3}
	=
	c_{\ell_1}c_{\ell_2}c_{\ell_3}\,\delta_{\ell_1+\ell_2+\ell_3,\textup{even}},
	\qquad
	W_{1;\ell}
	=
	\frac{\ell-2}{24}\,c_\ell\,\delta_{\ell,\textup{even}}.
\end{equation}
for pairs of pants and one-holed tori. To compute $B$ and $C$, we need to enumerate skin maps. To this end, observe that, by comparing \Cref{Tutte:lemma} and \Cref{skinning:maps}, the operator $f(x)\mapsto \langle S(x_1,x)f(x)\rangle_x$ is the inverse of
\begin{equation}
	f(x)\longmapsto \mathcal{L}_{x}f
	=
	f(x)\bigl(x-2W_{0,1}(x)\bigr)
	=
	f(x)\sqrt{x^2-4}.
\end{equation}
The formula for $x-2W_{0,1}(x)$ is easily obtained from \Cref{Catalan} and the generating series of Catalan numbers. Hence, for $\ell_1,\ell \geq 1$,
\begin{equation}
	S_{\ell_1,\ell}
	\coloneqq
	\big\langle S(x_1,x)x_1^{\ell_1}x^{-\ell}\big\rangle_{x_1,x}
	=
	\Bigg\langle \frac{x_1^{\ell_1-\ell}}{\sqrt{x_1^2-4}}\Bigg\rangle_{x_1}
	=
	\binom{\ell_1-\ell}{(\ell_1-\ell)/2}\,\delta_{\ell_1+\ell,\textup{even}}
	=
	\frac{\ell_1-\ell+2}{2}\text{Cat}((\ell_1-\ell)/2).
\end{equation}
In particular, $S_{\ell_1,\ell}=0$ unless $\ell_1 \geq \ell$. This gives
\begin{equation}\label{BC}
\begin{aligned}
	B_{\ell_1,\ell_2,\ell}
	&=
	\begin{cases}
		\ell_2 \binom{\ell_1+\ell_2-\ell-2}{(\ell_1+\ell_2-\ell-2)/2}
		&
		\text{if }\ell_1-\ell_2 \leq \ell+2 \leq \ell_1
		\text{ and }
		\ell_1+\ell_2+\ell \text{ is even},
		\\
		2^{\ell_1-\ell-1} c_{\ell_2}
		&
		\text{if }\ell_1>\ell+2
		\text{ and }
		\ell_1+\ell_2+\ell \text{ is even},
		\\
		0
		&
		\text{otherwise},
	\end{cases} \\
	C_{\ell_1,\ell',\ell''}
	&=
	\binom{\ell_1-\ell'-\ell''-2}{(\ell_1-\ell'-\ell'')/2-1}\,\delta_{\ell_1+\ell'+\ell'',\textup{even}}.
\end{aligned}
\end{equation}
Indeed, the formula for $C$ is immediate. As for $B$, write $\beta_{\ell_1,\ell_2,\ell} \coloneqq \sum_{m \geq 1} 2S_{\ell_1,\ell+m+2}W_{0;m,\ell_2}$ for the first term in $B_{\ell_1,\ell_2,\ell}$. If $\ell_1-\ell_2 \leq \ell+2 \leq \ell_1$, then the term $\beta_{\ell_1,\ell_2,\ell}$ vanishes, and therefore
\begin{equation}
	B_{\ell_1,\ell_2,\ell}
	=
	\ell_2 S_{\ell_1,\ell-\ell_2+2}
	=
	\ell_2 \binom{\ell_1+\ell_2-\ell-2}{(\ell_1+\ell_2-\ell-2)/2},
\end{equation}
provided that $\ell_1+\ell_2+\ell$ is even.

Assume now that $\ell_1>\ell+2$. If $\ell_2=2k_2$ is even, only the terms $m=2p$ contribute to $\beta$, and $\ell_1$ and $\ell$ must have the same parity. Writing $d = \frac{\ell_1-\ell-2}{2}$, we obtain
\begin{equation}
\begin{split}
	\beta_{\ell_1,\ell_2,\ell}
	&=
	2k_2 \binom{2k_2}{k_2}
	\sum_{p=1}^{d}
	\frac{p}{p+k_2}
	\binom{2p}{p}
	\binom{2d-2p}{d-p} \\
	&=
	2k_2
	\Bigg[
		4^d \binom{2k_2}{k_2}
		-
		2\binom{2d+2k_2-1}{d+k_2}
	\Bigg] \\
	&=
	2^{\ell_1-\ell-1}c_{\ell_2}
	-
	\ell_2
	\binom{\ell_1+\ell_2-\ell-2}{(\ell_1+\ell_2-\ell-2)/2}.
\end{split}
\end{equation}
If instead $\ell_2=2k_2+1$ is odd, only the terms $m=2p+1$ contribute to $\beta$, and $\ell_1$ and $\ell$ must have opposite parity. Writing $d = \frac{\ell_1-\ell-3}{2}$, we obtain
\begin{equation}
\begin{split}
	\beta_{\ell_1,\ell_2,\ell}
	&=
	2(2k_2+1)\binom{2k_2}{k_2}
	\sum_{p=0}^{d}
	\frac{2p+1}{p+k_2+1}
	\binom{2p}{p}
	\binom{2d-2p}{d-p} \\
	&=
	2(2k_2+1)
	\Bigg[
		2^{2d+1}\binom{2k_2}{k_2}
		-
		\binom{2d+2k_2+1}{d+k_2}
	\Bigg] \\
	&=
	2^{\ell_1-\ell-1}c_{\ell_2}
	-
	\ell_2
	\binom{\ell_1+\ell_2-\ell-2}{(\ell_1+\ell_2-\ell-2)/2}.
\end{split}
\end{equation}
In both cases, the sum in the second line was evaluated using the identities
\begin{equation}
	\sum_{p=0}^d \binom{2p}{p}\binom{2d-2p}{d-p}=4^d,
	\qquad
	r\binom{2r}{r}\sum_{p=0}^d \frac{1}{p+r}\binom{2p}{p}\binom{2d-2p}{d-p}
	=
	\binom{2d+2r}{d+r},
\end{equation}
applied with $r=k_2$ and $r=k_2+1$, respectively.

The final expression is independent of the parity of $\ell_2$, and the subtracted term cancels with $\ell_2 S_{\ell_1,\ell-\ell_2+2}$. This proves the claimed formula. The recursion \labelcref{TR:GUE}, together with the initial data \labelcref{ic} and the explicit expressions \labelcref{BC} for $B$ and $C$, provides an effective computation of the generating series of maps without internal faces.

\printbibliography

\end{document}